\newtheorem{theorem}{Theorem}[section]
\newtheorem{lemma}[theorem]{Lemma}
\newtheorem{remark}[theorem]{Remark}
\newtheorem{prop}[theorem]{Proposition}
\newtheorem{claim}[theorem]{Claim}
\theoremstyle{definition}
\newtheorem{definition}[theorem]{Definition}
\newtheorem{setup}[theorem]{Setup}
\DeclareMathOperator{\supp}{supp}
\DeclareMathOperator{\aff}{aff}
\newcommand{\mc}{\mathcal}                
\newcommand{\N}{\mathbb{N}}               
\newcommand{\R}{\mathbb{R}}               
\newcommand{\de}{\delta}
\renewcommand{\epsilon}{\varepsilon}
\newcommand{\dir}{\operatorname{dir}}
\newcommand{\GL}{\mathrm{GL}}
\begin{document}

\author{Paige Bright}
\address{
Department of Mathematics\\
Massachusetts Institute of Technology\\
Cambridge, MA 02142, USA}
\email{paigeb@mit.edu}

\author{Alexander Ortiz}
\address{
Department of Mathematics\\
Rice University\\
Houston, TX 77005, USA}
\email{ao80@rice.edu}

\author{Dmitrii Zakharov}
\address{
Department of Mathematics\\
Massachusetts Institute of Technology\\
Cambridge, MA 02142, USA}
\email{zakhdm@mit.edu}
 
\keywords{}
\subjclass[2020]{28A75, 28A78}

\date{}

\title[A Continuum Beck-type Theorem for Hyperplanes]{A Continuum Beck-type Theorem for Hyperplanes}

\begin{abstract}
We prove a sharp continuum Beck-type theorem for hyperplanes.
Our work is inspired by foundational work of Beck on the discrete problem, as well as refinements due to Do and Lund. The inductive proof uses recent breakthrough results in projection theory by Orponen--Shmerkin--Wang and Ren, who proved continuum Beck-type theorems for lines in $\R^2$ and $\R^n$.
\end{abstract}

\maketitle

\tableofcontents

\section{Introduction}

In 1983, J\'{o}zsef Beck \cite{beck1983lattice} studied the following discrete problem about points and lines in Euclidean space. Let $\mathcal P^1(X)$ denote the set of affine lines spanned by at least two points of $X$. Given a finite set $X\subset \R^n$ with $|X| = N$, study the size of the set $\mathcal P^{1}(X)$. Beck proved that a dichotomy holds: either about $N$ points lie on a common line, or there are about $N^2$ distinct lines in $\mathcal P^1(X)$.

Recently, Orponen, Shmerkin, and Wang \cite{orponen2022kaufman} proved a continuum analog of Beck's theorem for lines in the plane, which was soon after extended to lines in higher dimensions by Ren \cite{ren2023discretized}. Ren showed that given $X\subset \R^n$ Borel, either 
\begin{itemize}
    \item[(1)] There exists an $m$-dimensional affine subspace $F\subset\R^n$ such that $\dim (X\setminus F) < \dim X$, or 
    \item[(2)] The set $\mathcal P^1(X)$, a subset of the affine Grassmannian of lines in $\R^n$, satisfies 
    \[
    \dim \mathcal P^1(X) \geq 2\min\{\dim X, m\}.
    \]
\end{itemize}
\noindent Here and throughout the paper, $\dim A$ refers to the Hausdorff dimension of $A$. We refer the reader to Mattila's book \cite{mattila1999} for standard facts about Hausdorff dimension and properties of the affine Grassmannian.

In addition to studying the set of lines that contain at least two points of a given finite set $X \subset \mathbb{R}^n$, Beck also studied hyperplanes of $\R^n$ spanned by $n$ affinely independent points of $X$. For $1\leq k \leq n-1$, let $\mathcal P^k(X)$ denote the set of $k$-planes spanned by $k+1$ affinely independent points of $X$. In his original paper \cite{beck1983lattice}, Beck showed that for any finite set $X\subset \R^n$, either there exists a hyperplane that contains about $|X|$-many points of $X$, or $|\mathcal P^{n-1}(X)|\geq c|X|^n$, for some dimensional constant $c$. The assumption that no hyperplane can contains a large fraction of the points of $X$ is a \emph{non-concentration} hypothesis about the set $X$.

In 2016, Thao Do \cite{do2018extending} and Ben Lund \cite{lund2016essential} refined Beck's theorem for hyperplanes by sharpening the non-concentration hypothesis of $X$. Do showed that for any $\varepsilon>0$ there is some $\gamma_\varepsilon >0$ such that for finite $X\subset \R^n$, either
\begin{itemize}
    \item[(1)] There exists a collection of affine subspaces $F_1,\dots, F_r\subset\R^n$ whose union contains at least $(1-\varepsilon) |X|$ points of $X$ and $\sum_{i=1}^r \dim F_i \leq n-1$, or 
    \item[(2)] $|\mathcal P^{n-1}(X)| \geq \gamma_\varepsilon |X|^n.$
\end{itemize}

In this paper, we establish a continuum analog of Beck's theorem for hyperplanes in $\R^n$. Either a Borel set $X$ is \emph{non-concentrated}, or else the set of hyperplanes spanned by $X$ has maximal dimension.

\begin{definition}
    Let $n \ge 2$, and let $X\subset \R^n$ be Borel. We say that $X$ is \emph{non-concentrated}, or simply $\mathbf{NC}$, if for any $r \ge 1$ and any collection of affine subspaces $F_1,\dots, F_r\subset\R^n$ such that $\sum_{i=1}^r \dim F_i \leq n-1$, we have 
    \[
    \dim \Big(X\setminus \bigcup_{i=1}^r F_i\Big) = \dim X.
    \]
\end{definition}

Heuristically, a Borel set that is $\mathbf{NC}$ corresponds to a finite set disobeying (1) in the dichotomy of Do's Beck-type theorem. Our main result is the following: 

\begin{theorem}\label{thm:main}
Let $X\subset \R^n$ be Borel and $\mathbf{NC}$. Then, 
\[
\dim \mathcal P^{n-1} (X) \geq n\min\{\dim X, 1\}.
\]
\end{theorem}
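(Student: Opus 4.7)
The plan is to induct on the ambient dimension $n$. The base case $n=2$ follows directly from the Orponen--Shmerkin--Wang theorem: since $X\subset\R^2$ is $\mathbf{NC}$, we have $\dim(X\setminus L)=\dim X$ for every line $L$, so case (1) of Ren's dichotomy (with $m=1$) is excluded and case (2) yields $\dim\mathcal{P}^1(X)\geq 2\min\{\dim X,1\}$.

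For the inductive step, assume the theorem in $\R^{n-1}$, and let $X\subset\R^n$ be $\mathbf{NC}$ with $s=\dim X$ and $s_0=\min\{s,1\}$. Fix a typical base point $x_0\in X$ (with respect to an $s_0$-Frostman measure $\mu$ on $X$) and consider the radial projection $\pi_{x_0}\colon\R^n\setminus\{x_0\}\to\R\mathbb{P}^{n-1}$, with image $X_{x_0}=\pi_{x_0}(X\setminus\{x_0\})$ viewed through an affine chart as a subset of $\R^{n-1}$. The first main task is to verify that $X_{x_0}$ is itself $\mathbf{NC}$ in $\R^{n-1}$: for any $F_1,\dots,F_r\subset\R\mathbb{P}^{n-1}$ with $\sum\dim F_i\leq n-2$, I would lift each $F_i$ to an affine subspace $\tilde F_i\subset\R^n$ through $x_0$ of affine dimension $\dim F_i+1$, and combine the $\mathbf{NC}$ property of $X$ (which controls $\dim(X\setminus\bigcup\tilde F_i)$) with a radial projection theorem (in the spirit of Orponen and collaborators) to obtain $\dim(X_{x_0}\setminus\bigcup F_i)\geq\min\{s,n-1\}=\dim X_{x_0}$. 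Care is needed when $r\geq 2$, since the naive sum $\sum\dim\tilde F_i=r+\sum\dim F_i$ may exceed $n-1$; I expect the common basepoint $x_0$ of the $\tilde F_i$ to enable a case analysis that closes this gap. Having established this, the inductive hypothesis yields $\dim\mathcal{P}^{n-2}(X_{x_0})\geq(n-1)s_0$, and lifting each projective hyperplane of $\mathcal{P}^{n-2}(X_{x_0})$ to an affine hyperplane through $x_0$ in $\R^n$ shows that the subfamily $\mathcal{P}^{n-1}(X,x_0)\subset\mathcal{P}^{n-1}(X)$ of spanned hyperplanes through $x_0$ has Hausdorff dimension at least $(n-1)s_0$.

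To gain the remaining $s_0$ dimensions, form the incidence variety $I=\{(x,\Pi):x\in X,\ \Pi\in\mathcal{P}^{n-1}(X),\ x\in\Pi\}$. Since the fiber of $I\to X$ over typical $x$ is $\mathcal{P}^{n-1}(X,x)$ of dimension at least $(n-1)s_0$, one obtains $\dim I\geq s+(n-1)s_0\geq ns_0$ in both regimes $s\leq 1$ and $s\geq 1$. The fiber of the second projection $I\to\mathcal{P}^{n-1}(X)$ over $\Pi$ is $X\cap\Pi$, so $\dim\mathcal{P}^{n-1}(X)\geq\dim I-\sup_\Pi\dim(X\cap\Pi)$. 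The main obstacle I foresee is establishing the slicing bound $\dim(X\cap\Pi)\leq\max\{s-1,0\}$ for $\mathcal{P}^{n-1}(X)$-typical $\Pi$: when $s\geq 1$ it reduces to a Marstrand-type slicing statement, but when $s<1$ it asserts that $X\cap\Pi$ is essentially zero-dimensional for most spanned hyperplanes, and this must be extracted from the $\mathbf{NC}$ condition itself through a Furstenberg-style or measure-energy argument applied to the pushforward of $\mu^n$ under the span map $(x_0,\dots,x_{n-1})\mapsto\Pi(x_0,\dots,x_{n-1})$.
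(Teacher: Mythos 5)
The proposal takes a genuinely different route from the paper. The paper decomposes $X$ into pieces carrying irreducible Frostman measures (Lemma~\ref{lem:irreduciblemeasures}), uses Ren's discretized radial projection theorem inductively to show each irreducible piece spans thin hyperplanes (Theorem~\ref{theorem:section4main}), and then patches the pieces together through stable position and a minimal-case induction (Sections~\ref{sec:stable-position}--\ref{sec:minimal-case}). You instead propose an induction on the ambient dimension: project radially from a base point, recurse, and add back the base point's dimension via an incidence-variety argument. This is more elementary in spirit, but at least two of your steps contain gaps that I do not see how to close, and the second is fatal as stated.

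The step you flag yourself---that $X_{x_0}$ is $\mathbf{NC}$ in $\R^{n-1}$ for typical $x_0$---is a genuine gap. If $X_{x_0}\subset\bigcup F_i$ with $\sum\dim F_i\le n-2$ and $r\ge 2$, the lifts $\tilde F_i\ni x_0$ have $\sum\dim\tilde F_i=r+\sum\dim F_i$, which can exceed $n-1$, so the $\mathbf{NC}$ hypothesis on $X$ is silent; sharing the common point $x_0$ only saves one dimension in the span and does not rule out the bad configuration when $r\ge 2$. One can build $\mathbf{NC}$ sets concentrated on a pair of low-dimensional flats through a common subflat, and projecting from a point in that subflat collapses $X_{x_0}$ onto a non-$\mathbf{NC}$ arrangement; you would need a positive-measure statement to exclude such $x_0$, which is again nontrivial (Ren's theorem as quoted gives only a supremum over $x$). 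You also need $\dim X_{x_0}\ge\min\{\dim X, n-1\}$ for a positive-measure set of $x_0$, which likewise requires the discretized versions of the projection theorems, not just the qualitative ones.

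The more serious issue is the final incidence step. The inequality
\[
\dim\mathcal P^{n-1}(X)\ \ge\ \dim I - \sup_\Pi\dim(X\cap\Pi)
\]
is not a valid general principle, even for Lipschitz fibrations, and even if $\sup$ is replaced by an essential supremum over typical fibers. A standard counterexample: let $g$ be a continuous Weierstrass-type function whose graph $A=\{(x,g(x)):x\in[0,1]\}$ has Hausdorff dimension strictly larger than $1$; then the horizontal projection $f:A\to[0,1]$ has $\dim f(A)=1$ and all fibers are singletons (dimension $0$), yet $\dim A>1+0$. Knowing $\dim I$ and the Hausdorff dimension of each fiber does not bound the dimension of the image from below. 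What one actually needs is to construct a Frostman measure directly on $\mathcal P^{n-1}(X)$---which is precisely what the paper does by pushing forward a product measure restricted to a thin-planes graph (Lemma~\ref{lem:thin-planes-implies-pushforward-frostman})---rather than trying to ``divide out'' fiber dimension from an incidence set. Independently of this, the slicing bound $\dim(X\cap\Pi)\le\max\{s-1,0\}$ for $\mathcal P^{n-1}(X)$-typical $\Pi$ is itself a substantive claim when $s<1$, and I do not see how to extract it from $\mathbf{NC}$ without essentially redoing the analysis of the paper. So while the overall architecture is appealing, the argument as written does not establish the theorem.
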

The lower bound on the dimension of $\mathcal P^{n-1}(X)$ is sharp, and the \textbf{NC} assumption is sharp in the following sense. If $X$ is contained in a union of pairwise disjoint flats $\bigcup_{i=1}^r F_i$ where $\sum \dim F_i \le n-1$, $\dim X >0$, and  $\dim X\cap F_i = \dim X$ for all $i$, then
\[
\dim \mathcal P^{n-1}(X) \le (n-2)\min\{\dim X, 1\}.
\]
To see this, let $V_{x_1,\dots,x_n}$ be a hyperplane spanned by $x_1,\dots,x_n\in X$, and consider
\[
m_i = \#\{x_j : x_j \in F_i\}.
\]
Flats $F_i$ are pairwise disjoint,  so $\sum m_i = n$. Since $\sum \dim F_i \le n-1$, there exists a flat $F_i$ such that $m_i > \dim F_i$. Let $F$ be a flat of codimension $q$, and denote by $\mathcal P_F^{n-1}(X)$ the set of $(n-1)$-planes of $X$ spanned by $X$ containing $F$. It follows that
\[
\mathcal P^{n-1}(X) \subset\bigcup_i \mathcal P^{n-1}_{F_i}(X).
\]
Therefore,
\[
\dim\mathcal P^{n-1}(X) \le \max_{i}(n-(\dim F_i+1)) \le n-2.
\]
Consider the surjective function
\[
\phi:(X\setminus F)^{q-1} \to \bigcup_{j\ge 1}\mathcal P_F^{n-j}(X)
\]
defined by $\phi(x_1,\dots,x_{q-1}) = \langle F,x_1,\dots,x_{q-1}\rangle$. The function $\phi$ is well defined and locally Lipschitz. Therefore, $\dim \mathcal P_F^{n-1}(X) \le (q-1)\dim X$, and $$\mathcal P^{n-1}(X) \le (n-2)\dim X.$$

\subsection{Connections to projection theory}

Given $x\in \R^n$, let $\pi_x : \R^n \setminus \{x\} \to \mathbb{S}^{n-1}$ be the \textit{radial projection onto $x$} given by 
\[
\pi_x(y) = \frac{y-x}{|y-x|}.
\]
In \cite{orponen2022kaufman}, Orponen, Shmerkin, and Wang proved the following radial projection theorem:
\begin{theorem}[Theorem 1.1, \cite{orponen2022kaufman}]\label{thm:osw}
    Let $X\subset\R^2$ be a (non-empty) Borel set which is not contained on any line. Then, for every Borel set $Y\subset \R^2$,
    \[
    \sup_{x\in X}\dim\pi_x(Y\setminus\{x\})\ge \min\{\dim X,\dim Y, 1\}.
    \]
\end{theorem}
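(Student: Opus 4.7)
My plan is to argue by contradiction: assume there is some $s < \min\{\dim X,\dim Y,1\}$ such that $\dim \pi_x(Y\setminus\{x\}) < s$ for every $x \in X$. First I would reduce to a $\delta$-discretized statement via Frostman measures $\mu$ on $X$ and $\nu$ on $Y$ with exponents slightly below $\dim X$ and $\dim Y$. Under the contradiction hypothesis, for each $x \in X$ there is a cover of $\pi_x(Y\setminus\{x\})$ by roughly $\delta^{-s}$ arcs of length $\delta$, which pulls back to a covering of $Y\setminus\{x\}$ by $\delta^{-s}$ many $\delta$-tubes through $x$.

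Next I would build a bipartite incidence structure between a $\delta$-net of $X$ and the family $\mathcal T$ of tubes obtained by collecting, for every basepoint $x$ in that net, the tubes above. This packages the configuration as an $(s,\dim X)$-Furstenberg-type family: a $(\dim X)$-dimensional set of basepoints, each attached to an $s$-dimensional set of tube directions, with $\nu$-almost every $y\in Y$ lying in many tubes of $\mathcal T$. The strategy is to double count incidences between $\mathcal T$ and a $\delta$-net of $Y$: the lower bound on $|\mathcal I(\mathcal T,Y)|$ comes directly from the contradiction hypothesis applied at every $x$, while the matching upper bound must come from a Furstenberg-type dimension estimate for $\mathcal T$.

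The key ingredient would be a recent improved Furstenberg set bound in the plane, in the spirit of Orponen--Shmerkin and Shmerkin's $L^p$-smoothing arguments, which yields a strict $\epsilon_0$-improvement over the Kaufman--Wolff bound as soon as the set of basepoints is not concentrated on a line. Since $X$ is assumed not to lie on any line, this improvement is available; feeding it into the incidence count forces $\dim Y \le 2s - \epsilon_0$ for some fixed $\epsilon_0 > 0$ depending only on the non-concentration of $X$, contradicting $s < \dim Y$ once $s$ is chosen close enough to $\min\{\dim X,\dim Y,1\}$.

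The main obstacle is precisely this last step: extracting a genuine $\epsilon_0$-improvement in the Furstenberg estimate that is strong enough to beat the $2s$ threshold and uniform enough to survive the discretization. Classical Kaufman-type bounds give only $\dim \pi_x(Y)\ge \tfrac12 \dim Y$ on a typical $x$, which is too weak, so one must invoke the full strength of Shmerkin's projection theorem together with its exceptional-set refinements, and correctly encode the non-concentration of $X$ as input to those theorems. A secondary subtlety will be the standard but delicate pigeonholing across scales needed to translate between Hausdorff-dimensional statements about $\pi_x(Y)$ and the $\delta$-discrete tube inequality used in the incidence argument.
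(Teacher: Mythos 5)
This theorem is quoted from Orponen--Shmerkin--Wang \cite{orponen2022kaufman} and not proved in the present paper, so there is no in-paper proof to compare against; I will assess your sketch on its own terms against the known argument.

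Your high-level intuition is pointed in the right direction: the OSW proof does rest on discretization via Frostman measures, on encoding the contradiction hypothesis as a Furstenberg-type configuration of tubes through the net of $X$, and crucially on an $\epsilon$-improvement over the Kaufman--Wolff threshold whose input is precisely that $X$ is not concentrated on a line (via the Orponen--Shmerkin incidence theorem, not the later ABC theorem, which has the opposite logical dependence). That much of the proposal is sound as orientation.

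The genuine gap is the single double-count you propose to close the argument. Two issues. First, the planar Furstenberg estimate bounds the $\delta$-covering number of the \emph{union} of tubes, not the number of tube--point incidences; converting between the two costs you exactly the kind of $L^2$/high-multiplicity analysis you are trying to avoid, and it is not true in general that a Furstenberg lower bound on $\lvert\bigcup\mathcal T\rvert_\delta$ directly yields an upper bound on $\lvert\mathcal I(\mathcal T,Y)\rvert$ strong enough to force $\dim Y\le 2s-\epsilon_0$. Second, and more structurally: a \emph{single} $\epsilon_0$-improvement cannot bridge the gap between the Kaufman threshold $\min\{s,\dim Y\}/2$ (roughly what a one-shot incidence count gives) and the target $\min\{\dim X,\dim Y,1\}$, since $\epsilon_0$ is a fixed constant depending only on the non-concentration parameters. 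The actual OSW argument is a \emph{multi-scale bootstrap}: one proves that if the pair $(\mu_0,\mu_1)$ has $\sigma$-thin tubes for some $\sigma<\min\{\dim X,\dim Y,1\}$, then the non-concentration of $X$ and the Furstenberg $\epsilon$-gain upgrade this to $(\sigma+\eta)$-thin tubes for a quantitative $\eta>0$, and one iterates this improvement a bounded number of times. Your sketch collapses this iteration into a single contradiction step, which is exactly why you flag, correctly, that the final step is the main obstacle: as stated it does not close, and without the bootstrap mechanism it will not close.
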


Orponen, Shmerkin, and Wang used Theorem \ref{thm:osw} to obtain the sharp continuum Beck-type theorem for lines in the plane. Orponen and Shmerkin \cite{orponen2023ABC} used Theorem \ref{thm:osw} to prove the conjectured Furstenberg set estimates in the plane for AD regular sets. Later, Ren and Wang \cite{ren2023furstenberg} proved the full Furstenberg set conjecture, using the AD regular case in an important way.

In \cite{ren2023discretized}, Ren generalized the Orponen--Shmerkin--Wang's radial projection theorem  to $\R^n$:
\begin{theorem}[Theorem 1.1, \cite{ren2023discretized}]\label{thm:ren}
    Let $X\subset \R^n$ be a (non-empty) Borel set which is not contained in any $m$-plane. Then, for every Borel set $Y\subset \R^n$, 
    \[
    \sup_{x\in X} \dim \pi_x(Y \setminus \{x\}) \geq \min\{\dim X, \dim Y, m\}
    \]
\end{theorem}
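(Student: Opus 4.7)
My plan is to induct on the integer parameter $m$, with Theorem \ref{thm:osw} as the base case. Fix $s < \min\{\dim X, \dim Y, m\}$ and suppose for contradiction that $\sup_{x \in X} \dim \pi_x(Y \setminus \{x\}) < s$. Standard Frostman-measure and pigeonholing reductions upgrade this to a $\de$-discretized statement at a sequence of scales $\de \to 0$: there exist Frostman measures $\mu$ on $X$ and $\nu$ on $Y$ of exponents close to $\dim X$ and $\dim Y$ such that for every $x \in \supp \mu$, the $\de$-covering number of $\pi_x(\supp\nu \setminus B(x,\de))$ in $\mathbb{S}^{n-1}$ is at most $\de^{-s}$. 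The job is to contradict this using the geometric hypothesis that $X$ lies in no $m$-plane.

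For the base case $m=1$, $X$ is not contained in a line, and I would reduce to $\R^2$ via a generic affine $2$-plane $V \subset \R^n$. Standard projection theorems give that, for a positive-measure set of $V$, the image $\pi_V(X)$ is not contained in a line and $\dim \pi_V(X), \dim \pi_V(Y) \geq \min\{\dim X, \dim Y, 2\}$. The key commutation $\pi_{\pi_V(x)} \circ \pi_V = \pi_V \circ \pi_x$, read as an identity of unit directions inside $V$, shows that small radial projections in $\R^n$ push down to small radial projections in $V$, and Theorem \ref{thm:osw} yields the contradiction.

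For the inductive step $m \mapsto m+1$, assume the theorem for parameter $m$ and that $X$ lies in no $(m+1)$-plane. I would select a typical $x_0 \in X$ and study the pushforward $(\pi_{x_0})_*\nu$ on $\mathbb{S}^{n-1}$, whose support has dimension $< s$ by the contradiction hypothesis. After quotienting by the direction through $x_0$, the inductive hypothesis (applied to a positive-dimensional subset of $X$ that is not contained in any $m$-plane through $x_0$, which exists because $X$ lies in no $(m+1)$-plane) provides $m$ units of directional dimension. A transversality and pigeonhole argument over a second basepoint $x_1 \in X$ should promote this to $m+1$ units by recording a direction independent of the information extracted from $x_0$.

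The main obstacle is precisely this last promotion: the combinatorial/Fourier-analytic mechanism needed to convert the absence of an $(m+1)$-plane into a true extra unit of dimension for the combined radial projections. Formalizing it demands a $\de$-discretized higher-dimensional inverse theorem of Shmerkin type, applied multi-scale to the line configuration $\{\overline{xy} : x \in \supp\mu,\ y \in \supp\nu\}$, together with careful control of how tubes through different basepoints cluster on $\mathbb{S}^{n-1}$. The base case reduction and the Frostman discretization are standard, but bootstrapping the tube-clustering information across scales in $\R^n$ is delicate and constitutes the technical heart of Ren's argument.
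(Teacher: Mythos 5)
This statement is Theorem~1.1 of Ren~\cite{ren2023discretized}; the present paper only cites it and does not reprove it (it instead invokes the discretized version, restated here as Theorem~\ref{thm:ren-thin-tubes}, as a black box). So there is no in-paper proof to compare against, and your attempt must be judged on its own.

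As a proof, the proposal has a genuine gap, and it is exactly the one you identify. The claim that ``a transversality and pigeonhole argument over a second basepoint $x_1$ should promote this to $m+1$ units by recording a direction independent of the information extracted from $x_0$'' is not an argument; it is a placeholder for the nontrivial incidence/additive-combinatorics input (a $\de$-discretized, higher-dimensional inverse theorem in the spirit of Shmerkin, together with a multiscale analysis of how tubes through distinct basepoints cluster on $\mathbb{S}^{n-1}$). Without that input the inductive step does not close, so the outline establishes at most a plan for a proof, not a proof. In addition, two smaller points in the sketched base case deserve care: the ``commutation'' $\pi_{\pi_V(x)}\circ\pi_V = \pi_V\circ\pi_x$ is only an identity of directions after renormalization, and it degenerates when $y-x$ is nearly orthogonal to $V$; one must quantify this over a positive-measure family of planes $V$ to prevent loss of dimension. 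Also, applying Theorem~\ref{thm:osw} in $V$ requires that $\pi_V(X)$ has the correct Hausdorff dimension \emph{and} is not contained in a line for the chosen $V$; the ``positive measure set of $V$'' claim is plausible but should be substantiated (e.g., via Mattila-type exceptional set estimates for projections). The overall induction-on-$m$ skeleton is plausible and resembles the structure of known radial projection arguments, but the missing promotion step is the technical heart of Ren's theorem and must be supplied for the argument to stand.
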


Theorem \ref{thm:ren} was used to improve the best upper bounds for the Falconer distance set problem in $\R^n$ for $n\geq 3$ \cite{DuOuRenZhang}.

In our paper, we use discretized versions of Theorems \ref{thm:osw} and \ref{thm:ren} in an inductive scheme to prove a more quantitative version of Theorem \ref{thm:main} (see Theorem \ref{thm:general-case}) which we use to deduce Theorem \ref{thm:main}. The structure of our proof, and the construction of \emph{thin-planes graphs} is geometrically motivated by the notion of \emph{thin-tubes} introduced by Shmerkin--Wang \cite{ShmerkinWang} and developed by Orponen--Shmerkin--Wang \cite{orponen2022kaufman} and Ren \cite{ren2023discretized}.

\subsection{Overview of the proof}

Let $X\subset \R^n$ be Borel and \textbf{NC}, and let  $\mu$ be a $s$-Frostman measure supported on $X$ with $0 < s < \dim X$.

Our approach to this problem is partly inspired by the work of Do \cite{do2018extending}, who refined the geometric non-concentration condition in Beck's dichotomy. Our strategy is to begin by identifying \emph{irreducible} pieces of the measure $\mu$. 

\begin{definition}
    We say that a measure $\mu$ supported on an affine subspace $V\subset \R^n$ is \emph{irreducible in $V$} if $\mu(V') = 0$ for any proper affine subspace $V'\subset V$.
\end{definition}

\textbf{Case 1.} If $\mu$ is irreducible on $\R^n$, we will have a direct argument that $\mathcal P^{n-1}(X)$ has the correct dimension using Ren's radial projection theorem \cite{ren2023discretized}. In particular, we prove the following.
\begin{lemma}\label{lem:introirreducible}
    Let $X \subset \R^n$ be Borel with $\dim X>0$, and let $0 < s < \dim X$. Given an irreducible (in $\R^n$) $s$-Frostman measure $\mu$ supported on $X$, it follows that
    \[
    \dim \mathcal P^{n-1}(X) \geq n \min\{s,1\}.
    \]
\end{lemma}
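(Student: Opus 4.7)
My plan is to argue by induction on the ambient dimension $n$. The base case $n = 2$ is the planar continuum Beck theorem for lines due to Orponen--Shmerkin--Wang, which was derived from Theorem \ref{thm:osw} in \cite{orponen2022kaufman}: for an irreducible $s$-Frostman $\mu$ on $X \subset \R^2$, $\dim \mathcal{P}^1(X) \geq 2\min\{s,1\}$. One may restrict attention throughout to $0 < s \leq 1$, as the case $\min\{s,1\} = 1$ is obtained by taking $s \uparrow 1$ in the Frostman exponent.

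For the inductive step, the first task is to produce a good base point. Since $\mu$ is irreducible, $X = \supp(\mu)$ is not contained in any $(n-1)$-plane, so Theorem \ref{thm:ren} applied with $m = n-1$ and $Y = X$ yields a point $x_1 \in X$ such that $\dim \pi_{x_1}(X \setminus \{x_1\}) \geq s$. Setting $\Theta := \pi_{x_1}(X \setminus \{x_1\}) \subset \mathbb{S}^{n-1}$, I would verify that $\Theta$ inherits non-concentration: if it were contained in some great $(n-2)$-sphere of $\mathbb{S}^{n-1}$, then $X \setminus \{x_1\}$ would lie in the corresponding hyperplane of $\R^n$ through $x_1$, contradicting the irreducibility of $\mu$. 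Passing to an affine chart on $\mathbb{S}^{n-1}$ via the gnomonic projection (which sends great subspheres to affine hyperplanes of $\R^{n-1}$) and invoking the inductive hypothesis, the family of great $(n-2)$-spheres spanned by $\Theta$ has Hausdorff dimension at least $(n-1)s$. The radial correspondence through $x_1$ converts these great subspheres into hyperplanes of $\R^n$ through $x_1$ spanned by $n$ affinely independent points of $X$, producing a $(n-1)s$-dimensional family of hyperplanes in $\mathcal{P}^{n-1}(X)$ containing $x_1$.

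The remaining step, which I expect to be the main obstacle, is to upgrade this bound from $(n-1)s$ (hyperplanes through a single $x_1$) to the sharp bound $ns$ on all of $\mathcal{P}^{n-1}(X)$ by varying $x_1$. A natural approach is to introduce the incidence set $E := \{(x, H) \in X \times \mathcal{P}^{n-1}(X) : x \in H\}$: applying the fiber-over-$x$ bound above for $\mu$-typical $x$ — which calls for a $\mu$-a.e. strengthening of Ren's theorem, not just the existential statement in Theorem \ref{thm:ren} — gives $\dim E \geq \dim X + (n-1)s \geq ns$. Projecting $E$ onto its $H$-component, the fibers are intersections $X \cap H$, so one must control $\sup_{H \in \mathcal{P}^{n-1}(X)} \dim (X \cap H)$. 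For $s \leq 1$, Marstrand's slicing theorem yields $\dim(X \cap H) = 0$ for Lebesgue-a.e. hyperplane in the Grassmannian, but the set $\mathcal{P}^{n-1}(X)$ may be exceptional for this ambient measure. Resolving this will likely require an energy-based argument on the pushforward measure $\Phi_* \mu^n$ with $\Phi(x_1,\dots,x_n) = \aff(x_1,\dots,x_n)$, or a quantitative Frostman-type bound on the $\mu$-mass of $r$-neighborhoods of hyperplanes derived from the interplay between irreducibility and the $s$-Frostman property, in analogy with the thin-tubes framework of \cite{ShmerkinWang, orponen2022kaufman, ren2023discretized}.
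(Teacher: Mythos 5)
Your broad scheme---induction on the ambient dimension, projecting radially from a well-chosen point, and appealing to the planar case---is the same geometric idea driving the paper's argument, but the proposal stops exactly at the two points where the hard work lies, and you acknowledge the second yourself. Both gaps are real, and the paper's resolution is not what the proposal sketches.

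First, to apply the inductive hypothesis on the spherical image $\Theta = \pi_{x_1}(X\setminus\{x_1\})$ you need an irreducible $s$-Frostman \emph{measure} on it, not just a dimension bound for the set. Theorem \ref{thm:ren} only gives $\dim\Theta \geq s$; the pushforward $\pi_{x_1}\mu$ is in general not $s$-Frostman, and if instead you extract a Frostman measure on $\Theta$ via Frostman's lemma, you sever its connection to $\mu$, which you need in order to interpret hyperplanes spanned by $\Theta$ as hyperplanes spanned by $X$. The paper avoids this by inducting on the discretized statement (Theorem \ref{theorem:section4main}) about $(w,\tau)$-irreducible $(C,s)$-Frostman measures in good position: the $k=1$ base case (thin tubes, Theorem \ref{thm:renprojcorollary}) is used precisely to guarantee that the projected measures $\pi_{x_i}(\mu_j|_{G_{ij}|_{x_i}})$ are genuinely $\sigma$-Frostman for most base points $x_i$, and the discretized irreducibility parameter $(w,\tau)$ is checked to survive the projection. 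This quantitative bookkeeping is what makes the induction close.

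Second, the upgrade from $(n-1)s$ (hyperplanes through one $x_1$) to $ns$ is the crux of the theorem, and the incidence-set/slicing route does not close it. Marstrand's slicing result governs Lebesgue-a.e.\ hyperplane, whereas $\mathcal P^{n-1}(X)$ is a null family that can contain hyperplanes $H$ with $\dim(X\cap H)$ large; irreducibility of $\mu$ only forces $\mu(H)=0$, not $\dim(X\cap H)=0$. And even if every fiber of $E\to\mathcal P^{n-1}(X)$ were zero-dimensional, that alone does not give $\dim\mathcal P^{n-1}(X)\geq\dim E$ without extra regularity. The paper does not attempt a slicing argument at all. Instead it partitions $\mu$ into $n$ measures $\mu_0,\dots,\mu_{n-1}$ in good position (Lemma \ref{lem:partitioning-measure}) and shows they span $\sigma$-thin $(n-1)$-planes: there is a graph $G$ of nearly full measure such that $\mu_j(V_{x_0,\dots,x_{n-1}}(\delta))\leq K\delta^\sigma$ for every $j=0,\dots,n-1$ and every tuple in $G$. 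Lemma \ref{lem:thin-planes-implies-pushforward-frostman} then multiplies these $n$ bounds to produce an $n\sigma$-Frostman measure on $\mathcal A(n,n-1)$ supported in $\mathcal P^{n-1}(X)$; the factor $n$ comes from the product structure of the graph, not from a co-area count over base points. This is the ``thin-tubes framework'' you name in your final sentence; developing it is the content of Section \ref{sec:irreducible implies saturated}, and the lemma does not follow without it.
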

\noindent In particular, if there exists such a $\mu$ for all $0 < s < \dim X$, then sending $s \nearrow \dim X$ proves our main result. We refer to this as the \emph{irreducible case} of our continuum Beck-type theorem.

\textbf{Case 2.} If $V$ is a \emph{proper} affine subspace in $\R^n$ such that $\mu|_V$ is irreducible on $V$, it follows by Lemma \ref{lem:introirreducible} that $X\cap V$ spans a set of hyperplanes in $V$ of the correct dimension. (If $\dim V = 1$, we mean that $X\cap V$ has the appropriate dimension.) 

Since $X$ is \textbf{NC}, we know that $\dim (X\setminus V) = \dim X$. Thus, we can iterate through the two cases described above for the set $X\setminus V$. This gives us an algorithmic method for finding a collection of affine subspaces supporting irreducible measures, which we record in the following lemma.

\begin{lemma}\label{lem:irreduciblemeasures}
    Let $X \subset \R^n$ be Borel and $\mathbf{NC}$ with $\dim X > 0$. Then for any $0<s< \dim X$, there exist a collection of affine subspaces $V_1, \dotsc, V_m \subset \R^n$ and $s$-Frostman measures $\mu_i$ supported on $X \cap V_i$, $i = 1,\dots,m$, such that
    \begin{itemize}
        \item[(i)] Each $\mu_i$ is irreducible in $V_i$ and $\mu_i(X) > 0$.
        \item[(ii)] The supports $\supp\mu_i$ are pairwise disjoint.
        \item[(iii)] For any affine subspaces $F_1, \dotsc, F_r$ such that $\bigcup_{i=1}^m V_i \subset \bigcup_{j=1}^r F_j$ we have $\sum_{j=1}^r \dim F_j \ge n$.
    \end{itemize}
\end{lemma}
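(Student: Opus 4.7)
The plan is to iteratively construct the collection by a greedy procedure alternating Frostman's lemma with the \textbf{NC} hypothesis. The core primitive, which I will call \emph{extraction}, takes as input an $s$-Frostman measure $\nu$ on $X$ with positive mass and outputs a pair $(V, \mu_V)$ as follows. Let $d = \min\{\dim V : V \subset \R^n \text{ affine},\, \nu(V) > 0\}$; this minimum is well defined in $\{0,1,\dots,n\}$ because $\nu(\R^n) > 0$. Pick an affine subspace $V$ of dimension $d$ with $\nu(V) > 0$, and set $\mu_V := \nu|_V$. Then $\mu_V$ is $s$-Frostman (as a restriction of $\nu$), has positive mass, and is irreducible in $V$: any proper affine $V' \subsetneq V$ has $\dim V' < d$, so $\mu_V(V') \le \nu(V') = 0$ by minimality of $d$.

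Since $s < \dim X$, Frostman's lemma supplies an initial $s$-Frostman measure $\mu^{(1)}$ on $X$, and extraction yields $(V_1, \mu_1)$. Inductively, suppose $(V_1, \mu_1), \dots, (V_k, \mu_k)$ have been produced with properties (i) and (ii). If (iii) already holds, stop. Otherwise, there is a cover $F_1, \dots, F_r$ of $\bigcup_i V_i$ with $\sum_j \dim F_j \le n-1$. By the \textbf{NC} hypothesis, $\dim(X \setminus \bigcup_j F_j) = \dim X > s$, so Frostman's lemma produces an $s$-Frostman measure $\mu^{(k+1)}$ supported on $X \setminus \bigcup_j F_j$; applying extraction to $\mu^{(k+1)}$ yields $(V_{k+1}, \mu_{k+1})$. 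Condition (ii) is preserved because $\supp \mu_{k+1} \subset V_{k+1} \cap (X \setminus \bigcup_j F_j)$, while $\supp \mu_i \subset V_i \subset \bigcup_j F_j$ for each $i \le k$.

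The main obstacle is to show that this procedure terminates in finitely many steps. The natural potential is $\tau(V_1, \dots, V_k) := \min \sum_j \dim F_j$, where $\{F_j\}$ ranges over all finite covers of $\bigcup_i V_i$ by affine subspaces; then $\tau \le n$ always (using $\{\R^n\}$), and condition (iii) is equivalent to $\tau = n$. A useful structural observation is that every optimal cover of $V_1, \dots, V_k$ has the form $\{\aff(\bigcup_{i \in I_j} V_i)\}_j$ for some partition $\{I_j\}$ of $\{1, \dots, k\}$, so there are only finitely many optimal covers at each step. By choosing the cover $\{F_j\}$ used in the \textbf{NC} step carefully — for instance, an optimal cover whose union is inclusion-maximal among optimal covers — one can force the freshly extracted $V_{k+1}$ to lie outside every piece of every optimal cover of $V_1, \dots, V_k$, so that $\tau_{k+1} > \tau_k$. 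Since $\tau \le n$, the process halts after at most $n$ iterations, producing a collection with all three required properties.
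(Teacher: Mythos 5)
Your construction is exactly the paper's: extract an affine flat of minimal dimension carrying positive mass (this is precisely how the paper builds $V_{m+1}$ and $\mu_{m+1}$), and repeat using the $\mathbf{NC}$ hypothesis to place the next measure outside a minimum-cost cover of the flats found so far. The structural observation that every optimal cover corresponds to a partition of $[k]$ is also the paper's framework. The gap is entirely in the termination argument.

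You claim that by choosing an optimal cover ``whose union is inclusion-maximal among optimal covers,'' the new flat $V_{k+1}$ will lie outside every piece of \emph{every} optimal cover, so that $\tau_{k+1} > \tau_k$. This does not follow from what you wrote: inclusion-\emph{maximal} is not inclusion-\emph{maximum}, and a priori there could be two optimal covers with incomparable unions; if $V_{k+1}$ is extracted outside one of them it could still lie inside a piece of the other, in which case that other partition (with $V_{k+1}$ absorbed into the containing piece) certifies $\tau_{k+1} = \tau_k$. To repair your argument you would need to show that a unique coarsest minimizing partition exists, i.e., that the set of minimizing partitions is closed under join in the partition lattice. This is in fact true: writing $g(I) = \dim\big(\sum_{i\in I}\overline{V_i}\big)$ for the linearized span, the dimension formula gives $g(I\cup J) + g(I\cap J) \le g(I) + g(J)$, and submodularity implies $h(\pi\vee\pi') + h(\pi\wedge\pi') \le h(\pi) + h(\pi')$ for $h(\pi)=\sum_{I\in\pi}(g(I)-1)$, whence the join of two minimizers is a minimizer. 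But this Dilworth-truncation type lemma is not obvious, and you neither state nor prove it --- you simply assert ``one can force $\ldots$ so that $\tau_{k+1} > \tau_k$.'' The paper deliberately avoids this subtlety with a more robust argument: when the cost $c(\mathcal{V}_m)$ stays constant, it shows the \emph{number} $N(\mathcal{V}_m)$ of minimizing partitions strictly decreases (Claim~\ref{claim:extensions}), which yields termination without needing to identify a canonical coarsest minimizer.
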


Let $n_i = \dim V_i$. Notice then that applying Lemma \ref{lem:introirreducible} to each $V_i$, we see that $\dim \mathcal P^{n_i-1}(X\cap V_i) \geq n_i\min\{s,1\}$ for any $0 <s < \dim X$. Again, recall here that $\mathcal P^0(X\cap V_i)= X\cap V_i$ in the case where $n_i = 1$. From here, we will ``patch together'' codimension 1-planes from each $V_i$ to build hyperplanes in $\R^n$. 

The guiding idea is to choose $n_i$ affinely independent points from each $X\cap V_i$ so that their union spans a hyperplane in $\R^n$. In practice, not every such selection works: the chosen points may span a space of dimension smaller than $n-1$, or they may already fill all of $\R^n$ if $\sum_{i=1}^m n_i \ge n.$ To avoid these issues, we will take care to ensure that our collection of measures $\mu_1,\dots,\mu_r$ is in what we call \emph{$c$-stable position}, so that, for example, the angle between any two distinct flats $V_i,V_j$ is bounded below by $\theta(c)>0$, and (say) the dimension of intersection $\dim(V_i\cap V_j)$ is what one would expect under generic conditions. The use of $c$-stable position will facilitate the quantitative argument of Section \ref{sec:minimal-case}.

\subsection{Paper outline}

We begin by making use of the \textbf{NC} condition on $X$, and in particular prove Lemma \ref{lem:irreduciblemeasures} in Section \ref{sec:irreducibledecomp}, which allows us to find a collection of affine subspaces $V_i$ and corresponding irreducible $s$-Frostman measures $\mu_i$ for any $0 < s < \dim X$. The setup is similar to that in the radial projection work of Orponen--Shmerkin--Wang \cite{orponen2022kaufman} and Ren \cite{ren2023discretized}, which we unpack in Section \ref{sec:thinplanesprelim}. We give background on the notion of \emph{thin tubes}, and describe a generalization we refer to as \emph{thin $k$-planes}. We prove Lemma \ref{lem:introirreducible} in Section \ref{sec:qualititative-irreducible-beck} by showing that irreducible measures span thin hyperplanes. 

Next, we ``patch together'' the measures on each $V_i$ to form hyperplanes in $\R^n$. Some computations will be justified through the use of the dimension-sum formula of linear algebra: for linear subspaces $V,W\subset \R^n$,
\[
\dim(V+W) = \dim V + \dim W - \dim V\cap W.
\]
The assumption that our collection of measures $\{\mu_i\}$ are in $c$-stable position in $\{V_i\}$ will guarantee that various ``joins'' $V+W$ and ``meets'' $V\cap W$ have the expected dimension. The reduction to $c$-stable position is carried out in Section \ref{sec:stable-position}.

Furthermore, we will be able to assume that our collection of flats is \textit{minimal} in the following sense.

\begin{definition}\label{def:minimal-flats}
Let $F_1,\dots, F_k$ be a collection of affine flats in $\R^n$. Furthermore, let $[k] = \{1,\dots, k\}$, and given $J \subset [k]$ let 
\[
F_J := \mathrm{aff}(F_J : j \in J),
\]
the smallest dimensional affine flat containing $F_j$ for all $j \in J$. We say that a collection of affine flats $F_1, \ldots, F_k$ is \emph{minimal} if $\dim F_{[k]} = n \le \sum_{j=1}^k \dim F_j$ and $\dim F_J \ge \sum_{j\in J} \dim F_j$ for any proper $J \subsetneq [k]$.    
\end{definition}

The reduction to flats that are minimal is done in Section \ref{sec:reduction-to-minimal}. The main work that remains is dealing with this minimal case, which is the entirety of Section \ref{sec:minimal-case}.

\subsection{Notation}

We denote by $\aff(x_0,\dots,x_k)$ or $\langle x_0,\dots,x_k\rangle$ the (at most) $k$-plane spanned by points $x_0,\dots,x_k\in \R^n$.

For $k \le n-1$, we denote by
\[
\mathcal S_k = \{(x_0,\dots,x_k)\in (\R^n)^{k+1}:x_0,\dots,x_k\ \text{belong to a common $(k-1)$-plane}\}.
\]
By $\mathcal A(V,k)$ we denote the set of affine $k$-dimensional subspaces of $V$, and $\mathcal A(n,k) = \mathcal A(\R^n,k)$, with similar notation $\mathcal G(V,k), \mathcal G(n,k)$ for linear $k$-dimensional subspaces.

\bigskip
\begin{sloppypar}
\noindent {\bf Acknowledgments.} The first author would like to thank Larry Guth, Pablo Shmerkin, and Josh Zahl for supportive discussions during the duration of this project. The first author was supported by the MathWorks fellowship at MIT.
\end{sloppypar}

\section{Decomposing into irreducible pieces}
\label{sec:irreducibledecomp}

In this section, we prove Lemma \ref{lem:irreduciblemeasures}, which we restate here.

\begin{lemma}\label{lem:decomposition}
    Let $X \subset \R^n$ be Borel and $\mathbf{NC}$ with $\dim X > 0$. Then for any $0<s< \dim X$, there exist a collection of affine subspaces $V_1, \dotsc, V_m \subset \R^n$ and $s$-Frostman measures $\mu_i$ supported on $X \cap V_i$, $i = 1,\dots,m$, such that
    \begin{itemize}
        \item[(i)] Each $\mu_i$ is irreducible in $V_i$ and $\mu_i(X) > 0$.
        \item[(ii)] The supports $\supp\mu_i$ are pairwise disjoint.
        \item[(iii)] For any affine subspaces $F_1, \dotsc, F_r$ such that $\bigcup_{i=1}^m V_i \subset \bigcup_{j=1}^r F_j$ we have $\sum_{j=1}^r \dim F_j \ge n$.
    \end{itemize}
\end{lemma}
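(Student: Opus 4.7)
The plan is to construct the collection $(V_1,\mu_1), \dots, (V_m,\mu_m)$ iteratively via a greedy algorithm whose invariant is the strict growth of the \emph{minimum cover dimension} of $\bigcup_j V_j$ at each step, forcing termination in at most $n$ steps.

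\textbf{Greedy construction.} By Frostman's lemma, fix a compactly supported $s$-Frostman probability measure $\mu$ on $X$. Let $V_1$ be an affine subspace of minimum dimension with $\mu(V_1)>0$ (such a $V_1$ exists since affine dimension is integer-valued in $\{0,\dots,n\}$), and set $\mu_1 := \mu|_{V_1}/\mu(V_1)$. Then $\mu_1$ is $s$-Frostman and irreducible in $V_1$: any proper $V' \subsetneq V_1$ has strictly smaller dimension, so $\mu(V')=0$ by minimality. Inductively, given $(V_j,\mu_j)$ for $j<i$ satisfying (i) and (ii), if (iii) holds we stop. Otherwise, there is a cover $\{F_1,\dots,F_r\}$ of $\bigcup_{j<i} V_j$ with $\sum_l \dim F_l \le n-1$; select such a cover with care (see below), apply the NC hypothesis to conclude $\dim(X \setminus \bigcup_l F_l) = \dim X > s$, and apply Frostman's lemma to produce an $s$-Frostman measure $\nu$ supported there. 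Let $V_i$ be an affine subspace of minimum dimension with $\nu(V_i)>0$, and set $\mu_i := \nu|_{V_i}/\nu(V_i)$. Then $\mu_i$ is $s$-Frostman and irreducible in $V_i$, with $\supp\mu_i \subset V_i \setminus \bigcup_l F_l \subset V_i \setminus \bigcup_{j<i} V_j$, preserving pairwise disjointness.

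\textbf{Partition-based reduction.} The key structural observation is that any cover $\{F_1,\dots,F_r\}$ of $\bigcup_{j<i} V_j$ can be replaced, without increasing total dimension, by a \emph{partition-based cover} $\{F_{J_1},\dots,F_{J_p}\}$, where $\{J_l\}$ partitions $\{1,\dots,i-1\}$ and $F_J := \aff(V_j : j\in J)$. Indeed, an affine subspace $V_j$ contained in a finite union $\bigcup F_l$ must lie entirely in a single $F_l$ (an affine subspace cannot be written as a finite union of proper affine subsets); grouping the $V_j$'s accordingly and replacing each $F_l$ by the corresponding $F_{J_l} \subset F_l$ cannot increase total dimension. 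Thus $k_{i-1} := \min\{\sum_l \dim F_l : \bigcup_{j<i} V_j \subset \bigcup F_l\}$ is realized among the finitely many partition-based covers. Among those of total dimension $k_{i-1}$, I choose $F^{(i)}$ corresponding to the \emph{coarsest} such partition; coarsening enlarges $\bigcup_l F_{J_l}$ (since $F_J \supset F_{J'}$ whenever $J \supset J'$), so this yields the maximal union among minimum partition-based covers.

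\textbf{Termination and main obstacle.} To show $k_i > k_{i-1}$, suppose toward contradiction that a partition $P^*$ of $\{1,\dots,i\}$ has total dimension $k_{i-1}$, and let $J^* \ni i$ be the class of $P^*$ containing $i$. If $J^*=\{i\}$, removing this class gives a partition of $\{1,\dots,i-1\}$ of total dimension $k_{i-1}-\dim V_i < k_{i-1}$, contradicting minimality. So $J^{**} := J^* \setminus \{i\}$ is non-empty, and replacing $J^*$ by $J^{**}$ gives a partition of $\{1,\dots,i-1\}$ with total dimension $k_{i-1} - \dim F_{J^*} + \dim F_{J^{**}} \le k_{i-1}$, hence exactly $k_{i-1}$, which forces $\dim F_{J^*} = \dim F_{J^{**}}$ and therefore $V_i \subset F_{J^{**}}$. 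Since this new partition is a minimum partition of $\{1,\dots,i-1\}$, it refines $P^{(i)}$, so $F_{J^{**}} \subset F^{(i)}_l$ for some $l$; thus $V_i \subset \bigcup_l F^{(i)}_l$, contradicting $V_i$'s construction (as $V_i$ meets $\supp\nu \subset \R^n \setminus \bigcup_l F^{(i)}_l$). The main technical obstacle is establishing the existence of the coarsest minimum partition, equivalently that the set of minimum partitions is closed under joins in the partition lattice; this reduces to a submodular-type inequality for the dimension function $J \mapsto \dim \aff(V_j : j\in J)$ under partition joins. Once this is in place, $k_i$ strictly increases from its initial positive value up to $n$, so the algorithm halts in at most $n$ steps with (iii) satisfied.
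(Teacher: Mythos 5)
You take the same greedy framework as the paper — pick an affine $V_i$ of minimal dimension supporting positive $\nu$-measure, reduce covers to partition-based ones, and track the quantity $k_i := \min_{\pi}\sum_{J\in\pi}\dim F_J$ — but your termination argument is genuinely different. The paper does not show that $k_i$ increases strictly; it shows $k_{i+1}\ge k_i$ and rules out stalling by tracking the \emph{number} of cost-minimizing partitions $N(\mathcal V_i)$, proving this strictly decreases whenever the cost stagnates (any extension of a non-minimizing partition stays non-minimizing; each minimizing partition has at most one minimizing extension by a one-step dimension-sum argument; and the specific partition used to define $V_{i+1}$ has none). You instead claim $k_i$ strictly increases, which would be a stronger statement.

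The gap you flag yourself — the existence of a coarsest minimizing partition, equivalently that the set of cost-minimizing partitions is closed under joins in the partition lattice — is a genuine gap, and your argument does not go through without it. Your contradiction is: the minimizing partition of $[i-1]$ obtained by deleting $i$ from its block must \emph{refine} the coarsest one $P^{(i)}$ used to construct $V_i$, hence $F_{J^{**}}$ sits inside one of the avoided flats. That refinement step is precisely what fails without closure under joins. The needed fact amounts to the inequality $c(\pi_1\vee\pi_2)+c(\pi_1\wedge\pi_2)\le c(\pi_1)+c(\pi_2)$ for the cost functional, which does hold here: the linearization $J\mapsto\dim\bigl(\sum_{j\in J}\overline{V_j}\bigr)$ is a polymatroid rank function, hence submodular, and combining this with upper-semimodularity of the partition lattice (so that $\lvert\pi_1\vee\pi_2\rvert+\lvert\pi_1\wedge\pi_2\rvert\ge\lvert\pi_1\rvert+\lvert\pi_2\rvert$) gives the claim. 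But establishing the first inequality over general partitions (not just the three-element case, where it reduces to a single instance of submodularity) requires an induction or an appeal to principal-partition theory for submodular functions — substantive work that the proposal omits. Until that lemma is supplied, the proof is incomplete; once supplied, your route is valid and arguably cleaner, whereas the paper's lexicographic $(k,N)$ argument is more elementary and self-contained.
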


\noindent Recall that we say that a Borel set $X \subset \R^n$ is \textbf{NC} if 
$$
\dim \Big(X \setminus \bigcup_{i=1}^j F_i\Big) = \dim X,
$$
for any affine flats $F_1, \dotsc, F_j$ with $\sum_{i=1}^j \dim F_i \le n-1$. Additionally, recall that we say that a measure $\mu$ supported on an affine subspace $V \subset \R^n$ is {\em irreducible in $V$} if $\mu(V') = 0$ for any proper affine subspace $V' \subset V$. 

\begin{proof}[Proof of Lemma \ref{lem:decomposition}]
Let $0 < s < \dim X$. We say that a flat $V$ is \emph{large} if there exists an irreducible (in $V$) $s$-Frostman measure $\mu$ on $V \cap X$ with $\mu(X) > 0$.

For a collection of flats $\mc V = \{V_1, \dotsc, V_m\}$ and a partition $\{I_1, \dots ,I_j\}$ of $[m]$, define
$$
c(\mc V; I_1, \dotsc, I_j) = \sum_{i=1}^j \dim \aff(V_t: t\in I_i),
$$
where $\aff(E_i:i\in I)$ means the affine span of a collection of flats, i.e., the smallest affine subspace containing all the sets $E_i$ for $i \in I$. Let us define the \emph{cost} of a set of $m$ flats to be the minimal dimension sum as we vary partitions of $[m]$:
\begin{equation}\label{eqn:cost}
c(\mc V) = \min_{\{I_1, \dotsc, I_j\}} c(\mc V; I_1, \dotsc, I_j),
\end{equation}
where the minimum is taken over all partitions of $[m]$. If a partition $\{I_1,\dots,I_j\}$ of $[m]$ achieves the cost of $\mathcal V$, then we say that this partition is \emph{minimizing}.

We will construct a family of large flats $\mc V$ and corresponding measures $\mu_i$ inductively, using the cost to ensure that our inductive construction eventually stops. Let $\mc V_0 = \emptyset$ (in this case we have $c(\mc V_0) = 0$). Given a collection of $m$-many large flats $\mc V_m = \{V_1,\dots, V_m\}$ and corresponding measures $\mu_1,\dots,\mu_m$ (in case $m = 0$, we do not define $\mu_0$), if $c(\mc V_m) \ge n$ then stop. Otherwise, we define $\mc V_{m+1}$ as follows. 

By assumption, we have $c(\mc V_m) = c \leq n-1$ and there exists some partition $\{I_1, \dotsc, I_j\}$ of $[m]$ such that $c(\mc V_m;I_1, \dotsc, I_j) = c$.  Define $F_i = \aff(V_t: t\in I_i)$ and 
note that given $c\leq n-1$ and $X$ is $\mathbf{NC}$, we have
$$
\dim \left(X \setminus \bigcup_{i=1}^j F_i\right) = \dim X.
$$
By Frostman's lemma, there exists an $s$-Frostman probability measure $\mu$ with
\begin{equation}\label{eqn:disjointsupport}
\supp\mu \subset X \setminus \bigcup_{i=1}^jF_i.
\end{equation}
By shrinking the support of $\mu$ if necessary, we can ensure that $\supp\mu$ is separated from $\bigcup_{i=1}^jF_i$. Let $V_{m+1} \subset \R^n$ be a subspace of minimal dimension such that $\mu(V_{m+1}) > 0$. Note that since $\mu(\mathbb R^n) = 1$, such a minimal subspace necessarily exists. Then it is clear that $V_{m+1}$ is large and $\mu|_{V_{m+1}}:= \mu_{m+1}$ is the corresponding irreducible Frostman measure on $V_{m+1}$. Note that $\mu_{m+1}$ has pairwise disjoint support from any previously constructed measures $\mu_1,\dots, \mu_m$ obtained in this way by \eqref{eqn:disjointsupport}. Let $\mc V_{m+1} = \mc V_m \cup \{V_{m+1}\}$ and proceed inductively.

We claim that this process eventually stops, i.e., eventually $c(\mc V_m) \geq n$. Note that $c(\mc V_{m+1}) \ge c(\mc V_{m})$ by construction. Indeed, given a partition $\{I'_1, \dotsc, I'_{j}\}$ of $[m+1]$ we can define a partition of $[m]$ by setting $I_i = I'_i \setminus \{m+1\}$ and noting that 
$$
c(\mc V_m; I_1, \dotsc, I_j) \le c(\mc V_{m+1}; I'_1, \dotsc, I'_j).
$$
Now we need to rule out that we ``get stuck'' on some fixed value $c(\mc V_m) = c \leq n-1$. To do so, define
$$
N(\mc V_m) = \#\{\text{partitions $\pi$ of $[m]$}: c(\mathcal V_m;\pi) = c(\mc V_m)\},
$$
i.e., the number of partitions of $[m]$ achieving the cost of $\mathcal V_m$. Note that this number is finite, and furthermore, by definition of $c(\mc{V}_m)$, $N(\mc{V}_m) \geq 1$ for all $m \ge 1$.

Suppose for the sake of contradiction that we do ``get stuck'' on some value of $c\leq n-1$, i.e., the algorithmic process has given us a countably infinite set of collections $\{\mc{V}_i\}_{i\geq m}$ such that $c(\mc{V}_i) = c$ for all $i\geq m$. We then show that $N(\mc{V}_i)$ is a strictly decreasing sequence, i.e.,
\[
N(\mc{V}_{i+1}) < N(\mc{V}_i),\quad \forall i \geq m.
\]
This will give a contradiction, as $N(\mc{V}_m)$ is finite, and for all $i\geq 1$, $N(\mc{V}_i) \geq 1$.

Fix an arbitrary $i\geq m$. Firstly, notice that given an arbitrary partition $\{I_1,\dots, I_j\}$ of $[i]$, we may consider the partition of $[i+1]$ given by $\{I_1, \dots , I_j, \{i+1\}\}$. For such a partition, we have
\begin{align*}
c(\mc{V}_{i+1}; I_1,\dots, I_j, \{i+1\}) &= c(\mc{V}_i;I_1,\dots, I_j) + \dim  V_{i+1}\\
&\ge c(\mathcal V_i) + 1 > c.
\end{align*}
In other words, any partition of $[i+1]$ which contains the set $\{i+1\}$ does \emph{not} minimize the cost of $\mathcal V_{i+1}$, and hence does not contribute to $N(\mc{V}_{i+1})$.

Furthermore, suppose that $\{I_1,\dots,I_j\}$ is a partition of $[i]$ such that 
\[
c(\mc{V}_i;I_1,\dots, I_j) > c.
\]
Suppose that we add the element $i+1$ to one of the sets $I_\ell$ for some $1\leq \ell \leq j$. We will call this an \emph{extension} of the partition $\{I_1,\dots, I_j\}$. Then, it follows that 
\[
c(\mc{V}_{i+1}; I_1,\dots, I_\ell \cup \{i+1\},\dots , I_j) \geq c(\mathcal V_{i}; I_1,\dots, I_\ell,\dots,I_j)>c.
\]
Hence, any extension of a non-minimizing partition of $\mc{V}_i$ is non-minimizing for the collection $\mc{V}_{i+1}$, so it cannot contribute to $N(\mathcal V_{i+1})$ either.

Therefore, the only partitions which contribute to $N(\mathcal V_{i+1})$ are those which are extensions of partitions which achieve the cost of $\mathcal V_i$.
Thus, the proof that $N(\mc{V}_{i+1}) < N(\mc{V}_i)$ reduces to the following claim.

\begin{claim}\label{claim:extensions}
\noindent
\begin{enumerate}[label=\alph*), leftmargin=2em]
\item For any partition $\{I_1, \dotsc, I_j\}$ of $[i]$ such that $c(\mc V_i; I_1, \dotsc, I_j) = c$, there exists at most one extension of this partition to a partition $\{I_1',\dots, I_j'\}$ of $[i+1]$ with $c(\mathcal V_{i+1}; I'_1, \dotsc, I'_j) = c$. Here, again, extension means that we simply add the element $i+1$ to one of $I_\ell$ for $1 \leq \ell \leq j$.

\item Furthermore, there exists at least one minimizing partition $\{I_1,\dots, I_j\}$ of $[i]$ such that there are no minimizing extensions of this partition.
\end{enumerate}
\end{claim}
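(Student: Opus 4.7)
The plan is to reduce both parts to a single observation about when an extension preserves the cost, and then feed in the construction of $V_{i+1}$. Let $\{I_1,\dots,I_j\}$ be any minimizing partition of $[i]$ and set $F_\ell := \aff(V_t : t \in I_\ell)$. The extension obtained by adjoining $i+1$ to $I_\ell$ has cost
\[
\sum_{t \ne \ell} \dim F_t + \dim\aff(F_\ell \cup V_{i+1}) = c - \dim F_\ell + \dim\aff(F_\ell \cup V_{i+1}),
\]
so it is minimizing if and only if $V_{i+1} \subset F_\ell$. Everything else will be an application of this equivalence combined with the affine dimension formula
\[
\dim\aff(F \cup G) = \dim F + \dim G - \dim(F \cap G), \quad \text{valid when } F \cap G \ne \emptyset.
\]

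For part (b), I would invoke the construction of $V_{i+1}$ itself. By the algorithmic step that produced $V_{i+1}$, the measure $\mu_{i+1}$ was obtained by restricting an $s$-Frostman measure $\mu$ whose support was separated from $\bigcup_{\ell=1}^{j} F_\ell$ for one specific minimizing partition $\{I_1,\dots,I_j\}$ of $[i]$. Since $\supp\mu_{i+1} \subset V_{i+1}$ has positive $\mu_{i+1}$-mass yet avoids every $F_\ell$, we cannot have $V_{i+1} \subset F_\ell$ for any $\ell$. By the equivalence above, this particular minimizing partition admits no minimizing extension, giving (b).

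For part (a), suppose for contradiction that two distinct extensions of $\{I_1,\dots,I_j\}$, obtained by adjoining $i+1$ to $I_\ell$ and to $I_{\ell'}$ with $\ell \ne \ell'$, are both minimizing. Then $V_{i+1} \subset F_\ell \cap F_{\ell'}$, so in particular the intersection is non-empty. I would then test the partition of $[i+1]$ obtained from $\{I_1,\dots,I_j\}$ by merging $I_\ell$, $I_{\ell'}$, and $\{i+1\}$ into a single block. Because $V_{i+1}$ is already contained in $F_\ell$, the affine span of the merged block equals $\aff(F_\ell \cup F_{\ell'})$, and the affine dimension formula gives that the cost of this partition of $[i+1]$ equals $c - \dim(F_\ell \cap F_{\ell'})$. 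Since $\mu_{i+1}$ is $s$-Frostman with $s > 0$, it assigns zero mass to points, so $\supp \mu_{i+1}$ contains at least two distinct points and $\dim V_{i+1} \ge 1$. This forces $\dim(F_\ell \cap F_{\ell'}) \ge 1$, producing a partition of $[i+1]$ with cost at most $c-1 < c = c(\mathcal V_{i+1})$, contradicting the definition of the cost.

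The main obstacle is really only the bookkeeping with partitions and the affine dimension formula; the whole argument hinges on two small facts about $V_{i+1}$ coming from its construction, namely that it is not contained in any $F_\ell$ of the specific minimizing partition used to produce $\mu_{i+1}$ (giving (b) and the contradiction in (a)), and that it has dimension at least $1$ because $\mu_{i+1}$ is $s$-Frostman with $s > 0$ (upgrading the inequality in (a) from $\le c$ to $\le c-1$).
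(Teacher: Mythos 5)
Your proof is correct and follows essentially the same approach as the paper: part (b) is identical (use the specific minimizing partition that produced $V_{i+1}$, for which by construction $V_{i+1}\not\subset F_\ell$ for every $\ell$), and part (a) uses the same key facts ($\dim V_{i+1}\ge 1$ because $s>0$, and the affine dimension formula applied to $F_\ell\cap F_{\ell'}\supset V_{i+1}$). The only cosmetic difference is that in (a) you test a merged partition of $[i+1]$ and contradict $c(\mathcal V_{i+1})=c$, whereas the paper merges $I_\ell\cup I_{\ell'}$ into a partition of $[i]$ and contradicts $c(\mathcal V_i)=c$; both are valid in the surrounding context where the algorithm is assumed stuck at value $c$.
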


\begin{proof}[Proof of Claim \ref{claim:extensions}]
Note that the trivial partition of $[i]$, namely $\{[i]\}$ consisting of only one set, clearly has at most one (potentially non-minimizing) extension. Therefore, we only need to consider partitions $\{I_1,\dots,I_j\}$ of $[i]$ where $j\ge 2$.

We begin with proving part a). Consider a partition $\{I_1,\dots,I_j\}$ of $[i]$, where $j \ge 2$, and $c(\mathcal V_i; I_1,\dots,I_j) = c$. Suppose for the sake of contradiction that there exists a pair of indices $\ell \ne \ell'$ such that both extensions
$$
I_1, \dotsc, I_\ell\cup\{i+1\},\dotsc, I_j \text{ and }  I_1, \dotsc, I_{\ell'}\cup\{i+1\},\dotsc, I_j
$$
are minimizing.
This is equivalent to saying that
$$
V_{i+1} \subset \aff(V_t; t\in I_\ell)\text{ and }V_{i+1} \subset \aff(V_t; t\in I_{\ell'}).
$$
Since $s > 0$, and $V_{i+1}$ supports an $s$-Frostman measure, we have $\dim V_{i+1} \geq 1$. So it follows that
$$
\dim \aff(V_{t}; t \in  I_\ell \cup I_{\ell'}) \leq \dim \aff(V_{t}; t \in  I_\ell) + \dim \aff(V_{t}; t \in  I_{\ell'}) - 1.
$$
Since $\{I_1, \dots, I_j\}$ is a partition of $[i]$, so is
$$
\{I_\ell\cup I_{\ell'}, I_1,\dots,\widehat{I_\ell},\dots,\widehat{I_{\ell'}},\dots, I_j\},
$$
where the notation $\widehat{\bullet}$ means that the element $\bullet$ is removed from the list. Therefore,
$$
c(\mc V_i;  I_\ell\cup I_{\ell'}, I_1, \dotsc, \widehat{I_\ell}, \dotsc,\widehat{ I_{\ell'}},\dots, I_j) < c(\mc V_i; I_1,\dots, I_\ell,\dots, I_{\ell'},\dots, I_j) = c,
$$
which contradicts the minimality of $c = c(\mathcal V_i)$. This shows that for every minimizing partition of $\mc{V}_i$, there exists at most one minimizing extension of said partition.

Now we will show part b) of Claim \ref{claim:extensions}: that there exists a minimizing partition $\{I_1,\dots, I_j\}$ of $[i]$ with no minimizing extensions. Indeed, let $\{I_1,\dots, I_j\}$ be the partition of $[i]$ which was used to define $V_{i+1}$.

Recall that we found $V_{i+1}$ in such a way that
\[
V_{i+1}\not\subset F_\ell = \aff(V_t; t\in I_\ell), \text{ for any } 1\leq \ell \leq j.
\]
 Therefore, it must be the case that for all $1\leq \ell \leq j$, 
 \[
 \dim \aff(V_t; t\in I_\ell \cup \{i+1\}) > \dim \aff(V_t; t\in I_\ell).
 \]
That is, any extension $\{I_1',\dots, I_j'\}$ of the partition $\{I_1,\dots, I_j\}$ must satisfy 
\[
c(\mc{V}_{i+1}; I_1',\dots, I_j') > c(\mc{V}_i; I_1,\dots, I_j) = c.
\]
This concludes the proof of part b), and of Claim \ref{claim:extensions}.
\end{proof}

Claim \ref{claim:extensions} contradicts the assumption that there was a countably infinite sequence $\{\mc{V}_i\}_{i=m}^{\infty}$ such that $c(\mc{V}_i) = c$ for all $i\geq m$. It follows that the value of $c(\mathcal V_i)$ must eventually increase. Additionally, since the algorithm terminates if $c\geq n$, it follows that the algorithm terminates after running it a finite number of times. 

We let $\mathcal V_m = \{V_1, \dotsc, V_m\}$ be a final collection of subspaces with corresponding $s$-Frostman measures $\{\mu_1,\dots, \mu_m\}$ obtained at the termination of the algorithm.

By construction, the measures $\mu_i$ are irreducible in $V_i$, and they have pairwise disjoint supports, which is parts (i) and (ii) of Lemma \ref{lem:decomposition}. We now verify that property (iii) holds for $\mathcal V_m = \{V_1,\dots,V_m\}$. To see this, suppose that $\bigcup_{i=1}^m V_i\subset\bigcup_{j=1}^r F_j$. The reader can verify that for each $i$, there is some $j(i)$ such that $V_i \subset F_{j(i)}$. Define a function $j\colon [m]\to [r]$ by sending each $i\in[m]$ to a corresponding $j(i)\in[r]$ in this way, and let $S = \mathrm{image}(j)$. By definition, $\{j^{-1}(x): x\in S\}$ is a partition of $[m]$. Furthermore, by the construction of $\mathcal V_m$, we must have
\[
n \le c(\mathcal V;j^{-1}(x),x\in S) := \sum_{x\in S}\dim \aff(V_i;i\in j^{-1}(x)).
\]
Since each $i\in j^{-1}(x)$ satisfies $V_i\subset F_x$, and $F_x$ is closed under affine combinations, we have 
$$
\sum_{x\in S}\dim \aff(V_i;i\in j^{-1}(x))\le \sum_{x\in S}\dim F_x,
$$
and hence $n \le \sum_{j=1}^r \dim F_j$.
\end{proof}

\section{\texorpdfstring{Thin $k$-planes}{Thin k-planes}} \label{sec:thinplanesprelim}

In the previous section we decomposed an \textbf{NC} set or measure into a collection of affine subspaces supporting irreducible $s$-Frostman measures. In this section, we will describe what it means for a collection of measures to span $s$-thin hyperplanes.

The works of Orponen--Shmerkin--Wang \cite{orponen2022kaufman} and Ren \cite{ren2023discretized} obtain continuum versions of Beck's theorem for lines via radial projection theorems, which in turn are proven using the notion of \emph{thin tubes}. We recall their definition of thin tubes here. If $X_0,X_1,\dots, X_k\subset \R^n$, and $G\subset \prod_i X_i$, for all $x_j \in X_j$ we define the $x_j$-section of $G$:
\[
G|_{x_j} = \left\{(x_0,\dots, \widehat{x}_j,\dots x_k) \in \prod_{i\neq j} X_i : (x_0,\dots,x_j,\dots, x_k) \in G\right\}.
\]
If $G$ is Borel, then every section is measurable too.
\begin{definition}[Thin tubes]\label{defn:thin-tubes}
    Let $K,\sigma\geq 0$ and $c\in (0,1]$. Let $\mu_0,\mu_1$ be probability measures on $\R^n$ with $\supp(\mu_i) = X_i$ for $i=0,1$. We say that $(\mu_0,\mu_1)$ has $(\sigma,K,c)$-thin tubes if there exists a Borel set $G\subset X_0 \times X_1$ with $(\mu_0\times \mu_1) (G) \geq c$ with the following property. If $x_0 \in X_0$, then 
    \[
    \mu_1(T\cap G|_{x_0}) \leq K \cdot r^\sigma  \quad \text{for all $r>0$ and all $r$-tubes $T$ containing $x.$}
    \]
    We also say $(\mu_0,\mu_1)$ has $\sigma$-thin tubes if $(\mu_0,\mu_1)$ has $(\sigma,K,c)$-thin tubes for some $K\geq 0$ and $c\in (0,1].$
\end{definition}

One can readily check that if $(\mu_0,\mu_1)$ and $(\mu_1,\mu_0)$ have $\sigma$-thin tubes and $\supp \mu_i \subset X$ for $i=1,2$ it follows that $\dim \mathcal L(X) \ge 2\min\{\sigma,1\}$. Our definition of $\sigma$-thin $k$-planes will be formulated to ensure that the set of $k$-planes spanned by sets $X_0,\dots,X_k$ in ``good position'' is at least $(k+1)\min\{\sigma,1\}$-dimensional. In particular, we will deduce the sharp Beck-type theorem for hyperplanes.

Define the partial map $\psi_k\colon (\R^n)^{k+1} \to {\mc A}(n,k)$ which sends a $(k+1)$-tuple of points $(x_0, \dotsc, x_k)$ to the affine $k$-plane $\aff(x_0, \dotsc, x_k)$. This map is well-defined outside of
\[
\mathcal S_k = \{(x_0,\dots,x_{k})\in(\R^n)^{k+1}:x_0,\dots,x_k\ \text{belong to a common $(k-1)$-plane}\}.
\]

\begin{definition}[Measures in $C$-good position]
    We say that measures $\mu_0, \dotsc, \mu_k$ on $\R^n$ are in $C$-\emph{good position} if their supports $\supp\mu_i = X_i$ are contained in $B^n(0,1)$, and $X_0 \times \dots \times X_k$ is $1/C$-separated from $\mc S_k$. If the constant $C$ is not important, we will just refer to measures in \emph{good position}.
\end{definition}

Notice that given $\mu_0,\dotsc, \mu_k$ on $\R^n$ in good position, every $(x_0,\dotsc, x_k) \in \prod X_i$ uniquely determines a $k$-plane containing $x_0,\dots, x_k$. We will denote this $k$-plane $V_{x_0,\dots, x_k}$; we denote the $\de$-neighborhood of this $k$-plane (as a subset of $\R^n$) as $V_{x_0,\dots, x_k}(\de).$ We can now define thin $k$-planes.

\begin{definition}[Thin $k$-planes]
    Let $K \ge 1, \sigma > 0, c \in (0,1)$, and $1 \le k \le n-1$. Let $\mu_0, \dotsc, \mu_k$ be probability measures on $\R^n$ in good position.

    We say that $(\mu_0, \dotsc, \mu_k)$  \emph{span $(\sigma, K, c)$-thin $k$-planes}, or simply have \emph{$\sigma$-thin $k$-planes}, if there exists a Borel set $G \subset \supp\mu_0 \times \dots \times \supp\mu_k$ such that
    \begin{itemize}
        \item[(i)] $(\mu_0\times \dots \times \mu_k)(G) \ge c $,
        \item[(ii)] for any $(x_0, \dotsc, x_k) \in G$ and any $\delta > 0$ and $j = 0, \dotsc, k$, we have 
        $$
        \mu_j(V_{x_0, \dotsc, x_k}(\delta)) \le K \delta^\sigma.
        $$
    \end{itemize}
    We call any such $G$ a \emph{thin $k$-plane graph} of the collection $(\mu_0, \dotsc, \mu_k)$ (and a \emph{$(\sigma,K, c)$-thin $k$-plane graph} if we want to emphasize the values of parameters). We also say that such a $G$ \emph{witnesses} $(\sigma,K,c)$-thin $k$-planes for the measures $\mu_0,\dots,\mu_k$.

    We also extend this definition to the case of non-probability measures $\mu_i$ by saying that $(\mu_0, \ldots, \mu_k)$ span $(\sigma, K, c)$-thin $k$-planes if the renormalized measures $\tilde \mu_i = \frac{1}{\mu(\R^n)} \mu_i$ do.
\end{definition}

Now that we have defined $\sigma$-thin $k$-planes, we can show that if a set $X$ supports $\mu_0, \ldots, \mu_k$ having $\sigma$-thin $k$-planes, then the dimension of the set of $k$-planes spanned by $X$ is at least $(k+1)\sigma$.

\begin{lemma}\label{lem:thin-planes-implies-pushforward-frostman}
    Let $\mu_0,\dotsc, \mu_k$ be probability measures on $\R^n$ in good position with $(\sigma,K,c)$-thin $k$-planes. Let $G$ be a thin $k$-plane graph of the collection $(\mu_0,\dots, \mu_k).$ Then, the pushforward measure $\psi_k(\mu_0\times \cdots \times \mu_k|_G)$ is a nonzero compactly supported $\sigma'$-Frostman measure in $\mathcal A(n,k)$, where $\sigma' = (k+1) \sigma$. In particular, $\sigma \le n - k$.
\end{lemma}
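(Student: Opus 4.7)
The plan is to verify the three claims in turn (compact support, Frostman, nonzero), and then read off $\sigma \le n-k$ from the dimension of the affine Grassmannian.

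\textbf{Compact support and nonzero mass.} Since all supports $X_j$ are contained in $B^n(0,1)$, the $k$-planes $V_{x_0,\dots,x_k}$ appearing in the pushforward all meet $B^n(0,1)$, and this set of $k$-planes is a compact subset of $\mathcal A(n,k)$. Nonzeroness is immediate from property (i) of the definition of thin $k$-planes: the total mass of the pushforward equals $(\mu_0\times\cdots\times\mu_k)(G) \ge c > 0$.

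\textbf{Frostman estimate.} Fix $W\in\mathcal A(n,k)$ and $\delta>0$; write $B(W,\delta)$ for the $\delta$-ball around $W$ in any reasonable metric on $\mathcal A(n,k)$. We must bound
\[
(\mu_0\times\cdots\times\mu_k)\bigl(A\bigr), \qquad A = \{(x_0,\dots,x_k)\in G : V_{x_0,\dots,x_k}\in B(W,\delta)\}.
\]
If $A = \emptyset$ we are done. Otherwise pick any $(y_0,\dots,y_k)\in A$ and set $V_0 = V_{y_0,\dots,y_k}$. For a standard metric on $\mathcal A(n,k)$, two $k$-planes that lie within $\delta$ of $W$ are within $2\delta$ of each other, and for planes meeting $B^n(0,1)$ this translates into Hausdorff distance $\le C_0 \delta$ inside the unit ball (with $C_0$ depending only on $n$ and on the choice of metric). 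Hence every $(x_0,\dots,x_k)\in A$ satisfies $x_j\in V_0(C_0\delta)$ for each $j$, i.e.\ $A \subset \prod_{j=0}^{k} \bigl(X_j \cap V_0(C_0\delta)\bigr)$. Since $(y_0,\dots,y_k)\in G$, property (ii) of the thin $k$-planes definition applies and yields $\mu_j(V_0(C_0\delta)) \le K(C_0\delta)^\sigma$ for every $j$. Multiplying,
\[
(\mu_0\times\cdots\times\mu_k)(A) \le \prod_{j=0}^{k} K(C_0\delta)^\sigma = K^{k+1}C_0^{(k+1)\sigma}\,\delta^{(k+1)\sigma},
\]
which is the desired Frostman bound with exponent $\sigma' = (k+1)\sigma$.

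\textbf{Dimension bound.} A compactly supported nonzero $\sigma'$-Frostman measure on a smooth manifold of dimension $d$ forces $\sigma' \le d$. The affine Grassmannian $\mathcal A(n,k)$ has dimension $(k+1)(n-k)$ (namely $k(n-k)$ for the direction in $\mathcal G(n,k)$, plus $n-k$ for the translation transverse to it). Thus $(k+1)\sigma \le (k+1)(n-k)$, giving $\sigma \le n-k$.

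The only mildly subtle step is translating proximity in $\mathcal A(n,k)$ into pointwise proximity to the anchor plane $V_0$; this is where the hypothesis that the supports sit in $B^n(0,1)$ is used in an essential way, and is the one place one must commit to (or verify independence from) a specific metric on the affine Grassmannian.
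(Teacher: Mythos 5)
Your proof is correct and follows essentially the same route as the paper's: pick an anchor tuple in the fiber over $B(W,\delta)$, use the good-position/unit-ball hypothesis to show every other tuple lies in the $O(\delta)$-neighborhood of the anchor plane, and multiply the thin-plane bounds. The only addition is that you spell out the dimension count for $\mathcal{A}(n,k)$ to get $\sigma \le n-k$, which the paper leaves implicit; that step is fine.
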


\begin{proof}
    Let $V\in \mathcal A(n,k)$ be arbitrary and $\rho = \mu_0\times\dotsb\times\mu_k$. Let $B_{\mc A(n,k)}(V, \delta)$ denote the $\delta$-neighborhood of $V$ in the space of $k$-flats. That $\psi_k(\rho|_G)$ is nonzero and compactly supported is immediate from $\rho(G) \ge c>0$ and the compact support of $G$.
    
    We want to show that 
    \begin{equation}\label{eq:thin-frostman-sigma-prime}
    \psi_k(\rho|_G) (B_{\mc A(n, k)}(V, \de)) \le K' \de^{\sigma'}
    \end{equation}
    for some constant $K'$ to be determined. Note that there exists a constant $C=C(n)$ so that for every $V' \in B_{\mc A(n, k)}(V, \de)$ we have $V' \cap B^n(0,1) \subset V(C\delta)$. 
    
    Without loss of generality, we suppose that there exists $(x_0,\dots, x_k) \in G$ so that $V_{x_0,\dots, x_k} \in B_{\mc A(n, k)}(V,\delta)$. If this weren't the case, then 
    \[
    \psi_k (\rho|_G) (B_{\mc A(n, k)}(V,\de)) = 0
    \]
    which immediately implies \eqref{eq:thin-frostman-sigma-prime} holds. Thus, we have $V(C\de)\cap B^n(0,1) \subset V_{x_0,\dots, x_k}(C^2\de)$ for some $(x_0,\dots,x_k)\in G$. Thus, if we have $V_{\tilde x_0, \ldots, \tilde x_k} \in B_{\mc A(n, k)}(V,\de)$ for some $(\tilde x_0, \ldots, \tilde x_k)\in G$ then we get $\tilde x_0, \ldots, \tilde x_k \in V_{x_0,\dots, x_k}(C^2\de)$.
    Hence, 
    \begin{align*}
       \psi_k(\rho|_G) (B_{\mc A(n, k)}(V, \de)) &\leq  \int_{G} \prod_{i=0}^k\mathbf{1}_{V_{x_0,\dots, x_k}(C^2\de)}( \tilde{x}_i) \,d\rho(\tilde x_0,\dots,\tilde x_k) \\
&\le \prod_{i=0}^k \mu_i(V_{x_0,\dots, x_k}(C^2\de)) \\
&\leq \prod_{i=0}^k K (C^2\de)^\sigma \\
&= K^{k+1}C^{2(k+1)\sigma} \de^{(k+1)\sigma}.
\end{align*}
The second-to-last line follows from the fact that $G$ is a $(\sigma,K,c)$-thin plane graph of the collection $(\mu_0,\dots, \mu_k)$. Hence, we have shown that $\psi_k(\mu_0\times \cdots \times \mu_k|_G)$ is a nonzero $(k+1)\sigma$-Frostman with constant $K^{k+1} C^{2(k+1)\sigma}$.  
This concludes the proof of the Lemma.
\end{proof}

By Lemma \ref{lem:thin-planes-implies-pushforward-frostman}, to prove Theorem \ref{thm:main}, it suffices to show that if $X$ is an \textbf{NC} set, then for every $\sigma < s < \dim X$, there are $s$-Frostman measures $\mu_1,\dots,\mu_n$ in good position and supported in $X$ that span $\sigma$-thin hyperplanes. As a first step, we show that if $\mu$ is an irreducible $s$-Frostman measure in $\R^n$, then $\mu$ can be decomposed into measures $\mu_1,\dots,\mu_n$ that span $\sigma$-thin $(n-1)$-planes.

\section{The irreducible case}
\label{sec:irreducible implies saturated}

In this section, we prove the following theorem.

\begin{theorem} \label{thm:qualitative-irreducible-beck}
    If $X\subset \R^n$ is a Borel set supporting an irreducible $s$-Frostman probability measure, then for all $1 \leq k \leq n-1$,
    \[
    \dim \mathcal P^{k}(X) \ge (k+1)\min\{s,n-k\}.
    \]
\end{theorem}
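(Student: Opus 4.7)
The plan is to produce, for every $0 < \sigma < \min\{s, n-k\}$, a $(k+1)$-tuple of probability measures $(\mu_0, \ldots, \mu_k)$ supported on $X$, in $C$-good position, that spans $\sigma$-thin $k$-planes. Lemma \ref{lem:thin-planes-implies-pushforward-frostman} then furnishes a nonzero $(k+1)\sigma$-Frostman measure on $\mathcal{A}(n,k)$ supported on $\mathcal{P}^k(X)$, yielding $\dim \mathcal{P}^k(X) \geq (k+1)\sigma$; sending $\sigma \nearrow \min\{s,n-k\}$ proves the theorem.

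For the setup, I use irreducibility to inductively select affinely independent points $p_0, \ldots, p_k \in \supp\mu$: at each step, the affine span of the previously chosen points is a proper affine subspace of $\R^n$ and so carries zero $\mu$-mass, allowing me to pick the next $p_i$ well outside a neighborhood of this span. Thickening to sufficiently small pairwise disjoint closed balls $B_i \ni p_i$, the normalized restrictions $\mu_i := \mu|_{B_i}/\mu(B_i)$ form a tuple in $C$-good position (with $C$ depending on the separation of $p_i$ from the various affine flats) and remain $s'$-Frostman for $s' < s$. Because $\mu$ is irreducible in $\R^n$, every $\mu_i$ also assigns zero mass to any proper affine subspace of $\R^n$.

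The core of the argument is an induction on $j \in \{1, \ldots, k\}$, producing a Borel set $G_j \subset \prod_{i\leq j} \supp\mu_i$ of positive product measure witnessing $\sigma_j$-thin $j$-planes for $(\mu_0, \ldots, \mu_j)$ with $\sigma_j$ close to $\min\{s, n-j\}$. The base case $j = 1$ is a direct application of a discretized form of Theorem \ref{thm:ren} with $m = n-1$ to the pair $(\mu_0, \mu_1)$; irreducibility of $\mu$ (no mass on any hyperplane) supplies the non-concentration hypothesis, and a Fubini/pigeonhole argument symmetrizes the tube bound to hold simultaneously for $\mu_0$ and $\mu_1$. For the inductive step $j \to j+1$ (while $j+1 \leq k$), fix a typical $\mathbf{x} = (x_0, \ldots, x_j) \in G_j$ and set $V_{\mathbf x} := V_{x_0, \ldots, x_j}$. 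The key geometric observation is that $V_{x_0, \ldots, x_{j+1}}(\delta) = \pi_{V_{\mathbf x}}^{-1}(L_{x_{j+1}}(\delta))$, where $\pi_{V_{\mathbf x}} \colon \R^n \to V_{\mathbf x}^\perp$ is orthogonal projection and $L_{x_{j+1}}$ is the affine line in $V_{\mathbf x}^\perp$ through $\pi_{V_{\mathbf x}}(x_0)$ and $\pi_{V_{\mathbf x}}(x_{j+1})$. Controlling $\mu_i(V_{x_0, \ldots, x_{j+1}}(\delta))$ for $i = 0, \ldots, j+1$ therefore reduces to controlling the pushforwards $(\pi_{V_{\mathbf x}})_* \mu_i$ on $\delta$-tubes, which is accomplished by applying the discretized Theorem \ref{thm:ren} in $V_{\mathbf x}^\perp \cong \R^{n-j}$ with $m = n - j - 1$. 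The required non-concentration is inherited from $\mu$: a proper affine subspace $W \subset V_{\mathbf x}^\perp$ carrying positive $(\pi_{V_{\mathbf x}})_*\mu_i$-mass would pull back to the proper affine subspace $V_{\mathbf x} + W$ of $\R^n$, contradicting irreducibility.

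The main obstacle is making the inductive step quantitative and uniform in $\mathbf x$: the projected measures $(\pi_{V_{\mathbf x}})_* \mu_i$ depend on the parameter $\mathbf x \in G_j$, so one must integrate and pigeonhole over $\mathbf x$ to extract a single Borel $G_{j+1}$ of positive product measure on which the thin-tube conclusion holds simultaneously for all $i = 0, \ldots, j+1$. Tracking Frostman exponents through these projections (so that $\sigma_{j+1}$ remains close to $\min\{s, n-j-1\}$) and preserving the good-position hypothesis when passing to $V_{\mathbf x}^\perp$ are the principal technical hurdles, and the reason the qualitative form of Theorem \ref{thm:ren} must be replaced by its discretized counterpart.
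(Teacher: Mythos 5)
Your overall architecture — restrict $\mu$ to small balls $B_0,\dots,B_k$ around affinely independent points to get measures in good position, then build thin $k$-planes graphs inductively in $j$, then push forward via Lemma~\ref{lem:thin-planes-implies-pushforward-frostman} — is the right skeleton and matches the paper through Lemma~\ref{lem:partitioning-measure}. But the inductive step as written has a genuine gap in the Frostman-exponent bookkeeping, and it is not a ``technical hurdle'' one can just track through: it is a real obstruction to using orthogonal projection from the whole flat $V_{\mathbf x}$. Your identity $V_{x_0,\dots,x_{j+1}}(\delta)=\pi_{V_{\mathbf x}}^{-1}(L_{x_{j+1}}(\delta))$ is correct, and for $i=j+1$ the projected measure $(\pi_{V_{\mathbf x}})_*\mu_{j+1}$ is reasonable, but for $i\le j$ the measure $\mu_i$ contains the point $x_i\in V_{\mathbf x}$ in its support, so $(\pi_{V_{\mathbf x}})_*\mu_i$ is concentrated near the origin of $V_{\mathbf x}^\perp$. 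The thin $j$-planes property of $G_j$ gives you a Frostman-type bound $(\pi_{V_{\mathbf x}})_*\mu_i(B(0,r))\lesssim r^{\sigma_j}$ \emph{at the origin only}; it says nothing about balls $B(y,r)$ with $y\ne 0$, since those pull back to $\delta$-neighborhoods of $j$-planes parallel to $V_{\mathbf x}$, which are not of the form $V_{x_0',\dots,x_j'}$ with $(x_0',\dots,x_j')\in G_j$. And without a Frostman hypothesis you cannot invoke any form of Theorem~\ref{thm:ren} in $V_{\mathbf x}^\perp$. Indeed, even the crude computation shows the orthogonal projection of an $s$-Frostman measure through a $j$-flat in its support degrades to at best $(s-j)$-Frostman, which for $j$ close to $k$ is far below the target $\sigma<\min\{s,n-k\}$.

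The paper avoids this by never projecting a measure through a flat it meets. Its inductive step (Theorem~\ref{theorem:section4main}) first establishes thin \emph{tubes} for every \emph{pair} $(\mu_i,\mu_j)$, then, for each $j$, chooses a \emph{different} vantage point $x_i$ with $i\ne j$ and applies the radial projection $\pi_{x_i}:\R^n\to H\cong\R^{n-1}$ — the point $x_i$ lies in $\supp\mu_i$ but \emph{not} near $\supp\mu_j$, by the $C$-good-position hypothesis, so $\pi_{x_i}|_{\supp\mu_j}$ is bi-Lipschitz. Crucially, the paper then derives that $\pi_{x_i}(\mu_j|_{G_{ij}|_{x_i}})$ is genuinely $\sigma$-Frostman (not just at one point) from the pairwise thin-tube bound: a ball $B(y,\delta)\subset H$ pulls back to a tube through $x_i$, and the thin-tubes property controls its $\mu_j$-mass. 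This is the mechanism your proposal is missing. It also makes the induction close in a double recursion on $(n,k)\to(n-1,k-1)$: each projection drops both the ambient dimension and the plane codimension by one, so the full discretized statement in dimension $n-1$ can be invoked for the $k-1$ projected measures, and the glued graph $G=\bigcap_i G_i$ controls each $\mu_j$ by projecting from \emph{any} $x_i$ with $i\ne j$. Adopting this pair-first, one-point-at-a-time scheme — rather than projecting from all of $V_{\mathbf x}$ at once — is what makes the Frostman exponents survive the induction.
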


\begin{remark}
By translation and dilation, it suffices to prove Theorem \ref{thm:qualitative-irreducible-beck} in the case that $X\subset B^n(0,1)$.  Irreducibility is a continuum analog of the nonconcentration condition in Beck's discrete hyperplane theorem \cite[Theorem 5.4]{beck1983lattice}.
\end{remark}

We prove Theorem \ref{thm:qualitative-irreducible-beck} by establishing a more quantitative version of it. It relies on a quantitative measurement of the irreducibility of a measure.

\begin{definition}\label{def:discretized-irreducible}
    Let $w, \tau>0$ and $V \subset \R^n$ a subspace. 
    We say that a measure $\mu$ supported on $V \cap B^n(0,1)$ is $(w,\tau)$-irreducible on $V$ if for any proper subspace $H\subset V$ we have $\mu(H(w)) \le \tau \mu(B^n(0,1))$. 
\end{definition}

The following discretized version of Theorem \ref{thm:qualitative-irreducible-beck} is the main technical result of this section.

\begin{theorem} \label{theorem:section4main}
    Let $1\leq k \leq n-1$, $0<s\leq n-1$, $0<\sigma < s$ and $C>0$. For every $\varepsilon>0$, there exist $\tau_0:=\tau_0(\varepsilon, s, \sigma, n, C)$ and $w_0 := w_0(\varepsilon, s, \sigma, n, C)$ such that the following holds for all $\tau \leq \tau_0$ and all $w\leq w_0$. Given $\mu_0,\dots, \mu_k$ that are $(w,\tau)$-irreducible $(C,s)$-Frostman measures in $C$-good position, there exists  $K:= K(\varepsilon, s,\sigma, n, C, w)>0$ such that $(\mu_0,\dots, \mu_k)$ spans $(\sigma, K, 1-\varepsilon)$-thin $k$-planes.
\end{theorem}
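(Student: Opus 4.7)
The plan is to prove Theorem \ref{theorem:section4main} by induction on $k$.

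For the base case $k = 1$, thin $1$-planes are precisely tubes around the lines $V_{x_0, x_1}$, and the hypothesis is that $\mu_0, \mu_1$ are $(w, \tau)$-irreducible $(C, s)$-Frostman measures in $C$-good position. A discretized form of Ren's radial projection theorem (Theorem \ref{thm:ren}) then yields that $(\pi_{x_0})_* \mu_1$ is $\sigma$-Frostman on $\mathbb{S}^{n-1}$ for every $x_0$ outside a set of $\mu_0$-mass at most $\varepsilon / 2$, provided $\tau_0$ and $w_0$ are chosen small enough in terms of $\varepsilon, s, \sigma, n, C$. Since $V_{x_0, x_1}(\delta) \cap \supp \mu_1 \subset \pi_{x_0}^{-1}(B(\pi_{x_0}(x_1), C\delta))$, using that good position keeps $x_1$ a definite distance from $x_0$, the Frostman estimate immediately gives $\mu_1(V_{x_0, x_1}(\delta)) \le K \delta^\sigma$. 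Swapping the roles of $\mu_0$ and $\mu_1$ provides the symmetric bound for $\mu_0$, and intersecting the two exceptional-set complements produces a thin-tubes graph of $(\mu_0 \times \mu_1)$-mass at least $1 - \varepsilon$.

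For the inductive step, assume the conclusion for $k - 1$ and consider measures $\mu_0, \dots, \mu_k$ as in the hypothesis. By the symmetry of the thin $k$-planes condition across the indices $\{0, \dots, k\}$, it suffices, for each $j$, to produce a set $G_j \subset X_0 \times \cdots \times X_k$ of $\prod_i \mu_i$-mass at least $1 - \varepsilon / (k + 1)$ on which $\mu_j(V_{x_0, \dots, x_k}(\delta)) \le K \delta^\sigma$ for all $\delta > 0$; the intersection $G = \bigcap_j G_j$ is then a $(\sigma, K, 1 - \varepsilon)$-thin $k$-plane graph. We focus on the representative case $j = k$. Applying the inductive hypothesis to $(\mu_0, \dots, \mu_{k-1})$ (which inherits a good-position constant from that of the full collection) produces a $\sigma$-thin $(k - 1)$-plane graph $G_{k-1}$ of mass at least $1 - \varepsilon / (2(k + 1))$. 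For $(x_0, \dots, x_{k-1}) \in G_{k-1}$, the $(k - 1)$-plane $W = V_{x_0, \dots, x_{k-1}}$ is well defined, and the $k$-plane $V = V_{x_0, \dots, x_k}$ passes through $W$ in direction $\pi_W(x_k)$. The estimate $\mu_k(V(\delta)) \le K \delta^\sigma$ then reduces to showing that the radial projection $(\pi_W)_* \mu_k$ is $\sigma$-Frostman for $(\mu_0 \times \cdots \times \mu_{k-1})$-typical tuples in $G_{k-1}$.

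The Frostman property of $(\pi_W)_* \mu_k$ is established by iterating the discretized Ren projection theorem through spherical quotients: first project $\mu_k$ from $x_0$ to obtain a $\sigma$-Frostman measure on $\mathbb{S}^{n-1}$ for most $x_0$, then project that measure from $\pi_{x_0}(x_1)$ using a spherical analog of Ren's theorem, and so on; after $k$ such radial projections the resulting measure lives on $\mathbb{S}^{n-k} \cong \mathbb{S}_W$ and is $\sigma$-Frostman. The $(w, \tau)$-irreducibility of $\mu_k$ together with good position of the $x_i$ supplies the non-concentration condition required at each spherical stage. To close the estimate, one splits $\mu_k(V(\delta))$ into the part inside $W(w)$, bounded by $\tau$ via $(w, \tau)$-irreducibility, and the part outside, controlled by the Frostman bound on $(\pi_W)_* \mu_k$ combined with good position, which keeps $\supp \mu_k$ a definite distance from $W$. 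The main technical obstacle is the bookkeeping in this iteration: tracking how exceptional sets accumulate, verifying that the intermediate spherical measures retain a non-concentration property inherited from $\mu_k$, and choosing $\tau_0, w_0$, and $K$ so that after $k$ rounds of iteration the final quantitative bounds depend only on $\varepsilon, s, \sigma, n$, and $C$.
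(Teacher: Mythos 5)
Your base case is correct and matches the paper: it is Theorem \ref{thm:renprojcorollary}, deduced from Ren's discretized radial projection estimate exactly as you describe.

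The inductive step, however, diverges from the paper in a way that leaves a real gap. You induct only on $k$: apply the hypothesis to $(\mu_0,\dots,\mu_{k-1})$ in $\R^n$ to get $W = V_{x_0,\dots,x_{k-1}}$, then try to show $(\pi_W)_*\mu_k$ is $\sigma$-Frostman by iterating single-point radial projections through a chain of spheres $\mathbb{S}^{n-1} \supset \mathbb{S}^{n-2} \supset \cdots$, invoking ``a spherical analog of Ren's theorem'' at each stage. This family of spherical projection theorems is not proven, and is not an immediate transfer of Theorem \ref{thm:ren-thin-tubes}: at the $j$-th stage you need (a) the intermediate measure $(\pi_{\pi_{x_0}(x_1)\cdots})_*\cdots(\pi_{x_0})_*\mu_k$ to satisfy a quantitative non-concentration on $\mathbb{S}^{n-j}$-plates relative to the current projection center, (b) the projecting measure $(\pi_{\cdots})_*\mu_j$ to be Frostman with controlled constant, and (c) a scheme for tracking exceptional sets across $k$ successive applications. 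Each of these is plausible but none is supplied, and together they amount to reproving a strong iterated-projection lemma from scratch. The acknowledgment of the ``bookkeeping'' obstacle does not substitute for the argument.

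The paper avoids this entirely by running a \emph{double} induction on $(n,k)$: fix an index $i$, project \emph{all} the other measures $(\mu_j)_{j\neq i}$ from a point $x_i\in\widetilde X_i$ onto a fixed auxiliary hyperplane $H\cong\R^{n-1}$, verify in one step that the pushforwards $\pi_{x_i}\mu_j$ are $(w/A,\tau)$-irreducible, Frostman (using the already-constructed thin-tubes graphs $G_{ij}$ via Proposition \ref{prop:thin-planes-restriction}), and in $C^2$-good position, and then apply the inductive hypothesis with parameters $(k-1, n-1)$ directly. Thus the only projection theorem used from outside the induction is Ren's Euclidean one for the base case $k=1$; all higher stages are handled by the theorem's own statement in lower dimension, so no spherical analog is ever needed. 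If you want to salvage your route, you would need to prove (and quantify) the iterated spherical radial projection lemma; otherwise the cleaner move is to strengthen the inductive hypothesis to vary $n$ as well and project all remaining measures to a hyperplane, as the paper does.
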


\begin{proof}[Proof of Theorem \ref{theorem:section4main}]

We prove Theorem \ref{theorem:section4main} by induction on $k$ and $n$. Ren's discretized radial projection theorem \ref{thm:ren-thin-tubes} will imply Theorem \ref{thm:renprojcorollary}, the base case of our induction when $k = 1$ and $n\geq 2$ is arbitrary.

We now proceed to the induction step, so let us fix $2\leq k\leq n-1$ and suppose that the statement holds for all smaller values of $n$ and all smaller values of $k$ (within the allowable range). Let $X_i = \supp \mu_i$. As a first step, we apply the base case $k=1$ to each pair $\mu_i, \mu_j$ $(i\ne j)$. Since the measures $\mu_0,\dots,\mu_k$ are in $C$-good position, every pair $\mu_i,\mu_j$  is also in $C$-good position. By the base case $k=1$ applied to each pair, there exist Borel sets $G_{ij} \subset X_i \times X_j$  that witness $(\sigma, K, 1-\varepsilon)$-thin $1$-planes for the measures $\mu_i,\mu_j$ for arbitrary $\sigma < s$ for some $K_0 = K_0(\varepsilon, s,\sigma, n, C, w)$. We may assume that $(x_i, x_j) \in G_{ij}$ if and only if $(x_j, x_i) \in G_{ji}$. 
    
    Next, we define 
    \begin{equation}\label{eq:big-projection}
    \widetilde X_i = \{ x_i \in X_i:~ \mu_j(G_{ij}|_{x_i}) \ge 1 - \varepsilon^{1/2}, ~ \text{ for any }j\neq i\}.    
    \end{equation}
    We claim that $\mu_i(\widetilde X_i) \ge 1-   \varepsilon^{1/2}$ holds. Indeed by Fubini,
    $$
    1-\varepsilon \le \mu_i\times \mu_j(G_{ij}) = \int \mu_j(G_{ij}|_{x_i})\, d\mu_i(x_i) \le \mu_i(\widetilde X_i) + \mu_i(X_i \setminus \widetilde X_i) (1-\varepsilon^{1/2})
    $$
    and rearranging gives the bound (using that $\mu_i(X_i\setminus \tilde{X}_i) = 1-\mu_i(\tilde{X}_i)$).

    Consider a fixed hyperplane $H$ with $\mathrm{dist}(H, X_i) > C$ for every $0\le i \le k$. Denote by $\mathrm{dir}(H)$ the linear subspace of $\R^n$ tangent to $H$.  
    Fix $0\leq i \leq k$ and let $x_i \in \widetilde{X}_i$. We define a ``radial'' projection map onto the hyperplane $H$ given by 
    \begin{align*}
    \pi_{x_i} : ~ &\R^n \setminus (\mathrm{dir}(H) +x_i) \to H \\
    &\pi_{x_i}(y) = \langle x_i, y\rangle \cap H.
    \end{align*}
    Consider the measures $\{\pi_{x_i}(\mu_j)\}_{j:j\neq i}$ on $H$, which we identify with $\R^{n-1}$. We claim that these $k-1$ measures satisfy the hypotheses of Theorem \ref{theorem:section4main} in $\R^{n-1}$.

    Up to rescaling by $\sim 1/C$, we have $\pi_{x_i}\mu_j$ supported on a unit ball $B^{n-1}$ in $H$ and $\{\pi_{x_i}\mu_j\}_{j:j\ne i}$ is in $C^2$-good position in $\R^{n-1}$. We observe that for a $(n-2)$-plane $U \subset H$ we have $\pi_{x_i}^{-1}(U(\de)) \subset (\pi_{x_i}^{-1}(U))(A\delta)$ for some  $A$ (depending on how much $H$ and $\supp \mu_i$ are separated). It follows that for $i\neq j$ and every $x_i \in \supp\mu_i$, $\pi_{x_i}\mu_j$ is a $(w/A, \tau)$-irreducible measure on $H$.
    
    By the $(\sigma, K_0, 1-\varepsilon)$-thin property of $G_{ij}$, we know that $\pi_{x_i}(\mu_j|_{G_{ij}|x_i})$ is $(C_1 K_0, \sigma)$-Frostman for some $C_1 := C_1(\sigma,\varepsilon)>0$. Indeed, for any ball $B(y, \delta) \subset \R^{n-1}$ either $B(y, \delta) \cap \pi_{x_i}(G_{ij}|_{x_i})$ is empty or there is a point $x_j \in \pi^{-1}_{x_i}(B(y, \delta)) \cap X_j $. In the latter case, there exists a constant $A=A(C,n)$ such that $\pi_{x_i}^{-1}(B(y, \delta)) \cap X_j \subset V_{x_i,y}(A \delta)$, where $V_{x_i,y}$ is the line through $x_i,y$. It follows that $\pi_{x_i}(\mu_j|_{G_{ij}|x_i})$ is $(C_1 K_0,\sigma)$-Frostman with $C_1 = A^{\sigma}$.

    It follows that the measures $\{\pi_{x_i}(\mu_j|_{G_{ij}|x_i})\}_{j:j\neq i}$, satisfy the conditions of Theorem \ref{theorem:section4main} with parameters
    $$
    k' = k-1, ~ n' = n-1, ~ s' = \sigma, ~ C' = C_1K_0, ~ w' = w/A, ~\tau' = \tau.
    $$
    Thus, given $j\neq i$, by our inductive hypothesis and $x_i \in \tilde{X}_i$ (see \eqref{eq:big-projection}), for all $\sigma' < \sigma$, $(\pi_{x_i}(\mu_j|_{G_{ij}|x_i}))_{j:j\neq i}$, have $(\sigma', K_1, 1-\varepsilon^{1/2})$-thin $(k-1)$-planes for some $K_1 := K_1(\varepsilon, s', \sigma', n-1, C_1K_0, w'))$. That is, there exists a Borel set
    $$
    \widetilde G_{\hat i}^{(x_i)} \subset \prod_{j:j\neq i} \pi_{x_i}(X_j)
    $$
    of measure at least $1-\varepsilon^{1/2}$ with respect to the measure $\prod_{j:j\neq i} \pi_{x_i}(\mu_j|_{G_{ij}|x_i})$ such that
    \begin{equation}\label{eq1}
    \pi_{x_i}(\mu_j|_{G_{ij}|x_i}) (V'(\delta)) \le K_1 \delta^{\sigma'} 
    \end{equation}
    holds for any affine subspace $V' = \psi_{k-1}(x_{\hat i})$ with $(x_{\hat i}) := (x_0, \dotsc, x_{i-1}, x_{i+1}, \dotsc, x_k) \in \widetilde G_{\hat i}^{(x_i)}$. For our purposes, equation \eqref{eq1} is not exactly what we need to conclude our results: we would like this condition to hold with respect to the measures $\{\pi_{x_i}(\mu_j)\}$ instead of $\{\pi_{x_i}(\mu_j|_{G_{ij}|x_i})\}$. This can be achieved by the following.
    \begin{prop}\label{prop:thin-planes-restriction}
        Let $\mu_0, \dotsc, \mu_k$ be arbitrary probability measures in $C$-good position in $\R^n$. Denote $X_i = \supp(\mu_i)$ and suppose that there are subsets $X'_i \subset X_i$ and $G' \subset X'_0 \times \dotsb \times X'_k$ such that:
        \begin{itemize}
            \item $\mu_i(X_i') \ge 1-\varepsilon$, $i=0, \dotsc, k$,
            \item $\mu_0 \times \dots \times \mu_k(G') \ge 1-\varepsilon$,
            \item the measures $\mu_0|_{X'_0}, \dots, \mu_k|_{X'_k}$ have $(\sigma, K, 1-\varepsilon)$-thin $(k-1)$-planes with respect to $G'$.
        \end{itemize}
        Then there exists some constant $C_1:= C_1(\varepsilon, \sigma, n, C)$ 
        such that 
        $\mu_0, \dotsc, \mu_k$ have $(\sigma-\varepsilon, C_1K, 1-2\varepsilon)$-thin $(k-1)$-planes with respect to some Borel $G \subset G'$.
    \end{prop}

    \begin{proof}[Proof of Proposition \ref{prop:thin-planes-restriction}]
    Let $C_1>0$ to be determined later, and let $V_{x_0, \dotsc, x_k}$ denote the $k$-plane spanned by $x_0, \dotsc, x_k$. For dyadic $\delta>0$, define
    $$
    E_i(\delta) = \{(x_0, \dotsc, x_k) \in G':~ \mu_i(V_{x_0, \dotsc, x_k}(\delta)) > C_1 K \delta^{\sigma-\varepsilon}\}.
    $$
    Fix a tuple $(x_1, \dotsc, x_k) \in X_1 \times \dotsb \times X_k$ and let us bound $\mu_0(E_0(\delta)|_{(x_1, \dotsc, x_k)})$. Let $I$ be a maximal $\delta$-separated subset of $E_0(\delta)|_{(x_1, \dotsc, x_k)}$ such that
    the collection $\{V_{x_0, x_1, \dotsc, x_k}(\delta) \cap X_0\}_{x_0\in I}$ is pairwise disjoint. Thus, there exists a constant $A:=A(n,C)$ such that by the maximality of $I$ and the fact that $\mu_0, \dotsc, \mu_k$ are in $C$-good position, for any $x_0' \in E_0(\delta)|_{(x_1, \dotsc, x_k)}$ there is $x_0 \in I$ such that $x_0' \in V_{x_0, x_1, \dotsc, x_k}(A\delta)$ (otherwise we can add $x_0'$ to $I$ and increase its size, contradicting the maximality). So the neighborhoods $\{V_{x_0, x_1, \dotsc, x_k}(A\delta)\}_{x_0 \in I}$ cover the whole set $E_0(\delta)|_{(x_1, \dotsc, x_k)}$ and we obtain
    $$
    \mu_0(E_0(\delta)|_{(x_1, \dotsc, x_k)}) \le \sum_{x_0 \in I} \mu_0(G'|_{x_1, \dotsc, x_k} \cap V_{x_0, x_1, \dotsc, x_k}(A\delta)) \le |I| K (A\delta)^\sigma 
    $$
    by the thin $(k-1)$-planes property of $G'$. 
    
    By assumption, the intersections $\{X_0 \cap V_{x_0, x_1,\dotsc, x_k}(\delta)\}_{x_0 \in I}$
    are pairwise disjoint and so
    $$
    1 = \mu_0(X_0) \ge \sum_{x_0\in I} \mu_0(V_{x_0, x_1,\dotsc, x_k}(\delta)) \ge C_1 K \delta^{\sigma-\varepsilon} |I|.
    $$
    since $I \subset E_0(\delta)|_{(x_1, \dotsc, x_k)}$. 
    We conclude that $|I| \le (C_1K)^{-1} \delta^{\varepsilon-\sigma}$. Thus, 
    $$
    \mu_0(E_0(\delta)|_{(x_1, \dotsc, x_k)}) \le C_1^{-1} A^\sigma \delta^{\varepsilon}
    $$ 
    holds for any $(x_1, \dots, x_k) \in X_1\times \dots \times X_k$ and any dyadic $\delta > 0$.
    So if we define 
    $$
    E_0 = \bigcup_{\delta>0} E_0(\delta)
    $$
    then it follows by Fubini's theorem that 
    $$
    \mu_0\times \dots \times \mu_k(E_0) \le \sum_{0<\delta <1\ \text{dyadic}} C_1^{-1} A^\sigma \delta^{\varepsilon} < \varepsilon/(k+1),
    $$
    if we choose $C_1 := C_1(\sigma,C,n,\varepsilon)$ large enough. Similarly we get 
    $$
    \mu_0\times \dots \times \mu_k(E_i) <\varepsilon/(k+1),
    $$
    for $E_i = \bigcup_{\delta >0} E_i(\delta)$. Let 
    $$
    G = G' \setminus (E_0 \cup \dots \cup E_k),
    $$
    we claim that this $G$ resolves the Proposition. Clearly $\mu_0\times \dots \times \mu_k(G) \ge 1-2\varepsilon$ so we only need to check the thin planes property. Indeed, let $(x_0, \dotsc, x_k) \in G$ and pick a dyadic $\delta > 0$ and some index $i \in \{0,\dotsc, k\}$. Since $(x_0, \dotsc, x_k) \notin E_i(\delta)$ by design, we get
    $$
    \mu_i(V_{x_0, \dotsc, x_k}(\delta)) \le C_1 K \delta^{\sigma-\varepsilon},
    $$
    as desired. So $\mu_0, \dotsc, \mu_k$ have $(\sigma-\varepsilon, K', 1-2\varepsilon)$-thin planes with respect to the graph $G$ where $K' = C_1K$.
    \end{proof}

    Using Proposition \ref{prop:thin-planes-restriction} in our situation, we can find Borel sets $G^{(x_i)}_{\hat i}\subset \widetilde G_{\hat i}^{(x_i)}$ such that the measures $(\pi_{x_i}(\mu_j))_{j\neq i}$ have $(\sigma'-\varepsilon, C_1K_1, 1-2\varepsilon^{1/2})$-thin $(k-1)$-planes with respect to $G^{(x_i)}_{\hat i}$ for all $\sigma' < \sigma$.
    
    With this in mind, we can proceed to constructing a graph witnessing thin $k$-planes for the original measures $\mu_0, \dotsc, \mu_k$. We do this in a few steps. For each dyadic $\delta>0$, let $\mathcal D_\delta$ denote the collection of canonical $\delta$-cubes. Let $\delta > 0$ be a dyadic number and let $Q \in \mathcal{D}_\delta$ be a dyadic $\delta$-cube such that $\widetilde X_i \cap Q \neq \emptyset$. Fix an intersection point $x_i = x_i(Q) \in \widetilde X_i \cap Q$ and define
    \begin{equation}
        G_i(Q) = Q \times \left ( \pi_{x_i(Q)}^{-1} (G_{\hat i}^{(x_i(Q))}) \cap \prod_{j\neq i} G_{ij}|_{x_i(Q)}  \right)
    \end{equation}
    with the understanding that the first term $Q$ goes into the $i$-th position of the product. Next, we define
    $$
    G_i(\delta) = \bigcup_{Q:\ Q \cap \widetilde X_i \neq \emptyset} G_i(Q) 
    $$
    and we put 
    $$
    G_i = \bigcap_{\delta > 0} G_i(\delta).
    $$
    Finally, let 
    \begin{equation}\label{eq:def-G}
    G = G_0 \cap \dots \cap G_k.
    \end{equation}
    We claim that the constructed set $G$ witnesses $(\sigma' -  \varepsilon, K, 1-C_2 \varepsilon^{1/2})$-thin $k$-planes for the measures $(\mu_0, \dotsc, \mu_k)$ for all $\sigma'< \sigma$ where $K$ and $C_2$ are to be determined later. Indeed, first note that $G$ is clearly a Borel set. Furthermore, there exists a dimensional constant $C := C(n)$ such that for each dyadic $\delta > 0$, the set $G_i(\delta)$ is contained in a $C\delta$-neighborhood of $X_0 \times \dots \times X_k$ and so since $X_i$ are compact (as they are the supports of the measures $\mu_i$), we conclude that $G_i$ is contained in $X_0 \times \dotsb \times X_k$ for each $i=0, \dotsc, k$. We conclude that $G \subset X_0\times \dotsb \times X_k$. 

    Next, we verify that the measure of $G$ with respect to $\nu = \mu_0\times \dotsb \times \mu_k$ is large enough. From our construction, for $\delta >0$ and a dyadic $\delta$-box $Q$ we have
    $$
    \nu(G_i(Q)) = \mu_i(Q) \cdot \mu_{\hat i} \left ( \pi_{x_i(Q)}^{-1}( G_{\hat i}^{(x_i(Q))}) \cap \prod_{j\neq i} G_{ij}|_{x_i(Q)}  \right).
    $$
    Using that $G_{\hat i}^{(x_i(Q))}$ witnesses $(\sigma' - \varepsilon, C_1K', 1-2\varepsilon^{1/2})$-thin $(k-1)$-planes, we have
    $$
    \mu_{\hat i}\left ( \pi_{x_i(Q)}^{-1} G_{\hat i}^{(x_i(Q))}\right) = 
    \pi_{x_i(Q)}\mu_{\hat i}\left ( G_{\hat i}^{(x_i(Q))}\right) \ge 1- 2 \varepsilon^{1/2},
    $$
    and we use the fact that $x_i(Q) \in \widetilde X_i$, to bound
    $$
    \mu_{\hat i} \left (  \prod_{j\neq i} G_{ij}|_{x_i(Q)}  \right) = 
    \prod_{j\neq i} \mu_j( G_{ij}|_{x_i(Q)} ) \ge (1-\varepsilon^{1/2})^k \geq 1 - k\varepsilon^{1/2}.
    $$
    Putting these bounds together implies 
    $$
    \nu(G_i(Q)) \ge \mu_i(Q) (1-(k+2) \varepsilon^{1/2}).
    $$
    Summing over all dyadic $\de$-boxes $Q$ such that $Q\cap \widetilde X_i \neq \emptyset$, we obtain by \eqref{eq:big-projection}:
    $$
    \nu(G_i(\delta)) \ge \mu_i(\widetilde X_i) (1-(k+2) \varepsilon^{1/2}) \ge (1- \varepsilon^{1/2})(1 - (k+2) \varepsilon^{1/2}) \geq 1-(k+3)\varepsilon^{1/2}.$$
    
    By continuity, we get $\nu(G_i) \ge 1-(k+3) \varepsilon^{1/2}$ and therefore by \eqref{eq:def-G}, $$\nu(G) \geq 1-C_2\varepsilon^{1/2},$$
    where $C_2 = (k+1)(k+3)$ as desired.

    Last but not least we check the thin planes condition for $G$. It is enough to check the condition for any dyadic scale $\delta > 0$. Let $(x_0, \dotsc, x_k) \in G$ and fix an index $j \in \{0, \dotsc, k\}$. Let $V = \psi_{k}(x_0, \dotsc, x_k)$, we want to estimate the measure $\mu_j(V(\delta))$. Let $i$ be any index distinct from $j$. We have 
    $$
    (x_0, \dotsc, x_k) \in G \subset G_i \subset G_i(\delta).
    $$
    So there exists a dyadic $\delta$-box $Q$ such that $Q \cap \widetilde X_i \neq \emptyset$ and $x_i \in Q$. Then we have by definition
    $$
    (x_0, \dotsc, x_k) \in G_i(Q) = Q \times \left ( \pi_{x_i(Q)}^{-1}( G_{\hat i}^{(x_i(Q))}) \cap \prod_{t\neq i} G_{it}|_{x_i(Q)}  \right),
    $$
    i.e., we have $x_{t} \in G_{it}|_{x_i(Q)}$ for any $t \neq i$ and $(\pi_{x_i(Q)}(x_{\hat i})) \in G_{\hat i}^{(x_i(Q))}$.

    Next, we observe that if we let $V' = \psi_{k-1}(\pi_{x_i(Q)}(x_{\hat i}))$ then for a constant $A(n,C)$,
    \begin{equation}\label{eq3}
    \pi_{x_i(Q)}(V(\delta) \cap X_j) \subset V'(A \delta) \cap \pi_{x_i(Q)}(X_j).    
    \end{equation}
    The measures $\{\pi_{x_i(Q)}(\mu_t)\}_{t\neq i}$ have $(\sigma'-\varepsilon, C_1K_1, 1-2\varepsilon^{1/2})$-thin $(k-1)$-planes with respect to $G_{\hat i}^{(x_i(Q))}$. So we can use this property on the tuple $(\pi_{x_i(Q)}(x_{\hat i})) \in G_{\hat i}^{(x_i(Q))}$ at scale $A\delta$ to conclude that
    $$
    \pi_{x_i(Q)}(\mu_j) (V'(A \delta)) \le C_1K_1 (A\delta)^{\sigma'-\varepsilon}.
    $$
    Letting $K = C_1K_1A^{\sigma' - \varepsilon}$, by \eqref{eq3} and the definition of the pushforward, we conclude that $\mu_j(V(\delta)) \le K \delta^{\sigma' - \varepsilon}$ holds. Since $\sigma' < \sigma$ is arbitrary and $\varepsilon > 0$ is arbitrary, this concludes the proof.
\end{proof}

\subsection{Proof of Theorem \ref{thm:qualitative-irreducible-beck}} \label{sec:qualititative-irreducible-beck}
We now deduce the qualitative version of Beck's theorem for irreducible measures (Theorem \ref{thm:qualitative-irreducible-beck}). Let $\mu$ be an irreducible $(C,s)$-Frostman probability measure supported on $X$.

Our first lemma divides the measure $\mu$ into a list of measures in $C$-good position.

\begin{lemma}[Partitioning measure]\label{lem:partitioning-measure}
For each $C>0$ and $s > 0$, there exists $A(C)\ge 1$ such that the following hold.
    Let $s>0$, and suppose that $\mu$ is an $(C,s)$-Frostman probability measure that is irreducible in $\R^n$ and let $\supp(\mu) = X$. Then, for any $1\leq k\leq n$, there exist balls $B_0, \ldots, B_k$ such that $B_0\times \ldots \times B_k$ is $1/A(C)$-separated from $\mc S_k$ and $\mu(B_i) >0$ for $i=0, \ldots,k$. In particular, the restricted measures $\mu_i = \frac{1}{\mu(B_i)} \mu|_{B_i}$ are $(C/\mu(B_i), s)$-Frostman and irreducible.
\end{lemma}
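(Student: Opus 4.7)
The plan is to use the qualitative irreducibility of $\mu$ to produce $k+1$ affinely independent points $x_0,\ldots,x_k\in\supp\mu$ by induction, and then take the balls $B_i$ to be sufficiently small neighborhoods of the $x_i$. With this construction, positivity $\mu(B_i)>0$ will be automatic from $x_i\in\supp\mu$, separation of $B_0\times\cdots\times B_k$ from $\mc S_k$ will be a quantitative form of affine independence, and the ``in particular'' conclusions about $\mu_i$ will be elementary rescaling statements.

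For the inductive construction, I would first pick any $x_0\in\supp\mu$. Assuming that affinely independent points $x_0,\ldots,x_{j-1}\in\supp\mu$ have already been selected for some $1\le j\le k$, the span $V_{j-1}:=\aff(x_0,\ldots,x_{j-1})$ has dimension $j-1\le k-1\le n-1$ and is therefore a \emph{proper} affine subspace of $\R^n$. Irreducibility of $\mu$ in $\R^n$ yields $\mu(V_{j-1})=0$, hence $\mu(\R^n\setminus V_{j-1})=1>0$, and I may pick $x_j\in\supp\mu\setminus V_{j-1}$; this keeps the enlarged tuple affinely independent. After $k+1$ steps I obtain affinely independent $x_0,\ldots,x_k\in\supp\mu$.

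To turn these points into balls with the required separation, I would set
\[
\eta:=\min_{0\le j\le k}\mathrm{dist}\bigl(x_j,\ \aff(\{x_i:i\ne j\})\bigr)>0
\]
and take $B_i:=B(x_i,\eta/M)$ for a constant $M=M(n,k)$ to be chosen. A standard linear-algebra perturbation estimate shows that, for $M$ large enough (depending only on $n,k$), every tuple $(y_0,\ldots,y_k)\in B_0\times\cdots\times B_k$ stays affinely independent with $\mathrm{dist}(y_j,\aff(\{y_i\}_{i\ne j}))\ge \eta/2$, and hence lies at distance $\gtrsim\eta$ from $\mc S_k$ in $(\R^n)^{k+1}$. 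Taking $A$ to be a suitable multiple of $1/\eta$ yields the desired $1/A$-separation, and $\mu(B_i)>0$ follows from $x_i\in\supp\mu$. For the ``in particular'' clause, the estimate $\mu_i(B(y,r))=\mu(B(y,r)\cap B_i)/\mu(B_i)\le (C/\mu(B_i))r^s$ gives the Frostman bound, and for any proper affine $V'\subsetneq\R^n$ one has $\mu_i(V')\le \mu(V')/\mu(B_i)=0$ by irreducibility of $\mu$.

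The main subtlety I would flag is that the constant $A$ produced this way depends on the specific measure $\mu$ through $\eta$, not purely on $C$ as the notation $A(C)$ in the statement might suggest; I read the notation as permitting implicit dependence on $\mu$, which is all that is needed for the downstream application in Theorem \ref{thm:qualitative-irreducible-beck}, where only finiteness of such a constant (for the given $\mu$) is required.
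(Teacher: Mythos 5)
Your proof is correct and takes essentially the same approach as the paper: irreducibility guarantees the support is not contained in any hyperplane, so one inductively selects $k+1$ affinely independent points from $\supp\mu$ and then takes small balls around them, with separation from $\mc S_k$ following from the openness of quantitative affine independence. Your final caveat is also apt: the constant $A$ depends on $\mu$ (through the chosen points), not only on $C$ as the notation suggests, but this does no harm since the downstream application (Theorem \ref{thm:qualitative-irreducible-beck}) only requires a finite constant for the given measure.
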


\begin{proof}
    Since the measure is irreducible, its support $X = \supp\mu$ is not contained in any hyperplane. So we can find affinely independent points $x_0,\ldots, x_{k}\in X$. Then take $B_i$ to be the $\varepsilon$-ball around $x_i$ and note that $B_0\times \ldots \times B_k$ is $1/A(C)$-separated from $\mc S_k$ for a small enough $\varepsilon$. We clearly have $\mu(B_i) >0$ so we are done.
\end{proof}

The next lemma establishes the existence of a \emph{modulus of irreducibility} for  irreducible measures.

\begin{lemma}\label{lem:compactness-for-irreducible-measures}
    Let $\mu$ be an irreducible measure on $V\cap B(0,1)$ then for any $\tau >0$ there exists $w := w(\tau) >0$ such that $\mu$ is $(w, \tau)$-irreducible in $V$.
\end{lemma}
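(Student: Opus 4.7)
I would prove this by contradiction via a compactness argument in the affine Grassmannian of $V$. Suppose no such $w$ exists: then there is a sequence $w_n \downarrow 0$ and, for each $n$, a proper affine subspace $H_n \subsetneq V$ with $\mu(H_n(w_n)) > \tau \mu(B^n(0,1))$. Since $\supp \mu \subset V \cap B^n(0,1)$, the neighborhood $H_n(w_n)$ must meet $B^n(0,1)$, so once $w_n < 1$ the flat $H_n$ meets the compact ball $\overline{B}^n(0,2)$. After passing to a subsequence I may assume $\dim H_n = k$ is constant for some $0 \le k \le \dim V - 1$.

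Next, since the collection of $k$-dimensional affine subspaces of $V$ meeting $\overline{B}^n(0,2)$ is compact (equivalently, a compact family of closed subsets of $\overline{B}^n(0,2)$ in the Hausdorff topology), I can extract a further subsequence along which $H_n$ converges to a $k$-dimensional affine subspace $H_\infty \subseteq V$; because the dimension is preserved in the limit and $k \le \dim V - 1$, $H_\infty$ is a proper affine subspace of $V$. Fix $\epsilon > 0$. For all sufficiently large $n$, we have $w_n < \epsilon/2$ and $H_n \cap \overline{B}^n(0,2) \subset H_\infty(\epsilon/2)$, and the triangle inequality then gives the containment $H_n(w_n) \cap B^n(0,1) \subset H_\infty(\epsilon)$. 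Consequently,
\[
\mu(H_\infty(\epsilon)) \;\ge\; \mu\bigl(H_n(w_n)\bigr) \;>\; \tau\, \mu(B^n(0,1))
\]
for all $n$ large enough (depending on $\epsilon$).

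Finally, since $H_\infty$ is closed we have $H_\infty = \bigcap_{\epsilon > 0} H_\infty(\epsilon)$, and continuity from above for the finite measure $\mu$ yields $\mu(H_\infty) \ge \tau \mu(B^n(0,1)) > 0$, contradicting the assumption that $\mu$ is irreducible in $V$. The only step that requires care is the compactness argument: I must fix the dimension $k$ along the subsequence before passing to the Hausdorff limit so that the limiting flat $H_\infty$ remains a proper affine subspace of $V$ rather than degenerating or filling out all of $V$. Everything else is routine triangle-inequality and measure-theoretic bookkeeping.
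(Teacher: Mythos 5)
Your proof is correct and takes essentially the same compactness-in-the-Grassmannian approach as the paper, just phrased as a contradiction argument rather than by directly tracking the maximizing subspace at each dyadic scale. You also make explicit the step of fixing the dimension $k$ along a subsequence so the limiting flat stays proper, which the paper leaves implicit.
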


\begin{proof}
    This is a statement about compactness: for $j\ge 0$ let $H_j \subset V$ be subspace maximizing $\mu(H_j(2^{-j}))$. Let $H$ be a limiting point of the sequence $\{H_j\}$ (note that it sits in a compact subset of the Grassmannian). Then, by the irreduciblity of $\mu$,
    \[
    0 = \mu(H) = \limsup_{j\to \infty} \mu(H_j(2^{-j}))
    \]
    so there exists $j_0$ so that $\mu(H_j(2^{-j})) \le \tau\mu(B(0,1))$ for all $j\ge j_0$. 
\end{proof}

The outcome of Lemmas \ref{lem:partitioning-measure} and \ref{lem:compactness-for-irreducible-measures} is that for any $\tau > 0$, we have $(w,\tau)$-irreducible measures $\mu_0,\dots,\mu_k$ in $A(C)$-good position. By Theorem \ref{theorem:section4main}, $(\mu_0,\dots,\mu_k)$ span $\sigma$-thin $k$-planes.
Applying Lemma \ref{lem:thin-planes-implies-pushforward-frostman}, we deduce 
\[
\dim \mathcal P^k(X) \geq (k+1) \min\{\sigma, n-k\}
\]
for all $\sigma < s$. Letting $\sigma \nearrow s$ concludes the proof of Theorem \ref{thm:qualitative-irreducible-beck}.

\section{Measures in stable position}\label{sec:stable-position}

We fix a collection of flats $V_1, \ldots, V_m \subset \R^n$. Denote $n_j = \dim V_j$. We will also have a collection of measures $\mu_{j, 1}, \ldots, \mu_{j, n_j}$ supported on $V_j \cap B^n(0,1)$. Recall the algorithm of Lemma \ref{lem:decomposition}, which decomposes a measure $\mu$ into irreducible pieces $\mu_{j,i}$ supported on proper subflats $V_1,\dots,V_m\subset \R^n$. The basic issue behind ensuring that we span hyperplanes when we choose a point from the support of each $\mu_{j,i}$ is that the dimension of the affine span can be less than $n-1$ or equal to $n$.

For a flat $V \subset \R^n$ let $\overline{V} = \operatorname{span}(V \times \{1\}) \subset \R^{n+1}$ denote the linearization of $V$ (note that $\dim \overline{V} = \dim V+1$). In what follows, it will be convenient to work with linearizations of flats.

 For $j\in [k]$ let $v_{j, 1}, \ldots, v_{j, n_j+1} \in \R^{n+1}$ be a fixed orthonormal basis of the linear space $\overline{V_j}$ and let $A_j = (v_{j,1}, \ldots, v_{j, n_{j}+1})$ be the corresponding $(n+1) \times (n_j+1)$ matrix. For a choice of an index set $I_j \subset [n_j]$ for every $j=1, \ldots, m$ and
 $\overline{x} = (x_{j, i}:~j\in [m], ~ i\in I_j)$, where $x_{j,i} \in \supp \mu_{j,i}$, we denote $\overline I = \{(j, i), ~ j\in[m],i\in I_j\}$ and let $B_{\overline{I}}(\overline{x})$ be the $(n+1)\times (\sum |I_j|)$ matrix composed of vectors $\begin{bmatrix}x_{j, i}\\ 1\end{bmatrix} \in \overline{V_j}$ over all $(j, i) \in \overline{I}$. Since $n_j \le n$ for all $j$, we always have $\overline{I} \subset [m]\times [0,n]$. 

 For $J \subset [k]$ we let $A_J = (A_{j}, ~j\in J)$ which is a $(n+1)\times (\sum_{j\in J} (n_j+1))$ matrix. 
 For index sets $\overline{I}$ and $J \subset [k]$, we will be interested in the properties of the matrices
 \[
 (B_{\overline{I}}(\overline{x}), A_{J})
 \]
 over various choices of $\overline{x}$ supported on our measures $\mu_{j, i}$. For a matrix $A$ and an integer $r\ge 1$ we let $M_r(A)$ be the maximum absolute value of an $r \times r$ minor of $A$.

\begin{definition}
    Given a collection of flats $V_j$ and bases $v_{j, i}$ as above, we say that a collection measures $(\mu_{j, i}:~ i\in [n_j], j\in [m])$ is in $c$-stable position if for every choice of index sets $\overline{I}$ and $J$ there exists a number $r=r(\overline{I}, J)$ such that we have $\operatorname{rank} (B_{\overline{I}}(\overline{x}), A_J)=r$ and $M_r(B_{\overline{I}}(\overline{x}), A_J) \ge c$ for any $\overline{x} \in \prod_{j, i} \supp \mu_{j,i}$.
\end{definition}

\begin{lemma}\label{lem:stable-position}
    Let $V_1, \ldots, V_m$ be arbitrary flats, set $n_j=\dim V_j$. Let $\mu_{j, i}$, $i=1, \ldots, n_j$, be arbitrary irreducible measures on $V_j \cap B^n(0,1)$. Then there exist $c>0$ and balls $B_{j, i} \subset V_j \cap B^n(0,1)$ such that $B_{j, 1}\times \ldots\times B_{j, n_j}$ is $c$-separated from $\mc S_{n_j}$,  $\mu_{j, i}(B_{j, i}) >0$ and the collection $(\mu_{j, i}|_{B_{j, i}})$ is in $c$-stable position.
\end{lemma}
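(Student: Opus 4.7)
\bigskip
\noindent\textbf{Proof proposal.}
The plan is to first show that for every pair of index sets $(\overline{I}, J)$, the maximal rank
\[
r^*(\overline{I}, J) := \max_{\overline{x}} \operatorname{rank}(B_{\overline{I}}(\overline{x}), A_J)
\]
is achieved at $\prod \mu_{j,i}$-almost every $\overline{x}$, and then to intersect the resulting full-measure sets over the finitely many pairs $(\overline{I}, J)$ to produce a single point $\overline{x}^* \in \prod \supp(\mu_{j,i})$ simultaneously realizing all these maxima. Small enough balls $B_{j,i}$ around each $x_{j,i}^*$ will preserve the rank equalities and a quantitative lower bound on the relevant minors by continuity. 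The $c$-separation of $B_{j,1}\times\cdots\times B_{j,n_j}$ from $\mathcal{S}$ will then be a special case of stable position applied to $\overline{I} = \{(j,1), \ldots, (j,n_j)\}$ and $J = \emptyset$, since affine independence of the $x_{j,i}$ corresponds to a maximal-rank minor of $B_{\overline{I}}$.

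The main technical step is to prove by induction on $|\overline{I}|$ that the rank-drop set
\[
Z(\overline{I}, J) = \{\overline{x} : \operatorname{rank}(B_{\overline{I}}(\overline{x}), A_J) < r^*(\overline{I}, J)\}
\]
has $\prod_{(j,i) \in \overline{I}} \mu_{j,i}$-measure zero. The base case $\overline{I} = \emptyset$ is trivial. For the inductive step I fix $(j^*, i^*) \in \overline{I}$ and set $\overline{I}' = \overline{I}\setminus\{(j^*, i^*)\}$. The column $\bigl[\begin{smallmatrix} x_{j^*, i^*} \\ 1 \end{smallmatrix}\bigr]$ appears in at most one place in any square submatrix, so after fixing the remaining variables $\overline{x}'$, every $r^*\times r^*$ minor of $(B_{\overline{I}}(\overline{x}), A_J)$ is an affine function of $x_{j^*, i^*}$. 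Consequently the Fubini slice $Z(\overline{I}, J)|_{\overline{x}'}$ is an affine subspace of $V_{j^*}$, and has $\mu_{j^*, i^*}$-measure zero by irreducibility unless it equals all of $V_{j^*}$.

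The slice equals $V_{j^*}$ precisely when $\operatorname{rank}(B_{\overline{I}}(\overline{x}', x), A_J) < r^*(\overline{I}, J)$ for every $x \in V_{j^*}$. Since the vectors $\bigl[\begin{smallmatrix} x \\ 1 \end{smallmatrix}\bigr]$ with $x \in V_{j^*}$ linearly span $\overline{V_{j^*}}$, maximizing over $x$ gives $\operatorname{rank}(B_{\overline{I}'}(\overline{x}'), A_{J \cup \{j^*\}})$, and exchanging the order of maxima yields $r^*(\overline{I}, J) = r^*(\overline{I}', J \cup \{j^*\})$. Thus the set of $\overline{x}'$ where the slice is all of $V_{j^*}$ coincides with $Z(\overline{I}', J \cup \{j^*\})$, which has measure zero by the inductive hypothesis. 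Fubini then gives $\prod_{(j,i) \in \overline{I}} \mu_{j,i}(Z(\overline{I}, J)) = 0$, completing the induction.

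With this in hand, the complements $U_{\overline{I}, J} = Z(\overline{I}, J)^c$ each have full measure under $\prod \mu_{j,i}$, so their finite intersection does too and meets $\prod \supp(\mu_{j,i})$. I pick $\overline{x}^*$ in the intersection; for each $(\overline{I}, J)$ some specific $r^*(\overline{I}, J) \times r^*(\overline{I}, J)$ minor of $(B_{\overline{I}}(\overline{x}^*), A_J)$ is nonzero, and by continuity of minors I can choose balls $B_{j,i}$ centered at $x_{j,i}^*$ small enough that each such minor stays bounded below by a common $c > 0$ on $\prod B_{j,i}$ (which forces the rank to remain $r^*(\overline{I}, J)$). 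The positivity $\mu_{j,i}(B_{j,i}) > 0$ follows from $x_{j,i}^* \in \supp \mu_{j,i}$. I expect the induction step to be the main subtlety: it rests on the geometric observation that ``adjoining $V_{j^*}$ to the columns indexed by $J$'' is equivalent to ``maximizing a column indexed by $\overline{I}$ over $x \in V_{j^*}$,'' which is what allows the argument to shrink $|\overline{I}|$ by one.
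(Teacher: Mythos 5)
Your proof follows a genuinely different route from the paper's (the paper maximizes the $\ell^1$-norm of the full rank tuple $\big(\operatorname{rank}(B_{\overline I}(\overline x),A_J)\big)_{\overline I,J}$ over $\prod\supp\mu_{j,i}$ and combines lower semicontinuity of rank with that maximality to deduce local constancy), but there is a gap in your inductive step. You claim that maximizing $\operatorname{rank}(B_{\overline I}(\overline x',x),A_J)$ over $x\in V_{j^*}$ yields $\operatorname{rank}(B_{\overline I'}(\overline x'),A_{J\cup\{j^*\}})$, hence that $r^*(\overline I,J)=r^*(\overline I',J\cup\{j^*\})$ and that the bad set of $\overline x'$ coincides with $Z(\overline I',J\cup\{j^*\})$. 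This is false: a single added column $\bigl[\begin{smallmatrix}x\\1\end{smallmatrix}\bigr]$ can raise the rank by at most $1$, whereas the block $A_{j^*}$ contributes $n_{j^*}+1$ columns spanning $\overline{V_{j^*}}$. Already for $\overline I=\{(1,1)\}$, $\overline I'=\emptyset$, $J=\emptyset$ one has $r^*(\overline I,\emptyset)=1$ while $r^*(\emptyset,\{1\})=\operatorname{rank}(A_1)=n_1+1$.

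The conclusion of the inductive step (that the bad set has measure zero) is still true, but it requires both inductive hypotheses. Let $W_0(\overline x')$ denote the column space of $(B_{\overline I'}(\overline x'),A_J)$. Then $\max_{x\in V_{j^*}}\operatorname{rank}(B_{\overline I}(\overline x',x),A_J)$ equals $\dim W_0(\overline x')$ if $\overline{V_{j^*}}\subset W_0(\overline x')$, and $\dim W_0(\overline x')+1$ otherwise; moreover $\overline{V_{j^*}}\subset W_0(\overline x')$ if and only if $\operatorname{rank}(B_{\overline I'}(\overline x'),A_{J\cup\{j^*\}})=\operatorname{rank}(B_{\overline I'}(\overline x'),A_J)$. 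Applying the inductive hypothesis to both $(\overline I',J)$ and $(\overline I',J\cup\{j^*\})$, almost every $\overline x'$ attains both $r^*(\overline I',J)$ and $r^*(\overline I',J\cup\{j^*\})$ simultaneously; a short case check on whether $r^*(\overline I',J\cup\{j^*\})$ exceeds $r^*(\overline I',J)$ then shows that this maximum over $x$ is $\ge r^*(\overline I,J)$ almost everywhere, so the slice is a proper affine subspace a.e.\ and Fubini closes the induction. You should either patch the step along these lines, or switch to the paper's maximality argument, which sidesteps the Fubini induction entirely.
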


\begin{proof}
    Observe that $M_r(\cdot)$ is a continuous function, so if we have $M_r(B_{\overline{I}}(\overline{x}), A_J) \ge c>0$ for some $\overline{x}$, then we have $M_r(B_{\overline{I}}(\overline{x'}), A_J) \ge c/2$ for all $\overline{x}'$ within a small neighborhood of $\overline{x}$. Now for each $\overline{x} \in \prod \supp\mu_{j,i}$ consider the tuple \[
    \overline{r}(\overline{x}) = (\operatorname{rank}(B_{\overline{I}}(\overline{x}), A_J):~ \text{ all possible }\overline{I}, J).
    \]
    Note that $\overline{r}(\overline{x})$ is a vector of integers between $0$ and $n+1$ in some finite dimensional vector space (there are only finitely many options for $\overline{I}$, $J$). So we can find a vector $\overline{x} \in \prod \supp\mu_{j,i}$ for which $\overline{r}(\overline{x})$ has maximum possible sum of coordinates. So for any other $\overline{x'}$ we have $\|\overline{r}(\overline{x'})\|_1 \le \|\overline{r}(\overline{x})\|_1$. On the other hand, by the continuity, we must have $\overline{r}(\overline{x'}) \ge \overline{r}(\overline{x})$ (coordinate-wise) for $\overline{x'}$ in a small neighbourhood of $\overline{x}$. So we can take $B_{j, i}$ to be small balls around $x_{j,i}$ and $c=\frac{1}{2}\min_{\overline{I}, J} M_{\operatorname{rank}(B_{\overline{I}, J}(\overline{x}), A_J)}(B_{\overline{I}, J}(\overline{x}), A_J)$. Finally note that $c$-stability implies the separation from $\mc S_{n_j}$.
\end{proof}

We will need a lemma showing that $c$-stable position is preserved under radial projections.

\begin{lemma}\label{lem:projecting-stable}
    Suppose that $V_1, \ldots, V_m$ are subspaces in $\R^n$ and $(\mu_1, \ldots, \mu_m)$ are measures in $c$-stable position with respect to them. Let $I_0 \subset [m]$ be an index set and let $n_0 = \dim \langle\overline{x}_{I_0} \rangle =\dim \langle x_{i}, ~ i\in I_0\rangle$ for arbitrary $x_i \in \supp\mu_i$. Let $U \subset \R^n$ be a subspace of dimension $n-n_0-1$ transversal to $\langle\overline{x}_{I_0}\rangle$ (for all $\overline{x}$) and consider the projection map $\pi_{\langle\overline{x}_{I_0}\rangle}: \R^n \setminus \langle\overline{x}_{I_0}\rangle \to U$.
    Define
    \[
    \theta = \angle( \overline{U}, \overline{\langle\overline{x}_{I_0}\rangle}).
    \]
    Then the collection of measures $\tilde \mu_i = \pi_{\langle \overline{x}_{I_0}\rangle}(\mu_i)$, $i\in [m]\setminus I_0$ is in $\gtrsim (\theta c)^{O(1)}$-stable position with respect to spaces $\pi_{\langle \overline{x}_{I_0}\rangle}(V_i)$, $i \in [m]\setminus I_0$. 
\end{lemma}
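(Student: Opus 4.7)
The plan is to work in the linearized picture in $\R^{n+1}$. The radial projection $\pi_{\langle \overline{x}_{I_0}\rangle}$ lifts to the linear projection $\tilde\pi\colon\R^{n+1}\to\overline{U}$ along $W := \overline{\langle \overline{x}_{I_0}\rangle}$, a subspace of dimension $n_0+1$. By the transversality assumption, $\|\tilde\pi\|_{\mathrm{op}} = O(1/\sin\theta)$. The key observation is that $W$ is exactly the column span of $B_{I_0}(\overline{x})$, namely the vectors $\begin{bmatrix}x_i\\ 1\end{bmatrix}$ for $i\in I_0$. Consequently, for any matrix $M = (B_{\overline{I}}(\overline{x}), A_J)$ arising from data for the projected collection, the augmented matrix $(M, B_{I_0}(\overline{x})) = (B_{\overline{I}\cup I_0}(\overline{x}), A_J)$ is itself of the form covered by the original $c$-stable position.

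First, I would verify constancy of rank. The identity
\[
\operatorname{rank}(\tilde\pi M) = \operatorname{rank}(M, B_{I_0}(\overline{x})) - \dim W,
\]
combined with the constancy assertions of the original stability, shows the projected rank is independent of $\overline{x}$. To define orthonormal bases for the projected flats $\tilde\pi\overline{V_j}$ (whose dimensions may drop below those of $\overline{V_j}$ when $\overline{V_j}\cap W\neq 0$), I apply Gram--Schmidt to a maximal linearly independent subset of the columns of $\tilde\pi A_j$, with the selection made uniform in $\overline{x}$ using the constancy of subrank across sub-matrices.

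For the minor lower bound, I exploit the Gram determinant identity
\[
\operatorname{vol}_{\R^{n+1}}(w_1,\dots,w_d, m_1,\dots,m_r) = \operatorname{vol}(w_1,\dots,w_d)\cdot\operatorname{vol}(P_{W^\perp}m_1,\dots,P_{W^\perp}m_r),
\]
where $w_1,\dots,w_d$ is a basis of $W$ drawn from $B_{I_0}(\overline{x})$ and $m_1,\dots,m_r$ are columns of $M$ realizing the desired projected minor. The left-hand side is $\gtrsim c$ by $c$-stability of the augmented matrix (up to a Cauchy--Binet combinatorial factor), while $\operatorname{vol}(w_1,\dots,w_d) = O(1)$ since each $w_k$ has norm $O(1)$. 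The restriction $T = \tilde\pi|_{W^\perp}\colon W^\perp\to \overline{U}$ has singular values in $[1,1/\sin\theta]$, so $\operatorname{vol}(P_{W^\perp}m_k)$ and $\operatorname{vol}(\tilde\pi m_k)$ agree up to a factor $(\sin\theta)^{-O(r)}$. Converting this volume bound into a minor bound in the orthonormal bases $\tilde A_j$ introduces one more $\theta$-dependent factor through the change of basis from $\tilde\pi A_j$ to $\tilde A_j$; assembling these estimates yields the claimed $(\theta c)^{O(1)}$ lower bound.

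The main obstacle will be the final basis-change bookkeeping: $\tilde\pi A_j$ has $n_j+1$ columns that can become linearly dependent when $\overline{V_j}\cap W\neq 0$, and its condition number as a basis of $\tilde\pi\overline{V_j}$ is controlled only by $\|\tilde\pi\|\sim 1/\sin\theta$ together with the angles between $\overline{V_j}$ and $W$ (the latter quantified by the original $c$-stability, since small such angle would force a small augmented minor). Tracking these factors carefully produces the exponent $O(1)$ in $(\theta c)^{O(1)}$, polynomial in $n$.
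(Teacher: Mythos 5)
Your proof follows essentially the same approach as the paper's: both linearize, augment with $B_{I_0}(\overline{x})$, and factorize the $c$-stability lower bound on a large minor into a piece controlled by $W = \overline{\langle\overline{x}_{I_0}\rangle}$ and a piece that becomes the projected minor, with the $\theta$-dependence entering through the obliquity of the projection. Your Gram-determinant identity $\operatorname{vol}(w,m) = \operatorname{vol}(w)\cdot\operatorname{vol}(P_{W^\perp}m)$ is a clean reformulation of the paper's step of subtracting combinations of $B_{I_0}$-columns to land in $\overline{U}$ and then splitting $M_{r+r_0}$ into a product $M_{r_0}\cdot M_r$; both versions leave the final change-of-basis bookkeeping (from $\tilde\pi A_j$ to an orthonormal $\tilde A_j$, controlled by $c$-stability of the angles between $\overline{V_j}$ and $W$) at the same level of detail.
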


\begin{proof}
    Choose an orthonormal basis in the linearization $\overline{\pi_{\langle \overline{x}_{I_0}\rangle}(V_i)} \subset \overline{U}$ for $i \in [m]\setminus I_0$ and define the corresponding matrix $A'_i$. Choose some $I,J \subset [m]\setminus I_0$ and pick any $\overline{x'} \in \prod_{i\in [m]\setminus I_0} \supp\mu_i$. Let $r+r_0$ denote the rank of the matrix $(B_{I_0\sqcup I}(\overline{x}_{I_0},\overline{x'}), A_J)$ (where $r_0 = \operatorname{rank}(B_{I_0}(\overline{x}))$ notably only depends on $I_0$ by $c$-stability),  and furthermore by the $c$-stability of the collection $(\mu_i)$ we have
    \[
    c \le M_{r+r_0}(B_{I_0\sqcup I}(\overline{x}_{I_0}, \overline{x'}), A_J) = M_{r+r_0}(B_{I_0}(\overline{x}), B_I(\overline{x'}), A_J).
    \]
    Note that $\overline{\langle \overline{x}_{I_0}\rangle}$ and $\overline U$ are complementary subspaces of $\R^{n+1}$. Thus, by subtracting appropriate linear combinations of columns in $B_{I_0}(\overline{x})$ we can make the columns of $B_I(\overline{x'})$ and $A_J$ belong to $\overline{U}$. The coefficients of these linear combinations are upper bounded by $(c\theta)^{-O(1)}$ (since $ \overline{x}_{I_0}$ is in $c$-stable position inside $\langle \overline{x}_{I_0}\rangle$ and this space has angle $\theta$ with $\overline{U}$). The result of the column operations will give us precisely the linearization of the radial projection map $\pi_{\langle \overline{x}_{I_0}\rangle}$, and so we obtain
    \[
    (c\theta)^{O(1)} \le M_{r+r_0}(B_{I_0}(\overline{x}), B'_I(\pi_{\langle \overline{x}_{I_0}\rangle}(\overline{x'})), A'_J) \lesssim M_{r_0}(B_{I_0}(\overline{x})) M_r( B'_I(\pi_{\langle \overline{x}_{I_0}\rangle}(\overline{x'}) ), A'_J),
    \]
    and so we conclude that $M_r( B'_I(\pi_{\langle \overline{x}_{I_0}\rangle}(\overline{x'}) ), A'_J) \gtrsim (c\theta)^{O(1)}$. Similarly, we have 
    \[
    r+r_0 = \operatorname{rank}(B_{I_0\sqcup I}(\overline{x}_{I_0}, \overline{x'}), A_J) = \operatorname{rank}(B_{I_0}(\overline{x}))+\operatorname{rank}( B'_I(\pi_{\langle \overline{x}_{I_0}\rangle}(\overline{x'}) ), A'_J)
    \]
    and so $r = \operatorname{rank}( B'_I(\pi_{\langle \overline{x}_{I_0}\rangle}(\overline{x'}) ), A'_J)$. This completes the proof. 
\end{proof}

Using the lemmas of this section, we can reduce our situation to the case when the measures are in $c$-stable position for some $c>0$. This significantly simplifies the geometric analysis and reduces the problem to understanding the quantities $r(\overline{I}, J)$.

\section{Reducing Theorem \ref{thm:main} to the minimal case} \label{sec:reduction-to-minimal}

Let us first recall our current set up of the problem from the previous sections. From Section \ref{sec:irreducibledecomp} we have found flats $V_1,\dots, V_m$ and corresponding irreducible $s$-Frostman measures $\mu_1,\dots, \mu_m$. Furthermore, by the irreducible case in Section \ref{sec:irreducible implies saturated}, we know that for each $(V_j,\mu_j)$, there exists measures $\mu_{j,i}$ for $1 \leq j \leq \dim V_j$ spanning thin $(\sigma, K, 1-\varepsilon)$-thin codimension 1-planes for all $0 < \sigma < s < n-1$ and $\varepsilon>0.$ In fact, we know more about the collection of our flats $\{V_1, \dots,V_m\}$ from Lemma \ref{lem:decomposition}.

\begin{definition}[\textbf{NC} flats] \label{def:NCFlats}
    Let $V_1,\dots, V_m \subset\R^n$ be a collection of affine flats. We say that the collection $V_1,\dots, V_m$ is \emph{non-concentrated}, or \textbf{NC}, if any list of flats $F_1,\dots, F_r$ with $\bigcup_{j \in [m]} V_j \subset \bigcup_{i\in[r]} F_i$ satisfies 
    \[
    \sum_{i\in [r]} \dim F_i \geq n.
    \]
    Note that this implies $\sum_{j \in [m]} \dim V_j \geq n.$
\end{definition}

By combining several of our \textbf{NC} flats $V_j$ into a single flat $F_J =\aff(V_j : j \in J)$ for $J \subset [m]$, and showing that on each $F_J$ we have thin codimension 1-planes, we can in fact assume that our collection $\{V_1,\dots, V_m\}$ is minimal in a certain sense.

\begin{definition}[Minimal Collections of Flats] \label{def:minimal}
    Given a collection of flats $F_1,\dots, F_k\subset \R^n$ and $J \subset [k]$, let $F_J := \aff(F_j : j \in J)$. Then, we say that $F_1,\dots, F_k$ is \emph{minimal} if $\dim F_{[k]} = n \leq \sum_{j \in [k]} \dim F_j$ and $\dim F_J \geq \sum_{j\in J} \dim F_j$.
\end{definition}

To this end, let us now state the main results that will be the contents of the rest of the paper, and show we may assume we have a minimal collection of flats.

 We will prove the following theorem.

\begin{theorem}[General case]\label{thm:general-case}
    Let $V_1, \ldots, V_m \subset \R^n$ be an {\bf NC} collection of flats and suppose that $(\mu_{j, i}:~ j\in [m], i\in [n_j])$ are $(C, s)$-Frostman measures in $c$-stable position. Suppose that $\mu_{j, i}$ is $(w, \tau)$-irreducible inside $V_j$ for every $j, i$. 

    There exist some $n'_j \le n_j$ with $\sum n'_j = n$ such that the following holds. 
    For any $\varepsilon>0$ and $\sigma< s$, if $\tau < \tau(\varepsilon,s, \sigma, C, c, n)$ and $K= K(\varepsilon,s, \sigma, C, c, n, w)$, then the measures $(\mu_{j, i}, ~ j\in [m], i \in [n'_j])$ have $(\sigma, K, 1-\varepsilon)$-thin hyperplanes. 
\end{theorem}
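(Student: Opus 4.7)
My plan is to reduce the general case of Theorem \ref{thm:general-case} to the minimal case (Definition \ref{def:minimal}), which will be treated in Section \ref{sec:minimal-case}, by induction on the total dimension sum $\sum_{j=1}^m n_j$. First observe that the \textbf{NC} hypothesis applied to the trivial one-flat cover by $F_{[m]} := \aff(V_1, \dotsc, V_m)$ forces $\dim F_{[m]} \ge n$, and hence $F_{[m]} = \R^n$. Consequently, in the non-minimal case one obtains a proper subset $J \subsetneq [m]$ with $\dim F_J < \sum_{j\in J} n_j$.

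In this non-minimal case, I would collapse the sub-collection $\{V_j\}_{j\in J}$ into the single lower-dimensional flat $F_J$. To furnish irreducible Frostman measures inside $F_J$, form the convex combination $\mu_J := \frac{1}{|J|}\sum_{j\in J}\mu_{j,1}$, which remains $s$-Frostman with a modified constant, and apply the decomposition Lemma \ref{lem:decomposition} inside $F_J$ to $\supp\mu_J$ to produce a new \textbf{NC} sub-collection $\{W_\ell\}_{\ell=1}^{m'}\subset F_J$ with corresponding irreducible Frostman measures. Lemma \ref{lem:partitioning-measure} and Lemma \ref{lem:stable-position} then refine these into good-position, $c'$-stably positioned Frostman measures with controlled parameters. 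The enlarged collection $\{W_\ell\}\cup\{V_j\}_{j\notin J}$ remains \textbf{NC}, since any affine-flat cover of this new collection is in particular a cover of the original $\bigcup_j V_j$, and its total dimension sum is $\sum_\ell\dim W_\ell+\sum_{j\notin J}n_j\le \dim F_J+\sum_{j\notin J}n_j < \sum_{j=1}^m n_j$. The induction hypothesis then yields the thin hyperplane property for the new collection, and lifting the chosen dimensions back through the inclusions $W_\ell\subset F_J$ gives the required $n'_j\le n_j$ for the original collection.

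\textbf{Main obstacle.} The technical core of the argument will be the minimal case itself, deferred to Section \ref{sec:minimal-case}. There one must patch together the thin $(n_j-1)$-planes inside each $V_j$ (provided by the irreducible case Theorem \ref{thm:qualitative-irreducible-beck}) into thin hyperplanes in $\R^n$, via an inductive radial-projection argument in the spirit of the proof of Theorem \ref{theorem:section4main}. The $c$-stable position assumption should be used decisively via Lemma \ref{lem:projecting-stable} to ensure that iterated radial projections preserve both good- and stable-position, while the dimension-sum formula for affine joins will govern the admissible choices of the $n'_j$ and confirm that the resulting spans are genuine hyperplanes rather than lower-dimensional flats. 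Parameter tracking through the reduction — the dependencies $\tau = \tau(\varepsilon,s,\sigma,C,c,n)$ and $K=K(\varepsilon,s,\sigma,C,c,n,w)$ — will be delicate but should be essentially routine, since only finitely many reduction steps occur before one reaches a minimal configuration.
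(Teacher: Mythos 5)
Your overall plan — reduce to a minimal configuration, with the hard work deferred to Section \ref{sec:minimal-case} — matches the paper's structure, but the specific reduction you propose diverges from the paper's and has several genuine gaps. The most serious is a mismatch with what the theorem actually asserts. Theorem \ref{thm:general-case} requires the existence of $n'_j\le n_j$ so that the \emph{original} measures $(\mu_{j,i}:j\in[m],i\in[n'_j])$ span thin hyperplanes. Your reduction replaces the block $\{\mu_{j,i}\}_{j\in J}$ by \emph{new} irreducible measures $\{\nu_\ell\}$ supported on new flats $\{W_\ell\}$ produced by Lemma \ref{lem:decomposition} inside $F_J$, and then applies induction to the new collection. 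Even if this were carried through, the conclusion would be a thin-hyperplanes graph for $\{\nu_\ell\}\cup\{\mu_{j,i}\}_{j\notin J}$, and there is no step transporting this back to the original $\mu_{j,i}$'s. The paper avoids this entirely by never discarding measures or flats: it introduces \emph{saturated index sets}, proves (via Lemma \ref{PartitionIntoMinimalParts} and Lemma \ref{lem:minimal-case}) that one may always find a sub-family of index sets whose associated flats form a minimal collection and merge them, and iterates until $[m]$ itself is saturated. Thus the inductive invariant is always phrased in terms of the original $\mu_{j,i}$'s, and the descending quantity is the number of parts of the partition, not $\sum_j n_j$.

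There are also two concrete errors in your reduction step. The claim that the enlarged collection $\{W_\ell\}\cup\{V_j\}_{j\notin J}$ is \textbf{NC} ``since any affine-flat cover of this new collection is in particular a cover of the original $\bigcup_j V_j$'' is false: a cover of $\bigcup_\ell W_\ell\cup\bigcup_{j\notin J}V_j$ covers only $\bigcup_\ell W_\ell\subset F_J$, which is in general a strict subset of $\bigcup_{j\in J}V_j$. Moreover, the estimate $\sum_\ell\dim W_\ell\le\dim F_J$ — the inequality that makes your induction parameter $\sum_j n_j$ strictly decrease — is unjustified and in fact points the wrong way: condition (iii) of Lemma \ref{lem:decomposition}, applied with the $W_\ell$'s themselves as the covering flats, forces $\sum_\ell\dim W_\ell\ge\dim F_J$, so there is no guarantee the total dimension sum decreases. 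Finally, to invoke Lemma \ref{lem:decomposition} inside $F_J$ you would need the set $\supp\mu_J$ to be \textbf{NC} in $F_J$, which is asserted implicitly but never verified; this is not automatic from the \textbf{NC}-ness of the global collection $V_1,\dots,V_m$ in $\R^n$.
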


\begin{remark}
    We start with a qualitatively $\mathbf{NC}$ measure $\mu$, from which we get measures $\mu_{j,i}$, each with modulus of irreducibility $\tau_{j,i}(w)\to 0$ as $w\to 0$. We take $w$ sufficiently small so that $\tau_{j,i}(w) < \tau(\varepsilon, s, \sigma, C, c, n)$ to apply the general case theorem.
\end{remark}

At the end of this section we deduce Theorem \ref{thm:main}, that if $X$ is an $\mathbf{NC}$ set, then
\[
\dim \mathcal P^{n-1}(X) \ge n\min\{\dim X, 1\}
\]
from the more quantitative Theorem \ref{thm:general-case}.

We prove Theorem \ref{thm:general-case} using an inductive scheme.

We denote $V_J = \aff(V_j, ~j\in J)$ for $J\subset [m]$ and let $n_J = \dim V_J$. 

\begin{definition}[Saturated set of indices.]
    Let $V_1, \ldots, V_m \subset \R^n$ be a collection of flats. A set of indices $I \subset [m]$ is called {\em saturated with respect to $V_1, \ldots, V_m$} if there exist functions $K_I(\varepsilon,s, \sigma, C, c, n, w)$ and $\tau_I(\varepsilon,s, \sigma, C, c, n)$ such that the following holds for any $\varepsilon >0$, $\sigma <s$, $w>0$, $\tau \le \tau_I$, $K \ge K_I$. 

    Let $\mu_{t, i}$ for $t\in I$ and $i \in [\dim V_t]$ be $(C, s)$-Frostman measures in $c$-stable position with respect to $V_1, \ldots, V_m$. Suppose that $\mu_{t, i}$ is $(w, \tau)$-irreducible in $V_t$. Then there exist some $n'_t \le \dim V_t$ with $\sum_{t\in I} n'_t = \dim V_{I}$ and such that the measures $(\mu_{t, i}:~ t\in I, ~i \in [n'_t])$ have $(\sigma, K, 1-\varepsilon)$-thin hyperplanes inside $V_I$. 
\end{definition}

Thus, our goal is to show that $[m]$ is saturated with respect to $V_1, \ldots, V_m$, whenever that is an \textbf{NC} collection of flats. The irreducible case, Theorem \ref{theorem:section4main}, implies that $\{j\}$ is saturated for every $j \in [m]$. Here is our main inductive claim:

\begin{theorem}[Minimal case]\label{thm:minimal-case}
    Let $V_1, \ldots, V_m \subset \R^n$ be a collection of flats. 
    Let $I_1, \ldots, I_k \subset [m]$ be a collection of pairwise disjoint, non-empty subsets and suppose that each $I_j$ is saturated.
    Let $F_j = V_{I_j}$ and suppose that $F_1, \ldots, F_k$ is a minimal collection of flats inside $\langle F_1, \ldots, F_k \rangle$. Then $I_1\sqcup \ldots \sqcup I_k$ is also saturated with respect to $V_1, \ldots, V_m$.
\end{theorem}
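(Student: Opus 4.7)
The plan is to induct on $k$: for $k=1$, the set $I_1$ is saturated by hypothesis; for $k\ge 2$, I would apply the saturation of $I_k$ to single out a thin hyperplane $H_k\subset F_k$, project radially from $H_k$, and apply the inductive minimal-case theorem to the $k-1$ projected flats.

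Concretely, saturation of $I_k$ yields integers $(n^k_t)_{t\in I_k}$ summing to $\dim F_k$ and a graph $G_k$ witnessing $(\sigma,K_k,1-\varepsilon/2)$-thin hyperplanes in $F_k$. For $\overline{y}\in G_k$ let $H_k=\aff(\overline{y})$ and let $\pi=\pi_{H_k}\colon V_{[k]}\setminus H_k\to U$, where $\dim U=n-\dim F_k$. A linearization count using the minimality of $F_1,\ldots,F_k$ shows $\dim(\overline{F}_j\cap\overline{F}_k)\le 1$ for every $j<k$, and the $c$-stable position forces $H_k$ to be generic inside $F_k$; combining, we get $\dim\pi(F_J)=\dim F_J$ for every proper $J\subsetneq[k-1]$ and $\pi(F_{[k-1]})=U$, so $\pi(F_1),\ldots,\pi(F_{k-1})$ is itself a minimal collection inside $U$. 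Lemma~\ref{lem:projecting-stable} guarantees that the projected measures $\pi\mu_{t,i}$ for $t\in\bigsqcup_{j<k}I_j$ are in $c'$-stable position, and they inherit the Frostman and $(w',\tau)$-irreducibility properties with controlled constant degradation (using that $\supp\mu_{t,i}$ is bounded away from $H_k$). Applying the inductive minimal case to the projected system yields integers $(n''_t)$ summing to $n-\dim F_k$ and a thin-hyperplane graph $G'_{\overline{y}}$ inside $U$. Set $n'_t=n^k_t$ for $t\in I_k$ and $n'_t=n''_t$ otherwise, so that $\sum n'_t=n$.

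I would then assemble the combined graph $G=\{(\overline{y},\overline{x}'):\overline{y}\in G_k,\ \overline{x}'\in G'_{\overline{y}}\}$, lifted through $\pi$ into $\prod\supp\mu_{t,i}$; Fubini gives $\nu(G)\ge 1-\varepsilon$. For each $\overline{x}\in G$, the affine hull $V_{\overline{x}}$ is a hyperplane of $V_{[k]}$ containing $H_k$, and $\pi V_{\overline{x}}$ is the hyperplane of $U$ spanned by the projected points $\overline{x}'$. The thin-hyperplane bound splits in two cases. For $t\in I_k$, the $c$-stable position forces $V_{\overline{x}}\cap F_k=H_k$, so $\mu_{t,i}(V_{\overline{x}}(\delta))\le\mu_{t,i}(H_k(A\delta))\le K_k(A\delta)^\sigma$ by the thin property of $G_k$. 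For $t$ in any $I_j$ with $j<k$, the thin property of $G'_{\overline{y}}$ gives $\pi\mu_{t,i}(\pi V_{\overline{x}}(A\delta))\le K'(A\delta)^\sigma$, which pulls back along the projection $\pi$ (locally bi-Lipschitz on $\supp\mu_{t,i}$) to $\mu_{t,i}(V_{\overline{x}}(\delta))\le K''\delta^\sigma$.

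The main obstacle is synchronizing the induction so that saturation of each $I_j$ with $j<k$ is available in the projected system. The cleanest fix is to perform a joint induction on the pair $(k,n)$ in lex order: once the minimal case has been established for all strictly smaller $(k',n')$, saturation of $I_j$ with respect to the projected flats follows from the inductive derivation of saturation (via the irreducible case of Section~\ref{sec:irreducible implies saturated} and earlier instances of the minimal case) applied to the projected setup of strictly smaller ambient dimension. Secondary technicalities — Borel measurability of $G$ and uniformity of the constants defining $G'_{\overline{y}}$ as $\overline{y}$ ranges over $G_k$ — are handled by Fubini together with a standard measurable-selection argument, possibly after restricting to a positive-measure subset of $\overline{y}\in G_k$ on which the constants for $G'_{\overline{y}}$ are uniformly bounded.
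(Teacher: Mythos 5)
Your approach is genuinely different from the paper's, and there is a real gap in the dimension-counting that must be addressed before it can be evaluated further.

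The paper (when $p = \sum n_j - n \ge 1$) leaves $F_1,\dots,F_{k-1}$ untouched: it forms the $(p-1)$-flat $Q(\overline{x}) = P_{[k-1]}(\overline{x})\cap F_k$ inside $F_k$, shows via the Key Lemma (Lemma~\ref{lem:key-lemma}) that $Q(\overline{x})$ typically lies on a thin $(p-1)$-planes graph, then radially projects \emph{only} the $I_k$-measures from $Q(\overline{x})$ to a flat $Z\subset F_k$ of dimension $n_k-p$, and invokes the General Case (Theorem~\ref{thm:general-case}) inductively in dimension $n_k-p<n$. You instead project $F_1,\dots,F_{k-1}$ from $H_k=\aff(\overline{y})$, a hyperplane of $F_k$, onto $U$ with $\dim U = n-n_k$, and try to invoke the Minimal Case with $k-1$ flats. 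This avoids the Key Lemma entirely, which would be a genuine simplification if the rest were correct.

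However, your step \emph{``combining, we get $\dim\pi(F_J)=\dim F_J$ for every proper $J\subsetneq[k-1]$''} is false whenever $p\ge 1$, and the justification offered does not support it. The bound $\dim(\overline{F_j}\cap\overline{F_k})\le 1$ for singletons is correct, but it says nothing about $\overline{F_J}\cap\overline{F_k}$ for $|J|\ge 2$: minimality only gives $\dim(\overline{F_J}\cap\overline{F_k})\le \dim F_J - \sum_{j\in J}n_j + 1$, which can be much larger than $1$. Concretely, if $n=4$, $k=2$, $n_1=n_2=3$ (so $p=2$), then $\dim(\overline{F_1}\cap\overline{F_2})=3$ and any $\overline{H_2}$ of codimension $1$ in $\overline{F_2}$ meets it in dimension $\ge 2$; projecting from $H_2$ collapses $F_1$ from dimension $3$ down to $\dim U = 1$. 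So $\dim\pi(F_1)\neq\dim F_1$, and $\pi|_{F_1}$ is not injective (let alone bi-Lipschitz on $\supp\mu_{t,i}$), contradicting the later step where you pull back the thin-planes bound ``along the locally bi-Lipschitz projection.'' The weaker inequality $\dim\pi(F_J)\ge\sum_{j\in J}n_j$, which is what minimality of the projected collection actually requires, does hold for \emph{generic} $\overline{H_k}$ --- but establishing that the specific $H_k=\aff(\overline{y})$ produced by the thin-planes graph $G_k$ is generic with respect to the auxiliary subspaces $\overline{F_J}\cap\overline{F_k}$ is not ``forced by $c$-stable position''; $c$-stability constrains only the matrices involving the points $x_{t,i}$ and the fixed bases of the $V_t$ and $F_j$, and says nothing a priori about how $\aff(\overline{y})$ sits relative to $\overline{F_J}\cap\overline{F_k}$. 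You would need a measure-theoretic genericity argument in the style of Lemma~\ref{lem:swapping} to discard the degenerate tuples $\overline{y}$.

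A second, smaller gap: the assertion that saturation of $I_j$ (for $j<k$) can be re-derived in the projected setup by ``applying the irreducible case and earlier instances of the minimal case'' glosses over the fact that the projected measures $\pi\mu_{t,i}$ must still be $(w',\tau')$-irreducible in $\pi(V_t)$, which fails precisely in the collapsing scenario above, and that the projected $V_t$, $t\in I_j$, must remain an $\mathbf{NC}$ family in $\pi(F_j)$ with uniformly bounded constants as $\overline{y}$ ranges over $G_k$. Your plan is salvageable only after the dimension-collapse issue is dealt with, and at that point one is already close to re-introducing the $Q(\overline{x})$ mechanism of the paper: the reason the paper projects from the $(p-1)$-flat $Q(\overline{x})$ rather than the $(n_k-1)$-flat $H_k$ is exactly so that the excess $p$ is absorbed without collapsing any $F_j$.
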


Now we show how to deduce the General Case from the Minimal Case.

\begin{proof}[Proof that Theorem \ref{thm:general-case} follows from Theorem \ref{thm:minimal-case}]
We say that a collection of disjoint subsets $J_1,\dots, J_k \subset [m]$ is \emph{minimal} if the collection of flats $F_j = V_{J_j}$ is minimal inside $V_{J_1\cup \dots \cup J_k}$ (see Definition \ref{def:minimal}).

\begin{lemma}\label{lem:minimal-case}
    Let $J_1, \ldots, J_k \subset [m]$ be pairwise disjoint non-empty sets. Suppose that $J_j$ is saturated for $j=1, \ldots, k$. Suppose that the collection of flats $F_j = V_{J_j}$ is minimal inside the flat $V_{J_1 \sqcup \ldots \sqcup J_k}$. Then $J_1\sqcup \ldots \sqcup J_k$ is saturated as well. 
\end{lemma}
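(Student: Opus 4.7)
The plan is to observe that Lemma \ref{lem:minimal-case} is nothing more than a relabeling of Theorem \ref{thm:minimal-case}, which we treat here as a black box (its proof being the content of Section \ref{sec:minimal-case}). The first step is to reconcile the two ambient flats appearing in the ``minimal inside'' hypotheses of the two statements. Unfolding the notation $V_J = \aff(V_j : j \in J)$ and using the associativity of affine span, I would compute
\[
V_{J_1 \sqcup \dots \sqcup J_k} = \aff\bigl(V_j : j \in J_1 \sqcup \dots \sqcup J_k\bigr) = \aff(V_{J_1}, \dots, V_{J_k}) = \langle F_1, \dots, F_k\rangle,
\]
where in the last equality we used $F_j = V_{J_j}$. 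Consequently, the condition that $F_1, \dots, F_k$ is minimal inside $V_{J_1 \sqcup \dots \sqcup J_k}$ is literally the same (via Definition \ref{def:minimal-flats}) as being minimal inside $\langle F_1, \dots, F_k\rangle$.

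With this identification made, I would invoke Theorem \ref{thm:minimal-case} with $I_j := J_j$ for each $j$. Every hypothesis transfers directly: the pairwise disjointness and non-emptiness of the $J_j$ match those of the $I_j$, saturation of each $J_j$ is saturation of each $I_j$, and minimality of $\{F_j\}$ has just been matched. The conclusion of Theorem \ref{thm:minimal-case} then yields that $I_1 \sqcup \dots \sqcup I_k = J_1 \sqcup \dots \sqcup J_k$ is saturated with respect to $V_1, \dots, V_m$, which is what the lemma asserts.

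There is no genuine obstacle at this step. All of the technical work is packaged inside Theorem \ref{thm:minimal-case} itself; Lemma \ref{lem:minimal-case} is separated from that theorem purely to expose the conclusion in the $J$-notation that will be convenient for the subsequent induction. Indeed, in the proof that Theorem \ref{thm:general-case} follows from Theorem \ref{thm:minimal-case}, one expects to iteratively apply Lemma \ref{lem:minimal-case} to progressively larger collections $J_1 \sqcup \dots \sqcup J_k$, starting from the base case (singletons, saturated by the irreducible case Theorem \ref{theorem:section4main}) and repeatedly fusing saturated pieces along minimal configurations until the whole index set $[m]$ is shown to be saturated.
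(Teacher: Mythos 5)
Correct, and it matches the paper's own proof: both reduce the lemma to Theorem~\ref{thm:minimal-case} via the identification $V_{J_1\sqcup\cdots\sqcup J_k}=\langle F_1,\dots,F_k\rangle$ and the substitution $I_j := J_j$. The only detail the paper records that you gloss over is that after identifying $V_{J_1\sqcup\cdots\sqcup J_k}$ with a Euclidean space of the appropriate dimension, the Frostman constant $C$ and the stability constant $c$ degrade only by bounded factors (in particular, $c$-stable position with respect to $V_1,\dots,V_m$ yields $\gtrsim_c 1$-stable position with respect to $V_{J_1},\dots,V_{J_k}$), a harmless bookkeeping point.
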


\begin{proof}
    We identify $V_{J_1 \sqcup \ldots \sqcup J_k}$ with $\R^{n'}$ for $n' = \dim V_{J_1 \sqcup \ldots \sqcup J_k}$ and apply Theorem \ref{thm:minimal-case}. Note that the constants $C, c$ will only change by a constant factor after the change of coordinates and that the $c$-stable position with respect to $V_1, \ldots, V_m$ implies $\gtrsim_c1$-stable position with respect to spaces $V_{J_1}, \ldots, V_{J_k}$ in the new coordinates.
    Thus, the condition that $J_j$ is saturated can be used as an input in Theorem \ref{thm:minimal-case} and the thin-planes conclusion implies that $J_1\cup\ldots \cup J_k$ is also saturated. 
\end{proof}

    \begin{lemma}\label{PartitionIntoMinimalParts}
        Suppose $J_1\cup \cdots \cup J_t= [m]$ is a partition of $[m]$ with $t\geq 2$ such that $J_i$ is saturated for every $1\leq i \leq t$. Then, for some $I \subset [t]$ with $|I| \geq 2$, the set $(J_i : i \in I)$ is minimal. In particular,
        the set $\bigcup_{i \in I} J_i$ is saturated.
    \end{lemma}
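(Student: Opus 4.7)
The plan is to choose $I^* \subset [t]$ of size at least $2$ that is inclusion-minimal subject to a dimension inequality, verify that the corresponding flat collection is minimal in the sense of Definition \ref{def:minimal}, and then apply Lemma \ref{lem:minimal-case}.

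Set $F_i = V_{J_i}$, $d_i = \dim F_i$, and, for $I \subset [t]$, $D(I) = \dim \aff(F_i : i \in I) = \dim V_{\bigcup_{i \in I} J_i}$. The standing $\mathbf{NC}$ hypothesis on $V_1,\ldots,V_m$ forces $\dim V_{[m]} = n$ (otherwise the single flat $V_{[m]}$ would cover every $V_j$ and contradict non-concentration) and, applied to the cover $\bigcup_{j \in [m]} V_j \subset \bigcup_{i \in [t]} V_{J_i}$, gives $D([t]) = n \le \sum_{i=1}^t d_i$. Hence the family
\[
\mathcal I = \bigl\{\, I \subset [t] : |I| \ge 2,\ D(I) \le \textstyle\sum_{i \in I} d_i \,\bigr\}
\]
contains $[t]$ and is non-empty; let $I^*$ be any inclusion-minimal element. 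Condition (a) of Definition \ref{def:minimal} inside the ambient flat $\aff(F_i : i \in I^*) = V_{\bigcup_{i \in I^*} J_i}$ is precisely $\dim F_{I^*} = D(I^*) \le \sum_{i \in I^*} d_i$, which holds by membership in $\mathcal I$. For condition (b), consider a non-empty proper $I' \subsetneq I^*$: singletons $I' = \{i\}$ satisfy $D(I') = d_i \ge d_i$ trivially, and for $|I'| \ge 2$ the minimality of $I^*$ forces $I' \notin \mathcal I$, so $D(I') > \sum_{i \in I'} d_i$.

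Finally, since the $J_i$ with $i \in I^*$ are pairwise disjoint, non-empty, and each saturated by hypothesis, and the flats $(F_i : i \in I^*)$ form a minimal collection inside their joint affine hull, Lemma \ref{lem:minimal-case} applies to $(J_i : i \in I^*)$ and yields the saturation of $\bigcup_{i \in I^*} J_i$. The argument is purely combinatorial and I foresee no substantial obstacle; the only geometric input is the use of $\mathbf{NC}$ to ensure $\mathcal I$ is non-empty.
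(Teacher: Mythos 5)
Your proof is correct and follows essentially the same approach as the paper's. The only difference is stylistic: the paper first checks whether the whole collection $(J_1,\dots,J_t)$ is already minimal (applying Lemma~\ref{lem:minimal-case} directly in that case), and otherwise extracts an inclusion-minimal $I$ with the \emph{strict} inequality $D(I) < \sum_{i\in I} d_i$, whereas you define $\mathcal I$ using the non-strict inequality $D(I) \le \sum_{i\in I} d_i$ together with the size constraint $|I|\ge 2$, which folds the two cases of the paper into one uniform argument. Both choices yield a minimal flat collection inside its affine hull and conclude via Lemma~\ref{lem:minimal-case}. Your supporting observations --- that $\mathbf{NC}$ forces $\dim V_{[m]} = n$ and $n \le \sum_i d_i$, and that singletons are trivially excluded since $D(\{i\}) = d_i$ --- are the same facts the paper relies on (the paper's aside about singletons being "saturated by the irreducible case" is a slight red herring; what actually matters, and what you correctly use, is simply $D(\{i\}) = d_i \ge d_i$).
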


    \begin{proof}[Proof of Lemma \ref{PartitionIntoMinimalParts}]
        If the collection $J_1,\dots, J_t$ is minimal then  $[m]$ is saturated by Lemma \ref{lem:minimal-case}. Hence, suppose $J_1,\dots, J_t$ is not minimal. This implies that either $\dim V_{[t]} > \sum_{j = 1}^t \dim V_{J_j}$ or $\dim V_I < \sum_{j \in I} \dim V_{J_j}$ for some proper $I \subsetneq [t].$ The former case is impossible as the collection $\{V_i\}$ is \textbf{NC} (take $F_j = V_{J_j}$ for all $1\leq j \leq t$ in Definition \ref{def:NCFlats}). Hence, the latter inequality must hold for some $I \subsetneq [t].$

        Thus, there must exist some minimal by inclusion set $I \subset [t]$ such that for $J_I = \bigcup_{j \in J} J_j$ we have $\dim V_{J_I} < \sum_{j \in I} \dim V_{J_j}$ while $\dim V_{J_{I'}} \geq \sum_{j\in I'} \dim V_{J_j}$ for any proper $I' \subsetneq I.$ This implies that $|I|\geq 2$ as the singleton sets are saturated by the irreducible case. Therefore, we have that the collection of sets $\{J_j : j \in I\}$ is minimal. So, by Lemma \ref{lem:minimal-case}, $J_I$ is saturated, completing the proof.
    \end{proof}

    We can now finish the proof by iterating Lemma \ref{PartitionIntoMinimalParts}. Start with the trivial partition $\{1\}\cup \dots \cup \{m\} = [m]$. Applying Lemma \ref{PartitionIntoMinimalParts}, we can merge some subset of elements in the above partition to get a new partition into saturated sets with at least $m-1$ parts. Reiterating Lemma \ref{PartitionIntoMinimalParts} implies that eventually we will arrive at the partition $[m] = [m]$ which implies that $[m]$ is saturated. Unraveling the definition of a subset being saturated gives the conclusion of the General Case.
\end{proof}

\begin{remark}
Note that while we in fact have irreducible measures $\mu_j$ on each $V_j$, in the Minimal Case we may not assume $\mu_j$ are irreducible in $F_j$ due to the merging process described above.
\end{remark}

The proof of the Minimal Case is the content of Section \ref{sec:minimal-case}. We now show that the General Case implies our main result (Theorem \ref{thm:main}).

\begin{proof}[Proof that the General Case implies Theorem \ref{thm:main}]
Let $X\subset \R^n$ be a Borel \textbf{NC} set. Then, the Decomposition Lemma (Lemma \ref{lem:decomposition}) gives us an \textbf{NC} collection of flats $\{V_j\}_{j \in [m]}$ with irreducible $s$-Frostman measures $\mu_j$ supported on $X\cap V_j$ for all $s< \dim X.$ Let $n_j = \dim V_j$. 

By Lemma \ref{lem:partitioning-measure}, we can find restrictions $\mu_{j,i} = \frac{1}{\mu_j(B_{j, i})} \mu_j|_{B_{j, i}}$ which are $s$-Frostman irreducible probability measures on $V_j$ in good position. By Lemma \ref{lem:stable-position}, we can further restrict the measures $\mu_{j, i}$ onto some smaller balls so that the collection $(\mu_{j, i}: ~i \in [n_j], j\in [m])$ is $c$-stable for some $c>0$ and $\mu_{j,i}$ is still an irreducible $s$-Frostman probability measure. For each $\mu_{j, i}$ we can define the modulus of irreducibility $\tau_{j, i}(w)$ as the maximum measure of a $w$-neighborhood of a proper subspace $H\subset V_j$. Then by irreducibility we have $\tau_{j, i}(w)\to 0$ as $w\to 0$ and $\mu_{j, i}$ is $(w, \tau_{j, i}(w))$-irreducible in $V_j$. So if we take $w$ sufficiently small, then all $\tau_{j, i}(w)$ become small enough to apply Theorem \ref{thm:general-case} and we conclude that for some $n'_j\le n_j$, $\sum_{j} n'_j = n$, the measures $(\mu_{j, i}, ~i \in [n'_j], j \in [m])$ have $(\sigma, K, 1-\varepsilon)$-thin hyperplanes. 
Call this subcollection of measures $\{\tilde{\mu}_i\}_{i = 1}^n$. Then, by Lemma \ref{lem:thin-planes-implies-pushforward-frostman}, it follows that the pushforward measure $\psi_{n-1}(\tilde{\mu}_1\times \cdots \times \tilde\mu_n)$ is a nonzero $n \sigma$-Frostman measure on $\mathcal A(n,n-1)$ and $\sigma \leq 1$. Notice that since $\supp \tilde\mu_i \subset X$ for all $i$, this implies that the pushfoward measure is supported on $\mathcal P^{n-1}(X)$. Given this holds for all $0 < \sigma < \min\{s,1\} < \dim X$ and arbitrary $\varepsilon>0$, this concludes the proof.
\end{proof}

\section{The minimal case} \label{sec:minimal-case}

In this section we prove Theorem \ref{thm:minimal-case}. We will prove the theorem in dimension $n'$ and use variable $n$ to denote something else. 
Let $V_1, \ldots, V_m \subset \R^{n'}$ be a collection of flats and fix $I_1, \ldots, I_k\subset [m]$ and $F_j = V_{I_j}$.
Suppose that $F_1, \ldots, F_k \subset \langle F_1, \ldots, F_k\rangle$ be a minimal collection of flats and denote $n = \dim \langle F_1, \ldots, F_k\rangle$. 
Let $n_j = \dim F_j$. Suppose that $\mu'_{t, i}$, $t\in [m]$, $i=1, \ldots, \dim V_t$ are measures on $V_t$ such that $(\mu'_{t, i}:~ i\in [\dim V_t], t\in [m])$ are in $c$-stable position with respect to $V_1, \ldots, V_m$ and that each $\mu'_{t, i}$ is $(w, \tau)$-irreducible inside $V_{t}$. By the assumption of the theorem we know that for every $j$ there are some numbers $d_t \le \dim V_t$, $t\in I_j$, so that $\sum_{t\in I_j} d_t = \dim F_j$ and the collection $(\mu'_{t, i}: t\in I_j, ~i\in [d_t])$ has $(\sigma, K_j, 1-\varepsilon)$-thin $(n_j-1)$-planes, provided that $\tau \le \tau_j(\varepsilon, s, \sigma, C, c)$ and $K_j = K_j(\varepsilon, s, \sigma, C, c, w)$. Denote by $G_j$ the corresponding thin-planes graph. 

Let us rotate the coordinate system so that $\langle F_1, \ldots, F_k\rangle = \R^n \times \{0\} \subset \R^{n'}$. We now drop the 0 and view these as flats in $\R^n$ (flats $V_{t}$ with $t \not \in \bigcup I_j$ will not be used). Note that after this operation the measures will still be in $\sim c$-stable position. For notational convenience let us label by $\mu_{j,  1}, \ldots, \mu_{j, n_j}$ the collection of measures $\mu_{t, i}$, $t\in I_j$, $i \in [d_t]$ (say in the increasing order of $t$-index and then increasing order of $i$-index). We will mostly work with this new notation and will not need the underlying spaces $V_t$ for a while,  but they will become relevant at the end of the argument. 

Following the notation from Section \ref{sec:stable-position}, we define matrices $(B_{\overline{I}}(\overline{x}), A_J)$, where now each $A_j= (f_{j,1}, \ldots, f_{j, n_{j}+1})$ corresponds to a fixed orthonormal basis $f_{j, i}$ of $\overline{F_j}$. 
Let $r(\overline{I}, J)$ denote the rank of the matrix $(B_{\overline{I}}(\overline{x}), A_J)$. Note that it does not depend on the choice of $\overline{x} \in \prod \supp\mu_{j, i}$ and we moreover have $M_{r(\overline{I}, J)}(B_{\overline{I}}(\overline{x}), A_J) \ge c$ for some $c>0$. We denote by $P_{\overline{I}}(\overline{x})$ the affine space spanned by vectors $x_{j,i}$ over $(j,i) \in \overline{I}$. We denote $F_J = \aff(F_j:~j\in J)$. 

First of all, we reduce to the case when $\sum_{j=1, j\neq j'}^k n_j \le n-1$ holds for every $j'\in [k]$. Indeed, otherwise note that the subcollection $\{F_j, ~ j\neq j'\}$ is still minimal and so we can restrict to this collection. Write $\sum_{j=1}^k n_j = n+p$ for some $p\ge 0$. We then conclude that $n_j \ge p+1$ for every $j=1, \ldots, k$. 

The next proposition records the fact that $r(\overline{I}, J)$ is the dimension of the linear space $\langle \overline{P_{\overline{I}}(\overline{x})}, \overline{F_J}\rangle$.

\begin{prop}\label{prop:minus1}
    For index sets $\overline{I}, J$ and any $j\in [k]$ and $ i\in [n_j]$:
    \[
    r(\overline{I} \cup \{(j, i)\}, J) \le r(\overline{I}, J) + 1,
    \]
    \[
    r(\overline{I}, J \cup \{j\}) \le r(\overline{I}, J) + n_j+1.
    \]
    \[
    r(\overline{I}\cup (\{j\}\times [n_i]), J) \ge r(\overline{I}, J \cup \{j\}) - 1.
    \]
\end{prop}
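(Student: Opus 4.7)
My approach is to treat the three inequalities separately. Inequalities (1) and (2) are standard rank facts, while (3) is a short dimension count exploiting that $n_j$ lifted points in $\overline{F_j}$ can span at most a hyperplane of this $(n_j+1)$-dimensional space.

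For (1) and (2), I will invoke the elementary fact that appending $p$ columns to a matrix increases its rank by at most $p$. The matrix $(B_{\overline{I}}(\overline{x}), A_J)$ differs from $(B_{\overline{I}\cup\{(j,i)\}}(\overline{x}), A_J)$ by the single column $B_{(j,i)}(\overline{x})$, giving (1), and from $(B_{\overline{I}}(\overline{x}), A_{J\cup\{j\}})$ by the $n_j+1$ columns of $A_j$, giving (2).

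For (3), set $U = \operatorname{span}(B_{\overline{I}}(\overline{x}), A_J) \subset \R^{n+1}$. Since the columns of $A_j$ are an orthonormal basis of $\overline{F_j}$, one has
\[
r(\overline{I}, J \cup \{j\}) = \dim(U + \overline{F_j}).
\]
Let $W = \operatorname{span}\{B_{(j,i)}(\overline{x}) : i \in [n_j]\}$. Each column $\begin{bmatrix} x_{j,i}\\ 1\end{bmatrix}$ belongs to $\overline{F_j}$, so $W \subset \overline{F_j}$, and
\[
r(\overline{I} \cup (\{j\}\times[n_j]), J) = \dim(U + W).
\]
The key step is to show $\dim W = n_j$, equivalently that the $n_j$ points $x_{j,1}, \ldots, x_{j,n_j}$ are affinely independent inside the $n_j$-dimensional flat $F_j$. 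Granted this, $W$ has codimension one in $\overline{F_j}$, so adding any vector from $\overline{F_j} \setminus W$ yields all of $\overline{F_j}$, whence
\[
\dim(U + \overline{F_j}) \le \dim(U + W) + 1,
\]
which is exactly inequality (3).

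The main (and mild) obstacle is verifying the affine independence claim $\dim W = n_j$. I expect this to follow from the $c$-stable position hypothesis applied with $\overline{I} = \{j\}\times[n_j]$ and $J = \emptyset$: the rank $r(\{j\}\times[n_j], \emptyset)$ is constant over tuples $\overline{x}$ in the supports, and in the minimal-case setup the measures $\mu_{j,1}, \ldots, \mu_{j,n_j}$ come from saturated blocks that place their supports in good position inside $F_j$ (so the product of supports is separated from $\mathcal{S}_{n_j-1}$). This forces the constant rank to equal $n_j$, as required.
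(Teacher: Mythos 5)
Your argument coincides with the paper's proof: inequalities (1) and (2) are treated as immediate consequences of appending columns, and (3) rests on the observation that $\overline{P_{\{j\}\times[n_j]}(\overline{x})}$ (your $W$) is a codimension-one subspace of $\overline{F_j}$, so that passing from $U+W$ to $U+\overline{F_j}$ can increase dimension by at most one. The paper simply asserts the codimension-one fact without comment, whereas you take care to justify $\dim W=n_j$ via the good-position/$c$-stable hypotheses; this is a welcome bit of extra rigor but not a different method.
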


\begin{proof}
    The first two statements are clear from the definitions of ranks. For the last one notice that $\overline{P_{\{j\} \times [n_j]}(\overline{x})}$ is a codimension 1 subspace in $F_j$.
\end{proof}

\subsection{\texorpdfstring{Special case $p=0$}{Special case p=0}} \label{sec:proofofkeylemma}
We will first consider the special case when $p=0$, i.e. $\sum_j n_j = n$. The argument simplifies significantly in this case. Later we consider the general case $\sum_j n_j = n+p$ which will require an additional careful inductive argument.

For a subset $I \subset [k]$ and $J \subset [k]$ let us denote 
\[
r(I, J) = r(\overline{I}, J), ~ \text{ where } \overline{I}=\bigcup_{j\in I} \{j\}\times [n_j].
\]
With this notation the last line in Proposition \ref{prop:minus1} gives 
\begin{equation}\label{eq:minus1}
    r(I\cup \{j\}, J) \ge r(I, J \cup \{j\}) - 1.
\end{equation}

For $I \subset [k]$, we also denote
\[
\overline {P_I(\overline x)} = \overline{P_{\overline I}(\overline x)},\qquad \overline I = \bigcup_{i\in I}\{i\}\times[n_i].
\]
and $J\subset[k]$,
\[
\overline{ P_{{I},J}(\overline x)} = \mathrm{span}(\overline{P_{I}(\overline x)}, \overline{F_j},j\in J)
\]
with similar notation for $\overline{P_{I,J}(\overline x)}$.
With this notation, $r(I, J)$ is the dimension of the subspace $\overline{P_{I, J}(\overline x)}$ for every $\overline x\in G$.

\begin{prop}[cf. \cite{do2018extending} Claim 2]\label{prop:induction-rank}
     Let $F_1,\dots, F_k \subset \R^n$ be a minimal collection of flats with $\sum n_j = n$. For any disjoint $I, J \subset [k]$
    \[
    r(I, J) = \begin{cases}
    n_I, ~ \text{ if }J=\emptyset,\\
    \ge n_{I \cup J}+1, ~\text{ otherwise}.
    \end{cases}
    \]
\end{prop}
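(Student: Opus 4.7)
The plan is to prove both cases by a joint induction on $|I|+|J|$, using Proposition~\ref{prop:minus1}, the $c$-stable position hypothesis, and the minimality of $F_1,\dots,F_k$ together with $\sum n_j=n$.

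The base cases are immediate: for $|I|=1$, $J=\emptyset$ the proof of Proposition~\ref{prop:minus1} already records that the $n_j$ lifts $\begin{bmatrix} x_{j,i}\\ 1\end{bmatrix}$ span the codimension-$1$ subspace $H_j\subset\overline{F_j}$, so $r(\{j\},\emptyset)=n_j=n_{\{j\}}$; for $I=\emptyset$, $J\ne\emptyset$, $r(\emptyset,J)=\dim\overline{F_J}=n_J+1$ by definition.

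For the inductive step with $J=\emptyset$ and $|I|\ge 2$, pick $j\in I$ and use the decomposition $\overline{P_I(\overline x)}=\overline{P_{I\setminus\{j\}}(\overline x)}+H_j$ together with the dimension formula to write
\[
r(I,\emptyset)=r(I\setminus\{j\},\emptyset)+n_j-\dim\bigl(\overline{P_{I\setminus\{j\}}(\overline x)}\cap H_j\bigr);
\]
the inductive hypothesis $r(I\setminus\{j\},\emptyset)=n_{I\setminus\{j\}}$, combined with $c$-stability and minimality (which force the intersection to be at the minimum dimension compatible with the containment $\overline{P_{I\setminus\{j\}}(\overline x)}\cap H_j\subset \overline{F_{I\setminus\{j\}}}\cap\overline{F_j}$), gives $r(I,\emptyset)=n_I$. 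For $J\ne\emptyset$, the analogous decomposition $\overline{P_{I,J}(\overline x)}=\overline{P_I(\overline x)}+\overline{F_J}$ and the dimension formula yield
\[
r(I,J)=n_I+(n_J+1)-\dim\bigl(\overline{P_I(\overline x)}\cap\overline{F_J}\bigr),
\]
so the desired inequality reduces to the transversality bound $\dim(\overline{P_I(\overline x)}\cap\overline{F_J})\le n_I+n_J-n_{I\cup J}$: the codimension-$1$ subspace $\overline{P_I(\overline x)}\subset\overline{F_I}$ must miss at least one direction of $\overline{F_I}\cap\overline{F_J}$ (which has dimension $n_I+n_J-n_{I\cup J}+1$ by the dimension formula applied to $\overline{F_I}+\overline{F_J}=\overline{F_{I\cup J}}$).

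I expect this transversality step to be the main obstacle. My plan is to leverage $c$-stability, which makes $\dim(\overline{P_I(\overline x)}\cap\overline{F_J})$ independent of $\overline x$ on the product support, so it suffices to exhibit one configuration where the non-containment holds. Such a configuration comes from minimality: $F_I\cap F_J$ is nondegenerate, and its generic points avoid the proper affine subflat $P_I(\overline x)\subset F_I$, so the lifted direction $\begin{bmatrix} p\\ 1\end{bmatrix}$ for a suitable $p\in F_I\cap F_J$ witnesses the transversality and closes the induction.
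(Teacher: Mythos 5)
Your proposal runs into a genuine gap at exactly the step you flag as the "main obstacle," and your plan for closing it does not work. The needed transversality statement -- that $\dim(\overline{P_I(\overline x)}\cap\overline{F_J})$ takes its generic value for $\overline x$ in the product support -- is not a consequence of minimality of the flats plus $c$-stability alone. The flats $F_j$ could be perfectly minimal while the \emph{measures} $\mu_{j,i}$ concentrate on a degenerate subflat of $F_j$ containing the "bad" intersection; $c$-stability only forces the intersection dimension to be constant over the support, not to equal the generic value. The paper closes precisely this gap with a measure-theoretic input that your argument never invokes: it applies Lemma~\ref{lem:swapping} to the thin-$(n_s-1)$-planes graph $G_s$ (with $\nu$ a Dirac mass at a point $w\in W=\langle P_{I\setminus s},F_J\rangle\cap F_s$) to conclude that $w\notin P_s(\overline x_s)$ for $\prod\mu_{s,i}$-a.e.\ $\overline x_s$, and then uses $c$-stability to transfer this to all of the support. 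Your heuristic "generic points of $F_I\cap F_J$ avoid $P_I(\overline x)$" is a statement about the ambient flats, not about configurations $\overline x$ in the supports of the measures, and there is no way to promote it to the needed conclusion without the thin-planes hypothesis.

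There are two additional, more minor issues. First, in the $J\neq\emptyset$ branch your dimension formula uses $\dim\overline{F_J}=n_J+1$; but with $n_J=\sum_{j\in J}n_j$ (the reading forced by the case $r(I,\emptyset)=n_I$, since $r(I,\emptyset)$ is the rank of a matrix with exactly $\sum_{j\in I}n_j$ columns), minimality gives only $\dim F_J\ge n_J$, and strict inequality does occur (e.g.\ three pairwise skew lines in $\R^3$ with $J=\{2,3\}$, where $\dim F_J=3>2=n_J$). Replacing $n_J$ by $\dim F_J$ is fixable, but then the transversality bound you need becomes $\dim(\overline{P_I}\cap\overline{F_J})\le n_I+\dim F_J-n_{I\cup J}$, which is the genuine genericity statement -- not the much weaker "miss one direction" claim. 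Second, the paper's route for the $J=\emptyset$ case is cleaner than yours: it does not try to show directly that $\overline{P_{I\setminus\{j\}}}\cap H_j=\{0\}$; instead it uses the inequality $r(I,\emptyset)\ge r(I\setminus\{s\},\{s\})-1$ from Proposition~\ref{prop:minus1} together with the $J\neq\emptyset$ case of the inductive hypothesis, and then reduces $J\neq\emptyset$ to $J\cup\{s\}$ with smaller top index by the swap argument. Your induction on $|I|+|J|$ cannot directly replicate this (the pair $(I\setminus\{s\},\{s\})$ has the same total as $(I,\emptyset)$), which is why the paper inducts on the largest element of $I$ rather than on $|I|+|J|$.
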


\begin{proof}
    We prove the statement for $I \subset [s]$ using induction on $s$. If $s=0$ then $I=\emptyset$, in this case $r(\emptyset, J)$ coincides with the dimension of the space $\overline{F_J }$. 
    So $r(\emptyset, J) \ge n_{J}+1$ follows from the assumption that $F_1, \ldots, F_k$ is a minimal collection of flats. 

    Now let $s\ge 1$ and $I \subset [s]$ be a set containing the element $s$. If $J = \emptyset$ then 
    \[
    r(I, \emptyset) \ge r(I\setminus \{s\}, \{s\}) - 1 \ge n_{I},
    \]
    by inductive hypothesis and (\ref{eq:minus1}). So we may assume $J \neq \emptyset$. 

    Choose any $x_{j, i} \in  \supp\mu_{j,i}$ for $j \in I$ and $i=1, \ldots, n_j$ and let $\overline{P_I} = \overline{P_{I}(\overline{x})}$. By definition, we have $r(I, J) = \dim \langle \overline{P_I}, \overline{F_J} \rangle$. Note that if 
    \[
    \langle \overline{P_{I\setminus s}}, \overline{F_J}\rangle \cap \overline{F_s} = 0,
    \]
    then we get $r(I, J) = r(I\setminus\{s\}, J)+ r(\{s\}, \emptyset) \ge (n_{I\cup J}+1-n_s) + n_s \ge n_{I\cup J}+1$ by induction. 
    Now let $W = \langle P_{I\setminus s}, F_J\rangle \cap F_s$, by the above we may assume that $W\neq\emptyset$ (perhaps after performing a mild projective transformation). 

    Let $w\in W$ any point and let $\nu = \delta_w$ be the Dirac $\delta$-mass at $w$. By Lemma \ref{lem:swapping}, the $(\prod \mu_{s,i})$-measure of the set of $\overline{x_s} \in G_s$ for which we have $w \in P_s(\overline{x}_s)(\delta)$ is upper bounded by $\varepsilon$ where $\varepsilon\to 0$ as a function of $\delta\to0$. In particular, for almost every $\overline{x_s}$ we have $W\not \subset P_{s}(\overline{x}_s)$. Thus we also get $\overline{W}\not \subset\overline{ P_{s}(\overline{x}_s)}$ and so since $P_{s}(\overline{x}_s)$ is a hyperplane in $F_s$:
    \[
    \langle \overline{P_{I\setminus s}}, \overline{F_J}, \overline{P_{s}(\overline{x}_s)}\rangle = \langle \overline{P_{I\setminus s}}, \overline{F_J}, \overline{F_s}\rangle 
    \]
    Since the rank $r(I, J)$ does not depend on which $\overline{x}$ we fix, it follows that 
    \[
    r(I, J) = r(I\setminus \{s\}, J\cup \{s\}) \ge n_{I\cup J}+1,
    \]
    by induction. 
\end{proof}

Proposition \ref{prop:induction-rank}, implies that
\begin{equation}\label{eqn:RobustRankKeyLemma}
r([k], \emptyset) = n \quad \text{and} \quad r([k]\setminus \{j\}, \{j\}) = n+1
\end{equation}
for all $j \in [k]$, where recall $r(I,J) = r(\overline I,J)$.  

Now we use this to construct a thin-planes graph for the collection $(\mu_{j, i}:~j\in [k],~i \in [n_j])$ (recall that we are in the case $\sum n_j=n$). Define $G = \prod_{j} G_j$ where $G_j$ is a $(\sigma, K_j, 1-\varepsilon)$-thin planes graph for $(\mu_{j, i}:~i\in [n_j])$. 
We will use \eqref{eqn:RobustRankKeyLemma} together with $\sum n_j = n$ to show that for $\overline x \in G$, $V_{\overline x}(\delta)$ intersects each $F_j$ in the $C'\delta$-neighborhood of $V_{\overline x_j}$ for some constant $C'(n,c)$. This will allow us to apply the thin-planes property of the measures $(\mu_{j, 1},\dots,\mu_{j, n_j})$ on $F_j$.

    \begin{prop}\label{prop:nice1}
The following hold for every $\overline x \in G$ and every $j\in[k]$.
\begin{enumerate}
\item $\overline{F_j}\cap \overline{P_{[k] \setminus \{j\}}(\overline{x}}) = \{0\}$.

    \item $\overline{F_j}\cap \overline{P_{[k]}(\bar x)} = \overline{P_{j}(\bar x)}$.
    
    \item The smallest principal angle between flats satisfies $\angle(\overline{F_j}, \overline{P_{[k] \setminus \{j\}}(\overline{x}})) \gtrsim_n c$.
\end{enumerate}
\end{prop}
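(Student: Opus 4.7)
The plan is to reduce all three statements to the rank identities \eqref{eqn:RobustRankKeyLemma} (namely $r([k],\emptyset)=n$ and $r([k]\setminus\{j\},\{j\})=n+1$), combined with the standard dimension formula and, for part (3), the $c$-stable position hypothesis via a determinant-to-angle comparison.

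For part (1), I would apply Proposition \ref{prop:induction-rank} with $I=[k]\setminus\{j\}$, $J=\emptyset$ to obtain $\dim\overline{P_{[k]\setminus\{j\}}(\bar x)}=n-n_j$, and note $\dim\overline{F_j}=n_j+1$. Combined with $\dim\langle\overline{P_{[k]\setminus\{j\}}(\bar x)},\overline{F_j}\rangle=r([k]\setminus\{j\},\{j\})=n+1$ from \eqref{eqn:RobustRankKeyLemma}, the inclusion-exclusion dimension formula yields
\[
\dim\bigl(\overline{F_j}\cap\overline{P_{[k]\setminus\{j\}}(\bar x)}\bigr) = (n-n_j)+(n_j+1)-(n+1)=0,
\]
which is exactly (1).

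For part (2), the inclusion $\overline{P_j(\bar x)}\subset\overline{F_j}\cap\overline{P_{[k]}(\bar x)}$ is immediate. For the reverse, I would observe that $\overline{P_j(\bar x)}\subset\overline{F_j}$ implies $\langle\overline{P_{[k]}(\bar x)},\overline{F_j}\rangle=\langle\overline{P_{[k]\setminus\{j\}}(\bar x)},\overline{F_j}\rangle$, which has dimension $n+1$. Since $\dim\overline{P_{[k]}(\bar x)}=n$ by \eqref{eqn:RobustRankKeyLemma}, the dimension formula gives
\[
\dim\bigl(\overline{F_j}\cap\overline{P_{[k]}(\bar x)}\bigr)=n+(n_j+1)-(n+1)=n_j.
\]
Since $P_j(\bar x)$ is the affine span of $n_j$ points in $F_j$ (in $c$-stable position, hence affinely independent), $\dim\overline{P_j(\bar x)}=n_j$, so the inclusion is an equality.

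For part (3), the matrix $(B_{[k]\setminus\{j\}}(\bar x),A_j)$ is $(n+1)\times(n+1)$ of full rank $n+1$ by \eqref{eqn:RobustRankKeyLemma}, so $c$-stable position gives $|\det(B_{[k]\setminus\{j\}}(\bar x),A_j)|\ge c$. Since the columns of $A_j$ form an orthonormal basis of $\overline{F_j}$, column operations replace the $B$-block by its orthogonal projection $\tilde B^\perp$ onto $\overline{F_j}^\perp$ without changing the determinant, so $|\det\tilde B^\perp|\ge c$ as an $(n-n_j)\times(n-n_j)$ matrix in $\overline{F_j}^\perp$. The sine of the smallest principal angle between $\overline{P_{[k]\setminus\{j\}}(\bar x)}$ and $\overline{F_j}$ is the smallest singular value of this projection restricted to $\overline{P_{[k]\setminus\{j\}}(\bar x)}$; passing from the columns of $B$ to an orthonormal basis of $\overline{P_{[k]\setminus\{j\}}(\bar x)}$ costs at most a factor of $\sqrt{\det(B^T B)}$, which is bounded by $O_n(1)$ via Hadamard since $\|b_i\|\le\sqrt2$. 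This yields $\sin\angle\bigl(\overline{F_j},\overline{P_{[k]\setminus\{j\}}(\bar x)}\bigr)\gtrsim_n c$.

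The only nontrivial step is the determinant-to-angle translation in part (3); parts (1) and (2) are essentially bookkeeping once Proposition \ref{prop:induction-rank} is in hand. I do not anticipate real obstacles, only the need to carefully track the dimensional constants hidden in the $\gtrsim_n$ notation.
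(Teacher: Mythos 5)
Your proof is correct and follows essentially the same route as the paper's: parts (1) and (2) are the same bookkeeping from the rank identities in Proposition~\ref{prop:induction-rank} and the dimension-sum formula, and part (3) extracts the angle factor from the determinant lower bound $|\det(B_{[k]\setminus\{j\}},A_j)|\ge c$ exactly as the paper does, differing only in packaging (you use column operations plus Hadamard's inequality, where the paper uses a wedge-product factorization plus Cauchy--Binet; both deliver the same $O_n(1)$ conversion factor). One small correction: since $\sum n_j=n$ in this case, $B_{[k]\setminus\{j\}}$ has $n-n_j$ columns, not $n+1-n_j$, so the projected block is an $(n-n_j)\times(n-n_j)$ matrix in $\overline{F_j}^\perp$; this is purely cosmetic and does not affect the argument.
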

\begin{proof}
For $\overline x$ fixed, and $I\subset [k]$, we denote
\[
B_I = B_I(\overline x) = B_{\bigcup_{j\in I}\{j\}\times [n_j]}(\overline x),
\]
and for $J\subset[k]$, we denote $\overline{P_{ I}} = \overline{P_{I}(\overline x)}.$

To prove the claims, we will use the fact that, by definition, $\overline{P_{I}}$
is the column space of $B_{I}$,  and $\overline F_j$ is the column space of the matrix $A_{j}$.

First we prove (1). Since $r([k]\setminus\{j\},\{j\}) = n+1$, we have $\mathbb R^{n+1} = \overline{P_{[k]\setminus\{j\}}}  \oplus \overline F_j$, so in particular (1) holds. 

To prove (2), by Proposition \ref{prop:induction-rank}, we have $r([k],\emptyset) = n$, $r(\{j\},\emptyset) = n_j$, and $r([k]\setminus\{j\},\emptyset)=n-n_j$.  Thus the corresponding dimension statements hold: $\dim \overline{P_{[k]}} = n$, $\dim \overline{P_{j}}= n_j$, $\dim \overline{P_{[k] \setminus \{j\}}} = n-n_j$.  By the dimension-sum formula for subspaces of $\R^{n+1}$, $\overline{P_{[k]}} = \overline{P_{\{j\}}} \oplus \overline{P_{[k] \setminus \{j\}}}$ is therefore a direct sum. Hence,
\[
\overline{F_j}\cap \overline{P_{[k]}} = \overline{F_j} \cap \overline{P_{j}}+\overline{F_j}\cap \overline{P_{[k] \setminus \{j\}}} = \overline{F_j}\cap \overline {P_{j}}  = \overline{P_{j}},
\]
since $\overline{P_{\{j\}}}\subset \overline{F_j}$.

Now we prove (3).  Since $\overline x \in G$, we have $r([k]\setminus\{j\},\{j\}) = n+1$, and hence
    \begin{equation}\label{eqn:largeangles}
    c \le |\det( B_{[k]\setminus\{j\}},A_{j})|.
    \end{equation}
    Let $u_1,\dots,u_{n+1}$ be the columns of $( B_{[k]\setminus\{j\}},A_j)$.
    By exterior algebra,
    \[
    |\det( B_{[k]\setminus\{j\}},A_j)| = |u_1\wedge \dots\wedge u_{n+1}|.
    \]

    Let $w_1,\dots,w_{n+1-n_j}$ be the columns of $B_{[k]\setminus\{j\}}$, and let $v_1,\dots, v_{n_j+1}$ be the columns of $A_{j}$, so $u_1,\dots, u_{n+1} = w_1,\dots,w_{n+1-n_j},v_1,\dots, v_{n_{j+1}}$. 

    By standard facts from exterior algebra (see, e.g., \cite[Corollary to Theorem 7.1]{Afriat_1957} and \cite[Theorem 2.1]{Ispen}),
    \[
    | u_1\wedge\dots \wedge u_{n+1} | =  |w_1 \wedge \dots \wedge w_{n+1-n_j}|\cdot| v_1\wedge \dots \wedge v_{n_j+1}|\cdot|\sin\Theta|,
    \]
    where $\Theta>0$ is the smallest principal angle between the column space of $A_{j}$, namely $\overline F_j$, and the column space of $B_{[k]\setminus\{j\}}$, namely $\overline{P_{[k] \setminus \{j\}}}$.   By exterior algebra again, and since the columns of $A_j$ are orthonormal,
    \[
    |v_1\wedge \dots \wedge v_{n_j+1}|^2 = \det(A_{j}^tA_{j}) = 1.
    \]
    For $S\subset[n+1]$ with $|S| = n+1-n_j$, denote by $(B_{[k]\setminus \{j\}})_S$ the minor matrix obtained by selecting rows from $B_{[k]\setminus \{j\}}$ indexed by $S$. By the Cauchy--Binet formula,
    \begin{align*}
    |w_1 \wedge \dots \wedge w_{n+1 - n_j}|^2 &= \det\big((B_{[k]\setminus \{j\}})^t(B_{[k]\setminus \{j\}})\big)\\
    &=\sum_{S\subset [n+1],|S|=n+1-n_j}\det\big((B_{[k]\setminus \{j\}})_S\big)^2\\
    &\lesssim_{n} M_{n+1-n_j}(B_{[k]\setminus \{j\}})^2 \lesssim_n 1.
    \end{align*}
    So, overall, by \eqref{eqn:largeangles} we have 
    \[
    c \lesssim_{n} |\sin\Theta|.
    \]
    This proves (3), and finishes the proof.
\end{proof}

By $c$-stability and rank conditions $r([k],\emptyset) = n$ and $r([k]\setminus\{j\},\{j\}) = n+1$, we also have a $\delta$-thickened version of Proposition \ref{prop:nice1}.
\begin{prop}\label{prop:key-contain}
    There exists an absolute constant $C > 0$ such that for each $\overline x \in G$, and every $\delta > 0$ we have
    \[
    \overline{P_{[k]\setminus\{j\}}(\overline x)}(\delta)
    \cap\overline F_j \subset  (\overline{P_{[k]\setminus\{j\}}(\overline x)}
    \cap \overline F_j)(Cc^{-1}\delta) = B(0,Cc^{-1}\delta).
    \]
\end{prop}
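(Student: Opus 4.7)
The plan is to treat this proposition as the $\delta$-thickened counterpart of parts (1) and (3) of Proposition \ref{prop:nice1}. Those two statements say, respectively, that $\overline F_j \cap \overline{P_{[k]\setminus\{j\}}(\overline x)} = \{0\}$ and that the smallest principal angle between these two subspaces is bounded below by $\gtrsim_n c$. The present proposition should follow from these qualitative statements combined with one standard linear-algebra inequality.

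The first step I would take is to note that $\dim \overline F_j + \dim \overline{P_{[k]\setminus\{j\}}(\overline x)} = (n_j+1) + (n - n_j) = n+1$, so by Proposition \ref{prop:nice1}(1) these subspaces are in fact complementary in $\R^{n+1}$. In particular $\overline F_j \cap \overline{P_{[k]\setminus\{j\}}(\overline x)} = \{0\}$, which immediately gives the equality $(\overline{P_{[k]\setminus\{j\}}(\overline x)} \cap \overline F_j)(Cc^{-1}\delta) = B(0, Cc^{-1}\delta)$ on the right-hand side of the claim.

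The second step is to establish the containment. I would take $y \in \overline F_j$ with $\mathrm{dist}(y, \overline{P_{[k]\setminus\{j\}}(\overline x)}) \le \delta$ and aim to show $|y| \lesssim_n c^{-1}\delta$. The key fact to invoke is the standard inequality that for complementary subspaces $U, W$ of $\R^N$ with smallest principal angle $\Theta$, any $y \in U$ satisfies $\mathrm{dist}(y, W) \ge |y|\sin\Theta$. Combining this with the lower bound $\sin\Theta \gtrsim_n c$ from Proposition \ref{prop:nice1}(3) yields $|y| \lesssim_n c^{-1}\delta$, which is exactly the containment, with $C$ an explicit dimensional constant.

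The only technical point is the linear-algebra inequality itself, which amounts to identifying $\sin\Theta$ with the smallest singular value of $(I - P_W)|_U$ (where $P_W$ denotes orthogonal projection onto $W$), valid precisely under the complementary hypothesis $U \oplus W = \R^N$. This is entirely standard and presents no real obstacle; all the nontrivial geometric content has already been packaged into Proposition \ref{prop:nice1} via the stability of the collection and the rank identities from Proposition \ref{prop:induction-rank}.
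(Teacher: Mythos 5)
Your proposal is correct and takes essentially the same approach as the paper: both proofs rely on Proposition~\ref{prop:nice1}(1) for trivial intersection of $\overline{F_j}$ with $\overline{P_{[k]\setminus\{j\}}(\overline x)}$ and on the angle bound $\sin\Theta \gtrsim_n c$ from Proposition~\ref{prop:nice1}(3). The only cosmetic difference is that you invoke the packaged linear-algebra fact $\mathrm{dist}(y, W) \ge |y|\sin\Theta$ for complementary subspaces, whereas the paper unpacks exactly this inequality by a right-triangle computation with the foot of the perpendicular from $z$ to $\overline{P_{[k]\setminus\{j\}}(\overline x)}$; these are the same argument.
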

\begin{proof}
   Let $z \in \overline{P_{[k] \setminus \{j\}}}(\delta)\cap\overline{F_j}$. Since $\overline{P_{[k] \setminus \{j\}}} \cap \overline F_j = \{0\}$ by Proposition \ref{prop:nice1}, it suffices to assume $|z| > Cc^{-1}\delta$. 
    Let  $w\in  \overline{P_{[k] \setminus \{j\}}}$ be the closest point in $ \overline{P_{[k] \setminus \{j\}}}$ to $z$; in particular, $|w|\ge \frac{1}{2}Cc^{-1}\delta>0$ as long as $C$ is sufficiently large, $w-z$ is orthogonal to $w$, and $|w-z|<\delta$.  Consider the right triangle $\triangle 0wz$, whose angle at $0$ is $\ge \Theta \gtrsim c$. By considering the right triangle $\triangle 0wz$, 
    \[
    c\lesssim\sin(\angle z,w) = \frac{\delta}{|z|}.
    \]
    This finishes the proof.
\end{proof}

\begin{prop}\label{prop:delta-nice}
There exists a constant $C>0$ independent of $\overline x\in G$ such that for every $j\in[k]$,
    \[
    \overline{P_{[k]}(\overline x)}
    \cap \overline{F_j}  \subseteq 
    \overline{P_{j}(\overline x)}
    (Cc^{-1}\delta).
    \]
\end{prop}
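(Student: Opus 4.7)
The plan is to view the statement as the $\delta$-thickened version of the clean identity $\overline{P_{[k]}(\overline x)}\cap \overline{F_j} = \overline{P_{j}(\overline x)}$ from Proposition~\ref{prop:nice1}(2), and to reduce it to the already-proved $\delta$-version Proposition~\ref{prop:key-contain}. The mechanism is the direct-sum decomposition
\[
\overline{P_{[k]}(\overline x)} = \overline{P_{j}(\overline x)} \oplus \overline{P_{[k]\setminus\{j\}}(\overline x)}
\]
established in the proof of Proposition~\ref{prop:nice1}(2) from the ranks $r([k],\emptyset)=n$, $r(\{j\},\emptyset)=n_j$, $r([k]\setminus\{j\},\emptyset)=n-n_j$.

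Concretely, I would take any $z \in \overline{P_{[k]}(\overline x)}(\delta) \cap \overline{F_j}$, pick $w \in \overline{P_{[k]}(\overline x)}$ with $|z-w| \le \delta$, and split $w = a+b$ with $a\in \overline{P_{j}(\overline x)}$ and $b\in \overline{P_{[k]\setminus\{j\}}(\overline x)}$ using the direct sum above. Then
\[
z - a \;=\; b + (z-w),
\]
and two observations finish the job. First, $a \in \overline{P_{j}(\overline x)} \subset \overline{F_j}$ and $z \in \overline{F_j}$, so $z-a \in \overline{F_j}$. Second, $|(z-a)-b| = |z-w| \le \delta$, so $z-a \in \overline{P_{[k]\setminus\{j\}}(\overline x)}(\delta)$. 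Applying Proposition~\ref{prop:key-contain} to $z-a$ yields $|z-a| \le Cc^{-1}\delta$, i.e. $z \in \overline{P_{j}(\overline x)}(Cc^{-1}\delta)$, as desired.

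I do not expect any serious obstacle. The subtle point one might worry about is that the splitting $w = a+b$ could be quantitatively unstable (with $|a|,|b|$ much larger than $|w|$), but this is irrelevant here: the argument uses only that $a \in \overline{P_j(\overline x)} \subset \overline{F_j}$ in order to keep $z-a$ inside $\overline{F_j}$, and quantitative control on the individual summands is never invoked. The single $c^{-1}$ loss in the conclusion comes entirely through the appeal to Proposition~\ref{prop:key-contain}, which already bakes in the angle bound $\angle(\overline{F_j}, \overline{P_{[k]\setminus\{j\}}(\overline x)}) \gtrsim c$ supplied by Proposition~\ref{prop:nice1}(3).
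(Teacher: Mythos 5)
Your proof is correct and takes essentially the same approach as the paper: both decompose an arbitrary $z\in\overline{P_{[k]}(\overline x)}(\delta)\cap\overline{F_j}$ via the direct sum $\overline{P_{[k]}}=\overline{P_j}\oplus\overline{P_{[k]\setminus\{j\}}}$ from Proposition~\ref{prop:nice1}(2), and both finish by applying Proposition~\ref{prop:key-contain} to the component in $\overline{P_{[k]\setminus\{j\}}}(\delta)\cap\overline{F_j}$. Your write-up is in fact slightly more explicit than the paper's terse ``immediate containment,'' and your remark that the potential instability of the splitting $w=a+b$ is harmless here is accurate.
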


\begin{proof}
By $\overline{P_{[k]}}=\overline{P_{j}} \oplus \overline{P_{[k] \setminus \{j\}}}$, we have the immediate containment
\[
 \overline{P_{[k]}}(\delta)\cap \overline{F_j} \subset   \overline{P_j}(\delta) + \overline{P_{[k] \setminus \{j\}}}(\delta)\cap \overline{F_j}.
\]
as well as $\overline{P_j}(\delta) \cap \overline{F_j} \subset \overline{P_j}(\delta)$. By Proposition \ref{prop:key-contain}, we also have $\overline{P_{[k] \setminus\{j\}}}(\delta) \cap \overline{F_j} \subset B(0,Cc^{-1}\delta)$, so the claim is proved since $\overline{P_j}(\delta)+B(0,Cc^{-1}\delta)\subset \overline{P_{j}}(2Cc^{-1}\delta)$.
\end{proof}

\begin{prop}
    The measures $\mu_{j,i}$ have $(\sigma,(Cc^{-1})^\sigma\max_jK_j,1-k\varepsilon)$-thin $(n-1)$-planes with respect to the graph $G$.
\end{prop}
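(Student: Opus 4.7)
The plan is to combine the product structure of $G = \prod_j G_j$ with Proposition \ref{prop:delta-nice}, which already does the essential geometric work in the linearized picture. Concretely, the claim reduces to two observations: (i) the measure of the graph is large, and (ii) for each fixed $(j,i)$, the intersection of $V_{\overline x}(\delta)$ with $F_j$ is contained in the $O_c(\delta)$-neighborhood of $V_{\overline x_j}$, at which point the thin $(n_j-1)$-planes property of $G_j$ finishes the estimate.

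First, since each $G_j$ satisfies $(\mu_{j,1}\times\dots\times\mu_{j,n_j})(G_j)\ge 1-\varepsilon$, and the measures are probability measures, Fubini and Bernoulli give
\[
(\mu_{1,1}\times\dots\times\mu_{k,n_k})(G)=\prod_{j=1}^k(\mu_{j,1}\times\dots\times\mu_{j,n_j})(G_j)\ge(1-\varepsilon)^k\ge 1-k\varepsilon.
\]

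Now fix $\overline x=(\overline x_1,\dots,\overline x_k)\in G$, an index $(j,i)$, and a scale $\delta>0$. Since $\supp\mu_{j,i}\subset F_j$, it suffices to bound $\mu_{j,i}(V_{\overline x}(\delta)\cap F_j)$. Translating to the linearized picture in $\R^{n+1}$ via $y\mapsto(y,1)$, the $\delta$-neighborhood of $V_{\overline x}$ in $\R^n$ lifts into the $O(\delta)$-neighborhood of $\overline{P_{[k]}(\overline x)}$, and a point of $F_j$ lifts into $\overline{F_j}$. Thus Proposition \ref{prop:delta-nice} yields
\[
V_{\overline x}(\delta)\cap F_j\subset V_{\overline x_j}(Cc^{-1}\delta)
\]
for some absolute constant $C$ (absorbing the constant from Proposition \ref{prop:delta-nice} and the constant coming from the lift/projection between the affine and linear models).

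Finally, since $\overline x_j\in G_j$ and $G_j$ is a $(\sigma,K_j,1-\varepsilon)$-thin $(n_j-1)$-plane graph for the collection $(\mu_{j,1},\dots,\mu_{j,n_j})$ inside $F_j$, we have
\[
\mu_{j,i}\bigl(V_{\overline x_j}(Cc^{-1}\delta)\bigr)\le K_j(Cc^{-1}\delta)^\sigma\le\bigl(Cc^{-1}\bigr)^\sigma\max_jK_j\,\delta^\sigma.
\]
Combining the two displays gives the desired thin-planes bound with constant $(Cc^{-1})^\sigma\max_j K_j$, completing the proof. The only non-routine step is the containment on line two, but this is precisely the content of Proposition \ref{prop:delta-nice}, which in turn rested on the rank identities $r([k],\emptyset)=n$ and $r([k]\setminus\{j\},\{j\})=n+1$ established via Proposition \ref{prop:induction-rank} using the minimality of $F_1,\dots,F_k$.
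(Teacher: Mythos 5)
Your proof is correct and follows essentially the same route as the paper: the measure bound on $G$ comes directly from the product structure, and the thin-planes estimate for $\mu_{j,i}$ follows from the containment in Proposition \ref{prop:delta-nice} together with the thin-planes property of $G_j$. You are slightly more explicit than the paper about the passage between the affine picture and the linearized picture in $\R^{n+1}$ and about the Bernoulli/Fubini step, but these are the same computations the paper leaves implicit.
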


\begin{proof}
    For $\overline x$ fixed, and $I\subset[k]$, let $P_{I} = P_I(\overline x)$.
    
    By Proposition \ref{prop:delta-nice} we have the upshot, valid uniformly in $\overline x\in G$:
    \[
    P_{[k]}(\delta)\cap F_j   \subseteq P_j(Cc^{-1}\delta).
    \]
    Hence, by the thin-planes property of $\mu_{j,i}$ on $F_j$, 
    \[
    \mu_{j,i}(P_{[k]}(\delta)) \le \mu_{j,i}(P_j(Cc^{-1}\delta)) \le K_j(Cc^{-1})^\sigma \delta^{\sigma}.
    \]
    The $\prod_{j,i}\mu_{j,i}$ measure of $G$ being at least $1-k\varepsilon$ follows since the $\prod_{i\in I_j}\mu_{j,i}$-measure of each $G_j$ is at least $1-\varepsilon$.
\end{proof}

By replacing $\varepsilon$ with $\varepsilon/k$ and defining $$K:=K(\varepsilon, s, \sigma, C, c, w) = (Cc^{-1})^\sigma\max_jK_j(\varepsilon/k, s, \sigma, C, c, w)$$ we get the desired $(\sigma, K, 1-\varepsilon)$-thin planes graph $G$ for $(\mu_{j, i}:~ j\in [k], i\in [n_j])$. This concludes the proof of the special case $\sum n_j = n$. 

\subsection{\texorpdfstring{General case $p\ge 1$}{General case p≥1}} \label{sec:general-case}

\begin{prop}[cf. \cite{do2018extending} (5.5)]\label{prop:induction-rank-modified}
     Let $F_1,\dots, F_k \subset \R^n$ be a minimal collection of flats with $\sum n_j = n$. For any disjoint $I \subset [k-1]$, $J \subset [k]\setminus I$:
    \[
    r(I, J) = \begin{cases}
    n_I, ~ \text{ if }J=\emptyset,\\
    \ge n_{I \cup J}+1, ~\text{ if }J\neq \emptyset, J\neq[k]\setminus I, \\
    n+1, \text{ if }J=[k]\setminus I
    \end{cases}
    \]
\end{prop}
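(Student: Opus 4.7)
The plan is to prove the proposition by induction on $|I|$, mirroring the strategy for Proposition~\ref{prop:induction-rank} while dispatching each inductive step to the appropriate case of the statement. Write $\overline P_I := \overline{P_I(\overline x)}$ and $\overline F_J := \overline{\aff(F_j : j \in J)}$. The two ingredients are (i) the general rank inequalities of Proposition~\ref{prop:minus1} and (ii) the decomposition $r(I, J) = r(I\setminus\{s\}, J) + n_s$, which holds whenever $\overline F_s \cap (\overline P_{I\setminus\{s\}} + \overline F_J) = 0$.

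For the base case $I = \emptyset$, we have $r(\emptyset, J) = \dim \overline F_J$, which equals $0$ when $J = \emptyset$, equals $n_J + 1$ when $J$ is a nonempty proper subset of $[k]$ (matching the middle-case bound), and equals $n+1$ when $J = [k]$ since $F_{[k]} = \R^n$ by the minimality hypothesis. For the inductive step, pick any $s \in I$ (necessarily $s \neq k$ since $I \subset [k-1]$), and split into two subcases as in Proposition~\ref{prop:induction-rank}. In Case~A, $\overline F_s \cap (\overline P_{I\setminus\{s\}} + \overline F_J) = 0$, so $r(I, J) = r(I\setminus\{s\}, J) + n_s$; apply the inductive hypothesis to $(I\setminus\{s\}, J)$---note this pair is never in the third case since $[k]\setminus(I\setminus\{s\}) = J\cup\{s\} \neq J$---and combine with affine-span subadditivity $n_{(I\setminus\{s\})\cup J} + n_s \geq n_{I\cup J}$ (which follows from the minimality inequalities $n_{J'} \geq \sum_{j \in J'} n_j$ for proper $J' \subsetneq [k]$) to derive the claimed bound.

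In Case~B, $\overline F_s \cap (\overline P_{I\setminus\{s\}} + \overline F_J) \neq 0$. Fix a nonzero vector in the intersection and realize it as a point $w \in F_s \cap \langle P_{I\setminus\{s\}}, F_J\rangle$. By Lemma~\ref{lem:swapping}, for $\prod_i \mu_{s,i}$-generic $\overline x_s \in G_s$ one has $w \notin P_s(\overline x_s)$, so $\overline w \notin \overline{P_s(\overline x_s)}$. Since $\overline{P_s(\overline x_s)}$ is codimension $1$ in $\overline F_s$, this forces $\overline F_s = \overline{P_s(\overline x_s)} + \R\,\overline w \subset \overline P_{I\setminus\{s\}} + \overline F_J + \overline{P_s(\overline x_s)}$, whence $r(I, J) = r(I\setminus\{s\}, J\cup\{s\})$, and the inductive hypothesis applied to $(I\setminus\{s\}, J\cup\{s\})$ closes the step (falling into the middle case when $J \neq [k]\setminus I$ and into the third case when $J = [k]\setminus I$).

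The main subtlety is the new third case $J = [k]\setminus I$. The upper bound $r(I, J) \leq n+1$ is immediate since the matrix has only $n+1$ rows, but we need a matching lower bound. In Case~A, minimality gives $n_{[k]\setminus\{s\}} + n_s \geq \sum_{j=1}^k n_j \geq n$, so combining with the inductive middle-case bound $r(I\setminus\{s\}, J) \geq n_{[k]\setminus\{s\}} + 1$ yields $r(I, J) \geq n + 1$. In Case~B, the inductive third-case bound $r(I\setminus\{s\}, J\cup\{s\}) = n+1$ transfers directly. The restriction $I \subset [k-1]$ keeps the statement well-posed: it excludes the degenerate configuration $I = [k]$ in which the first case (giving $r = n_{[k]} = n$) and the third case (giving $r = n+1$) would collide, and it guarantees that at every step of the induction there is always an element $s \in I$ available to peel off.
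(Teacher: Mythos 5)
Your proof follows the paper's approach: the paper's own proof of Proposition~\ref{prop:induction-rank-modified} is a one-line remark that the argument is identical to that of Proposition~\ref{prop:induction-rank}, with the sole modification that the base case $I=\emptyset$, $J=[k]$ yields $r(\emptyset,[k]) = n+1$. Your write-up unfolds that remark into a complete induction, and the structure (peeling off $s\in I$, the transversal Case~A where $\overline F_s \cap (\overline P_{I\setminus\{s\}} + \overline F_J) = 0$ gives $r(I,J) = r(I\setminus\{s\},J)+n_s$, and the non-transversal Case~B using Lemma~\ref{lem:swapping} to get $r(I,J) = r(I\setminus\{s\},J\cup\{s\})$) matches the proof of Proposition~\ref{prop:induction-rank} faithfully. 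The added bookkeeping for the third case is done correctly: you verify that $(I\setminus\{s\},J)$ never lands in the third case (so the middle-case bound applies), that minimality gives $n_{[k]\setminus\{s\}}+n_s\ge\sum_j n_j=n$ so Case~A yields $\ge n+1$, and that the matrix having $n+1$ rows provides the matching upper bound; your observation about why the hypothesis $I\subset[k-1]$ is needed (to keep the first and third cases from colliding at $I=[k]$) is a correct and useful clarification that the paper leaves implicit. This is the same proof as the paper's, written out in full.

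One small caution, inherited from Proposition~\ref{prop:induction-rank} rather than introduced by you: in Case~A for the \emph{middle} case you appeal to the inequality $n_{(I\setminus\{s\})\cup J}+n_s\ge n_{I\cup J}$ as a consequence of the minimality inequalities $n_{J'}\ge\sum_{j\in J'}n_j$. Minimality gives a \emph{lower} bound on $n_{(I\setminus\{s\})\cup J}$, not an upper bound on $n_{I\cup J}$, so that inequality does not follow from minimality in the direction you invoke it; the correct mechanism is that Case~A (transversality of $\overline F_s$ with $\overline P_{I\setminus\{s\}}+\overline F_J$) forces the linearizations to meet appropriately. The paper's proof of Proposition~\ref{prop:induction-rank} makes the same leap silently, so you are not departing from the source, but the justification you offer for that step is not quite the right one. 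For the new third case $J=[k]\setminus I$ the step is genuinely fine, since there $I\cup J=[k]$ and $\sum_j n_j=n$ exactly, so minimality applied to $[k]\setminus\{s\}$ does give $n_{[k]\setminus\{s\}}+n_s\ge n$.
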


\begin{proof}
    The proof is exactly the same as Proposition \ref{prop:induction-rank}, except for the base case $I=\emptyset$ and $J=[k]$ we get $r(\emptyset, [k]) = n+1$.
\end{proof}

Let $G_j$ be a $(\sigma, K_j, 1-\varepsilon)$-thin $(n_j-1)$-planes graph for $(\mu_{j,i}:~i\in [n_j])$ and define $G = \prod_{j=1}^{k-1} G_j$. Note that $G$ has measure at least $1-(k-1)\varepsilon$. 
For the last flat $F_k$, we are going to select a subset of $n_k-p$ measures $\mu_{k, i}$ to complete the thin-planes graph $G$. 
For $\overline{x} \in G$ we define $Q(\overline{x}) = P_{ [k-1] }(\overline{x}) \cap F_k$ and $Q_j(\overline{x}) = (P_{[k-1]\setminus \{j\},\{j\}}(\overline{x})) \cap F_k$ for $j\in [k-1]$. 

\begin{prop}\label{prop:Q-is-a-flat}
    Given $\overline x \in G$, let
    \[
    Q(\overline x) = P_{[k-1]}(\overline x)\cap F_k,\quad Q_j(\overline x)= \langle P_{[k-1] \setminus \{j\}}(\overline{x}), F_{j}\rangle \cap F_k.
    \]Then
    \begin{enumerate}
        \item $Q(\overline x)$ is a $p-1$-dimensional flat, and
        \item $Q_j(\overline x)$ is a $p$-dimensional flat.
    \end{enumerate}
\end{prop}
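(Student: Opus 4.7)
My plan is to reduce both claims to pure dimension bookkeeping on the linearizations $\overline{P_{[k-1]}(\overline x)}$, $\overline{\langle P_{[k-1]\setminus\{j\}}(\overline x), F_j\rangle}$, and $\overline{F_k}$, and then convert the resulting linear dimensions into affine ones. Since $c$-stability forces every $r(\overline I, J)$ to be constant over $\overline x \in G$, I would first read off the following ranks from Proposition \ref{prop:induction-rank-modified} and Proposition \ref{prop:minus1}:
\begin{align*}
r([k-1], \emptyset) &= n+p-n_k, & r([k-1], \{k\}) &= n+1,\\
r([k-1]\setminus\{j\}, \{j\}) &= n+p-n_k+1, & r([k-1]\setminus\{j\}, \{j,k\}) &= n+1.
\end{align*}
The two equalities on the right ($J = [k]\setminus I$) come from the final case of Proposition \ref{prop:induction-rank-modified}, the equality $r([k-1], \emptyset) = n_{[k-1]}$ is the first case, and the rank $r([k-1]\setminus\{j\},\{j\})$ sandwiches the lower bound $\ge n_{[k-1]}+1$ of Proposition \ref{prop:induction-rank-modified} against the upper bound $r([k-1],\emptyset)+1$ extracted from the third inequality of Proposition \ref{prop:minus1}.

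Next I would apply the inclusion-exclusion identity $\dim(U\cap W)=\dim U+\dim W-\dim(U+W)$ to pairs of linear subspaces of $\R^{n+1}$ to obtain
\[
\dim\bigl(\overline{P_{[k-1]}(\overline x)}\cap\overline{F_k}\bigr)=p,\quad \dim\bigl(\overline{\langle P_{[k-1]\setminus\{j\}}(\overline x),F_j\rangle}\cap\overline{F_k}\bigr)=p+1.
\]
The desired affine dimensions $\dim Q(\overline x)=p-1$ and $\dim Q_j(\overline x)=p$ then reduce to showing that $Q(\overline x)$ and $Q_j(\overline x)$ are non-empty; equivalently, that the linear intersections above are not entirely contained in the horizon $\R^n\times\{0\}\subset \R^{n+1}$.

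The hard part will be this non-emptiness step. The emptiness of $P_{[k-1]}(\overline x)\cap F_k$ is equivalent to the existence of a nonzero $v\in\dir(F_k)^\perp$ such that $v\cdot x_{j,i}$ is constant across $(j,i)\in\overline{[k-1]}$ but distinct from the value of $v$ on $F_k$, which forces each $x_{j,i}$ to lie on the proper sub-flat $F_j\cap\{v\cdot x=c\}$ of $F_j$. Invoking $(w,\tau)$-irreducibility of each $\mu_{j,i}$ in $V_j$ together with a compactness/swapping argument in the spirit of Lemma \ref{lem:swapping}, I would bound the $\prod\mu_{j,i}$-measure of such bad configurations uniformly in the direction $v$ and then restrict $G$ to a full-measure sub-graph on which non-emptiness holds. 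The argument for $Q_j(\overline x)$ is parallel: replace $P_{[k-1]}$ by $\langle P_{[k-1]\setminus\{j\}}, F_j\rangle$ throughout, use the corresponding rank identities from the first paragraph, and run the same swapping reduction. I expect the linear-algebraic dimension count to be almost automatic once the ranks above are in hand; the substantive content of the proof lies in the swapping/irreducibility step that rules out the degenerate parallel configurations.
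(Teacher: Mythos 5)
Your dimension count is exactly the paper's argument: the paper reads off $r([k-1],\{k\}) = n+1$ and $r([k-1]\setminus\{j\},\{j,k\}) = n+1$ from Proposition~\ref{prop:induction-rank-modified}, applies the dimension-sum formula to the linearizations, and gets $\dim\bigl(\overline{P_{[k-1]}(\overline x)}\cap\overline{F_k}\bigr)=p$ and $\dim\bigl(\overline{\langle P_{[k-1]\setminus\{j\}}(\overline x),F_j\rangle}\cap\overline{F_k}\bigr)=p+1$, then declares $Q$ and $Q_j$ to be flats of affine dimension $p-1$ and $p$. Your sandwiching of $r([k-1]\setminus\{j\},\{j\})$ between the lower bound from Proposition~\ref{prop:induction-rank-modified} and the upper bound from the third inequality of Proposition~\ref{prop:minus1} is a valid way to pin down that rank, and is consistent with what the paper uses.

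Where you diverge is the ``hard part.'' You are right that the linear dimension count alone only computes $\dim\bigl(\overline{P_{[k-1]}}\cap\overline{F_k}\bigr)$; to conclude $Q=P_{[k-1]}\cap F_k$ is a nonempty $(p-1)$-flat one must also know this intersection is not contained in the horizon $\{x_{n+1}=0\}\subset\R^{n+1}$. The paper does not carry out this verification in the proof of Proposition~\ref{prop:Q-is-a-flat}; it passes directly from $\dim\overline{Q}=p$ to ``$Q$ is a $(p-1)$-flat'' (and later relies on nonemptiness to define the map $y(x,\overline x_{\mathbf I^c})$). So you have correctly identified a step that the paper leaves implicit. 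However, your proposed resolution has two problems. First, your characterization of the degenerate case is imprecise: the correct statement is that $Q=\emptyset$ iff there is an affine functional $v$ vanishing on $\dir(F_k)$ whose values on $P_{[k-1]}$ and on $F_k$ are two distinct constants; you then invoke ``irreducibility of $\mu_{j,i}$ in $V_j$,'' but the measures $\mu_{j,i}$ are only irreducible in the (possibly much smaller) subflats $V_t\subset F_j$, so the sub-flat $F_j\cap\{v\cdot x=c\}$ need not be a proper subflat of $V_t$. Second, a swapping argument of the type in Lemma~\ref{lem:swapping} restricts $G$ to a sub-graph of large measure, whereas Proposition~\ref{prop:Q-is-a-flat} is stated for \emph{every} $\overline x\in G$; you would need to build the restriction into the definition of $G$ earlier, or show the degenerate locus is empty rather than merely small. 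A cleaner route, more in keeping with the $c$-stable-position machinery, would be to note that the condition $Q(\overline x)=\emptyset$ is a rank condition on a matrix augmented by the row $(\mathbf 1^T,0)$, and then argue (using the stability construction of Lemma~\ref{lem:stable-position} and the rank maximization there) that this augmented rank is also constant and takes its maximal value, ruling out degeneracy.
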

\begin{proof}
    Part (1) follows by considering the matrix $(B_{[k-1]}(\overline x),A_k)$. Given $r([k-1],\{k\}) = n+1$, it follows that $\dim (\overline{P_{[k-1]}(\overline x)}+\overline {F_k}) = n+1$. Therefore, by the dimension-sum formula, 
    \[
    \dim \overline{Q(\overline x)} = n_{[k-1]}+(n_k+1)-(n+1) = p.
    \]
    It follows that $Q(\overline x)$ is a $p-1$-flat. 
    
    For (2), consider the matrix $(B_{[k-1]\setminus\{j\}}(\overline x),A_j,A_k)$. By $r([k]\setminus\{j,k\},\{j,k\}) = n+1$, we have
    \[
    \dim \overline{Q_j(\overline x)} = (n_{[k-1]}-n_j)+(n_j+1)+(n_k+1)-(n+1) = p+1. \qedhere
    \]
\end{proof}

\begin{setup}\label{Setup:Graph-H-and-Measures-nu}
Set $\mathbf{I} = \{1\}\times [p]$ and $\mathbf I^c$ the complement of $\mathbf I$ in $\bigcup_{j=1}^{k-1}\{j\} \times [n_j]$. For $\overline{x}_{\mathbf{I}^c} \in \prod_{(j, i) \in \mathbf{I}^c} \supp\mu_{j,i}$ and $x\in \bigcup_{i=1}^p\supp\mu_{1,i}$ let us define a point $y(x, \overline{x}_{\mathbf{I}^c}) \in F_k$ as the unique point of intersection 
\[
\langle x , \overline{x}_{\mathbf{I}^c}\rangle \cap F_k = \{y\}.
\]
This intersection indeed consists of a single point provided that $x \in \bigcup_{i=1}^p \supp\mu_{1, i}$---this follows from Proposition \ref{prop:Q-is-a-flat}. Denote $y_i = y(x_{1,i},\overline x_{\mathbf I^c})$.

Let $G|_{{\overline{x}_{\mathbf{I}^c}}} \subset \prod_{i=1}^p \supp \mu_{1,i}$ be the set of tuples $\overline{x}_{\mathbf{I}} = (x_{1, 1}, \ldots, x_{1, p})$ so that $(\overline{x}_{\mathbf{I}}, \overline{x}_{\mathbf{I}^c}) \in G$. For a fixed $\overline{x}_{\mathbf{I}^c}$ we can thus define measures
\[
\nu_i^{\overline{x}_{\mathbf{I}^c}} = y( \mu_{1, i}, \overline{x}_{\mathbf{I}^c} )
\]
and the graph
\[
H^{\overline{x}_{\mathbf{I}^c}} = \{ (y(x_{1,1}, \overline{x}_{\mathbf{I}^c}), \ldots, y(x_{1,p}, \overline{x}_{\mathbf{I}^c})):  (x_{1, 1}, \ldots, x_{1, p}) \in G|_{\overline{x}_{\mathbf{I}^c}} \} \subset \prod_{i=1}^p \supp \nu_i^{\overline{x}_{\mathbf{I}^c}}.
\]
\end{setup}

We claim that $H^{\overline{x}_{\mathbf{I}^c}}$ witnesses thin $(p-1)$-planes for $\{\nu_i^{\overline{x}_{\mathbf{I}^c}}\}_{i=1}^p$.

\begin{lemma}[Key Lemma]\label{lem:key-lemma}
For a typical choice of $\overline{x}_{\mathbf{I}^c} \in \prod_{(j, i) \in \mathbf{I}^c} \supp\mu_{j,i}$ there exists a subset $E_i(\overline{x}_{\mathbf{I}^c})$ with $\nu_i^{\overline{x}_{\mathbf{I}^c}}(E_i(\overline{x}_{\mathbf{I}^c})) \ge 1-O(\varepsilon)$ and such that
the measures $\nu_i^{\overline{x}_{\mathbf{I}^c}}|_{E_i(\overline{x}_{\mathbf{I}^c})}$ are $(\tilde C, \sigma)$-Frostman and the graph $H^{\overline{x}_{\mathbf{I}^c}}$ is $(\sigma, \tilde K, 1-O(\varepsilon))$-thin $(p-1)$-planes graph for $(\nu_i^{\overline{x}_{\mathbf{I}^c}}, ~i=1, \ldots, p)$.  
\end{lemma}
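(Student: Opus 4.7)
The plan is to transfer the thin $(n_1 - 1)$-planes property of $(\mu_{1,1}, \ldots, \mu_{1, n_1})$ on $F_1$ to a thin $(p-1)$-planes property of $(\nu_i)_{i=1}^p$ on $F_k$ via the projection through $L := \aff(\overline{x}_{\mathbf{I}^c})$. First I would use Fubini on $G = \prod_{j=1}^{k-1} G_j$ and on $G_1$ to fix a typical $\overline{x}_{\mathbf{I}^c}$ for which: (a) $\prod_{i=1}^p \mu_{1,i}(G|_{\overline{x}_{\mathbf{I}^c}}) \ge 1 - O(\varepsilon^{1/2})$; and (b) for each $i \in [p]$ and most $x \in \supp \mu_{1, i}$, the fiber of $G_1$ through $(x, x_{1, p+1}, \ldots, x_{1, n_1})$ in the remaining $p-1$ $F_1$-slots has $\prod_{j \in [p]\setminus\{i\}} \mu_{1, j}$-measure $\ge 1 - O(\varepsilon^{1/2})$. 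These Fubini reductions guarantee that, after restriction, the thin-planes bounds for the $\mu_{1, i}$ remain available at essentially every configuration of interest.

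The key geometric step is to identify the relevant flats under $y$. Writing $H_1 := P_{\{1\}}(\overline x) = \aff(x_{1,1}, \ldots, x_{1, n_1})$, a hyperplane in $F_1$ by $r(\{1\}, \emptyset) = n_1$, one has $P_{[k-1]}(\overline x) = \aff(L, H_1)$ since $H_j = L_j$ for $j \ge 2$. A dimension count using $r([k-1], \{k\}) = n+1$ from Proposition~\ref{prop:induction-rank-modified} then shows $y^{-1}(Q(\overline x)) \cap F_1 = \aff(L, H_1) \cap F_1 = H_1$; more generally, for $y_0 = y(x_0)$ with $x_0 \in F_1 \setminus L_1$, the fiber $y^{-1}(y_0) \cap F_1 = \aff(L_1, x_0)$ is an $(n_1 - p)$-flat through $L_1 := L \cap F_1$. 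Quantitatively, the $c$-stability should supply a constant $C = C(n, c)$ such that on $B^n(0, 1)$,
\[
y^{-1}(V_{y_1, \ldots, y_p}(\delta)) \cap F_1 \subset H_1(C\delta) \quad \text{and} \quad y^{-1}(B(y_0, \delta)) \cap F_1 \subset \aff(L_1, x_0)(C\delta),
\]
mirroring (but generalizing) Propositions~\ref{prop:nice1}--\ref{prop:delta-nice} from the $p = 0$ case.

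Given these inclusions, the thin-planes bound is immediate: for $\overline{x}_{\mathbf{I}} \in G|_{\overline{x}_{\mathbf{I}^c}}$, the tuple $(x_{1,1}, \ldots, x_{1, n_1})$ lies in $G_1$, so $\mu_{1, i}(H_1(C\delta)) \le K_1 (C\delta)^\sigma$, and hence $\nu_i(V_{y_1, \ldots, y_p}(\delta)) \le K_1 C^\sigma \delta^\sigma$. For the Frostman bound on $\nu_i$, given $y_0 = y(x_0)$ with $x_0 \in \supp \mu_{1, i}$, I would use (b) to choose auxiliary points $x_j^* \in \supp \mu_{1, j}$, $j \in [p] \setminus \{i\}$, so that the $n_1$-tuple with $x_0$ inserted in the $i$-th slot and $x_{1, p+1}, \ldots, x_{1, n_1}$ in the trailing slots lies in $G_1$. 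Then $\widetilde H := \aff(L_1, x_0, \{x_j^*\}_{j \ne i})$ is a hyperplane in $F_1$ containing $\aff(L_1, x_0)$, so the thin-planes property gives $\mu_{1, i}(\widetilde H(C\delta)) \le K_1 (C\delta)^\sigma$, whence $\nu_i(B(y_0, \delta)) \le K_1 C^\sigma \delta^\sigma$. Defining $E_i$ as the $y$-image of the admissible $x_0$, property (b) yields $\nu_i(E_i) \ge 1 - O(\varepsilon^{1/2})$. The main obstacle I anticipate is the quantitative $\delta$-neighborhood containment in the second paragraph: this should reduce to showing certain minors of $(B_{\overline I}(\overline x), A_J)$-type matrices are bounded away from zero by $c$-stability, a multi-dimensional generalization of the angle estimates underlying Proposition~\ref{prop:delta-nice}.
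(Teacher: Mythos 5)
Your proposal follows the same strategy as the paper's proof: Fubini-reduce to a typical $\overline{x}_{\mathbf{I}^c}$, identify $y^{-1}(Q(\overline x))\cap F_1$ with the hyperplane $H_1 = P_{\{1\}}(\overline x)$ of $F_1$, and transfer the thin $(n_1-1)$-planes property of $G_1$ through the projection $y$ to obtain thin $(p-1)$-planes for $(\nu_i)$. The geometric identifications you make ($y^{-1}(Q(\overline x))\cap F_1 = H_1$, the fibers $y^{-1}(y_0)\cap F_1 = \aff(C,x_0)$, and $V_{y_1,\dots,y_p}=Q(\overline x)$) all match what the paper's argument ultimately rests on.

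The principal difference is in how the quantitative $\delta$-neighborhood containment is handled, and this is exactly the piece you flag as "the main obstacle" without completing. The paper spends the bulk of Section~\ref{sec:proof-of-key-lemma} on this: it factors $y|_{F_1\setminus C}$ through a radial projection $\pi_{C}^U$ onto a transversal $p$-dimensional flat $U\subset F_1$ followed by an affine isomorphism $L^\#\colon U \to Q_1(\overline x)$, writes the induced hyperplane map $\psi(W) = S_W\cap F_k = L^\#(W)$ in explicit $(a,b)$-coordinates as $(a,b)\mapsto(M^{-\top}a,b)$, and bounds $\|M_{\overline x}\|\lesssim c^{-O(1)}$ uniformly over $\overline x\in G$ via $c$-stability (Propositions~\ref{prop:param-map}--\ref{prop:psi-immersion}). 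This is the rigorous content behind your asserted inclusion $y^{-1}(V_{y_1,\dots,y_p}(\delta))\cap F_1\subset H_1(C\delta)$ with a $c$-controlled constant, and without it the thin-planes bound you write in the third paragraph is not justified. So the proposal is a correct road map but leaves the quantitative core of the argument unfilled.

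Two smaller remarks. First, your Frostman argument reproves the fact from scratch with auxiliary points $x_j^*$; the paper instead observes at the outset that the Frostman bound on $\nu_i|_{E_i}$ is a formal consequence of the thin $(p-1)$-planes property of $H^{\overline{x}_{\mathbf{I}^c}}$ once that has been established (via $E_i=\{y : (\prod_{i'\ne i}\nu_{i'})(H|_{y_i = y})\ge 1/2\}$), so your extra work is avoidable. Second, your Fubini scheme yields $1-O(\varepsilon^{1/2})$ bounds rather than $1-O(\varepsilon)$, which is harmless but worth adjusting to match the stated lemma.
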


We prove the Key Lemma in Section \ref{sec:proof-of-key-lemma}. Using Lemma \ref{lem:key-lemma} we can now finish the proof as follows. 

We first show that spanning thin planes implies having an \textbf{NC} collection of flats.

\begin{prop} \label{prop:thin-implies-NC}
    Let $\mu_1, \ldots, \mu_n$ be measures on $\R^n$ and suppose that $G \subset \prod_{i=1}^n \supp\mu_i$ is a $(\sigma, K, c)$-thin hyperplanes graph for $(\mu_1, \ldots, \mu_n)$ for some $\sigma, K, c >0$. Then the flats $W_j = \aff\supp\mu_j$, $j=1, \ldots, n$, are {\bf NC} in $\R^n$.
\end{prop}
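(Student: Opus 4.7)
The plan is to argue by contradiction: suppose the flats $W_1,\dots, W_n$ fail to be \textbf{NC}, so there exist affine flats $F_1, \dots, F_r \subset \R^n$ with $\bigcup_{j=1}^n W_j \subset \bigcup_{i=1}^r F_i$ and $\sum_{i=1}^r \dim F_i \le n-1$. The key insight is that the $C$-good position hypothesis will force some $F_{i^*}$ to sit entirely inside the spanned hyperplane $V_{\overline{x}}$ for every $\overline x$ in a positive-measure piece of $G$, which directly clashes with the thin hyperplanes bound.

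First, I would convert the cover into a Borel partition by letting $\tilde F_i = F_i \setminus \bigcup_{i' < i} F_{i'}$, so every point of $\bigcup_i F_i$ lies in a unique $\tilde F_i$. For each assignment $\iota \colon [n] \to [r]$, define
\[
G_\iota = \{(x_1, \dots, x_n) \in G : x_j \in \tilde F_{\iota(j)} \text{ for each } j\}.
\]
The $G_\iota$ partition $G$, and since $(\mu_1 \times \cdots \times \mu_n)(G) \ge c > 0$ with only finitely many assignments, at least one $G_\iota$ has positive product measure. Fix such $\iota$ and set $n_i = |\iota^{-1}(i)|$. Since $\sum_i n_i = n > n-1 \ge \sum_i \dim F_i$, pigeonhole yields some index $i^*$ with $n_{i^*} \ge \dim F_{i^*} + 1$.

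Now comes the main geometric step. For any $\overline{x} = (x_1, \dots, x_n) \in G_\iota \subset G$, good position tells us that $(x_1, \dots, x_n)$ is $1/C$-separated from $\mathcal S_{n-1}$, so the $n$ points are affinely independent. In particular the $n_{i^*}$-point subset $\{x_j : \iota(j) = i^*\}$ is affinely independent, hence has affine span of dimension $n_{i^*} - 1 \ge \dim F_{i^*}$. But this span is contained in $F_{i^*}$, a flat of dimension $\dim F_{i^*}$, so it must equal $F_{i^*}$ itself, giving the containment $F_{i^*} \subset V_{\overline{x}}$.

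To conclude, Fubini applied to $G_\iota \subset \prod_j \tilde F_{\iota(j)}$ of positive product measure gives $\mu_j(\tilde F_{\iota(j)}) > 0$ for every $j$; choosing $j$ with $\iota(j) = i^*$, we have $\mu_j(F_{i^*}) > 0$, and therefore for every $\overline{x} \in G_\iota$ and every $\delta > 0$,
\[
\mu_j(V_{\overline{x}}(\delta)) \ge \mu_j(F_{i^*}) > 0.
\]
This is incompatible with the thin-hyperplanes bound $\mu_j(V_{\overline{x}}(\delta)) \le K\delta^\sigma$ as $\delta \to 0$, which is the desired contradiction. The main obstacle I anticipate is just the geometric containment $F_{i^*} \subset V_{\overline{x}}$; the rest is bookkeeping via pigeonhole, Borel partitioning, and Fubini.
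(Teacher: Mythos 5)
Your proof is correct and takes a genuinely different route from the paper's. The paper argues directly: it fixes a single tuple $\overline{x}\in G$, shows $W_j\not\subset V_{\overline x}$ for every $j$ (since otherwise $\mu_j(V_{\overline x}(\delta))$ would not decay), deduces $U_t\not\subset V_{\overline x}$ for each flat $U_t$ in the cover, and then uses the linearization trick: since the $n$ affinely independent points $x_j$ linearly span $\overline{V_{\overline x}}$ and each $\overline{x_j}$ lies in some $\overline{U_t}$, one gets $n=\dim\overline{V_{\overline x}}\le\sum_t\dim(\overline{V_{\overline x}}\cap\overline{U_t})\le\sum_t\dim U_t$. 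You instead argue by contradiction, make the cover disjoint, use a Borel partition of $G$ into pieces $G_\iota$ indexed by assignments of tuple-coordinates to flats, pick a positive-measure piece, pigeonhole to find a flat $F_{i^*}$ receiving $\ge\dim F_{i^*}+1$ coordinates, and show that affine independence (from good position) forces $F_{i^*}\subset V_{\overline x}$ for all $\overline x\in G_\iota$, giving a uniform lower bound $\mu_j(V_{\overline x}(\delta))\ge\mu_j(F_{i^*})>0$ that contradicts the thin-planes decay. Both proofs ultimately rest on the same two inputs (good position $\Rightarrow$ affine independence, and the $\delta\to 0$ decay), but the paper's version is shorter and avoids the contradiction/partition machinery, while yours localizes the failure to a concrete flat $F_{i^*}$ that lands inside $V_{\overline x}$ on a positive-measure set, which is arguably more transparent. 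One small bonus of your argument: you do not need the (tacit, in the paper) reduction that every $U_t$ in the cover contains some $W_j$, since your pigeonhole finds the bad flat directly.
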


\begin{proof}
    Recall that ${V}_{\overline x} = \aff(x_i : x_i \in \overline x)$. Furthermore let $\overline{V_{\overline{x}}}$ be the linearization of $V_{\overline x}$.

    Suppose that $\bigcup W_j \subset \bigcup U_t$ for some collection of flats $U_t$. For $\overline{x} =(x_1, \ldots, x_n) \in G$ consider the corresponding hyperplane $V_{\overline{x}}$. Then the thin-planes property implies that $W_j \not\subset V_{\overline{x}}$ for every $j$ (otherwise $\mu_j(V_{\overline{x}}(\delta))=1$ for all $\delta>0$). So we also get $U_t \not\subset V_{\overline{x}}$ for all $t$. So $\dim V_{\overline{x}}\cap U_t \le \dim U_t-1$. Equivalently, $\dim (\overline{V_{\overline{x}}} \cap \overline{U_t}) \le \dim U_t$ holds. 
    We conclude that 
    \[
    n = \dim \overline{V_{\overline{x}}} \le \sum_t \dim (\overline{V_{\overline{x}}} \cap \overline{U_t}) \le \sum_t \dim U_t. \qedhere
    \]
\end{proof}

Since by assumption $G_k$ is a thin planes graph for the measures $(\mu_{t, i}, ~t\in I_k, i\in [d_t])$, we conclude that the collection of flats $(V_{t}, ~ t \in I_k)$ is ${\bf NC}$ inside $F_k$. 

Let $Z \subset F_k$ be a generic subspace of dimension $n_k-p$. For $\overline{x} \in G$ and $v \in F_k \setminus Q(\overline {x})$, there exists a unique $y(v) \in Z$ such that 
\[
\aff(v, Q(\overline x))\cap Z = \{y(v)\}.
\]
This gives us the \emph{join-meet (radial)} projection map $\pi_{Q(\overline{x})}^Z: F_k \setminus Q(\overline{x}) \to Z$ defined by $\pi_{Q(\overline x)}^Z(v) = y(v)$.

\begin{prop}\label{prop:projection-is-still-NC}
    For all but finitely many $\overline{x} \in G$, the collection of flats $\pi_{Q(\overline{x})}^Z(V_t)$, $t\in I_k$ is {\bf NC} inside $Z$.
\end{prop}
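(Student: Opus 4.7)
The plan is to argue by contradiction: assume the projected collection is not \textbf{NC} in $Z$, and lift the witnessing cover back to a cover in $F_k$ that contradicts the \textbf{NC} property of $(V_t)_{t \in I_k}$ inside $F_k$. The latter property holds because $G_k$ is a thin-hyperplanes graph for the measures $(\mu_{t, i}: t \in I_k,\ i \in [d_t])$ in $F_k$, so Proposition \ref{prop:thin-implies-NC} applies.

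Suppose the projections are covered in $Z$ by flats $U_1, \ldots, U_r$ with $\sum_i \dim U_i \leq \dim Z - 1 = n_k - p - 1$. Since each $\pi^Z_{Q(\overline x)}(V_t)$ is an irreducible flat, it lies entirely in some $U_{i(t)}$; setting $\widetilde U_i := \aff(U_i, Q(\overline x))$, this gives $V_t \subset \widetilde U_{i(t)}$ inside $F_k$. One may shrink each $U_i$ to the minimal span $\aff(\pi^Z_{Q(\overline x)}(V_t) : i(t) = i)$---this only decreases $\sum \dim U_i$ while preserving the cover---and then use the identity $\aff(\pi^Z_{Q(\overline x)}(v), Q(\overline x)) = \aff(v, Q(\overline x))$ for $v \notin Q(\overline x)$ to obtain $\widetilde U_i = \aff(V_t : i(t) = i;\ Q(\overline x))$. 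For $\overline x$ outside a finite union of proper algebraic subvarieties of $G$, the flat $Q(\overline x)$ is disjoint from each $\aff(V_t : i(t) = i)$, so
\[
\dim U_i = \dim \widetilde U_i - p = \dim \aff(V_t : i(t) = i).
\]
The collection $\{\aff(V_t : i(t) = i)\}_{i \in S}$, with $S = \{i(t) : t \in I_k\}$, is then a cover of $\bigcup_{t \in I_k} V_t$ in $F_k$, and \textbf{NC} of $(V_t)_{t \in I_k}$ inside $F_k$ yields
\[
\sum_i \dim U_i \;=\; \sum_{i \in S} \dim \aff(V_t : i(t) = i) \;\geq\; n_k \;>\; n_k - p,
\]
contradicting the assumption.

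The main obstacle is the genericity step: identifying the exceptional set of $\overline x$'s for which $Q(\overline x)$ fails to be in general position with respect to some subcollection of the $V_t$'s. Since there are at most $2^{|I_k|}$ such subcollections and each imposes an algebraic condition on $\overline x$, the exceptional set is a finite union of proper algebraic subvarieties of $G$, which matches the ``for all but finitely many $\overline x \in G$'' qualifier in the statement.
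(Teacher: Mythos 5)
Your proof takes a genuinely different route from the paper's. The paper observes that $Q(\overline x) = \langle y_1,\dots,y_p\rangle$ is spanned by the $p$ points of $H^{\overline x_{\mathbf I^c}}$, picks generic hyperplanes $Z_1,\dots,Z_p\subset F_k$ with $\bigcap Z_i = Z$, factors the join-meet projection as $\pi_{Q(\overline x)}^Z = \pi_{y_p}^{Z_p}\circ\cdots\circ\pi_{y_1}^{Z_1}$, and applies Lemma~\ref{lem:projecting-NC-flats} (projecting \textbf{NC} flats from a point) $p$ times; the finiteness of the exceptional set at each stage comes from the pigeonholing/merging argument in that lemma, which carefully tracks the single-dimension drop that occurs precisely when the projection point lies in one of the spanned flats. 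Your argument instead tries to lift a bad cover of $\bigcup_t \pi_{Q(\overline x)}^Z(V_t)$ in $Z$ directly to a bad cover of $\bigcup_t V_t$ in $F_k$ in one step.

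The direct approach has a genuine gap in the genericity step. You claim that for $\overline x$ outside a small set, $Q(\overline x)$ is disjoint from (and, implicitly, transversal to) each $W_i := \aff(V_t:i(t)=i)$, which gives $\dim U_i = \dim W_i$. But disjointness of a $(p-1)$-flat $Q(\overline x)$ from $W_i$ inside the $n_k$-dimensional flat $F_k$ is a generic condition only when $\dim W_i + p \le n_k$; if $\dim W_i > n_k-p$, then \emph{every} $(p-1)$-flat in $F_k$ meets $W_i$, and the ``bad set'' is all of $G$, not a proper subvariety. Nothing in your argument rules this out: the a priori bound you have is only $\dim W_i \le \dim U_i + p \le (n_k-p-1)+p = n_k-1$, which is compatible with $\dim W_i > n_k-p$ as soon as $p\ge 2$. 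When $Q(\overline x)$ does meet $W_i$, the identity $\dim U_i = \dim W_i$ fails (one only gets $\dim U_i \ge \dim W_i - p$), and then summing no longer contradicts $\sum\dim U_i \le n_k-p-1$. The paper's Lemma~\ref{lem:projecting-NC-flats} sidesteps this by projecting from a single point at a time, so the dimension drop is at most $1$ per incidence, and the merging argument converts repeated incidences into a strict decrease in the number of partition classes. To repair your proof you would need a quantitative accounting of how many $W_i$'s the flat $Q(\overline x)$ can meet and how much total direction-overlap it can have—essentially re-deriving the content of Lemma~\ref{lem:projecting-NC-flats}—rather than the blanket disjointness claim. (A secondary, more cosmetic mismatch: ``a finite union of proper algebraic subvarieties of $G$'' is not the same as ``all but finitely many $\overline x\in G$'' as stated in the proposition; the paper's factoring argument does not literally establish the latter either, but this is a shared imprecision rather than a flaw specific to your proof.)
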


\begin{proof}
    Observe that $Q(\overline{x}) = \langle \overline{y}_{[p]}\rangle$ for $\overline{y}_{[p]} \in H^{\overline{x}_{\mathbf {I}^c}}$ and by Proposition \ref{prop:thin-implies-NC}, $(V_t, t\in I_k)$ is an $\mathbf{NC}$ collection of flats. 

    Pick generic $n_k-1$-flats $Z_1,\dots,Z_p\subset F_k$ such that $\bigcap_{i=1}^p Z_i = Z$.
    Then, the join-meet projections satisfy
    \[
    \pi_{\langle \overline y_{[p]}\rangle}^Z = \pi_{y_p}^{Z_p}\circ\dots\circ \pi_{y_1}^{Z_1}.
    \]
    It follows from applying Lemma \ref{lem:projecting-NC-flats} $p$ times that the projected flats are {\bf NC}. 
\end{proof}

By Lemma \ref{lem:projecting-stable}, the measures $\pi_{Q(\overline{x})}^Z\mu_{t, i}$, $t\in I_k$, $i \in [\dim V_j]$ are in $c^{O(1)}$-stable position with respect to flats $\pi_{Q(\overline{x})}^Z (V_t) \subset Z$. By Proposition \ref{lemma:projection-of-irreducible}, as long as $r>0$ is sufficiently small, the measures $\pi_{Q(\overline{x})}^Z\mu_{t, i}|_{F_{k} \setminus Q(\overline{x})(r)}$, $t\in I_k$, $i \in [\dim V_j]$, are $(c^{O(1)}w, 2 \tau)$-irreducible inside flats $\pi_{Q(\overline{x})}^Z (V_t)$ (as long as the projection is at least one-dimensional). 

Finally, by the Key Lemma (Lemma \ref{lem:key-lemma}) and Ren's discretized radial projections theorem, for typical $\overline{x}$, we can restrict $\mu_{t, i}$ to a set $S_{t, i}(\overline{x})$ of measure $\ge 1-O(\varepsilon)$ in such a way that $\pi_{Q(\overline{x})}^Z(\mu_{t, i}|_{S_{t, i}(\overline{x})})$ is a $(\tilde C, \sigma)$-Frostman measure on $U$. 

So we are in position to apply Theorem \ref{thm:general-case} to the collection of flats $\pi_{Q(\overline{x})}^Z (V_t) \subset Z$ and measures $\pi_{Q(\overline{x})}^Z(\mu_{t, i}|_{S_{t, i}(\overline{x})})$, $t\in I_k$, $i \in [\dim V_j]$. Note that $\dim Z = n_k-p < n$, so by induction we may assume we already proved Theorem \ref{thm:general-case} in dimension $n_k-p$. Hence we conclude that there are some numbers $d'_t \le \dim \pi_{Q(\overline{x})}^Z (V_t)$ so that for typical $\overline{x}$, the measures
\begin{equation}\label{eq:thin-planes-restricted-to-S}
(\pi^Z_{Q(\overline{x})}(\mu_{t, i}|_{S_{t, i}(\overline{x})}), ~~ t\in I_k, ~i\in [d'_t])    
\end{equation}
form $(\sigma', K', 1-\varepsilon')$-thin hyperplanes in $Z$. Note that we can use the stable position property to ensure that the indices $d'_t$ are the same for all choices of $\overline{x}$ (if some indices work for one $\overline{x}$ then they work for almost all by pigeonholing). 

Iterating Lemma \ref{lem:swapping} with $\nu =  \pi_{Q(\overline{x})}^Z\mu_{t, i}$ for each $(t,i)$, we can upgrade the thin planes graph for (\ref{eq:thin-planes-restricted-to-S}), to a thin planes graph of the unrestricted measures 
\begin{equation}\label{eq:thin-planes-no-S}
(\pi_{Q(\overline{x})}^Z(\mu_{t, i}), ~~ t\in I_k, ~i\in [d'_t]).
\end{equation}
That is, we get that the measures (\ref{eq:thin-planes-no-S}) have $(\sigma', K'', 1-\varepsilon')$-thin planes for some $K''$ depending on $K'$ and the remaining parameters. 
Let $G^{(\overline{x})} \subset \prod_{t\in I_k, i \in [d'_t]} \supp \mu_{t, i}$ be the preimage of the corresponding thin-planes graph under the projection map. More precisely, let $\mathbf J = \bigcup_{t\in I_k}\{t\}\times[d_t']$, and we define $G^{(\overline{x})}$ to be the set of tuples $\overline{x_{\mathbf J}} = (x_{t,i}:(t,i)\in\mathbf J) \in \prod_{(t,i)\in\mathbf J} \supp \mu_{t, i}$ such that 
\begin{equation}\label{eqn:mu-ti-havethinplanes}
\pi_{Q(\overline{x})}^Z(\mu_{t, i}) (\langle \pi_{Q(\overline{x})}^Z(x_{t, i}), ~(t,i)\in\mathbf J\rangle(\delta)) \le K'' \delta^{\sigma'}
\end{equation}
holds for all dyadic $\delta$. Then it follows that this graph has density $\ge 1-\varepsilon'$ for any $\sigma'< \sigma$ and sufficiently large $K''$. 

With this definition it is clear that the following graph is Borel:
\[
\overline{G} = \{ (\overline{x},\overline{x_{\mathbf J}}): ~ \overline{x} \in G', ~\overline{x_{\mathbf J}} \in G^{(\overline{x})} \}
\]
where $G' \subset G$ is the set of $\overline{x}$ for which the typical conditions above hold. 

\begin{prop}\label{prop:separate-from-Qj}
    For every $(\overline{x}, \overline{x_{\mathbf J}})\in \overline{G}$ and every $j\in[k-1]$, we have
    \[
    \angle (Q_j(\overline{x}), \langle Q(\overline x), \overline{x_{\mathbf J}}\rangle) \gtrsim c^{O(1)}.
    \]
\end{prop}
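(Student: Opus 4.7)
The plan is to prove the angle bound by expressing $\sin\angle(Q_j(\overline{x}), \langle Q(\overline x), \overline{x_{\mathbf J}}\rangle)$ as a normalized top-dimensional wedge product in $\overline{F_k}$ and then lower-bounding that wedge using the $c$-stable position, in the spirit of the exterior-algebra argument from Proposition \ref{prop:nice1}(3).

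First I would reduce to a determinant in $\overline{F_k}$. Both $\overline{Q_j(\overline{x})}$ and $\overline{\langle Q(\overline{x}), \overline{x_{\mathbf J}}\rangle}$ contain $\overline{Q(\overline{x})}$ (the first by $\dim\overline{Q_j}=p+1$ and $\overline{Q}\subset\overline{\langle P_{[k-1]\setminus\{j\}},F_j\rangle}\cap\overline{F_k}$; the second by construction), so the nontrivial part of the principal angle sits in the quotient $\overline{F_k}/\overline{Q(\overline{x})}$, where the former contributes a $1$-dimensional line and the latter contributes an $(n_k-p)$-dimensional hyperplane spanned by the images of the vectors $(x_{t,i},1)$. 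Choosing an orthonormal basis $e_1,\dots,e_p$ of $\overline{Q(\overline{x})}$ and a unit vector $q\in\overline{Q_j(\overline{x})}$ orthogonal to $\overline{Q(\overline{x})}$, the exterior-algebra identity used in the proof of Proposition \ref{prop:nice1}(3) gives
\[
|\sin\Theta|=\frac{\bigl|\,e_1\wedge\cdots\wedge e_p\wedge q\wedge\bigwedge_{(t,i)\in\mathbf J}(x_{t,i},1)\,\bigr|}{\bigl|\,e_1\wedge\cdots\wedge e_p\wedge q\,\bigr|\cdot\bigl|\,\bigwedge_{(t,i)\in\mathbf J}(x_{t,i},1)\,\bigr|_{\overline{Q(\overline{x})}^\perp}}.
\]
Since all vectors have norm $O(1)$, the denominator is $O(1)$, and the problem reduces to lower-bounding the numerator.

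Next I would lower-bound the numerator using $c$-stability. The key structural identities are
\[
\overline{Q(\overline{x})}=\overline{P_{[k-1]}(\overline{x})}\cap\overline{F_k},\qquad\overline{Q_j(\overline{x})}=\overline{\langle P_{[k-1]\setminus\{j\}}(\overline{x}),F_j\rangle}\cap\overline{F_k},
\]
so bases of $\overline{Q(\overline{x})}$ and $\overline{Q_j(\overline{x})}$ can be read off as kernel vectors of the block matrices $(B_{[k-1]}(\overline{x})\mid -A_k)$ and $(B_{[k-1]\setminus\{j\}}(\overline{x})\mid A_j\mid -A_k)$ respectively. By $c$-stable position, each of these matrices has its expected rank and admits an $(n+1)\times(n+1)$ minor of absolute value $\ge c$, which by a Cramer-type argument yields representatives of $e_1,\dots,e_p,q$ as linear combinations of the columns of $A_k$ with coefficient vectors of norm $\le c^{-O(1)}$. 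Plugging these Cramer expressions into the wedge in the numerator and expanding via Cauchy--Binet, one reduces the bound to an $(n+1)$-minor of the extended matrix $(B_{[k-1]\setminus\{j\}}(\overline{x})\mid A_j\mid A_k\mid B_{\mathbf J})$. By $c$-stability applied to this extended matrix (whose rank is controlled via the minimality of the flats and the inclusion of the $\mathbf J$-indices into the $c$-stable collection), one such minor has absolute value $\ge c$, and after tracking the constants we conclude $|\sin\Theta|\gtrsim c^{O(1)}$.

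The main obstacle will be the bookkeeping in step two: one must verify that the ranks $r(\overline I,J)$ for the combined matrices are the expected ones so that the Cauchy--Binet expansion is dominated by a single non-cancelling term, and that the Cramer substitutions for $e_i$ and $q$ preserve enough of the minor structure. This amounts to a careful dimension count using the minimality of $F_1,\dots,F_k$ (in particular $\dim F_{[k]}=n$ and $\dim F_J\ge\sum_{j\in J}n_j$ for proper $J$), together with the observation that the columns of $B_{\mathbf J}$ lie inside $\overline{F_k}$ and are placed in generic position relative to $A_k$ by the $c$-stability of the enlarged collection $(\mu_{l,i},\mu_{t,i})$.
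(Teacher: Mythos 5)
Your proposal takes a genuinely different route from the paper's proof, and it contains a gap at the crucial step.

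The paper does not attempt a direct exterior-algebra computation. Instead it introduces the auxiliary finite measure $\tilde\nu^{(\overline x)}$ (uniform on the $k-1$ points $\pi^Z_{Q(\overline x)}(Q_j(\overline x))$) and applies Lemma~\ref{lem:swapping} to the thin-planes graph $G^{(\overline x)}$. Because a uniform measure on finitely many points can only assign mass in multiples of $1/(k-1)$, the Frostman-type bound from Lemma~\ref{lem:swapping} forces the hyperplane $\langle\pi^Z_{Q(\overline x)}(\overline x_{\mathbf J})\rangle$ to be quantitatively separated from every point $\pi^Z_{Q(\overline x)}(Q_j(\overline x))$ for $\overline x_{\mathbf J}$ in a large subgraph $G'^{(\overline x)}$. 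This establishes the angle lower bound $\gtrsim_{\varepsilon,K'',n}1$ for a \emph{positive fraction} of tuples. Only then does the paper invoke $c$-stability: since the angle is encoded in the constant-rank quantities $\operatorname{rank}(B_*(\overline x), A_*)$ and the minors $M_r$, establishing positivity for a single tuple pins down what the constant rank actually is, and the $M_r \ge c$ bound then upgrades the estimate to $\gtrsim c^{O(1)}$ for \emph{all} tuples in $\overline G$.

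The gap in your proposal is precisely the step you flag as "the main obstacle" but never resolve: you write that "one such minor has absolute value $\ge c$" because the rank "is controlled via the minimality of the flats and the inclusion of the $\mathbf J$-indices into the $c$-stable collection." Neither of these actually delivers what you need. Minimality of $F_1,\dots,F_k$ gives Propositions~\ref{prop:induction-rank} and \ref{prop:induction-rank-modified}, which control $r(\overline I, J)$ only for $\overline I$ ranging over the top-level index sets $\{j\}\times[n_j]$ ($j\le k-1$); it says nothing a priori about ranks of matrices whose columns include $B_{\mathbf J}$, i.e.\ the measures $\mu_{t,i}$ on the sub-flats $V_t\subset F_k$. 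And $c$-stability only guarantees that the rank $r(\overline I, J)$ is \emph{constant} in $\overline x$ and that the maximal $r\times r$ minor is $\ge c$; it does not identify the value of $r$, nor does it bound the \emph{particular} $(n_k+1)$-wedge $e_1\wedge\dots\wedge e_p\wedge q\wedge\bigwedge_{\mathbf J}(x_{t,i},1)$ inside $\overline{F_k}$, which a priori could vanish identically (e.g.\ if the supports $\supp\mu_{t,i}$ all happened to lie in a hyperplane of $F_k$ containing $Q_j(\overline x)$). Determining that the relevant rank is full — equivalently, that this wedge is nonzero for \emph{some} tuple — is exactly what the paper's Lemma~\ref{lem:swapping} device supplies and what your Cramer/Cauchy--Binet bookkeeping, as sketched, does not.
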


\begin{proof}
    Let $\tilde \nu^{(\overline{x})}$ be the uniform probability measure on the points $\pi_{Q(\overline{x})}^Z(Q_j(\overline{x})) \in Z$ for $j=1, \ldots, k-1$ (note that $Q(\overline{x})\subset Q_j(\overline{x})$ and these are $p-1$ and $p$-flats, respectively). So by Lemma \ref{lem:swapping} we can find a subgraph $G'^{(\overline{x})} \subset G^{(\overline{x})}$ so that for any $\overline{x}_{\mathbf J} \in G'^{(\overline{x})}$ the hyperplane $\langle \pi_{Q(\overline{x})}^Z(\overline{x_{\mathbf J}})\rangle \subset Z$ is $\gtrsim_{\varepsilon,K', n} 1$ separated from the support of $\tilde \nu^{(\overline{x})}$. So the graph defined as
    \[
    \overline{G}' = \{(\overline{x}, \overline{x_{\mathbf J}}):~ \overline{x_{\mathbf J}} \in G'^{(\overline{x})}\}
    \]
    has the property that so for every $(\overline{x}, \overline{x_{\mathbf J}})\in \overline{G}'$ and every $j\in[k-1]$, we have
    \[
    \angle (Q_j(\overline{x}), \langle Q(\overline x), \overline{x_{\mathbf J}}\rangle) \gtrsim_{\varepsilon,K'', n} 1.
    \]
    since our measures are in stable position, the above condition implies a stronger property:
    \[
    \angle (Q_j(\overline{x}), \langle Q(\overline x), \overline{x_{\mathbf J}}\rangle) \gtrsim c^{O(1)}
    \]
    since this can be written as a statement about ranks and determinants of the matrices $B_*(\overline{x})$. So having the angle separation hold for a single choice of the tuple ensures that it holds for all tuples.
\end{proof}

We claim that $\overline{G}$ witnesses $\min\{\sigma,\sigma'\}$-thin hyperplanes for $$(\mu_{j,i}: (j,i) \in (\bigcup_{j\in[k-1]}\{j\}\times[n_j])\cup\mathbf J).$$
Now we check the thin-planes property. Let $(\overline{x}, \overline{x_{\mathbf J}}) \in \overline{G}$ be arbitrary and let $V_{\overline{x}, \overline{x_{\mathbf J}}}$ be the affine span of these vectors in $\R^n$. First we check that $\mu_{t, i}(V_{\overline{x}, \overline{x_{\mathbf J}}}(\delta))$ is small enough. First, notice that 
\[
V_{\overline{x}, \overline{x_{\mathbf J}}}(\delta) \cap F_k \subset \langle Q(\overline{x}), \overline{x_{\mathbf J}}\rangle(C\delta)
\]
this is because $\angle (V_{\overline{x}}, F_k) \gtrsim c^{O(1)}$ using the $c$-stable condition. 
By using the $c$-stable condition again, we see that $\mu_{t, i}$ must be $\sim c$-separated from the space $Q(\overline{x})$ (indeed a typical $x \in \operatorname{supp}\mu_{t, i}$ does not lie in $Q(\overline{x})$ and thus, the distance from any $x' \in \operatorname{supp}\mu_{t, i}$ to $Q(\overline{x})$ has to be lower bounded by $\sim c$ since this distance can be expressed in terms of functions $M_r(\cdot)$). Thus, by \eqref{eqn:mu-ti-havethinplanes} we get
\[
\mu_{t, i}(\langle Q(\overline{x}), \overline{x_{\mathbf J}}\rangle(C\delta)) \le \pi_{Q(\overline{x})}^Z\mu_{t,i}(\langle \pi_{Q(\overline{x})}^Z(\overline{x_{\mathbf J}})\rangle (Cc^{-O(1)} \delta) ) \le K'' (Cc^{-O(1)} \delta)^{\sigma'},
\]
by the thin-planes property of $\pi_{Q(\overline{x})}^ZG^{(\overline{x})}$. So we conclude that $\mu_{t, i}(V_{\overline{x}, \overline{x_{\mathbf J}}}(\delta)) \le K'' (Cc^{-O(1)} \delta)^{\sigma'}$.

Now we estimate $\mu_{j, i}(V_{\overline{x},\overline{x_{\mathbf J}}}(\delta))$ for some $j \in[k-1]$ and $i \in [n_j]$. By Proposition \ref{prop:separate-from-Qj}, we have $V_{\overline x, \overline x_{\mathbf J}} + Q_j(\overline x) = \R^n$, and thus by the dimension-sum formula we have 
\[
\dim V_{\overline{x}, \overline{x_{\mathbf J}}} \cap Q_j(\overline{x}) = p-1,
\]
where recall that $Q_j(\overline{x}) = \langle V_{\overline{x}}, F_j \rangle \cap F_k$.
Furthermore, since $Q(\overline x) \subset Q_j(\overline{x})$ is a $p-1$ plane, it follows that
\[
V_{\overline{x}, \overline{x_{\mathbf J}}} \cap Q_j(\overline{x}) = Q(\overline{x}), 
\]
 So we obtain, 
\[
F_k \cap \langle \overline{x}, \overline{x_{\mathbf J}}\rangle \cap \langle \overline{x}, F_j\rangle= F_k \cap \langle \overline{x}\rangle = Q(\overline{x}).
\]
On the other hand, if $F_j \subset \langle \overline{x}, \overline{x_{\mathbf J}}\rangle$ then we obtain
\[
Q_j(\overline{x})=F_k \cap \langle \overline{x}, F_j\rangle \subset F_k \cap \langle \overline{x}, \overline{x_{\mathbf J}}\rangle \cap \langle \overline{x}, F_j\rangle = Q(\overline x)
\]
a contradiction. So we conclude that $F_j \not\subset \langle \overline{x}, \overline{x_{\mathbf J}}\rangle$ and since we already know that $\langle \overline{x}\rangle\cap F_j = \langle \overline{x}_j\rangle$ is  a hyperplane on $F_j$, we get that $F_j \cap \langle \overline{x}, \overline{x_{\mathbf J}}\rangle = \langle \overline{x}_j\rangle$. Furthermore, using the $c$-stable position we immediately conclude that $\angle (F_j, \langle \overline{x}, \overline{x_{\mathbf J}}\rangle ) \gtrsim c$ and so 
\[
\mu_{j, i}(V_{\overline{x}, \overline{x_{\mathbf J}}}(\delta)) \le \mu_{j, i}(\langle \overline{x}_j\rangle(C'c^{-1}\delta)) \le K_j (C'c^{-1}\delta)^{\sigma}. 
\]
Thus, the constructed graph $\overline{G}$ is a $(\min(\sigma, \sigma'), C c^{-O(1)}\max \{K'', K_j : j\in [k-1]\}, 1-O(\varepsilon))$-thin planes graph.

\subsection{Proof of the key lemma} \label{sec:proof-of-key-lemma}
We now prove the Key Lemma which we restate here for the convenience of the reader.

\begin{lemma}[Key Lemma]
For a typical choice 
of $\overline{x}_{\mathbf{I}^c} \in \prod_{(j, i) \in \mathbf{I}^c} \supp\mu_{j,i}$ there exists a subset $E_i(\overline{x}_{\mathbf{I}^c})$ with $\nu_i^{\overline{x}_{\mathbf{I}^c}}(E_i(\overline{x}_{\mathbf{I}^c})) \ge 1-O(\varepsilon)$ and such that
the measures $\nu_i^{\overline{x}_{\mathbf{I}^c}}|_{E_i(\overline{x}_{\mathbf{I}^c})}$ are $(\tilde C, \sigma)$-Frostman and the graph $H^{\overline{x}_{\mathbf{I}^c}}$ is $(\sigma, \tilde K, 1-O(\varepsilon))$-thin $(p-1)$-planes graph for $(\nu_i^{\overline{x}_{\mathbf{I}^c}}, ~i=1, \ldots, p)$.  
\end{lemma}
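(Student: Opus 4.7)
The plan is to transfer the thin $(n_1{-}1)$-planes property of $(\mu_{1,1},\dots,\mu_{1,n_1})$ on $F_1$ (witnessed by the graph $G_1$) to a thin $(p{-}1)$-planes property for the pushforwards $(\nu_1^{\overline x_{\mathbf I^c}},\dots,\nu_p^{\overline x_{\mathbf I^c}})$ on $F_k$, via the join-meet projection $y(\,\cdot\,,\overline x_{\mathbf I^c})\colon F_1 \to F_k$ through the center $V_{\overline x_{\mathbf I^c}} := \langle \overline x_{\mathbf I^c}\rangle$. The engine is the algebraic identity
\[
y(\,\cdot\,,\overline x_{\mathbf I^c})^{-1}(R) = \langle R, V_{\overline x_{\mathbf I^c}}\rangle \cap F_1,
\]
valid for any affine $(p{-}1)$-flat $R \subset F_k$. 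When $R = \langle y_1,\dots, y_p\rangle$ with $y_i = y(x_{1,i},\overline x_{\mathbf I^c})$, the right-hand side simplifies to $P_{[k-1]}(\overline x) \cap F_1$. Using the rank estimate $r([2,k{-}1],\{1\}) \ge n_{[k-1]}{+}1$ from Proposition \ref{prop:induction-rank-modified} and the dimension-sum formula, one deduces $\overline{P_{[2,k-1]}(\overline x)} \cap \overline{F_1} = \{0\}$, which collapses the intersection to $V_{\overline x_1} := \langle x_{1,1},\dots,x_{1,n_1}\rangle$, an $(n_1{-}1)$-hyperplane of $F_1$. Crucially, for $\overline x \in G$ the tuple $\overline x_1$ lies in $G_1$, so the thin-planes property of $G_1$ applies directly to $V_{\overline x_1}$.

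First I would establish a quantitative bi-Lipschitz estimate for $y(\,\cdot\,,\overline x_{\mathbf I^c})$. Away from the degenerate locus $V_{\overline x_{\mathbf I^c}}$, the map can be written in local coordinates as a ratio of minors of the matrices $(B_{\overline I}(\overline x), A_J)$; the $c$-stable position hypothesis bounds these minors below by $c$, yielding a bi-Lipschitz estimate with constant $O_c(r^{-O(1)})$ on $\{x\in F_1 : \mathrm{dist}(x, V_{\overline x_{\mathbf I^c}}) \ge r\}$ for any $r>0$. Since $V_{\overline x_{\mathbf I^c}} \cap F_1$ is a proper affine flat of $F_1$, the $(w,\tau)$-irreducibility of $\mu_{1,i}$ together with $r \le w$ bounds its mass in the $r$-neighborhood by $\tau$. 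Define $E_i(\overline x_{\mathbf I^c})$ as the image under $y$ of $\supp\mu_{1,i}$ outside this neighborhood; then $\nu_i^{\overline x_{\mathbf I^c}}(E_i) \ge 1 - \tau$ and $\nu_i^{\overline x_{\mathbf I^c}}|_{E_i}$ inherits the $(C,s)$-Frostman property from $\mu_{1,i}$ with constant $\tilde C = O_{c,r}(C)$.

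Next I would verify the thin-planes bound. By Fubini on $G = \prod_{j=1}^{k-1} G_j$, for a $(1-O(\varepsilon^{1/2}))$-measure set of $\overline x_{\mathbf I^c}$ the fiber $G|_{\overline x_{\mathbf I^c}}$ has $\prod_{i\le p}\mu_{1,i}$-measure at least $1-O(\varepsilon^{1/2})$ and every tuple in it completes to some $\overline x_1 \in G_1$. For $\overline y \in H^{\overline x_{\mathbf I^c}}$ and $R = \langle y_1,\dots,y_p\rangle$, the identity combined with the bi-Lipschitz pull-back and the thin $(n_1{-}1)$-planes property of $G_1$ yields
\[
\nu_i^{\overline x_{\mathbf I^c}}(R(\delta)) \le \mu_{1,i}\bigl(V_{\overline x_1}(Cc^{-O(1)}\delta)\bigr) \le K_1 (Cc^{-O(1)})^\sigma \delta^\sigma,
\]
so $\tilde K = K_1 (Cc^{-O(1)})^\sigma$ suffices. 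The density $\prod_i\nu_i^{\overline x_{\mathbf I^c}}(H^{\overline x_{\mathbf I^c}}) \ge 1-O(\varepsilon^{1/2})$ is immediate since $H^{\overline x_{\mathbf I^c}}$ is the coordinatewise bi-Lipschitz image of $G|_{\overline x_{\mathbf I^c}}$.

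The main obstacle is verifying the algebraic collapse $P_{[k-1]}(\overline x) \cap F_1 = V_{\overline x_1}$ and the quantitative bi-Lipschitz bound. Both rely crucially on the minimality of $F_1,\dots,F_k$ and the rank values of Proposition \ref{prop:induction-rank-modified} to rule out unexpected incidences (such as $F_1 \cap F_j$ being positive-dimensional for $j\ge 2$, or $\langle R, V_{\overline x_{\mathbf I^c}}\rangle$ swallowing $F_1$). These degeneracies are ruled out uniformly by $c$-stability through lower bounds on minors of $(B_{\overline I}(\overline x), A_J)$, which is exactly what the stable position setup of Section \ref{sec:stable-position} provides.
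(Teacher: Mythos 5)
Your plan of working directly with the join-meet map $y(\,\cdot\,,\overline x_{\mathbf I^c})\colon F_1\to Q_1(\overline x)$ and the identity $y^{-1}(\langle y_1,\dots,y_p\rangle)=P_{[k-1]}(\overline x)\cap F_1=V_{\overline x_1}$ is a reasonable and cleaner route to the thin-planes estimate than the paper's explicit factorization through the auxiliary flat $U$, the projection $\pi=\pi_{C(\overline x)}^U$, and the hyperplane map $\psi$. The containment $y^{-1}(R(\delta))\cap\supp\mu_{1,i}\subset V_{\overline x_1}(Cc^{-O(1)}\delta)$ and the resulting bound $\nu_i^{\overline x_{\mathbf I^c}}(R(\delta))\le K_1(Cc^{-O(1)})^\sigma\delta^\sigma$ do capture what the paper proves via Propositions \ref{prop:psi-immersion} and \ref{prop:immersion-implies-thin}, and the Fubini step for typical $\overline x_{\mathbf I^c}$ matches the paper's selection of $G_0$. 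Similarly, the density claim $\prod_i\nu_i^{\overline x_{\mathbf I^c}}(H^{\overline x_{\mathbf I^c}})\ge 1-O(\varepsilon)$ follows from the pushforward relation and needs no regularity of $y$.

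However, there is a genuine gap in your treatment of the Frostman condition. You assert that $y(\,\cdot\,,\overline x_{\mathbf I^c})$ is quantitatively bi-Lipschitz away from the degenerate locus and that therefore $\nu_i^{\overline x_{\mathbf I^c}}|_{E_i}$ ``inherits'' the $(C,s)$-Frostman property from $\mu_{1,i}$. This is false: $y$ is a join-meet projection from the $n_1$-dimensional flat $F_1$ onto the $p$-dimensional flat $Q_1(\overline x)$, and since $p<n_1$ (indeed $n_1\ge p+1$ by the reduction in Section \ref{sec:minimal-case}), the map has $(n_1-p)$-dimensional fibers and cannot be injective on any set of positive $n_1$-dimensional measure. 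If $\mu_{1,i}$ were concentrated along a single fiber, $\nu_i^{\overline x_{\mathbf I^c}}$ would be a point mass, so no Frostman property at any positive exponent can be ``inherited'' from the Frostman property of $\mu_{1,i}$ alone. The $(w,\tau)$-irreducibility argument you invoke only controls the mass near the proper subflat $C(\overline x)=V_{\overline x_{\mathbf I^c}}\cap F_1$; it does nothing to prevent concentration along a fiber of $y$ that is far from $C(\overline x)$. The paper instead derives the Frostman property of $\nu_i^{\overline x_{\mathbf I^c}}|_{E_i}$ (at exponent $\sigma$, not $s$) as a consequence of the thin $(p-1)$-planes property of $H^{\overline x_{\mathbf I^c}}$ itself, by defining $E_i=\{x:(\prod_{i'\neq i}\nu_{i'})(H^{\overline x_{\mathbf I^c}}|_{x_i=x})\ge 1/2\}$ and observing that a ball $B(x,\delta)$ in $Q_1(\overline x)$ is contained in a $O(\delta)$-neighborhood of a spanned $(p-1)$-plane. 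You would need to make this argument (or an equivalent one), not appeal to bi-Lipschitzness of $y$.
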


Note that the first statement about the restricted measures being $(\tilde C, \sigma)$-Frostman follows from the thin-planes property---say, if measures $\mu_1, \ldots, \mu_n$ have $(\sigma, K, 1-\varepsilon)$-thin planes graph $G$, then we can define 
\[
E_i = \{x: (\prod_{i'\neq i} \mu_{i'})(G|_{x_i=x}) \ge 1/2\}
\]
and notice that $\mu_i|_{E_i}$ is $(\sigma, 2\tilde K)$-Frostman and $\mu_i(E_i) \ge 1-O(\varepsilon)$. 

Now we show that the graph $H^{\overline{x}_{\mathbf{I}^c}}$ has the thin-planes property by transferring the thin-planes property of the graph $G|_{\overline{x}_{\mathbf{I}^c}} \subset \prod_{i=1}^p \supp\mu_{1,i}$. This will hold by the following construction.

\subsubsection{Choosing a generic flat} \label{sec:choosing-a-generic-flat} For $\overline x \in G$, define the $n_1-p-1$-flat
\[
C = C(\overline x) = C(\overline x_{\mathbf I^c\cap(\{1\}\times [n_1])}) := \langle \overline x_{\mathbf I^c\cap(\{1\}\times [n_1])}\rangle \subset F_1,
\]
and
\[
\mathcal C = \{C(\overline x):\overline x \in G\},
\]
and for $i = 1,\dots,p$ consider the collection of all joins formed between $\mu_{1,i}$ and $C(\overline x)$:
\[
\mathcal J_{i} := \{\langle x_{1,i}, C(\overline x_{\mathbf I^c})\rangle : \overline x \in G, x_{1,i}\in \supp\mu_{1,i}\}.
\]
For each $i$, we claim that $\mathcal J_i\subset B_{\mathcal A(F_1,n_1-p)}(P_i,\eta)$ for some $P_i \in \mathcal A(F_1,n_1-p)$ and some $0 < \eta.$

By $c$-stable position, for all $i\neq i',$
\[
\angle P_i, P_{i'} \gtrsim c.
\]
So if we choose $\eta>0$ sufficiently small, there exists $P_0\in \mathcal A(F_1,n_1-p)$ with $\mathrm{dist}(P_0,P_i)\ge \eta$ for every $i$. Pick a translate $U$ of $P_0^\perp$ in $F_1$ (where we take orthogonal complements within $\mathrm{dir}(F_1)$), $c$-separated from $\bigcup \mathcal C$.

Notice that with $C:= \langle\overline{{x}}_{\mathbf I^c\cap(\{1\}\times [n_1])}\rangle$, it follows that for all $v\in F_1 \setminus C$, there exists a unique $\pi(v) \in U$ such that
\[
\aff(v, C) \cap U = \{\pi(v)\}.
\]
We denote this radial projection
\[
\pi=\pi_{C(\overline x)}^U : F_1\setminus C\to U
\]
Note that having fixed $\overline{x}_{\mathbf I^c\cap(\{1\}\times [n_1])}$, the intersection $Q(\overline{x}) = P_{[k-1]}(\overline{x}) \cap F_k$ is completely determined by the hyperplane $W = \pi (\langle \overline{x}_{\mathbf I}\rangle) = \langle \pi(x_{1,i}) : i \in [p]\rangle $. We choose affine coordinates $\Phi_{\overline x}$ defined by $(u,t,w)\in\R^p\times\R\times\R^{n_1-p-1}$ on $F_1$ so that $U = \{(u,0,0):u\in\R^p\}$, $C(\overline x) = \{(0,1,w):w\in\R^{n_1-p-1}\}$.

In the $\Phi_{\overline x}$ coordinates, the map $\pi\colon F_1\setminus C(\overline x)\to U$ becomes
\[
\pi(u,t,w) = (\frac{u}{1-t},0,0).
\]
By $c$-stable position, the coordinate systems $\Phi_{\overline x}$ all satisfy
\begin{equation}\label{eq:coord-lipschitz}
\mathrm{Lip}(\Phi_{\overline x})\lesssim c^{-O(1)}.
\end{equation}

\begin{prop}
Consider the flats $E = \langle \overline x_{\mathbf I^c}\rangle$, $J = \langle E, F_1\rangle$, and $Q_1(\overline  x) = \langle P_{[k-1] \setminus \{1\}}(\overline{x}), F_{1}\rangle \cap F_k$. 

    Each of the following hold:
    \begin{enumerate}
    \item $\dim E = n_{[k-1]}-p-1$
    \item $J  
=\langle P_{[k-1]\setminus\{1\}}(\overline x),F_1\rangle$
    \item $\dim J = n_{[k-1]}$
    \item $Q_1(\overline x) =  J\cap F_k$
    \end{enumerate}
\end{prop}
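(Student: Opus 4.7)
The plan is to handle the four parts in the order (2), (4), (1), (3): parts (2) and (4) are set-theoretic and follow directly from the definitions, while (1) and (3) are short rank computations that follow from $c$-stable position combined with Propositions \ref{prop:minus1} and \ref{prop:induction-rank-modified}.

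First I would prove (2). By definition $E$ is the affine span of the $n_{[k-1]}-p$ points $\{x_{j,i}:(j,i)\in \mathbf I^c\}$. These points split into the subcollection with $j\ge 2$, whose affine span is by definition $P_{[k-1]\setminus\{1\}}(\overline x)$, and the $n_1-p$ points $x_{1,p+1},\dots,x_{1,n_1}\in F_1$. Hence $E\subset \langle P_{[k-1]\setminus\{1\}}(\overline x),F_1\rangle$, and so $J=\langle E,F_1\rangle\subset \langle P_{[k-1]\setminus\{1\}}(\overline x),F_1\rangle$. The reverse inclusion is immediate from $P_{[k-1]\setminus\{1\}}(\overline x)\subset E\subset J$ together with $F_1\subset J$. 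Once (2) is known, (4) is immediate by substituting the identity from (2) into the defining formula $Q_1(\overline x)=\langle P_{[k-1]\setminus\{1\}}(\overline x),F_1\rangle\cap F_k$.

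Next, for (1), I would identify $\overline E$ with the column space of the matrix $B_{\mathbf I^c}(\overline x)$, so that $\dim E+1=\operatorname{rank}(B_{\mathbf I^c})$. Since $\mathbf I^c$ is obtained from $\overline I_{[k-1]}:=\bigcup_{j\in[k-1]}\{j\}\times[n_j]$ by deleting the $p$ columns in $\mathbf I$, iterating the first inequality in Proposition \ref{prop:minus1} yields $\operatorname{rank}(B_{\mathbf I^c})\ge \operatorname{rank}(B_{\overline I_{[k-1]}})-p$. Proposition \ref{prop:induction-rank-modified} with $I=[k-1]$, $J=\emptyset$ then gives $\operatorname{rank}(B_{\overline I_{[k-1]}})=r([k-1],\emptyset)=n_{[k-1]}$, while $\operatorname{rank}(B_{\mathbf I^c})$ is trivially bounded by its number of columns, $n_{[k-1]}-p$. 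Combining these forces $\dim E=n_{[k-1]}-p-1$.

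Finally, for (3), the upper bound comes from the elementary join estimate $\dim \langle A,B\rangle\le \dim A+\dim B+1$ for affine flats $A,B$, applied with $A=P_{[k-1]\setminus\{1\}}(\overline x)$ and $B=F_1$. Proposition \ref{prop:induction-rank-modified} with $I=[k-1]\setminus\{1\}$, $J=\emptyset$ gives $\dim P_{[k-1]\setminus\{1\}}(\overline x)=n_{[k-1]}-n_1-1$, so $\dim J\le n_{[k-1]}$ (with the convention $\dim\emptyset=-1$ taking care of the degenerate case $k=2$). The matching lower bound is Proposition \ref{prop:induction-rank-modified} with $I=[k-1]\setminus\{1\}$, $J=\{1\}$: we have $J\ne \emptyset$ and $J=\{1\}\ne\{1,k\}=[k]\setminus I$ because $k\ge 2$, so this yields $r([k-1]\setminus\{1\},\{1\})\ge n_{[k-1]}+1$, equivalently $\dim J\ge n_{[k-1]}$. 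The main obstacle is simply bookkeeping: one must carefully distinguish affine dimensions from ranks of linearized matrices and identify $\overline E$ and the linearization of $J$ as column spaces of the appropriate submatrices, after which all four assertions follow from one-line applications of the two propositions already at hand.
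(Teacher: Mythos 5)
Your proof is correct and follows essentially the same route as the paper: parts (2) and (4) are verified set-theoretically via the defining generating sets, and parts (1) and (3) are rank computations via Propositions \ref{prop:minus1} and \ref{prop:induction-rank-modified} combined with column-count upper bounds. Your explicit rank-of-submatrix argument for (1) is a clean rendering of the paper's minor inequality, and you correctly cite the $p\ge 1$ variant Proposition \ref{prop:induction-rank-modified} for the lower bound in (3).
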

\begin{proof}
    (1) This follows from $r([k-1],\emptyset) = n_{[k-1]}$ and noting that augmenting a matrix by $p$ columns at most increases the rank by $p$. Therefore we have
    \begin{align*}
    c\lesssim M_{n_{[k-1]}}(\overline x_{1,1},\dots,\overline x_{1,p},\overline x_{\mathbf I^c}) \le M_{n_{[k-1]}-p}(\overline x_{\mathbf I^c}),
    \end{align*}
    proving (1).
    
    (2) This follows by considering the images of the matrices $(B_{[2,k]}(\overline x),A_1)$ and $(B_{\mathbf I^c}(\overline x),A_1)$ and the dimension-sum formula.
    
    (3) We have $\dim\langle P_{[k-1]\setminus\{1\}}(\overline x),F_1\rangle = r([k-1] \setminus \{1\},\{1\})-1\ge (n_{[k-1]}+1)-1 = n_{[k-1]}$, by Proposition \ref{prop:induction-rank}. Equality is obtained by noting that there are exactly $n_{[k-1]}$ column vectors in the matrix $(B_{[k-1] \setminus \{1\}} (\overline x), A_1)$

    (4) Immediate from (2) and the definition of $J$.
\end{proof}

If $W$ is a $p-1$ plane on $U$ defined by $a^\top u = b$ with $a \ne 0$, then the unique hyperplane $H_W\subset F_1$ containing $C$ satisfying $\pi(H_W\setminus C) = W$ is defined by the equation
\[
a^\top u + b(t-1) = 0.
\]

\begin{lemma}[Dimension and hyperplane lifts]\label{lem:SW-hyperplane}
For any hyperplane $W\subset U$, the lift $H_W\subset F_1$ given by
$a^\top u+b(t-1)=0$ (in the $(u,t,w)$-chart) contains $C$ and projects to $W$.
Let $E=\langle \overline x_{\mathbf I^c}\rangle$, $J=\aff(E,F_1)$.
Then
\[
S_W\ :=\ \aff(E,H_W)\ \subset\ J
\]
is a hyperplane in $J$.
\end{lemma}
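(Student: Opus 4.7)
The plan is to dispatch the two geometric claims by direct substitution in the $(u,t,w)$-chart, and then to prove that $S_W$ is a hyperplane in $J$ via two applications of the affine dimension-sum formula, with the $c$-stable position assumption entering only to guarantee that $C$, $E$, and $J$ realize the generic dimensions recorded in the previous proposition.

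First I would handle the two geometric assertions. Plugging the parametrization $u=0,\ t=1$ of $C$ into the defining equation $a^\top u + b(t-1) = 0$ shows that it is satisfied identically, giving $C \subset H_W$. For the projection claim, on $H_W\setminus C$ we have $t\ne 1$, so the relation $a^\top u = b(1-t)$ rearranges to $a^\top\!\bigl(\tfrac{u}{1-t}\bigr) = b$, placing $\pi(u,t,w)$ in $W$. Conversely, any point of $W$ lifts into $H_W\setminus C$ by choosing any $t\ne 1$ and any $w$ and solving for $u$, so $\pi(H_W\setminus C) = W$.

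The heart of the argument is the dimension count. From the preceding proposition we have $\dim E = n_{[k-1]} - p - 1$, $\dim F_1 = n_1$, and $\dim J = n_{[k-1]}$. The affine dimension-sum formula applied to $J = \aff(E, F_1)$ gives $\dim(E\cap F_1) = n_1 - p - 1$. Since $C \subset E\cap F_1$ and $\dim C = n_1 - p - 1$ by $c$-stable position, these dimensions match, forcing $E\cap F_1 = C$. Because $C \subset H_W \subset F_1$, we get $E\cap H_W \subset E \cap F_1 = C$ and $C \subset E\cap H_W$, so $E\cap H_W = C$. A second application of the dimension-sum formula, now to $S_W = \aff(E, H_W)$, produces
\[
\dim S_W = (n_{[k-1]}-p-1) + (n_1 - 1) - (n_1 - p - 1) = n_{[k-1]} - 1,
\]
and $S_W \subset J$ is immediate from $H_W \subset F_1$. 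I do not expect a serious obstacle here: everything reduces to chart arithmetic and two careful invocations of the dimension-sum formula, with $c$-stable position only used to certify that $C$ realizes its generic dimension.
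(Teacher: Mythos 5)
Your proof is correct and follows essentially the same route as the paper: direct substitution in the $(u,t,w)$-chart for the geometric claims, then two applications of the affine dimension-sum formula to compute $\dim(E\cap F_1)=n_1-p-1$, conclude $E\cap H_W=C$, and derive $\dim S_W=n_{[k-1]}-1=\dim J -1$. The only cosmetic difference is that the paper cites the angle bound $\angle(E,F_1)\gtrsim c$ where you instead pull $\dim J=n_{[k-1]}$ directly from the preceding proposition and feed it into the dimension-sum formula; these amount to the same transversality input.
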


\begin{proof}
The verification that $H_W$ contains $C$ and $\pi(H_W\setminus C)=W$ is by direct
substitution. For dimensions, $\dim E=n_{[k-1]}-(p+1)$ and
$\dim H_W=n_1-1$, while $E\cap H_W$ contains $C$ of dimension $n_1-p-1$. Note that $\angle E, F_1 \gtrsim c$, and therefore,
\[
\dim(E\cap F_1) = \dim E +\dim F_1 -n_{[k-1]} = n_{[k-1]}-p-1 + n_1 - n_{[k-1]} = n_1 - p - 1.
\]
Therefore, $E\cap H_W = C$, and hence
\begin{align*}
\dim S_W&=\dim E+\dim H_W-\dim C
\\
&=n_{[k-1]}-(p+1)+n_1-1-(n_1-p-1)\\
&=n_{[k-1]}-1 \\
&=\dim J-1. \qedhere
\end{align*}
\end{proof}

We now define a map that will allow us to deduce that $\nu_i^{\overline{x}_{\mathbf{I}^c}}$ has $\sigma$-thin $(p-1)$-planes from the fact that $\{\mu_{1,i}\}_{i=1}^p$ has $\sigma$-thin $p-1$ planes.

\begin{definition}[The hyperplane map]\label{def:psi}
For $\overline x \in G$, define $\psi = \psi_{\overline x}$ by
\[
\psi\colon \mathcal A(U,p-1) \to \mathcal A(Q_1(\overline{x}), p-1)
\]
via
\[
\psi(W)\ =\ S_W\cap F_k\ \subset\ Q_1(\overline x)=J\cap F_k .
\]
\end{definition}
\begin{prop}
    For each hyperplane $W\subset U$, $\psi(W)$ is a hyperplane of $Q_1(\overline x)$.
\end{prop}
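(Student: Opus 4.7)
The plan is to use the dimension-sum formula inside $J$, with a geometric argument ruling out the degenerate case $Q_1(\overline{x})\subset S_W$. First, I would observe that since $S_W\subset J$ and $Q_1(\overline{x}) = J\cap F_k$, we have
\[
\psi(W) = S_W\cap F_k = S_W\cap J\cap F_k = S_W\cap Q_1(\overline{x}).
\]
By the dimension-sum formula inside $J$, and using $\dim S_W = n_{[k-1]}-1$, $\dim Q_1(\overline{x}) = p$, and $\langle S_W, Q_1(\overline{x})\rangle \subseteq J$ with $\dim J = n_{[k-1]}$, either $\dim(S_W\cap Q_1(\overline{x})) = p-1$, or $Q_1(\overline{x})\subset S_W$. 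So the whole content of the proposition is to exclude the latter possibility.

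Next, I would establish the auxiliary identity $S_W\cap F_1 = H_W$. The containment $H_W\subset S_W\cap F_1$ is by construction. For the reverse, if $F_1$ were contained in $S_W$, then $J = \aff(F_1,E)\subset S_W$ (using $E\subset S_W$), contradicting $\dim J = n_{[k-1]} > \dim S_W$. So $F_1\not\subset S_W$, and by dimension-sum inside $J$ we get $\dim(S_W\cap F_1) = n_1-1 = \dim H_W$, forcing equality.

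The key geometric step is to exhibit a point of $Q_1(\overline{x})$ lying outside $S_W$. Since $H_W$ is a proper hyperplane of $F_1$, pick any $f_1\in F_1\setminus H_W$. Define $y(f_1) = \aff(f_1,E)\cap F_k$, which is a well-defined point of $Q_1(\overline{x}) = J\cap F_k$ (the single-point intersection follows from the same dimension count used for Setup~\ref{Setup:Graph-H-and-Measures-nu}). The crucial observation is that $\aff(f_1,E) = \aff(y(f_1),E)$: both are $(n_{[k-1]}-p)$-flats containing $E$ and the points $f_1,y(f_1)$. Hence if $y(f_1)$ lay in $S_W$, then $\aff(y(f_1),E)\subset S_W$ (as $E\subset S_W$), forcing $f_1\in S_W\cap F_1 = H_W$, a contradiction. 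Therefore $y(f_1)\in Q_1(\overline{x})\setminus S_W$, so $Q_1(\overline{x})\not\subset S_W$, which together with the first paragraph yields $\dim \psi(W) = p-1$.

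The main obstacle I anticipate is not any single calculation but keeping straight the nested dimensions (of $E$, $C$, $H_W$, $S_W$, $F_1$, $J$, $Q_1$) and verifying that $c$-stable position delivers each of the generic intersection and angle transversality conditions used implicitly (in particular that $E\cap F_1 = C$, so that all the dimension-sum equalities are equalities rather than inequalities). Once these are in hand, the argument reduces to the two short dimension-sum applications and the affine-span equality $\aff(f_1,E) = \aff(y(f_1),E)$ above.
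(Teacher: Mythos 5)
Your proof takes a genuinely different route from the paper's. The paper's argument is a one-liner: the rank condition $r([2,k-1],\{1,k\}) = n+1$ (which follows from Proposition~\ref{prop:induction-rank-modified}) together with $c$-stability yields a quantitative angle lower bound $\angle(S_W,F_k) \ge \angle(J,F_k) \gtrsim c$, from which transversality of $S_W$ and $F_k$ follows, giving $\dim(S_W \cap F_k) = \dim S_W + \dim F_k - n = p-1$. You instead reduce, via dimension-sum inside $J$, to excluding $Q_1(\overline x)\subset S_W$, and then try to produce an explicit point of $Q_1(\overline x)\setminus S_W$. Your Steps~1 and~2 are correct and cleaner than what the paper writes; in particular the identification $S_W\cap F_1 = H_W$ is a nice intermediate observation.

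However, Step~3 has a genuine gap. You write ``pick any $f_1\in F_1\setminus H_W$'' and then define $y(f_1)=\aff(f_1,E)\cap F_k$, asserting this is a well-defined point and citing ``the same dimension count used for Setup~\ref{Setup:Graph-H-and-Measures-nu}.'' But the dimension count in Setup~\ref{Setup:Graph-H-and-Measures-nu} is validated only for $x\in\bigcup_i\supp\mu_{1,i}$, where $c$-stable position guarantees the transversality of $\aff(x,E)$ and $Q_1(\overline x)$ inside $J$. For an \emph{arbitrary} $f_1\in F_1\setminus H_W$, the $(n_{[k-1]}-p)$-flat $\aff(f_1,E)$ and the $p$-flat $Q_1(\overline x)$ are complementary in dimension inside $J$ but can still be disjoint (parallel: they share a nonzero direction while failing to meet), in which case $y(f_1)$ does not exist. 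You also implicitly need $y(f_1)\notin E$ (so that $\dim\aff(y(f_1),E)=\dim E+1$ and the span identity holds), which again is not automatic for arbitrary $f_1$. So the phrase ``any $f_1$'' must be replaced and justified.

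The gap is fixable, and in fact there is a cleaner version of your idea that avoids pointwise existence issues entirely: show $\aff(E,Q_1(\overline x)) = J$. In the linearized picture, $\dim\big(\overline E + (\overline J\cap\overline{F_k})\big) = \dim\overline E + \dim(\overline J\cap\overline{F_k}) - \dim(\overline E\cap\overline{F_k}) = (n_{[k-1]}-p) + (p+1) - \dim(\overline E\cap\overline{F_k})$, so $\aff(E,Q_1(\overline x)) = J$ is equivalent to $\overline E\cap\overline{F_k} = 0$, i.e.\ a rank condition $r(\overline{\mathbf I^c},\{k\}) = n+1$ that follows from $c$-stable position. Then $Q_1(\overline x)\subset S_W$ would force $J = \aff(E,Q_1(\overline x))\subset S_W$, contradicting $\dim S_W = \dim J - 1$, and you are done without ever producing a specific $f_1$. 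As written, though, your proof does not establish the existence of the point $y(f_1)$ it relies on.
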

\begin{proof}
This follows from $r([2,k-1],\{1,k\}) = n+1$, since $\angle (S_W,F_k)\ge \angle( J,F_k) \gtrsim M_{n+1}(A_1,B_{[2,k-1]}(\bar x),A_k)\gtrsim c$.
\end{proof}

We will show $\psi$ is an immersion of $p-1$-planes by justifying an appropriate coordinatization is an immersion. In particular, we will show the following.

\begin{lemma}
    Locally, near $W\in \mathcal A(U,p-1)$, there is a system of coordinates $(a,b)$ so that $W = W(a,b)$, and a matrix $M = M_{\overline x}\in \GL_p(\R)$ such that
    \[
    \psi_{\overline x}(a,b) = (M^{-t}a,b)
    \]
    and such that $\|M_{\overline x}\| = O(c^{-O(1)})$ for all $\overline x \in G$.
\end{lemma}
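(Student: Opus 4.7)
\noindent\emph{Proof proposal.}
The plan is to compute $\psi_{\overline x}$ explicitly in affine coordinates on $J$ adapted to the $\Phi_{\overline x}$-chart on $F_1$, and then put the resulting projective-linear map on hyperplane parameter spaces into the claimed block form by shear changes of coordinates, with all constants controlled by $c$-stable position.

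First, extend the chart $(u,t,w)$ on $F_1$ to affine coordinates $(u,t,w,e)$ on $J = \aff(E, F_1)$ by choosing a basis for a complement of $\mathrm{dir}(F_1)$ in $\mathrm{dir}(J)$ spanned by $c$-stable vectors in $\mathrm{dir}(E)$, so that $F_1 = \{e=0\}$ and $E = \{u=0,\, t=1\}$. By Lemma \ref{lem:SW-hyperplane}, for $W = W(a,b) = \{u : a^\top u = b\}$, the hyperplane $S_W \subset J$ is cut out by the equation $a^\top u + b(t-1) = 0$, with no dependence on $w$ or $e$.

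Next, parametrize $Q_1(\overline x) = J\cap F_k$ by $y \in \R^p$: choose a basepoint $y_* \in Q_1(\overline x)$ and a $c$-stable basis for $\mathrm{dir}(Q_1(\overline x))$, giving an affine inclusion
\[
y \mapsto (u_0 + Uy,\; t_0 + Ty,\; w_0 + Wy,\; e_0 + Ey)
\]
with $U \in \GL_p(\R)$ and $\|U^{\pm 1}\|,\ \|T\|,\ |u_0|,\ |t_0| = O(c^{-O(1)})$. The $c$-stable position of the measures allows one to choose $y_*$ so that both $|1-t_0|$ and $|1-t_0 - TU^{-1}u_0|$ are bounded below by $c^{O(1)}$. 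Substituting into the defining equation of $S_W$, the hyperplane $\psi_{\overline x}(W) \subset Q_1(\overline x)$ has equation $\alpha^\top y = \beta$ with
\[
\begin{pmatrix}\alpha\\\beta\end{pmatrix} = N\begin{pmatrix}a\\b\end{pmatrix}, \qquad N = \begin{pmatrix} U^\top & T^\top \\ -u_0^\top & 1-t_0 \end{pmatrix}.
\]

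Finally, put $N$ in block-diagonal form by two shears. On the codomain, the shear $(\alpha,\beta)\mapsto (\alpha + w\beta,\, \beta)$ with $w = -T^\top/(1-t_0)$ clears the top-right block; on the domain, the shear $(a,b)\mapsto (a,\, b - v^\top a)$ with $v = u_0/(1-t_0)$ clears the bottom-left block; projectively rescaling the codomain to normalize the bottom-right entry to $1$ then gives the desired form $\psi_{\overline x}(a,b) = (M^{-t}a,\,b)$ with
\[
M^{-t} \;=\; \frac{1}{1-t_0}\Big(U^\top - \tfrac{T^\top u_0^\top}{1-t_0}\Big).
\]
By $c$-stable position (in particular, the invertibility of $N$ via the non-degeneracy of its associated minor), $M$ is invertible with $\|M_{\overline x}\|,\ \|M_{\overline x}^{-1}\| = O(c^{-O(1)})$, uniformly for all $\overline x \in G$.

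The main obstacle is simply tracking that the shears and rescalings remain bounded by $c^{-O(1)}$ and that the denominators $1-t_0$ and $1 - t_0 - TU^{-1}u_0$ stay away from zero. Each of these reduces to a standard computation using the quantitative non-degeneracy furnished by $c$-stable position and Sherman--Morrison for the rank-$1$ update $U + uT$.
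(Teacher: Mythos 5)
Your approach is a legitimate alternative to the paper's, but it is genuinely different in method. The paper avoids computing with hyperplane parameters: it picks a transversal $(p+1)$-flat $K\subset J$ with $K\cap E=\emptyset$, defines the affine projection $q_K\colon J\to K$ along $E$, and compares $\mathbf U=q_K(U)$ and $\mathbf Q_1=q_K(Q_1)$ via a ``slope map'' $A$, from which $\psi = L^\#$ is exhibited as an explicit \emph{affine isomorphism} $U\to Q_1(\overline x)$. The block form $(a,b)\mapsto(M^{-\top}a,b)$ then comes for free from the general transformation rule for hyperplanes under an affine map $u\mapsto y_0+Mu$. You instead extend the $(u,t,w)$-chart on $F_1$ to $J$, write $S_W$ as $\{a^\top u+b(t-1)=0\}$, parametrize $Q_1(\overline x)$ affinely, read off a single linear map $N\in \mathrm{GL}_{p+1}$ on $(a,b)$-parameters, and then block-diagonalize $N$ by shears on the domain and codomain charts. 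This is more elementary and closer in spirit to a direct matrix computation, at the cost of having to verify that the shear and rescaling coefficients, and the invertibility of $N$, are controlled by $c^{-O(1)}$ --- whereas the paper's route gets $\|M^{\pm 1}\| \lesssim c^{-O(1)}$ more transparently from the angle between $\mathbf Q_1$ and the fixed direction $L$.

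Two small but genuine slips in the computation: (i) your $N$ is correct, but after the codomain shear with $w=-T^\top/(1-t_0)$ the top-left block is $U^\top + \tfrac{T^\top u_0^\top}{1-t_0}$ (plus sign, not minus), so $M^{-\top} = \tfrac{1}{1-t_0}\bigl(U^\top + \tfrac{T^\top u_0^\top}{1-t_0}\bigr)$; (ii) the block determinant condition for invertibility is $1-t_0 + TU^{-1}u_0\neq 0$, again with a plus sign. Neither error affects the structure of the argument. Finally, the claim that $c$-stable position lets you pick $y_*$ with $|1-t_0|$ and $\bigl|1-t_0+TU^{-1}u_0\bigr|$ bounded below by $c^{O(1)}$ is plausible but asserted rather than proved; you should spell out which $M_r$-minor of which $(B_{\overline I},A_J)$ matrix controls these quantities, since that is where the quantitative non-degeneracy actually lives.
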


\subsubsection{\texorpdfstring{$\psi$ is an immersion}{psi is an immersion}}

Pick a transversal $(p+1)$–flat $K\subset J$ with $K\cap E=\emptyset$ and
$\dir(J)=\dir(E)\oplus\dir(K)$. Fix $k_0\in K$ and define the affine projection along $E$,
\[
q_K:J\to K,\qquad q_K(x)=k_0+\Pi_K(x-k_0),
\]
where $\Pi_K$ is the linear projection with kernel $\dir(E)$. Set
$\mathbf U:=q_K(U)$ and $\mathbf Q_1:=q_K\bigl(Q_1(\overline x)\bigr)$; then
$q_K|_U:U\to\mathbf U$ and $q_K|_{Q_1(\overline x)}:Q_1(\overline x)\to\mathbf Q_1$ are affine
bijections by genericity: no direction of $U,Q_1$ is parallel to $E$.

Choose $v\in \mathbf U\cap\mathbf Q_1$ and recenter $K$ by $-v$, so $K$ is now a vector
space and $\mathbf U,\mathbf Q_1$ are codimension-1 subspaces. Fix a line $L\subset K$ with
$K=\mathbf U\oplus L=\mathbf Q_1\oplus L$. Let $P_{\mathbf U}$, $P_L$ be the projections for
this direct sum. The restriction $P_{\mathbf U}|_{\mathbf Q_1}:\mathbf Q_1\to\mathbf U$ is a
linear isomorphism with norm bounded by $c^{-O(1)}$; define the \emph{slope map}
\[
A:\mathbf U\to L,\qquad A:=P_L\circ (P_{\mathbf U}|_{\mathbf Q_1})^{-1}.
\]
Fix a basis $(e_1,\dots,e_p)$ of $\mathbf U$ and a generator $z\in L$. There are unique
scalars $s_1,\dots,s_p$ with $A(e_i)=s_i z$. Writing $u=\sum u_i e_i$ and
$s^\top:=(s_1,\dots,s_p)$ we have
\[
A(u)=(s^\top u)\,z,\qquad
\mathbf Q_1=\{(u,\tau):\ \tau=s^\top u\},\qquad
T(u):=u+A(u)=(u,s^\top u).
\]
Thus $T:\mathbf U\to\mathbf Q_1$ is the linear projection along the \emph{fixed} direction $L$,
with explicit slope vector $s$.

\begin{definition}[The affine isomorphism $L^\#$]
Define
\[
L^\#\ :=\ (q_K|_{Q_1})^{-1}\circ T\circ (q_K|_U):\ U\to  Q_1(\overline x).
\]
Then $L^\#$ is an affine bijection. In affine coordinates on $U$ and $Q_1(\overline x)$,
write
\[
L^\#(u)=y_0+M_{\overline{x}}\,u,\qquad M = M_{\overline{x}}\in \GL_p(\R).
\]
\end{definition}

Because flats are in $c$-stable position, $\|M_{\overline x}\| \lesssim c^{-O(1)}$ uniformly in $\overline x \in G$.

\begin{lemma}[Graph description of $\psi(W)$]\label{lem:graph}
If $W=\{u:\ a^\top u=b\}\subset U$ with $a\neq 0$, then
\[
q_K\bigl(\psi(W)\bigr)
= T\bigl(q_K(W)\bigr)
= \{(u,s^\top u):\ a^\top u=b\}\ \subset\ \mathbf Q_1.
\]
Consequently, $\psi(W)=L^\#(W)$.
\end{lemma}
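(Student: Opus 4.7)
The plan is to reduce $q_K(\psi(W))$ step-by-step to $q_K(H_W)\cap \mathbf Q_1$, then match this against $T(q_K(W))$ using the explicit $(u,t,w)$-chart on $F_1$. The consequence $\psi(W) = L^\#(W)$ then follows by applying the bijection $q_K|_{Q_1}^{-1}$ to both sides of $q_K(\psi(W)) = T(q_K(W))$ and unwinding the definition $L^\# = (q_K|_{Q_1})^{-1} \circ T \circ q_K|_U$.

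First I would observe that $\psi(W) = S_W \cap F_k = S_W \cap Q_1(\overline x)$, using $S_W \subset J$. Because $E \subset S_W$ forces $\dir(E) \subset \dir(S_W)$, and $\dir(E) = \ker \Pi_K$, the flat $S_W$ is a union of $q_K$-fibers, giving
\[
q_K(\psi(W)) = q_K(S_W) \cap \mathbf Q_1;
\]
the nontrivial $\supset$ inclusion uses that if $y = q_K(s) = q_K(z)$ for some $s \in S_W$ and $z \in Q_1$, then $z - s \in \dir(E) \subset \dir(S_W)$, forcing $z \in S_W \cap Q_1 = \psi(W)$. Next, since $S_W = \aff(E, H_W)$ and $q_K|_E$ is constant with value $p_E$, one has $q_K(S_W) = \aff(\{p_E\}, q_K(H_W))$; the containment $C \subset E \cap H_W$ forces $p_E = q_K(C) \in q_K(H_W)$, so the affine span collapses to $q_K(S_W) = q_K(H_W)$, a hyperplane of $K$.

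The heart of the proof is then identifying $q_K(H_W) \cap \mathbf Q_1$ with $T(q_K(W))$. In the $(u,t,w)$-chart on $F_1$, $H_W$ is cut out by $a^\top u + b(t-1) = 0$; since $q_K|_{F_1}$ has kernel $\dir(C)$ on directions (the $w$-axis), the image $q_K(H_W)$ is a hyperplane of $K$ carrying the same defining equation in the induced $(u,t)$-coordinates on $K$. After recentering $K$ at $v \in \mathbf U \cap \mathbf Q_1$ and choosing the auxiliary line $L$ compatibly with the $t$-direction transverse to $\mathbf U$, the recentered $(u,\tau)$-coordinates on $K$ realize $\mathbf Q_1$ as $\{\tau = s^\top u\}$, and substituting $\tau = s^\top u$ into the defining equation of $q_K(H_W)$ produces exactly $\{(u, s^\top u) : a^\top u = b\} = T(q_K(W))$. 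The main obstacle is precisely this coordinate bookkeeping: one must verify that the identification $U \cong \mathbf U$ via $q_K|_U$, together with the shift of origin to $v$ and a compatible choice of $L$, aligns the constant terms between $W$'s equation $a^\top u = b$ and the equation cutting out $q_K(H_W) \cap \mathbf Q_1$ after the substitution. Once this identity is in hand, $\psi(W) = L^\#(W)$ is immediate via the bijectivity of $q_K|_{Q_1}$.
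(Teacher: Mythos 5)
Your steps reducing $q_K(\psi(W))$ to $q_K(H_W)\cap\mathbf Q_1$ are correct: $S_W\cap F_k = S_W\cap Q_1$ since $S_W\subset J$, the $E$-saturation of $S_W$ (because $\dir(E)\subset\dir(S_W)$) gives $q_K(S_W\cap Q_1)=q_K(S_W)\cap\mathbf Q_1$, and the collapse $q_K(S_W)=q_K(H_W)$ follows from $p_E=q_K(C)\in q_K(H_W)$. That is a faithful and more careful version of the paper's first two sentences.

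The gap is exactly where you flag it. You assert that ``substituting $\tau=s^\top u$ into the defining equation of $q_K(H_W)$ produces exactly $\{(u,s^\top u):a^\top u=b\}$'' but never carry out the substitution, and if you do it does not come out that way. In the $(u,t)$-coordinates on $K$ induced by $q_K|_{F_1}$ one has $q_K(H_W)=\{a^\top u+b(t-1)=0\}$, $\mathbf U=\{t=0\}$, $q_K(W)=\{(u,0):a^\top u=b\}$, $p_E=(0,1)$. Writing $\mathbf Q_1=\{t=s^\top u+c\}$, recentering at $v=(v_0,0)\in\mathbf U\cap\mathbf Q_1$ (so $s^\top v_0=-c$) and taking $L$ to be the $t$-direction, so that the paper's $(u,\tau)$-coordinates coincide with the recentered $(u',t')$-coordinates, yields $\mathbf Q_1=\{\tau=s^\top u\}$ and $q_K(H_W)=\{a^\top u+b\tau=b'\}$ with $b'=b-a^\top v_0$. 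Intersecting,
\[
q_K(H_W)\cap\mathbf Q_1=\{(u,s^\top u):(a+bs)^\top u=b'\},
\qquad
T(q_K(W))=\{(u,s^\top u):a^\top u=b'\}.
\]
These differ whenever $bs\neq 0$, so the identity $q_K(\psi(W))=T(q_K(W))$ fails for generic $W$ unless $s=0$ (i.e.\ $\mathbf Q_1=\mathbf U$). A one-variable check makes this concrete: with $K=\mathbb R^2$, $\mathbf U=\{t=0\}$, $\mathbf Q_1=\{t=u\}$, $L=\{u=0\}$, $a=b=1$, one gets $q_K(H_W)\cap\mathbf Q_1=\{(1/2,1/2)\}$ but $T(q_K(W))=\{(1,1)\}$. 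The extra term $bs$ reflects that $W\mapsto H_W$ is a radial (projective) lift, not an affine one, so the induced map $\psi$ is projective rather than equal to the affine bijection $L^\#$; the parameter map is $(a,b)\mapsto(a+bs,\,b-a^\top v_0)$, a full $(p+1)\times(p+1)$ linear map, not the block form $(M^{-\top}a,\,b)$ claimed. Your proof inherits this error from the lemma statement; to close the gap you would need to either correct the lemma to a projective identification (for which the downstream Lipschitz bound $\lesssim c^{-O(1)}$ still appears to hold, since $\|s\|,\|v_0\|\lesssim c^{-O(1)}$ by stable position) or identify an additional geometric constraint forcing $bs=0$, which I do not see.
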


\begin{proof}
By construction, $S_W=\aff(E,H_W)$ is a hyperplane in $J$, and $q_K$ collapses $E$-fibers.
Hence $q_K(S_W)$ is the hyperplane in $K$ obtained by projecting $H_W$ along $E$; its
intersection with $\mathbf Q_1=q_K(Q_1)$ is exactly $T(q_K(W))$, the graph over $q_K(W)$
along $L$. Since $q_K|_{Q_1}$ is bijective, $\psi(W)$ is the inverse image of this graph,
i.e.\ $L^\#(W)$.
\end{proof}

\begin{prop}[Parameter map]\label{prop:param-map}
Use coordinates on $Q_1(\overline x)$ given by $y=y_0+M\eta$. Then for
$W(a,b)=\{u:\ a^\top u=b\}$,
\[
\psi\bigl(W(a,b)\bigr)\ =\ \{y:\ (M^{-\top}a)^\top \eta=b\}.
\]
Equivalently, the parameter map on hyperplanes is $(a,b)\mapsto (a',b')=(M^{-\top}a,\ b)$.
\end{prop}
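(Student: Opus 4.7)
The plan is to prove Proposition \ref{prop:param-map} by a direct affine change of coordinates, using Lemma \ref{lem:graph} to replace $\psi$ by the simpler map $L^\#$. First I would invoke Lemma \ref{lem:graph} to rewrite
\[
\psi(W(a,b)) = L^\#(W(a,b)) = \{L^\#(u) : u \in U, \ a^\top u = b\}.
\]
Using the affine form $L^\#(u) = y_0 + M_{\overline x} u$ of the bijection $L^\# : U \to Q_1(\overline x)$ established just before the proposition, this image becomes
\[
\psi(W(a,b)) = \{y_0 + M u : u \in U,\ a^\top u = b\} \subset Q_1(\overline x).
\]

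Next I would read off the defining equation of this hyperplane in the $\eta$-chart introduced by the proposition. Since $M \in \GL_p(\R)$, the relation $y = y_0 + M\eta$ is invertible and gives $u = M^{-1}(y - y_0)$ for a point $y = y_0 + Mu$ of $\psi(W(a,b))$. Substituting into the constraint $a^\top u = b$ yields
\[
a^\top M^{-1}(y - y_0) = b,
\qquad \text{i.e.,} \qquad (M^{-\top}a)^\top \eta = b,
\]
which is precisely the claimed defining equation; reading off the transformation on parameters gives $(a,b) \mapsto (a',b') = (M^{-\top}a, b)$, as required.

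I do not expect any real obstacle; the proposition is a routine linear-algebraic consequence of Lemma \ref{lem:graph} and the explicit form of $L^\#$. The substantive content is not in this proposition itself but in the earlier uniform bound $\|M_{\overline x}\| \lesssim c^{-O(1)}$ coming from $c$-stable position: combined with the parameter map $(a,b) \mapsto (M^{-\top}a, b)$, it shows that $\psi_{\overline x}$ acts as a bi-Lipschitz affine isomorphism of hyperplane spaces with constants depending polynomially on $c^{-1}$, uniformly in $\overline x \in G$. This uniform Lipschitz control on the parameter map is exactly what is needed in the proof of the Key Lemma (Lemma \ref{lem:key-lemma}) to transfer the $(\sigma, K, 1-\varepsilon)$-thin $(p-1)$-planes property of $G|_{\overline x_{\mathbf I^c}} \subset \prod_{i=1}^{p} \supp\mu_{1, i}$ on $U$ to a thin $(p-1)$-planes property for the pushforward measures $\nu_i^{\overline x_{\mathbf I^c}}$ on $F_k$, modulo factors of the form $c^{-O(1)}$ absorbed into the constant $\tilde K$.
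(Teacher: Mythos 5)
Your proposal is correct and follows essentially the same route as the paper: identify $\psi(W)=L^\#(W)$ via Lemma \ref{lem:graph}, then substitute the coordinate change $y=y_0+M\eta$ (equivalently $u=M^{-1}\eta$) into $a^\top u=b$ to read off $(M^{-\top}a)^\top\eta=b$. The extra paragraph on the uniform bound $\|M_{\overline x}\|\lesssim c^{-O(1)}$ is accurate context but belongs to Proposition \ref{prop:psi-immersion}, not to this proof.
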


\begin{proof}
From $y=y_0+ M u$ we have $u=M^{-1}\eta$. Substituting into $a^\top u=b$ gives
$a^\top M^{-1}\eta=b$, i.e.\ $(M^{-\top}a)^\top\eta=b$.
\end{proof}

\begin{prop}[$\psi$ is an immersion]\label{prop:psi-immersion}
Equip $\mathcal A(U,p-1)$ and $\mathcal A(Q_1(\overline{x}), p-1)$ with the unnormalized
normal and offset charts $(a,b)$ and $(a',b')$. The differential of $\psi$ at $(a,b)$ is
\[
d\psi_{(a,b)}= \begin{bmatrix} M^{-t} & 0 \\ 0 & 1\end{bmatrix}
\]
which is injective since $M\in\GL_p(\R)$. Therefore $\psi$ is a local diffeomorphism with
\begin{equation} \label{eqn:Lip-Upperbound}
\mathrm{Lip}(\psi) \lesssim c^{-O(1)}.
\end{equation}
\end{prop}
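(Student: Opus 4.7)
The plan is to read off the differential directly from the explicit parameter map of Proposition \ref{prop:param-map}. In the unnormalized charts $(a,b)$ on $\mathcal A(U,p-1)$ and $(a',b')$ on $\mathcal A(Q_1(\overline x),p-1)$, that proposition already exhibits $\psi$ as the global affine (in fact, linear) map
\[
(a,b)\ \longmapsto\ (M^{-\top}a,\ b).
\]
Since differentiation of a linear map returns the map itself, $d\psi_{(a,b)}$ equals the claimed block matrix, and injectivity is immediate from $M\in\GL_p(\R)$.

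The only remaining point is to convert the bound $\|M_{\overline x}\|\lesssim c^{-O(1)}$ (established immediately after the definition of $L^\#$) into the Lipschitz estimate \eqref{eqn:Lip-Upperbound}. First I would note that since $M_{\overline x}\in\GL_p(\R)$ with $\|M_{\overline x}\|\lesssim c^{-O(1)}$ and $|\det M_{\overline x}|\gtrsim c^{O(1)}$ (both quantities are polynomials in the entries of $M_{\overline x}$, whose size is controlled by $c$-stable position via the minors $M_r(\cdot)$), we have $\|M_{\overline x}^{-\top}\|\lesssim c^{-O(1)}$ as well. Then the differential is a block matrix whose operator norm is bounded by $c^{-O(1)}$, so $\psi$ is $c^{-O(1)}$-Lipschitz in these charts. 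Finally, the unnormalized charts are bi-Lipschitz equivalent (with constants depending only on $n$) to the standard metric on the affine Grassmannian on any compact region of parameter space where $|a|$ stays bounded away from $0$; this is the regime we are in because $W$ ranges over hyperplanes of the bounded set $U$.

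The main obstacle, such as it is, is not the computation but the bookkeeping of constants: one must verify that $\|M_{\overline x}^{-\top}\|\lesssim c^{-O(1)}$ uniformly in $\overline x\in G$, which in turn uses that the slope vector $s$, the projection $q_K$, and the isomorphism $P_{\mathbf U}|_{\mathbf Q_1}$ all have norms and inverse norms controlled by $c^{-O(1)}$ thanks to $c$-stable position (angles between $E$, $U$, $Q_1(\overline x)$, $L$, $F_k$ are all $\gtrsim c^{O(1)}$). Once this uniform control is in place, Proposition \ref{prop:psi-immersion} follows.
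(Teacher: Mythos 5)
Your proposal is correct and follows essentially the same route as the paper: differentiate the linear parameter map from Proposition~\ref{prop:param-map} to read off $d\psi$, observe injectivity from $M\in\GL_p(\R)$, and then push the stable-position bound on $M_{\overline x}$ through to a Lipschitz bound. If anything, your treatment is slightly more careful than the paper's one-line estimate $\mathrm{Lip}(\psi)\lesssim\|M\|$: since the differential in the $(a,b)$-chart is $M^{-\top}\oplus 1$, the operator norm you actually need to control is $\|M^{-\top}\|$, and your observation that this follows from $\|M\|\lesssim c^{-O(1)}$ together with $|\det M|\gtrsim c^{O(1)}$ (both consequences of $c$-stable position via the minor bounds $M_r(\cdot)$) closes that small gap cleanly. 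The paper instead leans on the uniform bound $\mathrm{Lip}(\Phi_{\overline x})\lesssim c^{-O(1)}$ from \eqref{eq:coord-lipschitz} to absorb the chart-change constants; your remark that the unnormalized charts are bi-Lipschitz to the Grassmannian metric serves the same purpose, though you should make sure the equivalence constant is allowed to depend on $c$ (via the coordinatization of $Q_1(\overline x)$), not just on $n$.
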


\begin{proof}
Differentiate the linear parameter map in Proposition~\ref{prop:param-map}. Equation \ref{eqn:Lip-Upperbound} follows from \eqref{eq:coord-lipschitz} and noting 
\[
\mathrm{Lip}(\psi) \lesssim \|M\| \lesssim c^{-O(1)}. \qedhere
\]
\end{proof}

\begin{prop}\label{prop:immersion-implies-thin}
    Using the notation as in Setup \ref{Setup:Graph-H-and-Measures-nu}, $H^{\overline{x}_{\mathbf I^c}}$ is a $(\sigma, c^{-O(1)}K_1,1-O(\varepsilon))$-thin $(p-1)$-planes graph for the measures $\nu_1^{\overline{x}_{\mathbf{I}^c}},\dots,\nu_p^{\overline{x}_{\mathbf{I}^c}}$.
\end{prop}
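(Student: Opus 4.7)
The plan is to transfer the $(\sigma, K_1, 1-\varepsilon)$-thin $(n_1-1)$-planes property of $G_1$ down to $H^{\overline x_{\mathbf I^c}}$ by factoring the map $x \mapsto y(x, \overline x_{\mathbf I^c})$ as $L^\# \circ \pi$, where $\pi : F_1 \setminus C(\overline x) \to U$ is the radial projection along $C(\overline x)$ and $L^\# : U \to Q_1(\overline x)$ is the affine bijection of Definition \ref{def:psi}. The identity $y(x,\overline x_{\mathbf I^c}) = L^\#(\pi(x))$ follows from $x \in \langle \pi(x), C(\overline x)\rangle$, which gives $\langle x, E\rangle = \langle \pi(x), E\rangle$, and then Lemma \ref{lem:graph} identifies $L^\#(u) = \langle u, E\rangle \cap F_k$. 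Consequently $\nu_i^{\overline x_{\mathbf I^c}} = L^\#_{\,*}\pi_{\,*}\mu_{1,i}$.

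Next I would show that for $\overline y = (y_1,\dots, y_p) \in H^{\overline x_{\mathbf I^c}}$ coming from a tuple $\overline x_{\mathbf I} \in G|_{\overline x_{\mathbf I^c}}$, the spanned hyperplane $V_{\overline y} \subset Q_1(\overline x)$ equals $L^\#(W)$, where $W$ is the hyperplane of $U$ spanned by $\pi(x_{1,1}),\dots, \pi(x_{1,p})$. The bi-Lipschitz property of $L^\#$ (Proposition \ref{prop:psi-immersion}, together with $c$-stable position to bound $\|M_{\overline x}^{-1}\| \lesssim c^{-O(1)}$) then gives $(L^\#)^{-1}(V_{\overline y}(\delta) \cap Q_1(\overline x)) \subset W(Ac^{-O(1)}\delta)$ for some absolute constant $A$. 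Pulling back through $\pi$, the fact that $\supp \mu_{1,i}$ is $\gtrsim c$-separated from $C(\overline x)$ (again from $c$-stable position) yields $\pi^{-1}(W(\delta')) \cap \supp\mu_{1,i} \subset H_W(A' c^{-O(1)}\delta')$. The crucial observation is that the unique hyperplane $H_W \subset F_1$ containing $C(\overline x)$ and projecting to $W$ is precisely $V_{\overline x_1} = \langle x_{1,1},\dots,x_{1,n_1}\rangle$, the full hyperplane in $F_1$ spanned by the tuple $\overline x_1 \in G_1$. Chaining these inclusions gives
\[
\nu_i^{\overline x_{\mathbf I^c}}(V_{\overline y}(\delta)) \le \mu_{1,i}(V_{\overline x_1}(Bc^{-O(1)}\delta)) \le K_1 (Bc^{-O(1)}\delta)^\sigma \lesssim c^{-O(1)} K_1 \delta^\sigma,
\]
by the thin $(n_1-1)$-planes property of $G_1$ applied to the $i$-th coordinate.

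For the density condition, since $L^\# \circ \pi$ is bi-Lipschitz on $\supp\mu_{1,i}$, the coordinate-wise map $(x_{1,1},\dots, x_{1,p}) \mapsto (y_1,\dots, y_p)$ is injective on $\prod_{i=1}^p \supp\mu_{1,i}$, so $(\nu_1^{\overline x_{\mathbf I^c}} \times \dots \times \nu_p^{\overline x_{\mathbf I^c}})(H^{\overline x_{\mathbf I^c}}) = (\mu_{1,1}\times\dots\times\mu_{1,p})(G|_{\overline x_{\mathbf I^c}})$, which for typical $\overline x_{\mathbf I^c}$ is $\ge 1 - O(\varepsilon)$ by Fubini applied to $(\mu_{1,1}\times\dots\times\mu_{1,n_1})(G_1) \ge 1-\varepsilon$. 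The main technical point to verify carefully will be the $c^{-O(1)}$ lower bound on $L^\#$ (i.e.\ the control on $\|M_{\overline x}^{-1}\|$): this reduces to a Cramer-style determinant bound on the slope matrix extracted from the minors of $(B_{\overline I}(\overline x), A_J)$ guaranteed by $c$-stable position, in the same spirit as the exterior-algebra angle bound used in Proposition \ref{prop:nice1}.
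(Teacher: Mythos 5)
Your proposal is correct and follows essentially the same route as the paper: factor $y(\cdot,\overline x_{\mathbf I^c})$ as $L^\#\circ\pi$, use the bi-Lipschitz bounds on $L^\#$ from Proposition~\ref{prop:psi-immersion} and on $\pi$ from $c$-stable separation of $\supp\mu_{1,i}$ from $C(\overline x)$, and pull the $\delta$-neighborhood back to a $c^{-O(1)}\delta$-neighborhood of a flat in $F_1$ on which one can apply the thin-planes property of $G_1$. In fact, your explicit observation that the lifted hyperplane $H_W$ is precisely $V_{\overline x_1}=\langle x_{1,1},\dots,x_{1,n_1}\rangle$, so that the estimate routes cleanly through the full $(n_1-1)$-plane and the thin $(n_1-1)$-planes property of $G_1$, is arguably more transparent than the paper's phrasing (which passes through the section $G|_{\overline x_{\mathbf I^c}}$ viewed as a thin $(p-1)$-planes graph and writes the neighborhood around $\langle x_{1,1},\dots,x_{1,p}\rangle$, where the pullback $\pi^{-1}(W(\delta'))$ actually lands in a neighborhood of the hyperplane $H_W$). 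The only small slip is the claim that $L^\#\circ\pi$ is injective on $\supp\mu_{1,i}$: the radial projection $\pi$ along the flat $C(\overline x)$ has fibers of dimension $n_1-p>0$ whenever $n_1>p+1$, so injectivity is not automatic. Fortunately you do not need it: since $\nu_i^{\overline x_{\mathbf I^c}}=(L^\#\circ\pi)_*\mu_{1,i}$ and $H^{\overline x_{\mathbf I^c}}$ is the coordinatewise image of $G|_{\overline x_{\mathbf I^c}}$, the preimage $(L^\#\circ\pi)^{\times p,-1}(H^{\overline x_{\mathbf I^c}})$ contains $G|_{\overline x_{\mathbf I^c}}$, which already gives $(\prod\nu_i)(H^{\overline x_{\mathbf I^c}})\ge(\prod\mu_{1,i})(G|_{\overline x_{\mathbf I^c}})\ge 1-O(\varepsilon)$ for typical $\overline x_{\mathbf I^c}$ by Fubini.
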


\begin{proof}
    Recall
    \[
    H^{\overline{x}_{\mathbf{I}^c}} = \{ (y(x_{1,1}, \overline{x}_{\mathbf{I}^c}), \ldots, y(x_{1,p}, \overline{x}_{\mathbf{I}^c})):  (x_{1, 1}, \ldots, x_{1, p}) \in G|_{\overline{x}_{\mathbf{I}^c}} \} \subset \prod_{i=1}^p \supp \nu_i^{\overline{x}_{\mathbf{I}^c}}.
    \]
    By taking sections of a thin-planes graph, there is a set $G_0 \subset \prod_{(j,i)\in\mathbf I^c}\mu_{j,i}$ of $\mu_{\mathbf I^c}$-measure at least $1-O(\varepsilon)$, such that for every $\overline x_{\mathbf I^c}\in G_0$,
    \[
    G|_{\overline x_{\mathbf I^c}} \ \text{is a $(\sigma,K,1-O(\varepsilon))$-thin $(p-1)$-planes graph}
    \]
    for the measures $\mu_{1,1},\dots,\mu_{1,p}$. Hence, for $\overline x_{\mathbf I^c}\in G_0$,
    \[
    \nu_1^{\overline x_{\mathbf I^c}} \times \dots \times \nu_{p}^{\overline x_{\mathbf I^c}}(H^{\overline x_{\mathbf I^c}}) \ge 1-O(\varepsilon).
    \]
    Furthermore, let $\pi = \pi_{C(\overline{x})}^U$ and $y_i = y(x_{1,i}, \overline{x}_{\mathbf{I}^c})$ for $i = 1,\dots, p$. 
    For tuple $x_{\mathbf{I}} = (x_{1, 1}, \ldots, x_{1, p}) \in G|_{\overline{x}_{\mathbf{I}^c}}$ and the corresponding tuple $(y_1, \ldots, y_p) \in H^{\overline{x}_{\mathbf{I}^c}}$ we have by definition that
    \[
    \psi(\langle \pi(x_{1, 1}), \ldots, \pi(x_{1, p})\rangle) = \langle y_1, \ldots, y_p\rangle,
    \]
    where $C = \langle x_{1,p+1}, \ldots, x_{1, n_1}\rangle$. 
    By Proposition \ref{prop:psi-immersion}, we have
    \[
    \psi^{-1}\left(B_{\mathcal A(Q_1(\overline x),p-1)}(\langle y_1,\dots,y_p\rangle, \delta)\right) \subset B_{\mathcal A(U,p-1)}(\langle \pi(x_{1,1}),\dots,\pi(x_{1,p})\rangle,c^{-O(1)}\delta).
    \]
    Thus,
    \begin{align*}
    \nu_i^{\overline{x}_{\mathbf{I}^c}}(\langle y_1, \ldots, y_p\rangle (\delta)) &\le \pi(\mu_{1, i})\left( \langle \pi(x_{1, 1}), \ldots, \pi(x_{1, p})\rangle (O(c^{-O(1)})\delta) \right) \\
    &\le \mu_{1,i}(\langle x_{1,1},\dots,x_{1,p}\rangle)(O(c^{-O(1)}\delta))\\
    &\le K_1 (O(c^{-O(1)})\delta)^{\sigma}
    \end{align*}
    where in the second-to-last inequality we use the choice of $U$ from Section \ref{sec:choosing-a-generic-flat}, and in the last we use the thin-planes property of $G|_{\overline x_{\mathbf I^c}}$ for every $\overline x_{\mathbf I^c}\in G_0$. 
\end{proof}

This completes the proof that $H^{\overline{x}_{\mathbf{I}^c}}$ is $(\sigma, K_1 (O(c^{-O(1)}))^{\sigma}, 1-O(\varepsilon))$-thin $(p-1)$-planes graph.

\subsection{Extra lemmas}

\begin{lemma}\label{lem:swapping}
    Let $n'\ge n$. 
    Let $\mu_1, \ldots, \mu_n$ be measures on $B^{n'}(0,1)$ and $G \subset \prod \supp \mu_i$ be a $(\sigma, K, c)$-thin $(n-1)$-planes graph. Let $\nu$ be any probability measure on $\R^{n'}$. Then for every $\varepsilon>0$ there is $K' = K'(\varepsilon, K, n')$ and a graph $G'\subset G$ of $\prod\mu_i$-measure at least $c-\varepsilon$ such that  $\nu(\langle x_1, \ldots, x_n\rangle(\delta)) \le K' \delta^{\sigma-\varepsilon}$ for every $(x_1, \ldots, x_n) \in G'$.
\end{lemma}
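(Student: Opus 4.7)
The plan is to combine a Fubini--Markov argument at dyadic scales with an induction on the number of measures $n$. For each dyadic $\delta = 2^{-m}$ define the bad set
\[
B(\delta) = \{\overline x \in G : \nu(V_{\overline x}(\delta)) > K'\delta^{\sigma - \varepsilon}\},
\]
where $V_{\overline x} = \langle x_1,\dots,x_n\rangle$. The aim is to prove $\prod_i\mu_i(B(\delta)) \lesssim (K/K') \delta^\varepsilon$; summing the geometric series and choosing $K'$ large, the set $G' := G \setminus \bigcup_{\delta} B(\delta)$ has $\prod_i\mu_i$-measure at least $c - \varepsilon$ and satisfies the required bound (passing from dyadic to continuous $\delta$ costs only a universal constant absorbed into $K'$).

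The quantitative input I need is the estimate
\[
\int_G \nu(V_{\overline x}(\delta))\, d{\textstyle\prod_i}\mu_i(\overline x) \,\lesssim_{K, n', n}\, \delta^\sigma
\]
at every dyadic $\delta$; Markov's inequality then controls $B(\delta)$. To prove this, I use Fubini to rewrite the left-hand side as $\int d\nu(y) \prod_i\mu_i|_G(\{y \in V_{\overline x}(\delta)\})$ and induct on $n$. The inductive step relies on the key observation that thin $(n-1)$-planes for $(\mu_1,\dots,\mu_n)$ on $G$ automatically yields thin $(n-2)$-planes for $(\mu_2,\dots,\mu_n)$ on the projected graph
\[
G^{(1)} := \{(x_2,\dots,x_n) : \exists\, x_1,\ (x_1,\dots,x_n) \in G\}
\]
with the same constants. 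Indeed, for any $\overline x \in G$ and any index $j$ we have $\mu_j(V_{\overline x}(\delta)) \le K\delta^\sigma$, and since $V_{\overline x}$ contains the $(n-2)$-plane $W := \langle x_2,\dots,x_n\rangle$, the trivial inclusion $W(\delta) \subset V_{\overline x}(\delta)$ gives $\mu_j(W(\delta)) \le K\delta^\sigma$. The good-position hypothesis also descends, since a collapse of $(x_2,\dots,x_n)$ into an $(n-3)$-plane would force $(x_1,\dots,x_n)$ into an $(n-2)$-plane, contradicting $1/C$-separation from $\mc S_{n-1}$.

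Fixing $y \in \supp\nu$ and slicing over $(x_2,\dots,x_n) \in G^{(1)}$, I split the inner $\mu_1$-slice into two cases. In Case A, $y \in W(C\delta)$ for a large $C$, and I bound the $\mu_1$-slice trivially by $1$. In Case B, $y$ is $C\delta$-separated from $W$ so that $V' := \langle y, W\rangle$ is a well-defined hyperplane, and a direct geometric calculation shows $y \in V_{\overline x}(\delta)$ is equivalent to $x_1 \in V'(C'\delta)$. If this intersection meets $G|_{(x_2,\dots,x_n)}$, pick any witness $x_1^*$; good position gives $\dist(x_1^*, W) \gtrsim 1/C$, so the hyperplanes $V_{\overline x^*}$ and $V'$ (both containing $W$) differ in angle by $O(\delta)$ and therefore $V'(C'\delta) \cap B^{n'}(0,1) \subset V_{\overline x^*}(O(\delta))$. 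Thin planes applied at $\overline x^*$ then yields $\mu_1$-slice $\le K\cdot O(\delta)^\sigma$. Assembling Case A and Case B and integrating against $d\nu(y)\, d\prod_{j\ge 2}\mu_j$ gives the recursion
\[
I_n(\delta) \,\lesssim\, \delta^\sigma + I_{n-1}(C\delta),
\]
where $I_m$ denotes the analogous integral with $m$ measures in good position having thin $(m-1)$-planes. The base case $n = 1$ amounts to the observation that $\sigma$-Frostman-on-balls (thin $0$-planes) plus Fubini yields $\int \nu(B(x_1,\delta))\, d\mu_1|_G(x_1) \lesssim \delta^\sigma$; if $B(y,\delta)\cap G = \emptyset$ the contribution vanishes, else a point of $G$ in $B(y,\delta)$ upgrades to $\mu_1|_G(B(y,\delta)) \le K(2\delta)^\sigma$.

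The main obstacle I expect is the angle/closeness estimate in Case B: one must quantify how $x_1^* \in V'(C'\delta)$ upgrades to a genuine $O(\delta)$-closeness of the two hyperplanes $V'$ and $V_{\overline x^*}$ inside $B^{n'}(0,1)$. This is where the good-position separation from $\mc S_{n-1}$ is indispensable, and the verification must be done carefully enough to extract constants depending only on $K$ and $n'$ (not on the measures). The rest of the argument is routine bookkeeping of constants through the induction, plus the standard dyadic Markov--Borel--Cantelli argument to produce $G'$.
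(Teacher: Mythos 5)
There is a genuine gap in Case B, and it is exactly the step you flag as the ``main obstacle''---but the problem is one step earlier than you anticipate. The claim that $y \in V_{\overline x}(\delta)$ is equivalent to $x_1 \in V'(C'\delta)$ for a controlled constant $C'$ is false. Decomposing orthogonally over $W=\langle x_2,\dots,x_n\rangle$, one has the exact similarity
\[
\dist(x_1, V') \;=\; \frac{\dist(x_1,W)}{\dist(y,W)}\,\dist(y, V_{\overline x}),
\]
so $\dist(y, V_{\overline x})\le\delta$ only forces $\dist(x_1,V')\lesssim \delta/d$ where $d=\dist(y,W)$. In Case B you merely have $d>C\delta$, and when $d\sim C\delta$ this gives $\dist(x_1,V')\lesssim 1/C$, which is a constant, not $O(\delta)$. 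Consequently the $\mu_1$-slice is bounded by $K(\delta/d)^\sigma$, not $K\delta^\sigma$, and the recursion becomes $I_n(\delta)\lesssim K\sum_{k\ge0}2^{-k\sigma}\,I_{n-1}(2^k C\delta)$ rather than $I_n(\delta)\lesssim\delta^\sigma+I_{n-1}(C\delta)$. If $I_{n-1}(\rho)\sim\rho^\sigma$ the naive sum $\sum_k 2^{-k\sigma}(2^k\delta)^\sigma$ does not converge; one needs to interpolate with the trivial bound $I_{n-1}\le 1$, which produces a $\log(1/\delta)$ loss at each of the $n-1$ inductive steps. In principle these losses can be absorbed into $\delta^{-\varepsilon}$ since the target exponent is $\sigma-\varepsilon$, but your write-up neither performs this dyadic decomposition on $d$ nor tracks the losses, so as stated the inductive step is incorrect.

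The paper's proof avoids this issue by a different device: before doing any scale analysis it passes to a subgraph $G''\subset G$ of measure $\ge c-\varepsilon$ on which the tuples are \emph{quantitatively} affinely independent, i.e.\ $\dist\bigl(x_j,\langle x_1,\dots,\widehat{x_j},\dots,x_n\rangle\bigr)\ge\delta_0$ for a fixed $\delta_0=\delta_0(\varepsilon)$. It then covers $\R^{n'}$ by $\delta$-boxes $Q_v$ (with $\delta\ll\delta_0$) and, for each box and tuple, \emph{chooses the index $j$ adaptively} so that $Q_v$ is $\gtrsim\delta_0$-separated from the $(n-2)$-span omitting $x_j$. Because this separation is bounded below by the fixed $\delta_0$ rather than the moving threshold $C\delta$, the geometric containments lose only a factor $\delta_0^{-O(1)}$, which is a constant absorbed into $K'$. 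Your approach always peels off index $1$; the paper's adaptive choice of $j$ plus the $\delta_0$-restriction is precisely what eliminates the small-$d$ regime that breaks your Case~B. If you want to salvage your inductive route, you should (i) restrict to the analogous quantitatively independent subgraph $G''$ first, so that $\dist(x_1,W)\gtrsim\delta_0$ uniformly, and (ii) carry out the dyadic decomposition in $d=\dist(y,W)$ with the trivial cap $I_{n-1}\le 1$, accepting a polylogarithmic loss that gets absorbed into the $\varepsilon$ in the exponent.
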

In particular, this means that we can replace $\mu_1$ with the measure $\mu_1\cup \nu$ and retain the thin-planes property (with slightly different constants). So if we showed $(\mu_1|_{B_1}, \ldots, \mu_n|_{B_n})$ have thin planes, then so do $(\mu_1, \ldots, \mu_n)$. We use this lemma at the end of the proof to get rid of the restriction to balls.
\begin{proof}[Proof of Lemma \ref{lem:swapping}]
    For each $\delta, K'$ we define
    \[
    G_{\delta, K'} = \{(x_1, \ldots, x_n)\in G:~ \nu(\langle x_1, \ldots, x_n\rangle(\delta)) > K'\delta^{\sigma-\varepsilon}\}.
    \]
    Our goal is to show that $(\prod \mu_i)(G_{\delta, K'})$ is appropriately small. The thin-planes property of $G$ implies that most tuples $(x_1, \ldots, x_n)\in G$ are quantitatively affinely independent. In particular for any $\varepsilon>0$ we can find $\delta_0 >0$ and $G'' \subset G$ so that $(\prod\mu_i)(G'') \ge c-\varepsilon$ and for every $(x_1, \ldots, x_n)\in G''$ we have 
    \[
    \mathrm{dist}(x_j, \langle x_1, \ldots, \widehat{x_j}, \ldots, x_n\rangle) \ge \delta_0
    \]
    (and $\delta_0$ is a function of $\varepsilon$). 
    Fix $\delta \ll \delta_0$ and cover $\R^{n'}$ by dyadic $\delta$-boxes $Q_v$ (where $v \in \R^{n'}$ denotes the center of the box). We claim that for a fixed box $Q_v$,
    \begin{equation}\label{eq:prod-Qv}
    (\prod\mu_i)\left\{ (x_1, \ldots, x_n)\in G'':~ \langle x_1, \ldots, x_n\rangle \cap Q_v \neq \emptyset \right\} \lesssim_{n'} K\delta_0^{-2\sigma}\delta^\sigma.   
    \end{equation}
    Indeed, for any $(x_1, \ldots, x_n)\in G''$ we can find $j$ so that $\mathrm{dist}(Q_v, \langle x_1, \ldots, \widehat{x_j}, \ldots, x_n\rangle) \gtrsim_{n'} \delta_0$ (this uses the definition of $G''$ and some simple geometry). Then we note that for any other tuple $(x_1, \ldots, x_j', \ldots, x_n) \in G''$ whose span intersects the box $Q_v$ we have
    \[
    x_{j}' \in \langle x_1, \ldots, x_{j}', \ldots, x_n\rangle(\delta) \subset \langle x_1, \ldots, v, \ldots, x_n\rangle(C\delta/\delta_0) \subset \langle x_1, \ldots, x_{j}, \ldots, x_n\rangle(C'\delta/\delta_0^2)  
    \]
    where we used that all 3 vectors $x'_j, v, x_j$ are $\gtrsim \delta_0$ separated from the flat $$\langle x_1, \ldots, x_{j-1}, x_{j+1}, \ldots, x_n\rangle.$$ Applying the thin-planes property for $\mu_j$, integrating over all tuples $(x_1, \ldots, \widehat{x_j}, \ldots, x_n) $ and summing over $j$, we conclude (\ref{eq:prod-Qv}).
    
    Now we observe
    \begin{align*}
    K' \delta^{\sigma-\varepsilon}(\prod \mu_i)(G_{\delta, K'}\cap G'') &\le \int_{G_{\delta, K'}\cap G''} \nu(\langle x_1, \ldots, x_n\rangle(\delta))d\mu_1(x_1)\dotsb d\mu_n(x_n) \\&\le \sum_v \nu(Q_v) \int_{G''} \mathbf{1}[\langle x_1, \ldots, x_n\rangle\cap Q_v\neq\emptyset] d\mu_1(x_1)\dotsb d\mu_n(x_n)\\
    &\lesssim \sum_v \nu(Q_v) K \delta_0^{-2} \delta^\sigma \lesssim K\delta_0^{-2}\delta^\sigma.
    \end{align*}
    so it follows that $G_{\delta, K'}\cap G''$ has measure at most $\lesssim (K\delta_0^{-2\sigma}/K')\delta^{\varepsilon}$. So taking $K'$ large enough we can make the sum over all dyadic $\delta$ less than $\varepsilon$. With that choice of $K'$ we get our desired graph $G' = G'' \cap \bigcap_{\delta \le \delta_0} G_{\delta, K'}$.
\end{proof}

\begin{lemma}\label{lem:projecting-NC-flats}
    Let $V_1, \ldots, V_m \subset \R^n$ be an {\bf NC} collection of flats. Fix $H \subset \R^n$ a generic hyperplane. Let $\Omega$ be the set of points $x \in \R^n\setminus H$ such that the collection of projected flats $\{ \pi_x(V_j\setminus \{x\}), ~ j\in [m] \}$ is not {\bf NC} inside $H$. Then we have $|\Omega| \le C$ for some constant $C=C(n, m)$.
\end{lemma}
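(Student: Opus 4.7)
The plan is to show that each point of $\Omega$ is uniquely determined by a small amount of combinatorial data---a partition of $[m]$ together with a distinguished subset of its parts---so that $|\Omega|$ is bounded in terms of $m$ alone.

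Suppose $x\in\Omega$, so there exist proper flats $G_1,\dots,G_r\subset H$ with $\sum_{i=1}^r\dim G_i\le n-2$ whose union contains $\bigcup_j\pi_x(V_j\setminus\{x\})$. A direct check shows that $\pi_x(V_j\setminus\{x\})$ is Zariski-dense in $F_j(x):=\aff(x,V_j)\cap H$, so because a flat contained in a finite union of proper flats lies inside one of them, we obtain a partition $[m]=J_1\sqcup\cdots\sqcup J_r$ with $V_j\subset\aff(x,F_j(x))\subset\hat G_i:=\aff(x,G_i)$ for $j\in J_i$. Since $\dim\hat G_i=\dim G_i+1$ (because $x\notin H$) and $\bigcup V_j\subset\bigcup\hat G_i$, the \textbf{NC} hypothesis on $\{V_j\}$ forces $\sum_i\dim G_i\ge n-r$, whence $r\ge 2$. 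Moreover, the inclusion $V_{J_i}\subset\hat G_i$ gives $\dim V_{J_i}\le\dim G_i+\mathbf{1}_{x\in V_{J_i}}$; setting $T:=\{i:x\in V_{J_i}\}$ and $t:=|T|$, summing over $i$ yields $\sum_i\dim V_{J_i}\le n-2+t$, while the lower bound $\sum\dim V_{J_i}\ge n$ (again from \textbf{NC}) forces $t\ge 2$.

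The main step is to show $A:=\bigcap_{i\in T}V_{J_i}$ is $0$-dimensional. Suppose for contradiction $d:=\dim A\ge 1$. Since every $V_{J_i}$ with $i\in T$ contains $A$, a standard linear algebra bound gives
\[
\dim\aff(V_{J_i}:i\in T)\le\sum_{i\in T}\dim V_{J_i}-(|T|-1)d.
\]
Covering $\bigcup V_j$ by the single flat $\aff(V_{J_i}:i\in T)$ together with $\{V_{J_i}\}_{i\notin T}$ and applying \textbf{NC} yields $\sum_i\dim V_{J_i}\ge n+(|T|-1)d$, which combined with $\sum\dim V_{J_i}\le n-2+t$ gives $(|T|-1)d\le|T|-2$, forcing $d<1$; contradiction. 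Therefore $A=\{x\}$, so $x$ is uniquely determined by the pair $(\{J_1,\dots,J_r\},T)$.

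The number of such pairs is bounded by a function of $m$ alone, which yields $|\Omega|\le C(n,m)$. The main obstacle is the cover-lifting step in the second paragraph: one must correctly set up the lifted cover $\bigcup V_j\subset\bigcup\hat G_i$ and extract the inequality $\dim V_{J_i}\le\dim G_i+\mathbf{1}_{x\in V_{J_i}}$, using genericity of $H$ in the background to guarantee Zariski density of the $\pi_x$-images. Once that is in place, the dimension contradiction and the counting step are both routine.
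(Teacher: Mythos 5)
Your proof is correct, and it takes a genuinely different route from the paper's. Both arguments begin the same way: for $x\in\Omega$, Zariski density of the $\pi_x$-images inside the flats $F_j(x)=\aff(x,V_j)\cap H$ forces a partition $\{J_i\}$ of $[m]$ with $V_j\subset\hat G_i=\aff(x,G_i)$ for $j\in J_i$, and the inequality $\dim\pi_x(V_{J_i}\setminus\{x\})=\dim V_{J_i}-\mathbf{1}_{x\in V_{J_i}}$ (by genericity of $H$) shows that $x$ must lie in at least two of the $V_{J_i}$'s. From that point the two proofs diverge. The paper runs a pigeonhole induction on partitions: if a fixed partition $\mathcal I$ has more than $t^2$ bad points, then two bad points share a pair of parts $I_j,I_{j'}$, forcing $\dim(V_{I_j}\cap V_{I_{j'}})\ge 1$ and hence all those bad points to survive under the coarser partition merging $I_j$ and $I_{j'}$; iterating reaches the trivial partition $\{[m]\}$, for which $\Omega_{\{[m]\}}=\emptyset$, a contradiction. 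You instead attach to each $x\in\Omega$ the data $(\{J_i\},T)$ with $T=\{i:x\in V_{J_i}\}$ and prove that the flat $A=\bigcap_{i\in T}V_{J_i}$ is $0$-dimensional by a clean dimension-sum contradiction against \textbf{NC}, so $A=\{x\}$ and the data determines $x$; this gives the explicit bound $|\Omega|\le B_m\cdot 2^m$ (Bell number times $2^m$), depending on $m$ alone. Your argument is more direct and gives an explicit count, whereas the paper's is a softer inductive reduction; the key extra idea you supply is the inequality $\dim\aff(V_{J_i}:i\in T)\le\sum_{i\in T}\dim V_{J_i}-(|T|-1)d$ followed by a second application of \textbf{NC} to the cover $\{\aff(V_{J_i}:i\in T)\}\cup\{V_{J_i}\}_{i\notin T}$. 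The only small cleanup needed: discard the (possibly) empty parts $J_i$ before defining $T$; nothing in the argument breaks.
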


\begin{proof}
    On the contrary, suppose that $|\Omega| > C(n,m)$ for a sufficiently large constant $C(n, m)$. For each partition $\mc I = \{I_1, \ldots, I_t\}$ of $[m]$ let $\Omega_{\mc I}$ be the set of $x \in \R^n\setminus H$ for which we have 
    \[
    \sum_{j=1}^t \dim \pi_x(V_{I_j}\setminus \{x\}) \le n-2.
    \]
    Clearly, $\Omega = \bigcup_{\mc I} \Omega_{\mc I}$. Now observe that by genericity of $H$, 
    \[
    \dim \pi_x(V_I) = \begin{cases}
        \dim V_I, ~~ x \not \in V_I, \\
        \dim V_I-1, ~ x \in V_I
    \end{cases}
    \]
    so if $x \in \Omega_{\mc I}$ for $\mc I = \{I_1, \ldots, I_t\}$ then we have
    \[
    n-2\ge \sum_{j=1}^t \dim V_{I_j} - 1_{x \in V_{I_j}} \ge n - \sum_{j=1}^t 1_{x \in V_{I_j}}
    \]
    where we used the {\bf NC} property of the collection $V_1, \ldots, V_m$. We conclude that there are some $j\neq j'$ such that $x \in V_{I_j} \cap V_{I_{j'}}$. Now this implies that if $|\Omega_{\mc I}|> t^2$ then there are some $x\neq x' \in \Omega_{\mc I}$ so that $x, x' \in V_{I_j} \cap V_{I_{j'}}$. This in turn implies that $\dim V_{I_j} \cap V_{I_{j'}} \ge 1$ and so 
    \[
    \dim \pi_x(V_{I_j\cup I_{j'}}\setminus \{x\}) \le \dim \pi_x(V_{I_j}\setminus \{x\})+ \dim \pi_x(V_{ I_{j'}}\setminus \{x\}).
    \]
    We conclude that $x \in \mc I'$ where $\mc I'$ is the partition of $[m]$ obtained from $\mc I$ by merging $I_j$ and $I_{j'}$. So by pigeonhole principle, we can find some $\mc I'$ with less parts than $\mc I$ such that $|\Omega_{\mc I'}| \gtrsim_m |\Omega_{\mc I}|$. So assuming that $C(n, m)$ is sufficiently large, by repeating this argument we eventually end up at the trivial partition $\{[m]\}$ of $[m]$ for which it is clear that $\Omega_{\{[m]\}} = \emptyset$, which is a contradiction.
\end{proof}

\begin{lemma}\label{lemma:projection-of-irreducible}
    Let $\mu$ be a measure on $V\cap B^n(0,1)$ such that $\mu$ is $(w, \tau)$-irreducible in $V$ for some $\tau \le 1/2$. Let $Q, U \subset \R^n$ be transversal subspaces with $\dim Q+\dim U = n-1$ and $d(Q, U) \ge c$ and consider the radial projection map $\pi_Q^U: \R^n\setminus Q\to U$. 
    
    Suppose that $V \not\subset U$ and $\dim\pi_Q^U(V)\ge 1$.

    Then for $\varepsilon\le w$ the measure $\pi_Q^U(\mu|_{V\setminus Q(\varepsilon)})$ is nonzero and $(cw, 2\tau)$-irreducible in $\pi_Q^U(V)$.
\end{lemma}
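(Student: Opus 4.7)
The plan is to transfer the $(w, \tau)$-irreducibility of $\mu$ in $V$ through the join-meet projection $\pi_Q^U$, using the fact that preimages of proper affine subspaces of $\pi_Q^U(V)$ are affine subspaces of $\R^n$ containing $Q$, and that neighborhoods pull back to cone-like regions whose width is controlled by the transversality constant $c$.

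For nonzeroness: since $\dim \pi_Q^U(V) \ge 1$, we have $V \not\subset Q$, so $V \cap Q$ is a proper (possibly empty) affine subspace of $V$. Transversality lets us bound $V \cap Q(\varepsilon) \subset H(A\varepsilon)$ for some proper affine subspace $H \subset V$ and some $A = A(c, n)$; then $(w, \tau)$-irreducibility applied to $H$ together with $\tau \le 1/2$ gives $\mu(V \setminus Q(\varepsilon)) \ge \tfrac{1}{2}\mu(B^n(0,1)) > 0$ as long as $A\varepsilon \le w$, which is ensured by $\varepsilon \le w$ (possibly after absorbing $A$).

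Next, let $H' \subsetneq \pi_Q^U(V)$ be a proper affine subspace and set $\tilde H := \aff(Q, H') \subset \R^n$. A dimension count using $Q \cap U = \emptyset$ and $\dim Q + \dim U = n-1$ gives $\aff(Q, H') \cap U = H'$; hence if $V \subset \tilde H$ then $\pi_Q^U(V) \subset \tilde H \cap U = H'$, contradicting properness. So $V \cap \tilde H$ is a proper affine subspace of $V$. The core geometric computation is then: for $x \in V \cap B^n(0,1) \setminus Q(\varepsilon)$ with $\pi_Q^U(x) = y \in H'(\delta)$, pick $y' \in H'$ with $|y - y'| \le \delta$ and let $q \in Q$ be the point closest to $x$. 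The flat $\aff(Q, y') \subset \tilde H$ contains the point on the line through $q$ and $y'$ corresponding to $x$ on $\aff(Q, y)$, and this point differs from $x$ by $t|y-y'|$ where $t = |x-q|/|y-q| \le R/c$, with $R = O(1)$ a diameter bound and $|y - q| \ge c$ coming from $d(Q, U) \ge c$. This gives $(\pi_Q^U)^{-1}(H'(\delta)) \cap (V \setminus Q(\varepsilon)) \subset \tilde H((R/c)\delta)$.

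Combining with an angle bound $\angle(V, \tilde H) \gtrsim c^{O(1)}$ (forced by transversality of the ambient configuration) gives $\tilde H(w_1) \cap V \subset (V \cap \tilde H)(w_1 \cdot c^{-O(1)})$. Choosing the new modulus $c'w$ with $(R/c) \cdot c'w \cdot c^{-O(1)} \le w$, i.e., $c' = c^{O(1)}$, yields
\[
\pi_Q^U(\mu|_{V\setminus Q(\varepsilon)})(H'(c'w)) \le \mu\bigl((V \cap \tilde H)(w)\bigr) \le \tau\,\mu(B^n(0,1)) \le 2\tau\,\mu(V\setminus Q(\varepsilon)),
\]
establishing $(c'w, 2\tau)$-irreducibility. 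The main obstacle is the geometric estimate — carefully tracking the Möbius-like distortion of the join-meet projection and its dependence both on the transversality $c$ via $|y - q| \ge c$, and on the angle $\angle(V, \tilde H)$ that lets one pass from an ambient neighborhood of $\tilde H$ to an intrinsic neighborhood inside $V$.
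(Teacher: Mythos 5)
Your argument reproduces the paper's approach: identify $L=V\cap\aff(Q,H')$ (the paper's $L=\pi^{-1}(G)\cap V$) as a proper subflat of $V$, bound the preimage $(\pi_Q^U)^{-1}(H'(\delta))\cap V$ by a controlled neighborhood of $L$, and close with the $(w,\tau)$-irreducibility of $\mu$ together with the mass lower bound $\nu(B^n(0,1))\ge(1-\tau)\mu(B^n(0,1))$. Your nonzeroness argument and the $\tau/(1-\tau)\le 2\tau$ normalization also match. Where you are more explicit than the paper is in decomposing the geometric containment into (i) $(\pi_Q^U)^{-1}(H'(\delta))\cap V\cap B^n(0,1)\subset\tilde H((R/c)\delta)$, which does follow from $d(Q,U)\ge c$ and boundedness, and (ii) the passage from an ambient neighborhood of $\tilde H$ to a neighborhood of $L=V\cap\tilde H$ inside $V$ via an angle bound; the paper compresses both into the single sentence ``by the law of sines.''

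The step to be careful about is (ii). You assert that $\angle(V,\tilde H)\gtrsim c^{O(1)}$ is ``forced by transversality of the ambient configuration,'' but the lemma's hypotheses do not control this angle. Nothing in $Q,U$ being transversal with $d(Q,U)\ge c$, $V\not\subset U$, and $\dim\pi_Q^U(V)\ge 1$ prevents $V$ from being nearly parallel to $\tilde H=\aff(Q,H')$ along $L$. Concretely, in $\R^3$ take $Q=\{0\}$, $U=\{z=1\}$ (so $d(Q,U)=1$), $V=\{y=\eta(z-1)\}$, and $H'=\{y=0,z=1\}$: then $\tilde H=\{y=0\}$, $L=H'$, and $\angle(V,\tilde H)\sim\eta$, so $\tilde H(\delta)\cap V$ is an $O(\delta/\eta)$-neighborhood of $L$ rather than an $O(\delta)$-neighborhood, and $\pi_Q^U(\mu|_{V\setminus Q(\varepsilon)})(H'(cw))$ need not be small as $\eta\to 0$. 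This is not a flaw you introduced --- the paper's ``by the law of sines (using that $d(Q,U)\ge c$)'' elides exactly the same point, and when the lemma is later applied the paper states the modulus as $c^{O(1)}w$ and has $c$-stable position available, which is presumably where the missing angle control actually comes from. So you have faithfully reconstructed the intended argument and made an implicit assumption explicit; the honest fix is to add an angle or stability hypothesis on $V$ relative to $Q$ to the lemma's statement, after which your two-step estimate goes through as written.
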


\begin{proof}
    For ease of notation, let $\pi = \pi_Q^U$. Since $\dim \pi(V)\ge 1$, $\pi(V)$ has nontrivial affine subspaces. 
Let $G\subset \pi(V)$ be a proper affine subspace, and note that since $G$ is proper, there exists $v\in V$ such that $\pi(v) \notin G$. 
Therefore, $v\notin \pi^{-1}(G)$, and hence $v\notin L = \pi^{-1}(G)\cap V$, so $L$ is a proper subflat of $V$.

By definition, $L\subset \pi^{-1}(G(\delta)) \cap V$, and furthermore, by the law of sines (using that $d(Q,U) \ge c$),
\[
\pi^{-1}(G(\delta))\cap V \subset L(c^{-1}\delta).
\]
Denote 
\[
\nu = \mu|_{V\setminus Q(\varepsilon)}.
\]
Since $\varepsilon \le w$, we first note that 
\[
\mu(Q(\varepsilon)\cap V) \le \mu((Q\cap V)(\varepsilon)) \le \tau\,\mu(B^n(0,1)),
\]
so 
\[
\nu(B^n(0,1)) = \mu(B^n(0,1)) - \mu(Q(\varepsilon)\cap V)
\ge (1-\tau)\,\mu(B^n(0,1)).
\]
Now, because $\mu$ is $(w,\tau)$-irreducible in $V$, we have for any proper affine subspace 
$G\subset \pi(V)$ and any $0<\delta\le c w$,
\[
\pi\nu(G(\delta)) 
= \nu(\pi^{-1}(G(\delta))) 
\le \nu(L(c^{-1}\delta)) 
\le \tau\,\mu(B^n(0,1)).
\]
Dividing both sides by $\nu(B^n(0,1))$ yields
\[
\frac{\pi\nu(G(\delta))}{\nu(B^n(0,1))} 
\le \frac{\tau}{1-\tau} 
\le 2\tau,
\]
provided $\tau\le \tfrac12$. 
Hence $\pi\nu$ is $(c w,2\tau)$-irreducible in $\pi(V)$.
\end{proof}

\newpage 

\appendix

\section{From irreducible measures to thin \texorpdfstring{$1$}{1}-planes}\label{appendix:thin}

In this appendix, we deduce the base case of Theorem \ref{theorem:section4main} from Ren's discretized radial projection result.

We recall the statement of Ren's discretized radial projection theorem. (Note that an $(r_0,k)$-plate $P$ is the $r_0$-neighborhood of a $k$-flat in $\R^n$, and that, moreover, such a plate is $\eta$-concentrated with respect to a measure $\mu$ if $\mu(P)\geq \eta$.)

\begin{theorem}[Theorem 1.13 of \cite{ren2023discretized}]\label{thm:ren-thin-tubes}
Let $k \in \{1,2,\dotsc,n-1\}$, $k-1<\sigma < s \le k$, and $\varepsilon>0$. There exist $N,K_0$ depending on $\sigma, s, k$, and $\eta(\varepsilon)>0$ (with $\eta(1) = 1$) such that the following holds. Fix $r_0 \le 1$, and $K \ge K_0$. Let $\mu_0,\mu_1$ be $\sim 1$-separated $s$-Frostman measures with constants $C_0,C_1$ supported on $X_0, X_1$, which lie in $B^n(0,1)$. Assume that $\|\mu_0\|,\|\mu_1\|\le 1$. Let $A$ be the pairs of $(x_0,x_1)\in X_0\times X_1$ that lie in some $K^{-1}$-concentrated $(r_0,k)$-plate. Then there exists a Borel set $B\subset X_0\times X_1$ with $(\mu_0\times \mu_1)(B) \lesssim K^{-\eta}$ such that for every $x_0\in X_0$ and $r$-tube $T$ containing $x_0$, we have
\[
\mu_1(T\setminus(A|_{x_0}\cup B|_{x_0})) \lesssim \frac{K^N}{r_0^{\sigma-(k-1)+N\varepsilon}}r^\sigma.
\]
The implicit constant may depend on $C_0,C_1,\sigma,s,k$.
\end{theorem}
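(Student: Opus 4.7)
The plan is to prove this by induction on the plate dimension $k$. The base case $k=1$ is the discretized Orponen--Shmerkin--Wang radial projection theorem extended to $\R^n$: an $(r_0,1)$-plate is just an $r_0$-tube, so the statement reduces to showing that, outside a set of pairs contained in a few $K^{-1}$-concentrated $r_0$-tubes, every $r$-tube $T$ through $x_0$ satisfies $\mu_1(T) \lesssim K^N r_0^{-\sigma+N\varepsilon} r^\sigma$. This is obtained by applying a Bourgain-type discretized projection estimate to the radial projection $\pi_{x_0}$ and combining it with Shmerkin's $L^q$-inverse theorem to convert non-concentration at the input scale into a quantitative $L^q$ bound at the output scale; the qualitative Theorem \ref{thm:osw} is the $n=2$ shadow of this base case.

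For the inductive step, assume the estimate for $k-1$ and prove it for $k$. Fix $x_0\in X_0$ and an $r$-tube $T$ containing $x_0$, and cover $T$ by $(r_0,k)$-plates. Either some $(r_0,k)$-plate $P$ meeting $T$ is $K^{-1}$-concentrated for $\mu_1$, in which case every $x_1\in T\cap P$ already belongs to $A|_{x_0}$ and is absorbed by subtracting $A|_{x_0}$; or else no such plate is concentrated, and the measure $\mu_1|_T$ spreads across many $(r_0,k)$-plates. In the latter case, the family of $k$-flats through $x_0$ carrying non-negligible $\mu_1$-mass forms a Furstenberg-type system in $\R^n$, whose total size is controlled by the Ren--Wang Furstenberg set estimate. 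Combining this control with the inductive hypothesis at level $k-1$, applied to appropriate transverse $(k-1)$-dimensional sections obtained by fixing a $(k-1)$-plate inside a candidate $(r_0,k)$-plate and slicing, produces the desired bound $\mu_1(T\setminus(A|_{x_0}\cup B|_{x_0}))\lesssim K^N r_0^{-\sigma+(k-1)-N\varepsilon} r^\sigma$.

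The main obstacle is keeping the exponent on $r$ equal to the full $\sigma$, rather than $\sigma-O(\varepsilon)$. A naive iteration across the $\sim \log(1/r_0)$ scales between $r$ and $r_0$ would accumulate an $\varepsilon$-loss at each scale, which is incompatible with the single $r_0^{N\varepsilon}$ loss allowed by the statement. The fix is Shmerkin's multi-scale $L^q$-inverse theorem: if at every dyadic scale $\delta\in[r,r_0]$ the measure $\mu_1$ avoids non-trivial concentration on $(r_0,k)$-plates, then $\pi_{x_0}\mu_1$ obeys a sharp $L^q$-flattening estimate which recovers the full exponent $\sigma$ in a single application. The delicate technical task, which is where Ren's argument in \cite{ren2023discretized} does the real work, is to arrange the multi-scale bookkeeping so that (i) the exceptional set $B$ has total $\mu_0\times\mu_1$-measure $\lesssim K^{-\eta}$ uniformly rather than accumulating $K^{-1}$ across $N$ scales, and (ii) the Furstenberg input and the inductive hypothesis are applied at compatible scales so that the transverse structure of $(r_0,k)$-plates is not destroyed by the radial projection from $x_0$.
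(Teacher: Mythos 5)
This theorem is not proved in the paper under review. It is quoted verbatim as Theorem~1.13 of Ren's paper \cite{ren2023discretized} and imported as a black box; the only place it is used is in Appendix~\ref{appendix:thin}, where it feeds into Lemma~\ref{thm:irreducible-implies-thin-tubes} to supply the $k=1$ base case of Theorem~\ref{theorem:section4main}. So there is no proof in this paper to compare your proposal against, and reproducing Ren's proof is out of scope here.

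That said, your sketch diverges from Ren's actual argument in a way that matters, and its central inductive mechanism is suspect. Ren's proof is a bootstrap in the Orponen--Shmerkin--Wang ``thin tubes'' style: one shows that having $(\sigma, K, c)$-thin tubes for some exponent $\sigma$ can be upgraded to $(\sigma', K', c')$-thin tubes for a strictly larger $\sigma'$, using Shmerkin's inverse theorem and discretized projection estimates, and iterates this until reaching any $\sigma < s$. Your inductive step instead proposes that ``the family of $k$-flats through $x_0$ carrying non-negligible $\mu_1$-mass forms a Furstenberg-type system in $\R^n$, whose total size is controlled by the Ren--Wang Furstenberg set estimate.'' This is problematic on two counts. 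First, the Ren--Wang theorem is a planar result about $(\alpha,\beta)$-Furstenberg sets of \emph{lines}; the corresponding estimates for families of $k$-flats in $\R^n$ are not known to be sharp and would not automatically plug into the precise $r_0^{\sigma-(k-1)+N\varepsilon}\,r^{\sigma}$ dependence required. Second, even granting such an estimate, it is not explained how to convert cardinality/dimension bounds on the Furstenberg-type family into the measure bound on $\mu_1(T\setminus(A|_{x_0}\cup B|_{x_0}))$, which is the actual content of the theorem. The multi-scale $L^q$-inverse theorem you invoke to control $\varepsilon$-losses is indeed a relevant tool (it appears in the OSW/Shmerkin circle of ideas), but it does not by itself fix the gap in the inductive step. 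In short: your outline has plausible shape (induction on $k$, a base case built on OSW, careful exceptional-set bookkeeping), but the mechanism you propose for the induction is not Ren's, and as stated it leans on a higher-dimensional Furstenberg input that does not currently exist in the required form.
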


We state the base case here for the convenience of the reader.

\begin{theorem}[Corollary of Theorem 1.13 in \cite{ren2023discretized}] 
\label{thm:renprojcorollary}
Let $0< s \leq n-1$, $\sigma < s$, and $C>0$. For every $\varepsilon > 0$, there exist $\tau_0 := \tau_0(\varepsilon, s, \sigma, n, C)$ and $w_0:= w_0(\varepsilon, s, \sigma, n, C)$ 
such that the following holds for all $\tau \le \tau_0$, and all $w \le w_0$.
Given $\mu_0,\mu_1$ that are $(w, \tau)$-irreducible $(C,s)$-Frostman measures that are $1/C$-separated in $\R^n$, there exists some $K:= K(\epsilon, s, \sigma, n,C, w) >0$ such that $(\mu_0,\mu_1)$ spans $(\sigma, K, 1-\epsilon)$-thin $1$-planes.
\end{theorem}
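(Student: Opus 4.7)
The plan is to reduce Theorem~\ref{thm:renprojcorollary} to Ren's discretized radial projection theorem (Theorem~\ref{thm:ren-thin-tubes}) applied symmetrically to $(\mu_0,\mu_1)$ and $(\mu_1,\mu_0)$, and to exploit the $(w,\tau)$-irreducibility hypothesis to eliminate the ``concentrated plates'' exceptional set $A$. The resulting one-sided Frostman-type tube bound must then be promoted to the two-sided thin $1$-planes bound required in the definition.

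First I set $k:=\lceil s\rceil\leq n-1$ and fix any $\sigma_0\in (\max(\sigma,k-1),s)$, so that $\sigma_0$-thin $1$-planes implies $\sigma$-thin $1$-planes (since $r^{\sigma_0}\leq r^\sigma$ for $r\leq 1$). I then apply Theorem~\ref{thm:ren-thin-tubes} to $(\mu_0,\mu_1)$ with parameters $k,\sigma_0,s,\varepsilon$ and plate scale $r_0:=w$. This supplies exceptional sets $A,B\subset X_0\times X_1$ with $(\mu_0\times\mu_1)(B)\lesssim K^{-\eta(\varepsilon)}$ and a bound
\[
\mu_1\bigl(T\setminus(A|_{x_0}\cup B|_{x_0})\bigr)\lesssim K^N w^{-(\sigma_0-k+1+N\varepsilon)}r^{\sigma_0}
\]
for every $x_0\in X_0$ and every $r$-tube $T\ni x_0$, and a symmetric application to $(\mu_1,\mu_0)$ yields an analogous swapped companion $B^*\subset X_1\times X_0$. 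An $(r_0,k)$-plate is the $w$-neighborhood of a proper affine $k$-flat in $\R^n$ (proper since $k\leq n-1$), so $(w,\tau)$-irreducibility of each $\mu_i$ forces $\mu_i(P)\leq \tau$ for any such plate; imposing $\tau<K^{-1}$ therefore empties $A$ in both applications and leaves the clean one-sided bound
\[
\mu_j\bigl(V_{x_0,x_1}(\delta)\setminus B_j|_{x_{1-j}}\bigr)\lesssim K'\delta^{\sigma_0}
\quad\text{for both }j\in\{0,1\},
\]
where $K'=K'(\varepsilon,s,\sigma,n,C,w)$.

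The final step is to upgrade this one-sided bound to the full two-sided bound $\mu_j(V_{x_0,x_1}(\delta))\leq K\delta^\sigma$ demanded in the definition of thin $1$-planes. The plan is to imitate the dyadic covering argument in the proof of Proposition~\ref{prop:thin-planes-restriction}: for each dyadic $\delta>0$ and each $j\in\{0,1\}$ define the bad set
\[
F_j(\delta):=\bigl\{(x_0,x_1):\mu_j(V_{x_0,x_1}(\delta))>C_1 K'\delta^{\sigma_0-\varepsilon}\bigr\};
\]
for fixed $x_{1-j}$, cover $F_j(\delta)|_{x_{1-j}}$ by a maximal essentially disjoint family of $\delta$-tubes, combine the Ren bound on each tube with the smallness $(\mu_0\times\mu_1)(B)\lesssim K^{-\eta}$ to bound $(\mu_0\times\mu_1)(F_j(\delta))$, and sum over dyadic scales with $C_1$ and $K$ large enough that the total measure of $\bigcup_\delta(F_0(\delta)\cup F_1(\delta))$ is at most $\varepsilon$. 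Taking $G$ to be the complement of $B\cup\sigma(B^*)\cup\bigcup_\delta(F_0(\delta)\cup F_1(\delta))$ in $X_0\times X_1$ gives a graph of $(\mu_0\times\mu_1)$-measure $\geq 1-\varepsilon$ witnessing $(\sigma_0-\varepsilon,K'',1-\varepsilon)$-thin $1$-planes; reabsorbing constants and reparameterizing $\varepsilon$ yields the stated $(\sigma,K,1-\varepsilon)$-thin $1$-planes.

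The main obstacle is the final upgrade step. Ren's conclusion only controls $\mu_j$ outside the $x_{1-j}$-fiber of $B$, whereas the thin $1$-planes definition demands a uniform bound on the \emph{full} $\mu_j$-mass of the tube across every scale $\delta>0$. The residual contribution $\mu_j(V_{x_0,x_1}(\delta)\cap B|_{x_{1-j}})$ does not a priori scale with $\delta$, so one must balance the scale-independent smallness of $B$ against the scale-dependent Frostman decay via a carefully chosen dyadic pigeonhole. This tension is what dictates the chain of quantitative choices $\tau<K^{-1}$, $K$ chosen so that $K^{-\eta(\varepsilon)}$ is small relative to $\varepsilon$, and $K''$ inflated to absorb the covering constants; it is also the source of the intricate dependence of the final $K$ on $\varepsilon,s,\sigma,n,C$ and the modulus of irreducibility $w$.
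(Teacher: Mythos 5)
Your proposal is correct and follows essentially the same route as the paper: apply Ren's discretized radial projection theorem (Theorem \ref{thm:ren-thin-tubes}) with plate scale $r_0 = w$, use the $(w,\tau)$-irreducibility to empty the concentrated-plates exceptional set $A$, and then perform a dyadic pigeonhole (this is exactly the paper's Lemma \ref{lem:oswr_to_thin}) to upgrade the one-sided thin-tubes bound to the two-sided thin $1$-planes bound. One small but genuine improvement in your version: you take $k = \lceil s\rceil$ and work with an auxiliary exponent $\sigma_0 \in (\max(\sigma, k-1), s)$ so that Ren's hypothesis $k-1 < \sigma_0 < s \le k$ is always satisfied, whereas the paper's Lemma \ref{thm:irreducible-implies-thin-tubes} fixes $k = n-1$, which forces $\sigma_0 > n-2$ and hence implicitly requires $s > n-2$; your choice handles the full range $0 < s \le n-1$ stated in the theorem.
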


Having thin tubes is essentially equivalent to having thin 1-planes, up to harmless changes of constants. We will prove that thin tubes implies thin 1-planes with slightly worse constants. (The proof that thin $1$-planes implies thin tubes is not needed, and may safely be left to the reader.)

\begin{lemma}\label{lem:oswr_to_thin}
    For every $\varepsilon>0$ and $C>0$, there exists $A = A(\varepsilon,\sigma,n)$ and $B = B(\sigma)$ such that the following holds.
    Suppose that $\mu_0,\mu_1$ are measures supported in $X_0,X_1\subset B(0,1)$, and such that $\mathrm{dist}(X_0,X_1)\ge 1/C$. Suppose both $(\mu_0,\mu_1)$ and $(\mu_1,\mu_0)$ have $(\sigma,K,1-\varepsilon)$-thin tubes.
    Then $(\mu_0,\mu_1)$ have $(\sigma-\varepsilon, AK, 1-B\varepsilon)$-thin 1-planes.
\end{lemma}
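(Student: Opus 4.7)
The plan is to mirror Proposition \ref{prop:thin-planes-restriction}: the thin-tubes hypothesis only controls $\mu_1(V_{x_0, x_1}(\delta) \cap G_{01}|_{x_0})$, but thin $1$-planes demands the unrestricted bound on $\mu_1(V_{x_0, x_1}(\delta))$, so we will identify a small set of pairs where the unrestricted measure is too large and then discard them. Let $G_{01}$ and $G_{10}$ witness $(\sigma, K, 1-\varepsilon)$-thin tubes for $(\mu_0, \mu_1)$ and $(\mu_1, \mu_0)$ respectively, set $\tilde G = \{(x_0, x_1) \in G_{01} : (x_1, x_0) \in G_{10}\}$, and note $(\mu_0\times\mu_1)(\tilde G) \ge 1 - 2\varepsilon$. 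For a constant $A$ to be chosen and each dyadic $\delta > 0$ and each $j \in \{0, 1\}$, define the bad set
\[
E_j(\delta) = \{(x_0, x_1) \in \tilde G : \mu_j(V_{x_0, x_1}(\delta)) > A K \delta^{\sigma - \varepsilon}\};
\]
the goal is to prove $(\mu_0\times\mu_1)(E_j(\delta)) \lesssim_{C, \sigma} A^{-1} \delta^\varepsilon$.

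We focus on $j = 1$, the case $j = 0$ being symmetric via $G_{10}$ (with $V_{x_0, x_1}(\delta)$ viewed as a tube through $x_1$). Fix $x_0 \in X_0$ and set $\tilde E(x_0) = E_1(\delta)|_{x_0}$, which lies in $G_{01}|_{x_0}$ by construction of $\tilde G$. Select $I \subset \tilde E(x_0)$ maximal subject to the radial projections $\pi_{x_0}(x_1)$, for $x_1 \in I$, being pairwise $C'\delta$-separated on $S^{n-1}$, where $C' = C'(C)$ is chosen so that the sets $\{V_{x_0, x_1}(\delta) \cap X_1 : x_1 \in I\}$ are pairwise disjoint. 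This disjointness uses the hypothesis $\mathrm{dist}(X_0, X_1) \ge 1/C$, which forces the angular spread of a $\delta$-tube through $x_0$ inside $X_1$ to be $\lesssim C\delta$. Summing the lower bounds $\mu_1(V_{x_0, x_1}(\delta)) > AK\delta^{\sigma-\varepsilon}$ over disjoint $x_1 \in I$ yields $|I| \lesssim (AK)^{-1}\delta^{\varepsilon - \sigma}$, while by maximality every point of $\tilde E(x_0)$ lies in $V_{x_0, x_1}(C''\delta)$ for some $x_1 \in I$, with $C'' = O(C')$. Applying the thin-tubes bound for $G_{01}$ at the base point $x_0$ (legitimate since $\tilde E(x_0) \subset G_{01}|_{x_0}$) gives
\[
\mu_1(\tilde E(x_0)) \le \sum_{x_1 \in I} \mu_1(V_{x_0, x_1}(C''\delta) \cap G_{01}|_{x_0}) \le |I|\, K (C''\delta)^\sigma \lesssim A^{-1} \delta^\varepsilon,
\]
and integrating against $\mu_0$ finishes the claim.

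Summing over dyadic $\delta$ and $j \in \{0, 1\}$ gives $(\mu_0 \times \mu_1)(\bigcup_{\delta, j} E_j(\delta)) \lesssim_\varepsilon A^{-1}$, so taking $A = A(\varepsilon, \sigma, n, C)$ large enough forces this to be at most $B\varepsilon$ for some absolute $B$. The graph $G = \tilde G \setminus \bigcup_{\delta, j} E_j(\delta)$ then satisfies $(\mu_0 \times \mu_1)(G) \ge 1 - B\varepsilon$ and $\mu_j(V_{x_0, x_1}(\delta)) \le AK\delta^{\sigma-\varepsilon}$ for all $(x_0, x_1) \in G$, $j \in \{0,1\}$, and dyadic $\delta$; the passage from dyadic to arbitrary $\delta$ costs only a bounded multiplicative factor that is absorbed into $A$. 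The main delicate point will be the disjointification step: tubes $V_{x_0, x_1}(\delta)$ and $V_{x_0, x_1'}(\delta)$ always overlap near $x_0$, but they become disjoint once truncated to $X_1$ provided their axial directions differ by $\gtrsim C\delta$, which is precisely where the separation hypothesis $\mathrm{dist}(X_0, X_1) \ge 1/C$ is essential.
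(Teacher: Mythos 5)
Your proof is correct and takes essentially the same approach as the paper's: for each dyadic scale, discard the pairs where the unrestricted tube-measure is too large, bounding the measure of these bad slices via a maximal separated family of tubes whose intersections with $X_1$ are essentially disjoint (this is where $\mathrm{dist}(X_0,X_1)\ge 1/C$ enters), combined with the thin-tubes hypothesis applied to the covering tubes. Your version is in fact slightly more careful than the paper's write-up: you pass to $\tilde G = G_{01}\cap\mathrm{flip}(G_{10})$ up front and run the argument symmetrically for both $j=0$ and $j=1$, whereas the paper's proof only explicitly treats the $\mu_1$ bound and leaves the symmetric $\mu_0$ half implicit.
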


\begin{proof}
    Let $T_{x_0, x_1}(r)$ denote $r$-tube around the line $x_0, x_1$. If $G\subset X_0\times X_1$ is the graph coming from the assumption $(\mu_0,\mu_1)$ have $(\sigma,K,1-\varepsilon)$-thin tubes, define
    $$
    E(r) = \{(x_0, x_1) \in G:~ \mu_1(T_{x_0, x_1}(r)) > 10\varepsilon^{-2} CK r^{\sigma-\varepsilon}\}.
    $$
    Fix $x_0 \in X_0$ and let us bound $\mu_1(E(r)|_{x_0})$. Let $I$ be a maximal subset of $E(r)|_{x_0}$ with the property that $\{\pi_{x_0}(x_1):x_1\in I\}$ is $r$-separated. Then 
    $$
    \mu_1(E(r)|_{x_0}) \le \sum_{x_1 \in I} \mu_1(G|_{x_0} \cap T_{x_0, x_1}(2r)) \le |I| K (2r)^\sigma 
    $$
    by the thin tubes property of $G$. Since $X_0$ and $X_1$ are $1/C$-separated, the intersections $T_{x_0, x_1}(r) \cap X_1$, $x_1 \in I$ are essentially disjoint (there is at most $10C$-wise overlap) and so
    $$
    10C = 10C\mu_1(X_1) \ge \sum_{x_1\in I} \mu_1(T_{x_0, x_1}(r)) \ge 10 C \varepsilon^{-2} K r^{\sigma-\varepsilon} |I|.
    $$
    We conclude that $|I| \le \varepsilon^2 K^{-1} r^{\varepsilon-\sigma}$. Thus, $\mu_1(E(r)|_{x_0}) \le 2^\sigma\varepsilon^2 r^{\varepsilon}$ holds for any $x_0 \in X_0$ and any dyadic $r > 0$. So if we define 
    $$
    E = \bigcup_{r \in 2^{-\N}} E(r)
    $$
    then it follows that 
    $$
    (\mu_0\times \mu_1)(E) \le 2^\sigma \sum_{r \in 2^{-\N}} \varepsilon^2 r^{\varepsilon} < C'\varepsilon.
    $$
    Hence $(\mu_0\times\mu_1)(G')\ge 1-C'\varepsilon$. If $(x_0,x_1)\in G'$, $r>0$, and $x_0,x_1\in T(r)$, then $T(r)\cap B(0,1)\subset T_{x_0,x_1}(r')$ for a dyadic $r'\le 10r$. Since $(x_0,x_1)\notin E(r')$, this implies 
    \[
    \mu_1(T(r))\le \mu_1(T_{x_0,x_1}(r'))\le 10C\varepsilon^{-2}Kr'^{\sigma-\varepsilon}\le 10^{1+\sigma-\varepsilon}C\varepsilon^{-2}Kr^{\sigma-\varepsilon}.
    \]
    With $G' = G\setminus E$, we have thus verified that $(\mu_0,\mu_1)$ have $(\sigma-\varepsilon, 10^{1+\sigma-\varepsilon}C\varepsilon^{-2}K, 1-C'\varepsilon)$-thin tubes with respect to $G'$.
\end{proof}

    In light of Lemma \ref{lem:oswr_to_thin}, it suffices to show that a pair of $\sim 1$-separated irreducible measures in $\mathbb R^n$ has thin tubes. We need one more lemma to obtain this result.

\begin{lemma}\label{thm:irreducible-implies-thin-tubes}
    Suppose that $\mu_0,\mu_1$ are $s\le n-1$-Frostman probability measures contained in $B^n(0,1)$ supported in $X_0, X_1$, respectively with $\mathrm{dist}(X_0,X_1)\ge 1/C$, and that they are irreducible in $\R^n$. Then for each $\varepsilon>0$ and $\sigma < s$, there exists $K(\varepsilon,s,\sigma,n)$ such that both $(\mu_0,\mu_1)$ and $(\mu_1,\mu_0)$ have $(\sigma, K,1-\varepsilon)$-thin tubes in the sense of Definition \ref{defn:thin-tubes}.
\end{lemma}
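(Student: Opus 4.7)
The plan is to derive thin tubes from Ren's discretized radial projection theorem (Theorem \ref{thm:ren-thin-tubes}), using irreducibility of $\mu_0$ and $\mu_1$ to eliminate the exceptional plate-set appearing in Ren's conclusion; once this is done, the statement of thin $1$-planes in Theorem \ref{thm:renprojcorollary} will follow by Lemma \ref{lem:oswr_to_thin}. By symmetry it suffices to handle $(\mu_0, \mu_1)$. First I would fix $\sigma < s$ and $\varepsilon > 0$, choose an integer $k \in \{1, \ldots, n-1\}$ with $k-1 < \sigma < k$ (possible since $0 < \sigma < n-1$), and view $\mu_0, \mu_1$ as $s'$-Frostman with $s' := \min(s, k) \in (\sigma, k]$. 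Applying Theorem \ref{thm:ren-thin-tubes} with these parameters produces constants $N, K_0, \eta = \eta(\varepsilon) > 0$ so that for every $K \ge K_0$ and every $r_0 \in (0, 1]$ there is a Borel ``bad'' set $B \subset X_0 \times X_1$ with $(\mu_0 \times \mu_1)(B) \lesssim K^{-\eta}$, satisfying
$$\mu_1(T \setminus (A|_{x_0} \cup B|_{x_0})) \lesssim K^N r_0^{-(\sigma - (k-1) + N\varepsilon)} r^\sigma$$
for every $r$-tube $T$ through every $x_0 \in X_0$, where $A$ is the set of pairs lying in some $K^{-1}$-concentrated $(r_0, k)$-plate.

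Next I would use irreducibility to force $A = \emptyset$. By Lemma \ref{lem:compactness-for-irreducible-measures}, applied separately to $\mu_0$ and $\mu_1$, there is a function $w(\tau) > 0$ with $w(\tau) \to 0$ as $\tau \to 0$ such that $\mu_i(H(w(\tau))) \le \tau$ for every proper affine subspace $H \subsetneq \R^n$ and every $i \in \{0, 1\}$. Since any $(r_0, k)$-plate $P$ is contained in the $r_0$-neighborhood of some hyperplane containing its central $k$-flat, choosing $r_0 := w(K^{-1}/2)$ forces $\mu_i(P) < K^{-1}$ for every such plate and every $i$, so indeed $A = \emptyset$.

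Finally, choosing $K = K(\varepsilon)$ large enough that $(\mu_0 \times \mu_1)(B) \le \varepsilon$ fixes $r_0 = r_0(\varepsilon, s, \sigma, n, C)$ and yields, for the thin-tube graph $G := (X_0 \times X_1) \setminus B$, the bound $\mu_1(T \cap G|_{x_0}) \le K' r^\sigma$ uniformly for $x_0 \in X_0$, with $K' = K'(\varepsilon, s, \sigma, n, C)$ determined by $K$ and $r_0$. The symmetric argument, swapping the roles of $\mu_0$ and $\mu_1$, produces the analogous thin-tube bound for $(\mu_1, \mu_0)$. The main obstacle is precisely the elimination of the plate set $A$: Frostman regularity alone does not rule out concentration on plates at arbitrarily small scales, and irreducibility (through the compactness argument of Lemma \ref{lem:compactness-for-irreducible-measures}) is exactly what supplies the quantitative non-concentration on lower-dimensional affine subspaces needed to kill $A$.
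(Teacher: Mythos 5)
Your proof is correct and follows the same strategy as the paper: apply Ren's discretized radial projection theorem (Theorem \ref{thm:ren-thin-tubes}) and use irreducibility, via the compactness Lemma \ref{lem:compactness-for-irreducible-measures}, to choose $r_0$ so small that the plate-concentration set $A$ is empty, and then take $G = (X_0 \times X_1) \setminus B$ as the thin-tubes graph. The one place you genuinely differ is the choice of $k$ in Ren's theorem: the paper fixes $k = n-1$, which satisfies Ren's hypothesis $k - 1 < \sigma < s \le k$ only when $\sigma > n-2$, and the monotonicity of thin tubes in $\sigma$ cannot repair this when $s \le n-2$; your choice of $k$ with $k - 1 < \sigma < k$, combined with reducing the Frostman exponent to $s' = \min(s,k) \le k$, covers the full range $0 < \sigma < s \le n-1$ and so is the more careful version of the argument. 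One small detail you elide: no integer $k$ satisfies $k - 1 < \sigma < k$ when $\sigma$ is itself an integer, but this is easily patched by slightly decreasing $\sigma$ and using that $(\sigma', K, c)$-thin tubes imply $(\sigma, K, c)$-thin tubes whenever $\sigma < \sigma'$. Your choice $\tau_0 = K^{-1}/2$ to force strict failure of $K^{-1}$-concentration, rather than the paper's $\tau_0 = K^{-1}$ which only yields $\mu_i(P) \le K^{-1}$, is also a minor but genuine improvement.
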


\begin{proof}
    Choose $k = n-1$ in the statement of Theorem \ref{thm:ren-thin-tubes}, and let $L\ge K_0$ be sufficiently large and $B$ be such that $(\mu_0\times\mu_1)(B) \lesssim L^{-\eta}\le  \varepsilon$. By Lemma \ref{lem:compactness-for-irreducible-measures}, for any $\tau>0$, there exists $w$ such that $\mu_0$ and $\mu_1$ are both $(w,\tau)$-irreducible. Thus, letting $\tau_0 = L^{-1}$, there exists $w_0>0$ such that $\mu_0$ and $\mu_1$ are both $(w_0,\tau_0)$-irreducible. Therefore, for all $w\le w_0 \le 1$, $\max\{\mu_0(V(w)),\mu_1(V(w))\} \le L^{-1}$.  In particular, the set $A$ of pairs $(x_0,x_1)\in X_0\times X_1$ lying in some $L^{-1}$-concentrated $(w_0,n-1)$-plate is empty. It follows that if $G = (X_0\times X_1)\setminus B$, then both $(\mu_0,\mu_1)$ and $(\mu_1,\mu_0)$ have $(\sigma,K,1-\varepsilon)$-thin tubes by Theorem \ref{thm:ren-thin-tubes} for some $K:= K(\epsilon, s, \sigma, n,C, w)$.    
\end{proof}

\printbibliography

\end{document}